\theoremstyle{plain}
\newtheorem{thm}{Theorem}[section]
\newtheorem{prop}{Proposition}[section]
\newtheorem{lemma}{Lemma}[section]
\newtheorem{rem}{Remark}[section]
\numberwithin{equation}{section}
\begin{document}
\title{Spatio-temporal dynamics for non-monotone semiflows with
limiting systems having spreading speeds}
\author{Taishan Yi\\
School of Mathematics (Zhuhai)\\
Sun Yat-Sen University\\
Zhuhai, Guangdong 519082, China
\and
Xiao-Qiang Zhao\thanks{The corresponding author.
	E-mail: zhao@mun.ca}\\
Department of Mathematics and Statistics\\
Memorial University of Newfoundland\\
St. John's, NL A1C 5S7, Canada}

\date {}
\maketitle

\begin{abstract}
This paper is devoted to the study of propagation dynamics for a large class of non-monotone evolution systems.  In two directions of the
spatial variable, such a system has two limiting systems admitting the spatial translation invariance.  Under the assumption that each of these two
limiting systems has both leftward and rightward  spreading speeds, we establish the spreading properties of solutions
and the existence of  nontrivial fixed points, steady states, traveling waves for the original systems.  We
also apply  the developed theory to  a  time-delayed reaction-diffusion equation with a shifting habitat and
a class of asymptotically homogeneous reaction-diffusion systems.
\end{abstract}

\noindent {\bf Key words:}  Asymptotic  translation invariance, evolution system, fixed point, propagation dynamics, steady state, traveling wave.

\smallskip

\noindent {\bf AMS Subject Classification.} 35B40, 35C07, 35K57, 37C65, 37L15, 92D25.


\baselineskip=18pt


\section {Introduction}

We start with the following scalar reaction-diffusion equation:
\begin{equation}\label{fisher}
\frac{\partial u}{\partial t}=\Delta u+u(1-u),\quad  x\in\mathbb{R}, \,    \, t\ge 0.
\end{equation}
In 1937, Fisher \cite{f1937}  proved that equation (\ref{fisher}) has a nonnegative
traveling wave $U(x-ct)$ connecting $1$ and $0$  if and only if $c\geq c_{min}=2$.  In the same year,   Kolmogoroff, Petrowsky and Piscounoff \cite{kpp1937} obtained  the similar result on the minimum wave speed when
the reaction term $u(1-u)$ is replaced by a general  monostable function $f(u)$,  and also proved a convergence result.  Aroson and Weinberger \cite{aw1975, aw1978} further established the existence of the asymptotic speed of spread
(in short, spreading speed) in the sense that
any nontrivial and nonnegative solution $u(t,x)$ of equation (\ref{fisher}) with the continuous initial data $u(0,\cdot)$ having compact support admits the following spreading properties:
\begin{enumerate}
	\item[(a)] $\lim_{t\to\infty,|x|\ge ct}u(t,x)=0,\quad \forall c>c^*=2$;
	\item[(b)] $\lim_{t\to\infty,|x|\le ct}u(t,x)=1,\quad \forall
	c\in(0,c^*)$.
\end{enumerate}
Since these seminal works,  there have been extensive investigations on travelling wave solutions and propagation phenomena for various evolution equations, see, e.g., \cite{BHCPAM2012,Bramson1983,DuMatanoJEMS, fm1977, Shen2010,vvv1994, XinReview,yz2020-1} and references therein.

For any $\phi \in BC(\mathbb{R},\mathbb{R}_+)$, let $u(t,x,\phi)$ be the unique
solution of  equation (\ref{fisher}) with $u(0,\cdot,\phi)=\phi$.  Then the time-$t$ map $Q_t$ of
equation (\ref{fisher}) is defined as
$$Q_t[\phi]=u(t,\cdot,\phi),\,   \,  \forall  t\geq 0,  \, \phi \in BC(\mathbb{R},\mathbb{R}_{+}).
$$
For any $y\in \mathbb{R}$, we define  the translation operator $T_y$ by
$$T_y[\phi](x)=\phi(x-y),\, \, \forall x\in \mathbb{R}, \, \phi \in BC(\mathbb{R},\mathbb{R}_+).
$$
Note  that
if $u(t,x)$ is a solution of  equation (\ref{fisher}), then so is
$u(t,x-y)$ for any given $y\in \mathbb{R}$. Moreover,  equation (\ref{fisher}) admits the comparison principle. It then easily follows that each map $Q_t$ has the following properties:
\begin{enumerate}
	\item[(P1)]  (Translation invariance) \, $Q_t\circ T_y=T_y\circ Q_t$, $\forall y\in \mathbb{R}$.
	
	\item[(P2)]  (Monotonicity) \, $Q_t[\phi]\geq Q_t[\psi]$ whenever $\phi\geq \psi$.
	\end{enumerate}

From the viewpoint of dynamical systems,
Weinberger \cite{w1982} established the theory of traveling waves and spreading speeds for monotone discrete-time systems with spatial translation invariance. This theory has been greatly developed
for {\it monotone} discrete and continuous-time semiflows
in \cite{FYZ2017,fz2014,LYZ,lz2007,lz2010,l1989,w2002}
and some {\it non-monotone} systems in \cite{ycw2013,yz2015}
so that it can be applied to a variety of evolution systems in homogeneous or periodic media.
By employing the Harnack inequality up to boundary and the strict positivity of solutions, Berestycki \textit {et al.}~\cite{bhn2005,bhn2010} investigated  the asymptotic spreading speed for KPP equations in  periodic or  non-periodic spatial domains.  A quite different method  was used in \cite{yc2017} to study
the spreading speed and asymptotic propagation for the Dirichlet problem of monostable  reaction-diffusion equations on the half line, see also
 \cite[Section 6.3]{yz2020} for an abstract  approach to such a
problem with time delay.

Motivated by mathematical modeling of  impacts of climate changes
and biological invasions (see, e.g., \cite{bbltc2012,Scheel,k2008,wpcmpbfhb2002}), there have been numerous works on
traveling waves and long-term behavior of solutions for evolution equations with  a shifting habitat, see, e.g.,  \cite{bdnz2009, bf2018, br2008,BR2009, BG2019, DWZ2018, flw2016,FangPengZhao2018,hyz2019,hu2020free,lsz2020,
lbsf2014, lwz2018, pl2004, wz2018,WuWangZou2019, ZhangZhao2019, zhang2017persistence} and references therein.
Another class of evolution equations consists of those being small perturbations of  homogeneous systems or in locally spatially inhomogeneous media (see \cite{Hamel1997I, Hamel1997II, ks2011}). To be more specific,  we consider a scalar reaction-diffusion equation with a shifting habitat:
\begin{equation}\label{shift}
u_t=d \Delta u+f(x-ct,u), \quad  x\in\mathbb{R}, \,    \, t\ge 0,
\end{equation}
where $d>0$, $c \in\mathbb{R}$ is the shifting speed, and $f$ is $C^1$ such that $f(z,0)=0, \forall z\in\mathbb{R}$.
Letting $z=x-ct$ and $v(t,z)=u(t,z+ct)$,  we change equation \eqref{shift} into
the following spatially inhomogeneous reaction-diffusion-advection equation:
\begin{equation}\label{shift2}
v_t=d \Delta v +cv_z+f(z,v), \quad  z\in\mathbb{R}, \,    \, t\ge 0.
\end{equation}
Let $\Phi_t$ be the time-$t$ map of equation \eqref{shift2}. Since
$f(z,v)$ depends on $z$,  it is easy to see that the map $\Phi_t$ does not
admit the translation invariance (P1).

Assume that $\lim_{z\to \pm \infty} f(z,v)=f_{\pm}^{\infty}(v)$ uniformly for $v$ in any bounded subset of
$\mathbb{R}_{+}$.  Then equation \eqref{shift2}  has the following two limiting equations:
\begin{equation}\label{limitE}
v_t=d \Delta v +cv_z+f_{\pm}^{\infty}(v), \quad  z\in\mathbb{R}, \,    \, t\ge 0.
\end{equation}
Let  $\Phi_t^{\pm}$ be the time-$t$ map of equation \eqref{limitE}.  Clearly,
$\Phi_t^{\pm}$ admits the translation invariance (P1).
If, in addition, $f(z,v)$ is nondecreasing in $z\in\mathbb{R}$, then
we easily see that each $\Phi_t$ has the following property:
\begin{enumerate}
	\item[(P3)]  $T_{-y}\circ  \Phi_t[\varphi]\geq \Phi_t\circ T_{-y}[\varphi],
	 \quad     \forall \varphi\in BC(\mathbb{R},\mathbb{R}_+),\,    \,
	 y\in  \mathbb{R}_+$.
\end{enumerate}

Note that systems \eqref{shift2} and \eqref{limitE} admit the comparison principle, and hence, their time-$t$ maps $\Phi_t$ and  $\Phi_t^{\pm}$ still satisfy the monotonicity (P2). In applications, however, we often meet
non-monotone evolution systems.  As a simple example, we consider
a scalar reaction-diffusion equation with time delay:
	\begin {equation}	\label{timedelay}
	u_t=d \Delta u+f(x-ct,u(t-\tau,x)), \quad  x\in\mathbb{R}, \,    \, t\ge 0,
	\end {equation}
where $\tau>0$, $c \in\mathbb{R}$, and  $f$ is $C^1$ such that $f(z,0)=0, \forall z\in\mathbb{R}$. It is well known that system \eqref{timedelay}
does not admit the comparison principle  unless the function $f(z,u)$ is nondecreasing  in $u\in\mathbb{R}_+$ for each $z\in \mathbb{R}$.

Under an abstract setting, recently we  studied the  propagation dynamics for a large class of monotone evolution systems without translation invariance in \cite{yz2020}. More precisely, we established the spreading properties
and the existence of  nontrivial steady states for monotone semiflows
with the property (P3) under the assumption that one limiting system has
both leftward and rightward  spreading speeds (good property G) and the
other one has the uniform asymptotic annihilation (bad property B).

The purpose of our current paper is to investigate the propagation dynamics
for non-monotone semiflows  without the property (P3) in the case where
 two limiting systems have good property  G (i.e., the GG combination).
To overcome the difficulty induced by the lack of (P3), we give
 unilateral estimates of the discrete orbits for a special class of  initial data having compact supports in the case where one limiting system admits the upward convergence property (see Propositions \ref{prop2.1} and \ref{prop2.2}). Another innovation in this
project is the establishment of spreading properties for  nonlinear
maps under the assumption of the global asymptotic stability of a positive fixed point (see Theorem \ref{thm3.1-bil-gas}).  We note that 
the BB combination for two limiting systems has been addressed 
more recently in \cite{yz2023}, and the  GB combination for two limiting systems will be investigated in a forthcoming  paper.

The rest of this paper is organized as follows.  In Section 2,  we  present notations and preliminary results.  In particular, we introduce the
(NM) condition for non-monotone maps and show that it is satisfied automatically whenever  the map is monotone.  And we also establish the links between the system with asymptotic translation invariance and its  limiting systems.
 In Section 3, we prove the existence of fixed points and asymptotic propagation properties for discrete-time semiflows in both unilateral and bilateral
 limit cases, respectively.  In Section 4,
we extend these results to continuous-time semiflows and  a class of
nonautonomous  evolution systems with asymptotic translation invariance, respectively.
In Section 5, we apply the developed  theory to
two prototypical evolution systems in the bilateral limit  case:
 a time-delayed reaction-diffusion equation with a shifting habitat  and a class of asymptotically homogeneous reaction-diffusion systems. Here we leave the study of the  unilateral limit case to readers who are interested in these illustrative examples.
We expect that our developed theory may find more applications in other
types of evolution systems having spatio-temporal heterogeneity such as those
population models with seasonality or nonlocal dispersal.

\section {Preliminaries}
 Let $X=BC(\mathbb{R},\mathbb{R}^N)$ be the normed
vector space of all bounded and continuous functions from
$\mathbb{R}$ to $\mathbb{R}^N$ with the norm $||\phi||_{X}\triangleq
\sum \limits_{n=1}^{\infty}2^{-n}\sup\limits_{|x|\leq n}\{||\phi(x)||_{\mathbb{R}^N}\}$. Let $X_+=\{\phi\in
X:  \,   \phi(x)\in \mathbb{R}_+^N, \,  \forall x\in \mathbb{R}\}$ and
$X_+^\circ=\{\phi\in X:   \, \phi(x)\in Int(\mathbb{R}_+^N),
\,  \forall  x\in \mathbb{R}\}$.
For a given compact metric space $M$,  let
$Y=C(M,\mathbb{R}^N)$ be the Banach space space of all
continuous functions from $M$ into $\mathbb{R}^N$ with the
norm $||\beta||_{Y}\triangleq  \sup\limits_{\theta\in
	M} \{||\beta(\theta)||_{\mathbb{R}^N}\}$ and
$Y_{+}=C(M,\mathbb{R}_+^N)$. Clearly,  $Int(Y_+)=\{\beta\in Y: \, \beta (\theta)\in Int(\mathbb{R}_+^N), \,  \forall  \theta\in M\}$.

Let $C=C(M,X)$ be the
normed vector space of all continuous functions from $M$
into $X$ with the norm $||\varphi||_{C}\triangleq
\sup\limits_{\theta\in M}\{||\varphi(\theta)||_{X}\}$,  $C_+=C(M,X_+)$ and
$C_{+}^{\circ}=C(M,X_{+}^\circ)$. It follows that $C_+$ is a closed
cone in the normed vector space $C$. Note that $C_+^\circ\neq Int(C_+)$ due to
the non-compactness of the spatial domain $\mathbb{R}$.
For the sake of convenience, we identify an element $\varphi\in C$
with a bounded and continuous function from $M\times
\mathbb{R}$ into $\mathbb{R}^N$. Accordingly,  we can also regard $X$ and $Y$ as subspaces of $C$.

 For any $\xi$, $\eta \in \mathbb{R}^N$,  we write $\xi\geq_{\mathbb{R}^N} \eta$ if $\xi-\eta
 \in \mathbb{R}^N_+$; $\xi >_{\mathbb{R}^N} \eta$ if $\xi\geq_{\mathbb{R}^N} \eta$ and $\xi\neq \eta$;
 $\xi \gg_{\mathbb{R}^N} \eta$ if $\xi- \eta \in Int(\mathbb{R}^N_+)$.
 Similarly, we use $Y_+$ and $Int(Y_+)$ to define $\geq_Y$, $>_Y$ and $\gg_Y$ for the  space $Y$.
 For any $\xi$, $\eta \in X$,  we write $\xi\geq_X \eta$ if $\xi-\eta
 \in X_+$; $\xi >_X \eta$ if $\xi\geq_X \eta$ and $\xi\neq \eta$;
 $\xi \gg_X \eta$ if $\xi- \eta \in X_+^\circ$.
We also employ  $C_+$ and $C_+^\circ$ to define $\geq_C$ $>_C$, $\gg_C$ for the space $C$ in a similar way. For simplicity, we  write
$\geq$, $>$, $\gg$, and $||\cdot||$, respectively, for $\geq_*$,
$>_*$, $\gg_*$, and $||\cdot||_*$, where $*$ stands  for one of ${\mathbb{R}^N}$, $X$,
$Y$ and $C$.

For any given $s,r\in Int(\mathbb{R}_+^N)$ with $s\geq r$, define $C_r=\{\varphi\in
C:0\leq\varphi\leq r\}$ and $C_{r,s}=\{\varphi\in C:r\leq\varphi\leq
s \}$. For any given $\varphi\in C_+$, define $C_{\varphi}=\{\psi\in C:
0\le \psi\le \varphi\}$.
We also  define $[\varphi,\psi]_*=\{\xi\in *:\varphi\leq_*\xi\leq_*\psi\}$ and  $[[\varphi,\psi]]_*=\{\xi\in *:\varphi\ll_*\xi\ll_*\psi\}$ for $\varphi,\psi\in *$ with $\varphi\leq_*\psi$, where $*$ stands  for one of ${\mathbb{R}^N}$, $X$,
$Y$ and $C$.  Let $\check{1}:=(1,1,\cdots,1)^{T}\in \mathbb{R}^N$ and $\check{\bf 1}\in \mathbb{R}^{N\times N}$ with $(\check{\bf 1})_{ij}=1, \forall i,j\in \{1,2,\cdots,N\}$.
For any given $y\in\mathbb{R}$, we define the spatial translation operator
$T_{y}$ on $C$ by
$$T_{y}[\varphi](\theta,x)=\varphi(\theta,x-y),\quad \forall \varphi\in C,\, \theta\in M, \, x\in\mathbb{R}.
$$

As a convention of this paper, we say a  map ${Q}:  C_+\to C_+$ is continuous
if for any given  $r\in Int(\mathbb{R}_+^N)$, $Q|_{C_r}:C_r\to C_{+}$ is continuous in the usual sense.  The map ${Q}:  C_+\to C_+$ is said to be monotone if ${Q}[\phi]\leq {Q}[\psi]$ whenever $\phi,\psi\in  C_{+}$ with
$\phi\leq \psi$.

Throughout the whole paper,  unless specified otherwise, we fix a continuous map $Q: C_+\to C_+$, and always assume that 
\begin{enumerate}
\item [{\bf (A)}] There exist $r^*\in Int(Y_+)$ and a continuous map $Q_+:  C_+\to C_+$ such that $Q_+[r^*]=r^*$ and 
$\lim\limits_{y\to\infty}T_{-y}\circ Q^n \circ T_y[\varphi]=Q_+^n[\varphi]$  in $C$ for all $\varphi\in C_+$ and $n\in \mathbb{N}$.
\end{enumerate}

The following observation is a straightforward  consequence of  assumption {\bf (A)}.
\begin{lemma}\label{lemm2.1}
The map  $Q_+:C_+\to C_+$ admits the following properties:
\begin{enumerate}
\item [(i)] $Q_+[\varphi](\theta,x)=\lim\limits_{y\to\infty}T_{-y}\circ Q \circ T_y[\varphi](\theta,x), \forall  \varphi\in C_+, (\theta,x)\in
M\times  \mathbb{R}$.

\item  [(ii)] $T_y[Q_+[\varphi]]=Q_+[T_y[\varphi]]$ for all $(y,\varphi)\in
\mathbb{R}\times C_{+}$.

\item   [(iii)] $Q_+$ is monotone whenever $Q$ is monotone.
\end{enumerate}
\end{lemma}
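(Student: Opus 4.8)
The plan is to obtain all three assertions directly from assumption \textbf{(A)}, using only the algebra of the translation operators $T_y$ and the fact that convergence in the metric of $C$ (a weighted sup-norm) entails locally uniform, hence pointwise, convergence on $M\times\mathbb{R}$. For (i), I would specialize \textbf{(A)} to $n=1$, which gives $T_{-y}\circ Q\circ T_y[\varphi]\to Q_+[\varphi]$ in $C$ as $y\to\infty$; evaluating this norm convergence at an arbitrary point $(\theta,x)\in M\times\mathbb{R}$ yields exactly the pointwise identity in (i).

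For (ii), the heart of the matter is a harmless reindexing of the limit furnished by (i). Writing the translations out pointwise one has $T_{-z}\circ Q\circ T_z[\varphi](\theta,x)=Q[T_z[\varphi]](\theta,x+z)$ and $T_z\circ T_y=T_{z+y}$; hence, using (i) (for the second identity applied at the point $(\theta,x-y)$ together with the definition of $T_y$),
\[
Q_+[T_y[\varphi]](\theta,x)=\lim_{z\to\infty}Q[T_{z+y}[\varphi]](\theta,x+z),\qquad
T_y[Q_+[\varphi]](\theta,x)=\lim_{z\to\infty}Q[T_z[\varphi]](\theta,x-y+z).
\]
The substitution $w=z-y$ (legitimate since $z\to\infty\Leftrightarrow w\to\infty$ for any fixed $y\in\mathbb{R}$) turns the second limit into $\lim_{w\to\infty}Q[T_{w+y}[\varphi]](\theta,x+w)$, which is the first; as two elements of $C$ agreeing at every $(\theta,x)$ coincide, this proves $T_y[Q_+[\varphi]]=Q_+[T_y[\varphi]]$.

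For (iii), assume $Q$ is monotone and take $\varphi\le\psi$ in $C_+$. Each $T_z$ is order preserving, being merely a relabeling of the spatial variable, so $T_z[\varphi]\le T_z[\psi]$; monotonicity of $Q$ then gives $Q[T_z[\varphi]]\le Q[T_z[\psi]]$, and applying $T_{-z}$ preserves this inequality. Letting $z\to\infty$ and using that $C_+$ is a closed cone in $C$, the order relation passes to the limit, whence $Q_+[\varphi]\le Q_+[\psi]$. The only points demanding a little care are the reindexing of limits in (ii) and the appeal to closedness of $C_+$ in (iii) to pass the inequality to the limit; beyond these, no genuine difficulty arises, since \textbf{(A)} has been formulated precisely so that $Q_+$ inherits these structural properties.
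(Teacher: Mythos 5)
Your proof is correct: the paper itself omits the argument, stating only that the lemma is a straightforward consequence of assumption \textbf{(A)}, and your derivation (pointwise evaluation of the norm convergence for (i), reindexing $w=z-y$ for (ii), and closedness of the cone $C_+$ for (iii)) is exactly the intended routine verification. No issues.
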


The following upward convergence was requested  for 
the map $Q_+$  in \cite{yz2020}.  In the main results of the current paper, however, we do not directly  assume that $Q_+$ has property ({\bf UC}).

\begin{enumerate}
\item [{\bf (UC)}]  There exist $c_-^*,c_+^*\in \mathbb{R}$   such that $c_+^*+c_-^*>0$, and
$$
\lim\limits_{n\rightarrow \infty}
\max\limits_{x\in \mathcal{A}_{\varepsilon,n}^+}
||Q_+^n[\varphi](\cdot,x)-r^*(\cdot)||=0, \, \forall \varepsilon\in (0,\frac{c_+^*+c_-^*}{2}), \, \varphi\in
C_{+}\setminus \{0\},
$$
 where $\mathcal{A}_{\varepsilon,n}^+=n{[-c_-^*+\varepsilon,c_+^*-\varepsilon]}$.

\end{enumerate}


Define a continuous function $h:M\times \mathbb{R}\rightarrow \mathbb{R}$ by
\[
		h(\theta,x)=\left\{
		\begin{array}{ll}
		1, & x\in [-1,1],
		\\
		0, \qquad & x\in (-\infty,-2]\cup [2,\infty),
		\\
		2-x, \qquad & x\in (1,2),
		\\
		 2+x, & x\in (-2,-1).
		\end {array}
		\right.
		\]

By the arguments similar to those for  Proposition 2.1 in \cite{yz2020},  we  can prove the first part of the following result, while  the second part easily follows from the first one.

\begin{prop} \label{prop2.1}
Assume  that $Q_+$ satisfies {\bf (UC)}.
Then for any $\varepsilon\in (0,\frac{3c_+^*+3c_-^*}{4})$, there exist $n_0:=n_0({\varepsilon})\in \mathbb{N} \cap [\frac{6}{\varepsilon},\infty)$ and $y_0:=y_0({\varepsilon})>2$ such that
$$
T_{-n c}\circ T_{-y} \circ Q^{n}\circ T_{y}[\frac{r^*}{16}h]\geq \frac{r^*}{4}h,
 \quad  \forall
 (c,n,y)\in [-c_-^*+\frac{2\varepsilon}{3},c_+^*-\frac{2\varepsilon}{3}]\times [n_0,2n_0] \times [y_0,\infty).
$$
If,  in addition, $Q$ is monotone, then
$$
T_{-y}\circ (T_{-n_0 c}\circ Q^{n_0})^n\circ T_{y}[\frac{r^*}{16}h]\geq \frac{r^*}{4}h,  \quad  \forall
(c,n,y)\in [-c_-^*+\frac{2\varepsilon}{3},c_+^*-\frac{2\varepsilon}{3}]\times \mathbb{N}\times [y_0,\infty).
$$
\end{prop}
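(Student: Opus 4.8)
The plan is to convert the moving‑frame estimate into a fixed‑frame statement about the orbit of $Q$ and then chain together the two limiting mechanisms at our disposal: assumption {\bf (A)}, which for each fixed $n$ controls $T_{-y}\circ Q^n\circ T_y$ by $Q_+^n$ once $y$ is large, and property {\bf (UC)}, which controls $Q_+^n$ by $r^*$ once $n$ is large, on the expanding set $\mathcal{A}_{\varepsilon,n}^+$. Throughout I would put $\varphi_0:=\tfrac{r^*}{16}h$; since $r^*\gg 0$ on the compact set $M$ and $h(0)=1$, we have $\varphi_0\in C_+\setminus\{0\}$, so {\bf (UC)} applies to $\varphi_0$.

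\textbf{First part.} First I would note that, because the operators $T_z$ are order preserving and pairwise commuting, the asserted inequality is equivalent to $T_{-y}\circ Q^n\circ T_y[\varphi_0]\ge\tfrac{r^*}{4}\,T_{nc}[h]$; and since $\tfrac{r^*}{4}T_{nc}[h]$ is supported in $(nc-2,nc+2)$, where it is bounded above by $\tfrac{r^*}{4}$, it suffices to prove
\[
T_{-y}\circ Q^n\circ T_y[\varphi_0](\theta,x)\ \ge\ \tfrac{1}{4}\,r^*(\theta)\qquad\text{for all }\theta\in M\text{ and }x\in(nc-2,nc+2).
\]
Next, with $\rho:=\min_{\theta\in M,\,1\le i\le N}r^*_i(\theta)>0$, I would apply {\bf (UC)} with parameter $\tfrac{\varepsilon}{3}$ — admissible because $\varepsilon<\tfrac{3}{4}(c_+^*+c_-^*)$ forces $\tfrac{\varepsilon}{3}\in(0,\tfrac{c_+^*+c_-^*}{2})$ — to obtain $N_1\in\mathbb N$ with $Q_+^n[\varphi_0](\theta,x)\ge\tfrac{3}{4}r^*(\theta)$ for all $n\ge N_1$, all $\theta\in M$ and all $x\in\mathcal{A}_{\varepsilon/3,n}^+=n[-c_-^*+\tfrac{\varepsilon}{3},\,c_+^*-\tfrac{\varepsilon}{3}]$. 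Then I would set $n_0:=\max\{N_1,\lceil 6/\varepsilon\rceil\}\in\mathbb N\cap[6/\varepsilon,\infty)$; a one‑line computation using $\tfrac{n\varepsilon}{3}\ge2$ shows that for $n_0\le n\le 2n_0$ and $c\in[-c_-^*+\tfrac{2\varepsilon}{3},\,c_+^*-\tfrac{2\varepsilon}{3}]$ one has $(nc-2,nc+2)\subseteq\mathcal{A}_{\varepsilon/3,n}^+$, and also $(nc-2,nc+2)\subseteq[-R,R]$ for a finite $R=R(n_0)$ since $n$ and $c$ vary in bounded sets. Finally I would invoke {\bf (A)}: for each of the finitely many integers $n\in[n_0,2n_0]$, $T_{-y}\circ Q^n\circ T_y[\varphi_0]\to Q_+^n[\varphi_0]$ in $C$, and convergence in $C$ entails uniform convergence on $M\times[-R,R]$; hence there is $y_0>2$ such that for all $y\ge y_0$, all such $n$ and all $(\theta,x)\in M\times[-R,R]$ the component‑wise bound $T_{-y}\circ Q^n\circ T_y[\varphi_0](\theta,x)\ge Q_+^n[\varphi_0](\theta,x)-\tfrac{\rho}{2}\check{1}$ holds. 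Chaining, for $x\in(nc-2,nc+2)$ I get $T_{-y}\circ Q^n\circ T_y[\varphi_0](\theta,x)\ge Q_+^n[\varphi_0](\theta,x)-\tfrac{\rho}{2}\check{1}\ge\tfrac{3}{4}r^*(\theta)-\tfrac{\rho}{2}\check{1}\ge\tfrac{1}{4}r^*(\theta)$, which is exactly what is needed.

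\textbf{Second part.} Here I would fix $c$ in the stated interval, put $P:=T_{-n_0c}\circ Q^{n_0}$, and observe that $T_{-y}\circ P\circ T_y=T_{-n_0c}\circ\big(T_{-y}\circ Q^{n_0}\circ T_y\big)$ is monotone whenever $Q$ is (a composition of monotone maps). The first part with $n=n_0$ already gives $(T_{-y}\circ P\circ T_y)[\varphi_0]\ge\tfrac{r^*}{4}h=4\varphi_0$ for every $y\ge y_0$. Then I would induct on $n$: writing $\psi_n:=T_{-y}\circ P^n\circ T_y[\varphi_0]$ one has $\psi_{n+1}=(T_{-y}\circ P\circ T_y)[\psi_n]$ (the inner $T_y\circ T_{-y}$ cancels), so if $\psi_n\ge4\varphi_0\,(\ge\varphi_0)$ then monotonicity together with the base case yields $\psi_{n+1}\ge(T_{-y}\circ P\circ T_y)[\varphi_0]\ge4\varphi_0$; hence $\psi_n\ge\tfrac{r^*}{4}h$ for all $n\in\mathbb N$ and $y\ge y_0$.

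\textbf{Expected main obstacle.} The two limiting mechanisms run in opposite directions — {\bf (UC)} needs $n\to\infty$, whereas {\bf (A)} only furnishes locally uniform convergence as $y\to\infty$ for $n$ held fixed — so the quantifiers must be handled in the order ``$n_0$ first, then $y_0=y_0(n_0)$'', and one must exploit that only finitely many $n\in[n_0,2n_0]$ occur and that $nc$ stays in a bounded set. The other point that needs care is absorbing the $\pm2$ coming from the support of $h$ into the cone $\mathcal{A}_{\varepsilon/3,n}^+$ via the constraint $n\ge 6/\varepsilon$; this is precisely why the admissible $c$‑interval shrinks from $[-c_-^*+\tfrac{\varepsilon}{3},\,c_+^*-\tfrac{\varepsilon}{3}]$ to $[-c_-^*+\tfrac{2\varepsilon}{3},\,c_+^*-\tfrac{2\varepsilon}{3}]$.
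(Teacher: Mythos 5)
Your proof is correct, and it supplies in full the details that the paper itself only delegates to the citation of \cite[Proposition 2.1]{yz2020} (with the second part, as the paper says, following easily from the first). The argument is the natural one the paper intends: fix $n_0$ from {\bf (UC)} applied with parameter $\varepsilon/3$, absorb the width $\pm 2$ of the support of $h$ into the cone $\mathcal{A}_{\varepsilon/3,n}^+$ via $n\ge 6/\varepsilon$, then choose $y_0$ from {\bf (A)} using that only finitely many $n\in[n_0,2n_0]$ and a bounded spatial window occur; the second part is the standard monotone induction on $T_{-y}\circ(T_{-n_0c}\circ Q^{n_0})\circ T_y$ from the base case $n=n_0$.
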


Following \cite{Zhaobook}, we say $Q$  is a subhomogeneous map on $[0,r^*]_C$ if $Q[\kappa \phi]\geq \kappa Q[\phi]$ for all $(\kappa,\phi)\in [0,1]\times [0,r^*]_C$.

By modifying the proof of \cite[Proposition 2.2]{yz2020}, we can
obtain the following result without assuming 
the translation monotone property (P3).

\begin{prop} \label{prop2.2}
Assume  that $Q$ is monotone and  $Q_+$ satisfies {\bf (UC)}. Let $c_+^*>0$, $\varepsilon\in (0,\frac{1}{2}\min\{c_+^*,c_+^*+c_-^*\})$, and let  $n_0:=n_0({\varepsilon})$ and  $y_0:=y_0({\varepsilon})$ be  defined as in Proposition~\ref{prop2.1}. Then
$$
T_{-n c}\circ T_{-y} \circ Q^{n}\circ T_{y}[\frac{r^*}{16}h]\geq \frac{r^*}{4}h,
\quad  \forall
c\in [\max\{0,-c_-^*+\frac{2\varepsilon}{3}\},\, c_+^*-\frac{2\varepsilon}{3}], \, n\geq n_0, y\geq y_0.
$$
 If, in addition,   $Q$ is subhomogeneous on $[0,r^*]_C$, then
for any $\delta\in [0,1]$,  there holds
$$
T_{-n c}\circ T_{-y} \circ Q^{n}\circ T_{y}[\frac{\delta r^*}{16}h]\geq \frac{ \delta r^*}{4}h,
\quad  \forall
c\in [\max\{0,-c_-^*+\frac{2\varepsilon}{3}\},\, c_+^*-\frac{2\varepsilon}{3}], \, n\geq n_0, \, y\geq y_0.
$$


\end{prop}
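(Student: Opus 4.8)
The plan is to derive both assertions from Proposition~\ref{prop2.1} together with the monotonicity and subhomogeneity of $Q$, rather than re-running the whole argument of \cite[Proposition 2.2]{yz2020}. First I would observe that the first display in Proposition~\ref{prop2.2} is an immediate specialization of the first conclusion of Proposition~\ref{prop2.1}: once $c_+^*>0$ and $\varepsilon\in(0,\tfrac12\min\{c_+^*,c_+^*+c_-^*\})$, the interval $[\max\{0,-c_-^*+\tfrac{2\varepsilon}{3}\},\,c_+^*-\tfrac{2\varepsilon}{3}]$ is a nonempty sub-interval of $[-c_-^*+\tfrac{2\varepsilon}{3},\,c_+^*-\tfrac{2\varepsilon}{3}]$ (here one uses $\varepsilon<\tfrac{3c_+^*+3c_-^*}{4}$, which follows from the stated smallness of $\varepsilon$), and $[n_0,\infty)$ likewise contains $[n_0,2n_0]$—but in fact the only place one needs $n\in[n_0,2n_0]$ in Proposition~\ref{prop2.1} is to keep the number of iterates bounded; for monotone $Q$ one can iterate the block map to cover all $n\ge n_0$. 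Concretely, writing $R_c:=T_{-n_0c}\circ Q^{n_0}$, the monotone conclusion of Proposition~\ref{prop2.1} says $T_{-y}\circ R_c^{\,k}\circ T_y[\tfrac{r^*}{16}h]\ge \tfrac{r^*}{4}h\ge\tfrac{r^*}{16}h$ for all $k\in\mathbb{N}$ and $y\ge y_0$, and combining consecutive blocks with a residual power of $Q$ (again monotone) upgrades this to arbitrary $n\ge n_0$; this yields the first display.

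For the second display the key tool is subhomogeneity of $Q$ on $[0,r^*]_C$. Fix $\delta\in[0,1]$. Since $\tfrac{\delta r^*}{16}h = \delta\cdot\tfrac{r^*}{16}h$ and $\tfrac{r^*}{16}h\in[0,r^*]_C$, I would show by induction on $n$ that $Q^n[\kappa\phi]\ge\kappa\,Q^n[\phi]$ for all $\kappa\in[0,1]$ and $\phi\in[0,r^*]_C$, using that the iterates stay in $[0,r^*]_C$ (here one invokes $Q_+[r^*]=r^*$ together with monotonicity and property~{\bf (A)}, or more simply the standing setup in which $[0,r^*]_C$ is positively invariant). Translations commute with scalar multiplication, so
$$
T_{-nc}\circ T_{-y}\circ Q^n\circ T_y\Bigl[\tfrac{\delta r^*}{16}h\Bigr]
= T_{-nc}\circ T_{-y}\circ Q^n\Bigl[\delta\,T_y\bigl[\tfrac{r^*}{16}h\bigr]\Bigr]
\ge \delta\,T_{-nc}\circ T_{-y}\circ Q^n\circ T_y\Bigl[\tfrac{r^*}{16}h\Bigr]
\ge \delta\cdot\tfrac{r^*}{4}h = \tfrac{\delta r^*}{4}h,
$$
where the first inequality is the iterated subhomogeneity (translations preserve the order) and the second is the first display of the proposition. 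This holds for the same range of $c$, $n\ge n_0$, $y\ge y_0$, which is exactly the claim.

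The main obstacle I anticipate is not the algebra but the bookkeeping needed to pass from $n\in[n_0,2n_0]$ in Proposition~\ref{prop2.1} to all $n\ge n_0$: one must check that the lower bound $\tfrac{r^*}{4}h$ produced by one block is at least the input $\tfrac{r^*}{16}h$ of the next block (true, since $\tfrac14\ge\tfrac{1}{16}$ and $h\ge0$), and that the leftover iterates $Q^{n-kn_0}$ with $0\le n-kn_0<n_0$ can be absorbed—this is where one uses that Proposition~\ref{prop2.1} already gives the estimate for every $n$ in $[n_0,2n_0]$, so after peeling off blocks of size $n_0$ the residual count lies in $[n_0,2n_0]$ once $n\ge n_0$. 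A second, more delicate point is making sure the subhomogeneity hypothesis is used only on $[0,r^*]_C$ and that all iterates $Q^j\circ T_y[\tfrac{r^*}{16}h]$ remain in that order interval; if the standing assumptions do not already guarantee invariance of $[0,r^*]_C$, one should restrict attention to initial data dominated by $r^*$ and note $\tfrac{r^*}{16}h\le r^*$, then use monotonicity and $Q_+[r^*]=r^*$ to keep the comparison valid in the limit. Everything else is a routine consequence of monotonicity, commutation of translations, and the already-established Proposition~\ref{prop2.1}.
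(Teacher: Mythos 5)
Your overall strategy --- peel off $n_0$-blocks, feed the output $\frac{r^*}{4}h\ge\frac{r^*}{16}h$ of one block into the input of the next, and finish with one application of subhomogeneity --- is exactly the mechanism of the paper's proof, which packages the peeling as a contradiction argument with a maximal admissible exponent $n^*$. Two points in your write-up need repair, however. First, the first display is not ``an immediate specialization'' of Proposition~\ref{prop2.1}, and the block map $R_c=T_{-n_0c}\circ Q^{n_0}$ cannot be used as you state: since $Q$ does \emph{not} commute with translations, $(T_{-n_0c}\circ Q^{n_0})^k\neq T_{-kn_0c}\circ Q^{kn_0}$, so the second display of Proposition~\ref{prop2.1} gives no direct information about $T_{-nc-y}\circ Q^{n}\circ T_y$. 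The correct peeling is the identity
$$
T_{-(n+1)c-y}\circ Q^{n+1}\circ T_y=T_{-n_0c-y'}\circ Q^{n_0}\circ T_{y'}\circ\bigl[T_{-(n+1-n_0)c-y}\circ Q^{n+1-n_0}\circ T_y\bigr],\qquad y'=(n+1-n_0)c+y,
$$
followed by monotonicity and Proposition~\ref{prop2.1} applied at the \emph{new} translate $y'$.

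Second --- and this is the substantive omission --- the induction only closes because $y'\ge y_0$, which holds precisely because $c\ge 0$; this is why the proposition restricts $c$ to $[\max\{0,-c_-^*+\frac{2\varepsilon}{3}\},\,c_+^*-\frac{2\varepsilon}{3}]$ rather than the full interval of Proposition~\ref{prop2.1}. Your text treats the $\max\{0,\cdot\}$ as a harmless shrinking of the $c$-range, but for $c<0$ the effective translate drifts leftward at each peel and eventually falls below $y_0$, so Proposition~\ref{prop2.1} ceases to apply and the conclusion is genuinely unavailable there. Once these two points are tracked, your argument coincides with the paper's. The subhomogeneity step is the same one-line estimate as in the paper, and your caution about keeping the iterates in $[0,r^*]_C$ when iterating $Q[\kappa\phi]\ge\kappa Q[\phi]$ is fair --- the paper is equally terse on that point.
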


\noindent
\begin{proof} 
Define
\begin{eqnarray*}
&&n^*:=\sup\left\{k\in [n_0,\infty)\cap \mathbb{N}: \quad T_{-n c-y} \circ Q^{n}\circ T_{y}[\frac{r^*}{16}h]\geq \frac{r^*}{4}h, \right.\\
&&\qquad \qquad \quad  \left.
\forall  (c,n,y)\in [\max\{0,-c_-^*+\frac{2\varepsilon}{3}\},c_+^*-\frac{2\varepsilon}{3}]\times [n_0,k]\times [y_0,\infty)\right\}.
\end{eqnarray*}
Clearly, $n^*\geq 2n_0$ due to Proposition~\ref{prop2.1}. It suffices to prove $n^*=\infty$. Otherwise, we have $n^*<\infty$.  By Proposition~\ref{prop2.1} and the choices  of $y_0,n_0$ and $n^*$, it follows that for any $
c\in [\max\{0,-c_-^*+\frac{2\varepsilon}{3}\},c_+^*-\frac{2\varepsilon}{3}]$ and $y\geq y_0$, we have
\begin{eqnarray*}
&&T_{-(n^*+1) c-y} \circ Q^{n^*+1 }\circ T_{y}[\frac{r^*}{16}h]
\\
&&=T_{-n_0c-[(n^*+1-n_0)c+y]}\circ Q^{n_0}\circ T_{(n^*+1-n_0)c+y}[ T_{-(n^*+1-n_0)c-y}\circ Q^{n^*+1-n_0}\circ T_{y}[\frac{r^*}{16}h]]
\\
&&\geq T_{-n_0c-[(n^*+1-n_0)c+y]}\circ Q^{n_0}\circ T_{(n^*+1-n_0)c+y}[ \frac{r^*}{16}h]\geq \frac{r^*}{4}h,
\end{eqnarray*}
which  contradicts the choice of  $n^*$.
For any $\delta\in [0,1]$,  the subhomogeneity  of $Q$ implies that
$$
T_{-n c}\circ T_{-y} \circ Q^{n}\circ T_{y}[\frac{\delta r^*}{16}h]
\geq \delta T_{-n c}\circ T_{-y} \circ Q^{n}\circ T_{y}[\frac{ r^*}{16}h]
\geq \delta \frac{  r^*}{4}h,
$$
for all
$c\in [\max\{0,-c_-^*+\frac{2\varepsilon}{3}\},c_+^*-\frac{2\varepsilon}{3}]$, $n\geq n_0$, and  $y\geq y_0$.
 \end{proof}


It is easy to verify the  following properties for $Q_+$.
\begin{prop} \label{prop2.30000}
Assume that $Q_+$ satisfies {\bf (UC)}.
Then the following statements are valid:
\begin{itemize}
\item [{\rm (i)}]  $Q_+^n[\alpha r^*](\theta,x)=Q_+^n[\alpha r^*](\theta,0),
\,  \, \forall
(n,\alpha,\theta,x)\in \mathbb{N} \times  \mathbb{R}_+\times M \times \mathbb{R}.$

\item [{\rm (ii)}]  $\lim\limits_{n\to \infty}Q_+^n[\alpha r^*]= r^*$ in $L^\infty(M\times \mathbb{R},\mathbb{R}^N), \,  \, \forall \alpha\in (0,\infty)$.


\end{itemize}
\end{prop}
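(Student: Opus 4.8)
The plan is to exploit the translation invariance of $Q_+$ (Lemma~\ref{lemm2.1}(ii)) together with the spreading estimate {\bf (UC)} to reduce the spatial dependence of $Q_+^n[\alpha r^*]$ to a statement about a single point, and then to combine a lower bound coming from {\bf (UC)} with an upper bound coming from a supersolution (the constant function $r^*$ together with subhomogeneity-type control) to force the convergence in (ii).

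For part (i): First I would observe that $\alpha r^*\in Y_+\subset C$ is independent of the spatial variable $x$, i.e.\ $T_y[\alpha r^*]=\alpha r^*$ for every $y\in\mathbb{R}$. By Lemma~\ref{lemm2.1}(ii), $T_y[Q_+^n[\alpha r^*]]=Q_+^n[T_y[\alpha r^*]]=Q_+^n[\alpha r^*]$ for all $y\in\mathbb{R}$ (the iterate $Q_+^n$ also commutes with $T_y$, by induction on $n$). A function in $C$ that is fixed by all spatial translations must be independent of $x$; evaluating at $x$ and at $0$ then gives $Q_+^n[\alpha r^*](\theta,x)=Q_+^n[\alpha r^*](\theta,0)$ for all $(\theta,x)$. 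This is essentially immediate once one notices that $\alpha r^*$ itself is translation-invariant.

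For part (ii): By part (i) it suffices to understand the constant-in-$x$ functions $w_n(\theta):=Q_+^n[\alpha r^*](\theta,0)\in Y$, and to show $w_n\to r^*$ in $Y$ (equivalently in $L^\infty(M\times\mathbb{R},\mathbb{R}^N)$). For the lower bound: pick any $\varepsilon\in(0,\frac{c_+^*+c_-^*}{2})$ and take a point $x_n\in\mathcal{A}_{\varepsilon,n}^+=n[-c_-^*+\varepsilon,\,c_+^*-\varepsilon]$ (nonempty for $n$ large since $c_+^*+c_-^*>0$); since $\alpha r^*\in C_+\setminus\{0\}$, {\bf (UC)} gives $\|Q_+^n[\alpha r^*](\cdot,x_n)-r^*(\cdot)\|\to 0$, and by part (i) the left-hand side equals $\|w_n-r^*\|$ up to this spatial relabeling — more precisely $Q_+^n[\alpha r^*](\cdot,x_n)=Q_+^n[\alpha r^*](\cdot,0)=w_n$, so $\liminf$ considerations already yield $w_n\to r^*$ once we also have an upper bound. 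For the upper bound I would argue: for $\alpha\le 1$, subhomogeneity is not assumed here, so instead I would split into the cases $\alpha\le 1$ and $\alpha>1$ using monotonicity of $Q_+$ (Lemma~\ref{lemm2.1}(iii) requires $Q$ monotone; if $Q$ is not assumed monotone one argues directly from {\bf (UC)} applied to both $\alpha r^*$ and to a comparison). The cleanest route is: {\bf (UC)} applied with $\varphi=\alpha r^*$ already asserts $Q_+^n[\alpha r^*](\cdot,x)\to r^*(\cdot)$ uniformly for $x$ in the moving interval $\mathcal{A}_{\varepsilon,n}^+$; combined with part (i), which says the value there is the same as the value at $x=0$, we get exactly $\|w_n-r^*\|=\max_{x\in\mathcal{A}_{\varepsilon,n}^+}\|Q_+^n[\alpha r^*](\cdot,x)-r^*\|\to 0$, hence $Q_+^n[\alpha r^*]\to r^*$ in $L^\infty(M\times\mathbb{R},\mathbb{R}^N)$.

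The main (and only real) obstacle is the bookkeeping to make sure that the moving interval $\mathcal{A}_{\varepsilon,n}^+$ is nonempty for all large $n$ so that {\bf (UC)} has content, and that the ``$\max$ over a moving interval'' in {\bf (UC)} can legitimately be identified with the norm $\|w_n-r^*\|$ via part (i); both are routine, since $c_+^*+c_-^*>0$ guarantees the interval is nonempty for $n\ge 1$ when $\varepsilon$ is chosen in $(0,\frac{c_+^*+c_-^*}{2})$, and part (i) gives the spatial constancy exactly on the support of that interval. No delicate estimate is needed — the proposition is a formal consequence of {\bf (UC)}, the translation invariance of $Q_+$, and the translation invariance of the initial datum $\alpha r^*$.
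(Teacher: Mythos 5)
Your proof is correct and is exactly the routine verification the paper leaves to the reader (the paper states the proposition with no proof beyond ``it is easy to verify''): part (i) follows from $T_y[\alpha r^*]=\alpha r^*$ together with Lemma~\ref{lemm2.1}(ii), and part (ii) follows by applying {\bf (UC)} to $\varphi=\alpha r^*\in C_+\setminus\{0\}$ on the nonempty interval $\mathcal{A}_{\varepsilon,n}^+$ and using the spatial constancy from (i). The digression about needing a separate upper bound via subhomogeneity or monotonicity is unnecessary---{\bf (UC)} already gives two-sided convergence to $r^*$---but your final ``cleanest route'' is complete and correct.
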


The following assumption is quite different from the assumption {\bf (SP)}  in \cite{yz2020}, although we still denoted it by {\bf (SP)}
for the convenience in applications.
	 
\begin{enumerate}
\item [{\bf (SP)}]  There exist $(\rho^*,N^*)\in (0,\infty)\times \mathbb{N}$ and  $\varrho^*\in (\rho^*,\infty)$  such that for any  $a\in \mathbb{R}$  and $\varphi\in
 C_{+}\setminus \{0\}$ with   $\varphi(\cdot, a)\subseteq Y_+\setminus \{0\}$,
we have  $Q^n[\varphi](\cdot,x)\in Int(Y_+)$ for all  $n\geq N^*$ and $x-a\in [n\rho^*,n\varrho^*]$.
\end{enumerate}

\begin{lemma}\label{lemm2.1-3.1-4.1} 
If  the map  $Q:C_+\to C_+$ satisfies {\bf (SP)}  and $K$ is a compact subset of $C_{+}\setminus\{0\}$, then for any $\mathfrak{y}>0$, there exist $\delta_0=\delta_0(K,\mathfrak{y})\in (0,1)$, $\mathfrak{y}_0=\mathfrak{y}_0(K,\mathfrak{y})>2+\mathfrak{y}$, and $\mathfrak{N}_0=\mathfrak{N}_0(K,\mathfrak{y})\in \mathbb{N}$ such that
$T_{-\mathfrak{y}_0}Q^{\mathfrak{N}_0}[\varphi]\geq \delta_0 r^* h$ for all $\varphi\in K$.
\end{lemma}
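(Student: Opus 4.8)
The plan is to reduce the statement about an arbitrary compact set $K\subseteq C_+\setminus\{0\}$ to a local (pointwise) version and then use compactness to make the constants uniform. First I would observe that for each $\varphi\in K$, since $\varphi\neq 0$, there exists $a=a(\varphi)\in\mathbb{R}$ with $\varphi(\cdot,a)\in Y_+\setminus\{0\}$. Applying {\bf (SP)} with this $a$, we get $Q^n[\varphi](\cdot,x)\in Int(Y_+)$ for all $n\geq N^*$ and $x-a\in[n\rho^*,n\varrho^*]$. Fix $n=N^*$ (or any convenient $n\geq N^*$); then on the nondegenerate interval $x\in[a+N^*\rho^*,\,a+N^*\varrho^*]$ the function $Q^{N^*}[\varphi](\cdot,x)$ takes values in the open cone $Int(Y_+)$. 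Since $Q$ is continuous, $Q^{N^*}[\varphi]$ restricted to $M\times[a+N^*\rho^*,a+N^*\varrho^*]$ is a continuous function on a compact set, so it is bounded below by some $\delta(\varphi)r^*$ on a subinterval of length, say, $2$ centered at a point $b(\varphi)$; after translating by $T_{-\mathfrak{y}_0}$ with an appropriate shift, this dominates $\delta(\varphi)r^*h$ (recall $h$ is supported in $[-2,2]$ and equals $1$ on $[-1,1]$). The shift can be chosen $>2+\mathfrak{y}$ by taking $a(\varphi)$ large enough in the worst case, but more carefully one should note that $a(\varphi)$ is not free — so the shift $\mathfrak{y}_0$ will depend on how far out the ``good point'' of $\varphi$ sits; this is where compactness enters.

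Next I would upgrade to uniformity over $K$. The issue is that the point $a(\varphi)$, the number $N^*$ iterations notwithstanding, and the lower bound $\delta(\varphi)$ all a priori depend on $\varphi$. By continuity of $Q^{N^*}$ (in the sense defined in the paper, i.e., on each $C_r$) together with compactness of $K$, there is a single $r\in Int(\mathbb{R}_+^N)$ with $K\subseteq C_r$, and $Q^{N^*}|_{C_r}$ is uniformly continuous on the compact set $K$; hence for each $\varphi\in K$ there is a neighborhood $U_\varphi$ on which $Q^{N^*}[\psi](\cdot,x)\geq \tfrac12\delta(\varphi)r^*$ for $x$ in a slightly shrunk interval and all $\psi\in U_\varphi$. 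Extract a finite subcover $U_{\varphi_1},\dots,U_{\varphi_k}$; take $\delta_0=\tfrac12\min_i\delta(\varphi_i)$, take $\mathfrak{N}_0$ a common large iterate, and take $\mathfrak{y}_0$ larger than all the relevant shifts and larger than $2+\mathfrak{y}$. One subtlety: the intervals $[a(\varphi_i)+N^*\rho^*,\,a(\varphi_i)+N^*\varrho^*]$ for different $i$ need not overlap, so a single translate $T_{-\mathfrak{y}_0}$ cannot place all of them over $[-1,1]$ simultaneously. To handle this I would iterate further: applying {\bf (SP)} repeatedly (or using that $Q^m$ for larger $m$ spreads the positivity over a longer interval $x-a\in[m\rho^*,m\varrho^*]$, whose length $m(\varrho^*-\rho^*)\to\infty$), one can choose $\mathfrak{N}_0$ large enough that the guaranteed positivity interval for every $\varphi_i$ contains a common window, e.g. $[\mathfrak{y}_0-1,\mathfrak{y}_0+1]$, provided $\mathfrak{y}_0$ is chosen in the intersection $\bigcap_i[a(\varphi_i)+\mathfrak{N}_0\rho^*,\,a(\varphi_i)+\mathfrak{N}_0\varrho^*]$, which is nonempty once $\mathfrak{N}_0(\varrho^*-\rho^*)>\max_i a(\varphi_i)-\min_i a(\varphi_i)$.

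Finally, on that common window $x\in[\mathfrak{y}_0-1,\mathfrak{y}_0+1]$ we have $Q^{\mathfrak{N}_0}[\psi](\cdot,x)\geq \delta_0 r^*$ for all $\psi\in K$, and $h$ is supported in $[-2,2]$, equals $1$ on $[-1,1]$, and satisfies $0\leq h\leq 1$; hence $T_{-\mathfrak{y}_0}Q^{\mathfrak{N}_0}[\psi](\cdot,x)=Q^{\mathfrak{N}_0}[\psi](\cdot,x+\mathfrak{y}_0)\geq \delta_0 r^*\geq \delta_0 r^* h(x)$ for $x\in[-1,1]$, while outside $[-1,1]$ we either still have positivity from a slightly wider window or simply use $h<1$ there together with a mild enlargement of the positivity interval — in any case, shrinking $\delta_0$ and enlarging $\mathfrak{N}_0$, $\mathfrak{y}_0$ once more if needed yields $T_{-\mathfrak{y}_0}Q^{\mathfrak{N}_0}[\varphi]\geq\delta_0 r^* h$ for all $\varphi\in K$, with $\mathfrak{y}_0>2+\mathfrak{y}$, $\delta_0\in(0,1)$, $\mathfrak{N}_0\in\mathbb{N}$. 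The main obstacle is exactly the non-uniformity of the location $a(\varphi)$ of the ``good point'' of $\varphi$: compactness of $K$ controls the vertical size ($\delta_0$) and the iterate ($\mathfrak{N}_0$) easily, but aligning the spreading windows of finitely many functions requires letting $\mathfrak{N}_0$ grow so that the window length $\mathfrak{N}_0(\varrho^*-\rho^*)$ beats the spread of the finitely many $a(\varphi_i)$; making that quantitative, and checking it is compatible with the lower iterate bound $N^*$ in {\bf (SP)}, is the delicate part of the argument.
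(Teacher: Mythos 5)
Your proposal is correct and follows essentially the same route as the paper's proof: cover $K$ by finitely many sets with anchor points $a_j$, use that the {\bf (SP)} positivity window $[a_j+n\rho^*,a_j+n\varrho^*]$ has length $n(\varrho^*-\rho^*)\to\infty$ so that for $n$ large the finitely many windows share a common subinterval of length $>4$ located beyond $2+\mathfrak{y}$, and then invoke compactness of $K$ to get the uniform lower bound $\delta_0 r^*$ there. The only cosmetic difference is your detour through uniform continuity of $Q^{N^*}$ (the paper gets the open cover directly from the openness of the condition $\varphi(\cdot,a_j)\neq 0$), and you should make the common window of length $>4$ rather than $2$ so it covers the full support $[\mathfrak{y}_0-2,\mathfrak{y}_0+2]$ of $T_{\mathfrak{y}_0}[h]$ — a fix you already indicate.
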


\begin{proof}
Since $K$ is a compact subset of $C_{+}\setminus\{0\}$, there exist $j_0\in \mathbb{N}$ and  a finite sequence $\{(a_j,O_j)\}$ with
$a_j\in \mathbb{R}$ and $O_j\subset K$  for all $1\leq j\leq j_0$ such that
$K=\bigcup\limits_{j=1}^{j_0} O_j$ and
$$
\varphi(\cdot,a_j)\subseteq Y_+\setminus \{0\},
 \,  \forall \varphi\in O_j,    \,  1\leq j\leq j_0.
 $$
It follows from  {\bf (SP)} that $Q^n[\varphi](\cdot,x)\subseteq Int(Y_+)$ for all  $n\in \mathbb{N}\cap [N^*,\infty)$,
$x\in a_j+ [n\rho^*,n\varrho^*]$, and  $\varphi\in O_j$ with $j\in  \mathbb{N}\cap [1,j_0]$. Let $I_n= \bigcap\limits_{j=1}^{j_0}[a_j+n\rho^*,a_j+n\varrho^*]$.
Then we have
$$
I_n\supseteq\left [\sum\limits_{j=1}^{j_0}|a_j|+n\rho^*,-\sum\limits_{j=1}^{j_0}|a_j|+n\varrho^* \right]\neq \emptyset,
 \,  \forall
n>\mathfrak{n}^*:=\frac{4+\mathfrak{y}+2\sum\limits_{j=1}^{j_0}|a_j|}{\varrho^*-\rho^*}.
$$
Consequently,  for any $n\geq \mathfrak{N}_0:=1+N^*+\mathfrak{n}^*$,
there holds
$$
Q^n[\varphi](\cdot,x)\subseteq Int(Y_+),
\,  \forall  x\in\left [\sum\limits_{j=1}^{j_0}|a_j|+n\rho^*,-\sum\limits_{j=1}^{j_0}|a_j|+n\varrho^*\right],\,  \varphi\in K.
$$
In particular,  there exists  $\mathfrak{y}_0=\mathfrak{y}_0(K)>2+\mathfrak{y}$ such that
\[
Q^{\mathfrak{N}_0}[\varphi](\cdot,x)\subseteq Int(Y_+),
\,  \forall  x\in [\mathfrak{y}_0-2,\mathfrak{y}_0+2],\,  \varphi\in K.
\]
By virtue of  the compactness of $K$, there exists $\delta_0\in (0,1)$ such that $$Q^{\mathfrak{N}_0}[\varphi](\cdot,x)\geq \delta_0 r^*,
\,  \forall  x\in [\mathfrak{y}_0-2,\mathfrak{y}_0+2],\,  \varphi\in K.$$
This,  together with the definition of $h$,  completes the proof.
\end{proof}

In Lemma \ref{lemm2.1-3.1-4.1}, for any $\mathfrak{y}>0$ we emphasize  the importance of $\mathfrak{y}_0>2+\mathfrak{y}$, not just its existence, which is helpful for later discussions such as the proof of Theorem~\ref{thm3.1}.

For any $(d,\varrho)\in (0,\infty)\times [1,\infty)$, we define two functions:
$$
\xi_{d}(\theta,x):=\max\{0,\min\{1,d+1-|x|\}\},\quad
 \forall (\theta,x)\in M\times \mathbb{R},
 $$
and
$$
\tilde{\xi}_{d,\varrho}(\theta,x):=\min\{\varrho,\max\{1,(\varrho-1) |x| -\varrho d+d+1\}\},\quad \forall (\theta,x)\in M\times \mathbb{R}.
$$
Let $r^*\in Int(Y_+)$ be given.
For any $(d,r,s,z,n,\varrho)\in Int(\mathbb{R}_+^6)$ with $s\geq r$, $n\in \mathbb{N}$ and $\varrho\geq 1$, we
introduce two numbers:
 $$I_{d,r,s,z,n,\varrho}:=
\sup\{\beta\geq 0:  \, T_{-y}\circ Q^{n} \circ T_{y}[\varphi](\cdot,0)\geq \beta r^*, \forall \varphi \in [r\xi_{d}r^{*},s\tilde{\xi}_{d,\varrho}r^{*}]_C, y\in [z,\infty)\}$$
and
$$
S_{d,r,s,z,n,\varrho}:=\inf\{\beta\geq 0:  \, T_{-y}\circ Q^{n} \circ T_{y}[\varphi](\cdot,0)\leq \beta r^*, \forall \varphi \in [r\xi_{d}r^{*},s\tilde{\xi}_{d,\varrho}r^{*}]_C,  y\in [z,\infty)\}.
$$

In order to  study  the nonlinear map $Q$  with $Q_+[r^*]=r^*$ 	for some $r^*\in Int(Y_+)$, we need
the following assumption.

\begin{enumerate}
\item [{\bf (NM)}]
If $t\geq s\geq r>0$ and $\{r,s\}\neq \{1\}$, then there exist $n(r,s)\in \mathbb{N}$, $\gamma(r,s)\in (0,r) $ and $(d(r,s,t),z(r,s,t))\in Int(\mathbb{R}_+^2)$
such that either  $$I_{d(r,s,t),r-\gamma(r,s),s+\gamma(r,s),z(r,s,t),n(r,s),\frac{t+\gamma(r,s)}{r-\gamma(r,s)}}\geq r+\gamma(r,s)$$ or $$S_{d(r,s,t),r-\gamma(r,s),s+\gamma(r,s),z(r,s,t),n(r,s),\frac{t+\gamma(r,s)}{r-\gamma(r,s)}}\leq s-\gamma(r,s).$$
\end{enumerate}

\begin{lemma}\label{lemm4.1} If $Q$ is monotone and $Q_{+}$ satisfies {\bf (UC)}, then {\bf (NM)} holds true.
\end{lemma}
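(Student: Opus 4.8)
The plan is to verify the two conditions required by {\bf (NM)} — namely, that for $t\geq s\geq r>0$ with $\{r,s\}\neq\{1\}$ one can find $n(r,s)$, $\gamma(r,s)\in(0,r)$, and $(d(r,s,t),z(r,s,t))$ making one of the two inequalities on $I_{\cdots}$ or $S_{\cdots}$ hold — by using the global attractivity information encoded in {\bf (UC)} together with monotonicity of $Q$. The key dichotomy to exploit is whether $r<1$ or $r\geq 1$ (and similarly for $s$); since $\{r,s\}\neq\{1\}$, at least one of the endpoints $r,s$ is separated from the fixed-point level $1$. Concretely, if $r<1$ I will aim for the lower estimate $I_{\cdots}\geq r+\gamma$, and if $s>1$ I will aim for the upper estimate $S_{\cdots}\leq s-\gamma$; when $r=1<s$ only the second applies, and when $r<1=s$ only the first, while if $r<1<s$ either works. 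In each case pick $\gamma=\gamma(r,s)\in(0,r)$ small enough that $r+\gamma<1$ in the first scenario (resp. $s-\gamma>1$ in the second), which is possible precisely because $r<1$ (resp. $s>1$).

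First I would treat the lower case $r<1$. By monotonicity, for every $\varphi\in[(r-\gamma)\xi_d r^*,(s+\gamma)\tilde\xi_{d,\varrho}r^*]_C$ we have $\varphi\geq (r-\gamma)\xi_d r^*$, hence $T_{-y}\circ Q^n\circ T_y[\varphi]\geq T_{-y}\circ Q^n\circ T_y[(r-\gamma)\xi_d r^*]$. Now I use {\bf (UC)} for the limiting map $Q_+$: since $(r-\gamma)\xi_d r^*\in C_+\setminus\{0\}$, Proposition~\ref{prop2.30000}(ii) (or {\bf (UC)} directly with speeds $c_\pm^*$ and $\varepsilon$ taken so that $0\in\mathcal A_{\varepsilon,n}^+$) gives $Q_+^n[(r-\gamma)\xi_d r^*]\to r^*$ uniformly near $x=0$, in particular $Q_+^n[(r-\gamma)\xi_d r^*](\cdot,0)\geq (r+\gamma)r^*$ for all large $n$, since $r+\gamma<1$ and $r^*$ is the limit. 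Fix such an $n=n(r,s)$. Finally, by the defining convergence in assumption {\bf (A)}, $T_{-y}\circ Q^n\circ T_y[(r-\gamma)\xi_d r^*](\cdot,0)\to Q_+^n[(r-\gamma)\xi_d r^*](\cdot,0)$ as $y\to\infty$; hence there is $z=z(r,s,t)$ with $T_{-y}\circ Q^n\circ T_y[(r-\gamma)\xi_d r^*](\cdot,0)\geq (r+\gamma)r^*$ for all $y\geq z$ (shrinking slightly if needed to absorb the approximation error, using that the target strict inequality $r+\gamma<1$ leaves room). This yields $I_{d,r-\gamma,s+\gamma,z,n,\varrho}\geq r+\gamma$ with $\varrho=\frac{t+\gamma}{r-\gamma}$, as required; the parameter $d$ plays no role in the lower bound and can be chosen arbitrarily, say $d(r,s,t)=1$.

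The upper case $s>1$ is symmetric in spirit but needs more care, since the relevant monotone bound is from above: $\varphi\leq (s+\gamma)\tilde\xi_{d,\varrho}r^*$, and $\tilde\xi_{d,\varrho}$ is an unbounded-in-shape but pointwise bounded ($\leq\varrho$) profile, so $\varphi\leq (s+\gamma)\varrho\,r^*$ only after noting $\tilde\xi_{d,\varrho}\leq\varrho$; but that crude bound $(s+\gamma)\varrho$ is too large. Instead I would exploit that $\tilde\xi_{d,\varrho}(\theta,x)=1$ on $|x|\leq d$, i.e. near the origin the upper test function is just $(s+\gamma)r^*$, and choose $d=d(r,s,t)$ large; then by monotonicity and the translation invariance/evaluation structure of $Q$ acting locally (this is where the spatial localization built into the estimates, and the fact that $Q^n$ reads the solution near $x=0$ only from a bounded neighbourhood in the relevant limiting regime, is used), one compares with $Q_+^n[(s+\gamma)r^*]$, which by Proposition~\ref{prop2.30000}(i)–(ii) is spatially constant and converges to $r^*$; since $s-\gamma>1$, for large $n$ we get $Q_+^n[(s+\gamma)r^*](\cdot,0)\leq (s-\gamma)r^*$, and then {\bf (A)} transfers this to $T_{-y}\circ Q^n\circ T_y$ for $y$ large, giving $S_{d,r-\gamma,s+\gamma,z,n,\varrho}\leq s-\gamma$. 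The main obstacle is exactly this last point in the upper case: controlling $Q^n$ near the origin from the localized upper barrier $(s+\gamma)\tilde\xi_{d,\varrho}r^*$ rather than the global constant $(s+\gamma)\varrho r^*$. I expect this is handled either by a finite-speed-of-propagation–type estimate (so that for fixed $n$, $Q^n[\varphi](\cdot,0)$ depends only on $\varphi$ restricted to a bounded set, where $\tilde\xi_{d,\varrho}\equiv1$ once $d$ is large) or by a continuity/uniformity argument letting $d\to\infty$ and invoking the convergence in {\bf (A)}; making this rigorous, and checking it respects the prescribed form $\varrho=\frac{t+\gamma}{r-\gamma}$, is the crux, while everything else reduces to combining {\bf (A)}, {\bf (UC)}, Proposition~\ref{prop2.30000}, and monotonicity.
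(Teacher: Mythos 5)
Your overall skeleton matches the paper's: split according to $r<1$ versus $s>1$, reduce by monotonicity to the extremal test functions $(r-\gamma)\xi_d r^*$ and $(s+\gamma)\tilde\xi_{d,\varrho}r^*$, and transfer the resulting estimate from $Q_+$ to $T_{-y}\circ Q^n\circ T_y$ via {\bf (A)}. However, both halves have a genuine gap at the same point: how to get a pointwise estimate \emph{at $x=0$} for $Q_+^n$ applied to the non-constant test functions. In the lower case you claim that {\bf (UC)} (or Proposition~\ref{prop2.30000}) gives $Q_+^n[(r-\gamma)\xi_d r^*](\cdot,0)\to r^*$, and that $d$ can be taken arbitrarily, say $d=1$. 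Neither is justified: {\bf (UC)} only gives convergence on $\mathcal{A}_{\varepsilon,n}^+=n[-c_-^*+\varepsilon,c_+^*-\varepsilon]$, which need not contain $0$ (only $c_+^*+c_-^*>0$ is assumed, so e.g.\ $c_-^*<0$ is allowed, in which case the convergence zone drifts away from the origin and {\bf (UC)} says nothing about $Q_+^n[\varphi](\cdot,0)$ for compactly supported $\varphi$); and Proposition~\ref{prop2.30000}(ii) applies only to the spatially constant data $\alpha r^*$, for which, by translation invariance, the value at $0$ coincides with the value on $\mathcal{A}_{\varepsilon,n}^+$. The paper's fix is to first fix $n=n(r,s)$ and $\gamma$ so that $Q_+^{n}[(r-\gamma)r^*]>(r+3\gamma)r^*$ using Proposition~\ref{prop2.30000}(ii), and only then choose $d$ \emph{large}: since the norm on $C$ only detects local uniform convergence, $\xi_d r^*\to r^*$ in $C$ as $d\to\infty$, and continuity of $Q_+$ yields $Q_+^{n}[(r-\gamma)\xi_d r^*](\cdot,0)>(r+2\gamma)r^*$ for $d$ large. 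So $d$ is not a free parameter; it is precisely what makes the truncation invisible to $Q_+^{n}$ at the origin.

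The same continuity-in-$d$ device is what resolves the issue you correctly flag as the crux of the upper case: for fixed $\varrho=\frac{t+\gamma}{r-\gamma}$, $\tilde\xi_{d,\varrho}\to 1$ locally uniformly as $d\to\infty$, hence $(s+\gamma)\tilde\xi_{d,\varrho}r^*\to(s+\gamma)r^*$ in $C$, and continuity of $Q_+^{n}$ gives $Q_+^{n}[(s+\gamma)\tilde\xi_{d,\varrho}r^*](\cdot,0)<(s-2\gamma)r^*$ once $Q_+^{n}[(s+\gamma)r^*]<(s-3\gamma)r^*$ has been arranged via Proposition~\ref{prop2.30000}(ii). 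No finite-speed-of-propagation estimate is needed or available in this abstract setting. Since you leave that step unproved and the lower case rests on a misreading of {\bf (UC)}, the proposal is incomplete as written; the missing ingredient is a single identifiable idea, namely combining the spatial constancy of $Q_+^n[\alpha r^*]$ with the continuity of $Q_+$ in the local-uniform topology of $C$, with $n$ chosen before $d$.
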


\begin{proof}  Fix $r,s,t\in (0,\infty)$ with $t\geq s \geq r$ and $\{r,s\}\neq \{1\}$. By Proposition~\ref{prop2.30000}-(ii), we easily see that there exist  $n(r,s)\in \mathbb{N}$ and $\gamma(r,s)\in (0,r)$ such that in the
case  $r<1$,
$$
Q_+^{n(r,s)}[(r-\gamma(r,s))r^*]>(r+3\gamma(r,s))r^*\in C_{r^*};
$$
and in the case where $s>1$,
$$
Q_+^{n(r,s)}[(s+\gamma(r,s))r^*]<(s-3\gamma(r,s))r^* \in r^*+C_+.
$$
By the continuity of $Q_+$, it then follows that there exists
 $d(r,s,t)\in (0,\infty)$ such that
 $$
 Q_+^{n(r,s)}[(r-\gamma(r,s))\xi_{d(r,s,t)}r^*](\cdot,0)>(r+2\gamma(r,s))r^*
 $$
 when  $r<1$;  and
  $$
  Q_+^{n(r,s)}[(s+\gamma(r,s))\tilde{\xi}_{d(r,s,t),\frac{t+\gamma(r,s)}{r-\gamma(r,s)}}r^*](\cdot,0)<(s-2\gamma(r,s))r^*
  $$
  when $s>1$.  In view of {\bf (A)},  there exists $z(r,s,t)\in (0,\infty)$ such that $$
  T_{-y}\circ Q^{n(r,s)} \circ T_{y}[(r-\gamma(r,s))\xi_{d(r,s,t)}r^*](\cdot,0)>(r+\gamma(r,s))r^*,  \quad
   \forall y\in [z(r,s,t),\infty)
   $$
 in the case  $r<1$;  and
 $$
 T_{-y}\circ Q^{n(r,s)} \circ T_{y}[(s+\gamma(r,s))\tilde{\xi}_{d(r,s,t),\frac{t+\gamma(r,s)}{r-\gamma(r,s)}}r^*](\cdot,0)<(s-\gamma(r,s))r^*, \quad \forall y\in [z(r,s,t),\infty)
 $$
 in the case $s>1$.
Thus, {\bf (NM)} follows from the monotonicity of $Q$.
\end{proof}

In the current paper, we use the non-monotonicity ({\bf NM}) 
to replace the monotonicity for the map $Q$. Throughout the whole
paper, we make the following assumption on the uniform boundedness for the map $Q$.

\begin{enumerate}
\item [{\bf (UB)}]  There exists a sequence $\{\phi_k^*\}_{k\in \mathbb{N}}$ in $Int(Y_+)$ such that $\phi_{k+1}^*>\phi_k^*$, $Y_+=\bigcup\limits_{k\in \mathbb{N}}[0,\phi_k^*]_{Y}$, and $Q[C_{\phi_k^*}]\subseteq C_{\phi_k^*}$  for all $k\in \mathbb{N}$.
\end{enumerate}

Instead of the subhomogeneity and {\bf (UC)} assumed   in Proposition~\ref{prop2.2}, we introduce the following asymptotic monotonicity, subhomogeneity,  and {\bf (UC)}  hypothesis.
\begin{enumerate}
\item [{\bf(ACH)}] There exist sequences $\{c_{l,\pm}^*\}_{l=1}^\infty$ in $\mathbb{R}$, $\{r_l^*\}_{l=1}^\infty$ in $Int(Y_+)$, $\{Q_{l}\}_{l=1}^\infty$ and $\{Q_{l,+}\}_{l=1}^\infty$ such that for any positive integer $l$, there hold
\begin{enumerate}
\item [(i)]   $c_+^*:=\lim\limits_{l\to \infty}c_{l,+}^*$,  $c_-^*:=\lim\limits_{l\to \infty}c_{l,-}^*$, and $r^*_l\leq \phi_1^*$.

\item [(ii)]   $Q_{l},Q_{l,+}:C_+ \to C_+$ are continuous and monotone maps with
$Q\geq Q_l$ in $C_{\phi_l^*}$, where $\phi_l^*$ is defined as in {\bf (UB)}.

\item [(iii)]  $Q_l$ is subhomogeneous on $[0,r_l^*]_ C$.

\item [(iv)]    $Q_{l,+}$  satisfies {\bf (UC)} with  $(c_{l,-}^*,c_{l,+}^*,r^*_l)$.

\item [(v)]   $(Q_{l},Q_{l,+},r^*_l)$  satisfies {\bf (A)}.
\end{enumerate}
\end{enumerate}

\section {Discrete-time  semiflows} \label{3sec}
In this section, we study the upward convergence, asymptotic annihilation,
and the existence of fixed points for discrete-time systems according to the
unilateral and bilateral limit cases, respectively.

\subsection{The unilateral limit case}
\label{sec3.1}
We first consider discrete-time systems under the unilateral limit assumptions.  We start with
the upward convergence in the unilateral {\bf (UC)}  case.
\begin{thm} \label{thm3.1}
Assume that $Q$ satisfies  {\bf (ACH)}, {\bf (NM)}, and {\bf (SP)}.
Let $c_+^*>0$ and $c\in (-c_-^*,c_+^*)\cap \mathbb{R}_+$.  Then
the following statements are valid:
\begin{itemize}
\item [{\rm (i)}]  $\lim\limits_{\alpha\rightarrow \infty}
\sup\Big\{|| Q^n[\varphi](\cdot, x)-r^*||:n\geq \alpha, \alpha+nc\leq x \leq n(c_+^*-\varepsilon),\mbox{ and } \varphi\in K\Big\}= 0$ for any $\varepsilon \in (0,c_+^*-c)$ and for any compact, uniformly bounded subset $K$ of  $C_{+}\setminus \{0\}$.

\item [{\rm (ii)}]  $\lim\limits_{\alpha\rightarrow \infty}
\sup\Big\{|| Q^n[\varphi](\cdot, x)-r^*||:n\geq \alpha, \alpha+nc\leq x \leq n(c_+^*-\varepsilon)\Big\}= 0$ for all $(\varepsilon,\varphi)\in (0,c_+^*-c)\times   C_{+}\setminus \{0\}$.

\end{itemize}
\end{thm}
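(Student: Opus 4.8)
\emph{Proof plan.}  First I note that part~(ii) is the special case $K=\{\varphi\}$ of part~(i): a one‑point set $\{\varphi\}\subseteq C_+\setminus\{0\}$ is compact and, being a single element of $C$ (hence a bounded function), uniformly bounded.  So it suffices to prove~(i); fix $\varepsilon$ and $K$ there.  The plan is a sandwich: (a) a uniform a~priori \emph{upper} bound $Q^n[\varphi]\le t\,r^*$; (b) a uniform \emph{positive lower} bound on the moving region $\mathcal R:=\{(n,x):n\ge\alpha,\ \alpha+nc\le x\le n(c_+^*-\varepsilon)\}$; and (c) a compression of the resulting order interval onto $\{r^*\}$ driven by {\bf (NM)}.

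For~(a): since $K$ is uniformly bounded, $K\subseteq C_{\phi_{k_0}^*}$ for some $k_0$, and {\bf (UB)} makes $C_{\phi_{k_0}^*}$ forward invariant under $Q$, so $Q^n[\varphi]\le\phi_{k_0}^*\le t\,r^*$ for all $n$ and $\varphi\in K$, with $t\ge1$ fixed ($r^*\in Int(Y_+)$).  For~(b), I would pick $l\ge k_0$ and $\bar\varepsilon>0$ so that $c_{l,+}^*>0$, $\bar\varepsilon<\frac12\min\{c_{l,+}^*,c_{l,+}^*+c_{l,-}^*\}$, $c_{l,+}^*-\frac{2\bar\varepsilon}{3}>c_+^*-\varepsilon$, and $\max\{0,-c_{l,-}^*+\frac{2\bar\varepsilon}{3}\}\le c$, which is possible since $c_{l,\pm}^*\to c_\pm^*$, $c_+^*+c_-^*>0$, and $c\in(-c_-^*,c_+^*)\cap\mathbb R_+$; then the admissible speed interval $[c_{\min},c_{\max}]$ in Proposition~\ref{prop2.2} applied to $(Q_l,Q_{l,+},r_l^*)$ contains $[c,c_+^*-\varepsilon]$.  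Because $Q\ge Q_l$ on the forward‑invariant order‑convex set $C_{\phi_l^*}$ and $Q_l$ is monotone, one has $Q^n[\psi]\ge Q_l^{\,n}[\psi]$ there.  Invoking Lemma~\ref{lemm2.1-3.1-4.1} (which uses {\bf (SP)}) with $\mathfrak y$ large produces $\mathfrak N_0$, $\mathfrak y_0$ (as large as wanted), and $\delta\in(0,1]$ with $Q^{\mathfrak N_0}[\varphi]\ge T_{\mathfrak y_0}[\frac{\delta}{16}r_l^*h]$ for all $\varphi\in K$; composing with the monotone $Q_l$ and Proposition~\ref{prop2.2} gives
\[
Q^n[\varphi]\ \ge\ Q_l^{\,n-\mathfrak N_0}\big[T_{\mathfrak y_0}[\tfrac{\delta}{16}r_l^*h]\big]\ \ge\ T_{\mathfrak y_0+(n-\mathfrak N_0)c'}\big[\tfrac{\delta}{4}r_l^*h\big]
\]
for $c'\in[c_{\min},c_{\max}]$ and $n$ large.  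Since $h\equiv1$ on $[-1,1]$, the supremum over $c'$ fuses these translated bumps into a plateau of height $\frac{\delta}{4}r_l^*$ which, for $\alpha$ large (bounded shifts $\mathfrak y_0,\mathfrak N_0$ absorbed; $c_{\min}\le c$ at the left, $c_{\max}>c_+^*-\varepsilon$ at the right), covers $\mathcal R$.  Thus $Q^n[\varphi](\cdot,x)\ge\beta_1 r^*$ on $\mathcal R$ for some fixed $\beta_1>0$.

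For~(c), let $\underline\beta$ (resp.\ $\overline\beta$) be the best asymptotic lower (resp.\ upper) exponent --- the largest (resp.\ smallest) $\beta$ with $Q^n[\varphi](\cdot,x)\ge\beta r^*$ (resp.\ $\le\beta r^*$) eventually on the moving regions uniformly over $\varphi\in K$, formed as the ``innermost bulk'' value over the family of such regions.  By~(a) and~(b), $0<\beta_1\le\underline\beta\le\overline\beta\le t$, and since $r^*\in Int(Y_+)$ the theorem follows once $\underline\beta=\overline\beta=1$.  If $\{\underline\beta,\overline\beta\}\ne\{1\}$, apply {\bf (NM)} at $r=\underline\beta>0$, $s=\overline\beta$ (with $t$ enlarged so that also $\phi_{k_0}^*\le t r^*$), obtaining $n(r,s)$, $\gamma\in(0,r)$, $d$, $z$, $\varrho=\frac{t+\gamma}{r-\gamma}$, and either $I_{\,d,r-\gamma,s+\gamma,z,n(r,s),\varrho}\ge r+\gamma$ or $S_{\,d,r-\gamma,s+\gamma,z,n(r,s),\varrho}\le s-\gamma$.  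For $x_0$ interior to a moving region, far enough right that $x_0\ge z$ and that $(r-\gamma)r^*\le Q^n[\varphi]\le(s+\gamma)r^*$ on a radius‑$(d+1)$ ball around $x_0$, the shapes of $\xi_d$ and $\tilde\xi_{d,\varrho}$, the global bound $Q^n[\varphi]\le t r^*$, and the inequality $(s+\gamma)\varrho\ge t+\gamma>t$ together force $T_{-x_0}[Q^n[\varphi]]\in[(r-\gamma)\xi_d r^*,(s+\gamma)\tilde\xi_{d,\varrho}r^*]_C$.  In the $I$‑case, the definition of $I$ then gives $Q^{\,n+n(r,s)}[\varphi](\cdot,x_0)\ge(r+\gamma)r^*$; re‑indexing $m=n+n(r,s)$ (only a bounded spatial shift and an $O(1/m)$ loss of $\varepsilon$) upgrades this to $Q^m[\varphi](\cdot,x)\ge(r+\gamma)r^*$ on the relevant moving regions, so $\underline\beta\ge r+\gamma$, a contradiction; the $S$‑case is symmetric.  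Hence $\underline\beta=\overline\beta=1$.

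The step I expect to be the main obstacle is this compression~(c): making the ``best asymptotic exponents'' $\underline\beta,\overline\beta$ precise and stable under the bounded shifts and vanishing $\varepsilon$‑losses caused by the $n(r,s)$ extra iterations --- the crux being that {\bf (NM)} is triggered only once, at the critical pair, where a single application already yields a strict improvement and hence the contradiction --- and checking carefully that the recentred orbit does lie in the order interval $[(r-\gamma)\xi_d r^*,(s+\gamma)\tilde\xi_{d,\varrho}r^*]_C$; this is exactly where the profiles $\xi_d,\tilde\xi_{d,\varrho}$ and the choice $\varrho=(t+\gamma)/(r-\gamma)$ enter, ensuring $(s+\gamma)\tilde\xi_{d,\varrho}r^*\ge t r^*$ outside the radius‑$d$ window so that the crude a~priori bound fits inside.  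A subsidiary point is the limit $l\to\infty$ in {\bf (ACH)}, needed so that the minorant's speed interval covers $[c,c_+^*-\varepsilon]$, which is what forces the bookkeeping of $\mathfrak y_0,\mathfrak N_0$ when the bumps are fused.
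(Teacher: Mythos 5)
Your overall strategy coincides with the paper's: part (ii) is indeed obtained from (i) with $K=\{\varphi\}$; your step (a) is the paper's use of {\bf (UB)} to get $Q^n[\varphi]\le\phi_{l_0}^*\le U^*r^*$; your step (b) is exactly the paper's combination of Lemma~\ref{lemm2.1-3.1-4.1} (via {\bf (SP)}) with Proposition~\ref{prop2.2} applied to a minorant $Q_{l_0}$ from {\bf (ACH)} whose speed interval covers $[c,c_+^*-\varepsilon]$; and your step (c) is the paper's squeeze of the extremal exponents onto $1$ via {\bf (NM)}, including the correct reading of why $\varrho=(t+\gamma)/(r-\gamma)$ makes the crude a priori bound fit under $(s+\gamma)\tilde\xi_{d,\varrho}r^*$ outside the window.

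The gap is in closing (c), precisely at the point you flag as the main obstacle. With a single pair of numbers $\underline\beta,\overline\beta$ the contradiction does not come out: to verify the hypothesis of {\bf (NM)} at a near-extremal point $(n_k,x_k)$ you must bound $Q^{n_k-N_0}[\varphi_k]$ on a spatial window of fixed radius around $x_k$, and for $x_k$ near the right edge $n_k(c_+^*-\varepsilon)$ the shifted points $(n_k-N_0,x_k+y)$ leave the cone $\{x\le n(c_+^*-\varepsilon)\}$ by a bounded amount; they only lie in the cone with a strictly smaller slope parameter $\varepsilon_1<\varepsilon$. Hence the bounds you can actually feed into {\bf (NM)} are $U_\pm(\varepsilon_1)$, and the conclusion you extract is $U_-(\tau)\ge U_-(\varepsilon_1)+\gamma_0$ for $\tau>\varepsilon_1$ — an improvement of a \emph{different} quantity, not of $U_-(\varepsilon_1)$ itself, so no contradiction follows directly. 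The paper's device is to make the exponents $\varepsilon$-dependent monotone functions $U_\pm(\varepsilon)$, use monotonicity to select a continuity point $\varepsilon_1$ of $U_\pm$ inside $(0,\varepsilon_0)$, run the two-parameter argument with $\tau\in(\varepsilon_1,\varepsilon_0)$, and then let $\tau\downarrow\varepsilon_1$ so that continuity converts $U_-(\tau)\ge U_-(\varepsilon_1)+\gamma_0$ into $U_-(\varepsilon_1)\ge U_-(\varepsilon_1)+\gamma_0$. Your "$O(1/m)$ loss of $\varepsilon$" intuition points at the right phenomenon, but without the continuity-point selection the argument does not close; this is the one ingredient you would need to add.
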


\noindent
\begin{proof}  (i) Fix a  compact and uniformly bounded subset $K$ of  $C_{+}\setminus \{0\}$. For any given $(\varepsilon,\alpha)\in (0,c_+^*-c)\times (0,\infty)$, we define
$$
\mathcal{I}_{\varepsilon,\alpha}=\{(n,x)\in \mathbb{N}\times \mathbb{R}: \, n\geq \alpha,  \,  \alpha+nc\leq x \leq n(c_+^*-\varepsilon)\},
$$
$$
U_-(\varepsilon)=\liminf\limits_{\alpha\rightarrow \infty}\Big[\sup\Big\{\beta\in \mathbb{R}_+: \,
 Q^n[\varphi](\cdot,x)\geq \beta r^*,\,  \,  \forall  (n,x)\in \mathcal{I}_{\varepsilon,\alpha},   \, \varphi\in K\Big\}\Big],$$
and
$$
U_+(\varepsilon)=\limsup\limits_{\alpha\rightarrow \infty}\Big[\inf\{\beta\in \mathbb{R}_+: \,
 Q^n[\varphi](\cdot,x)\leq \beta r^*,\,  \,  \forall  (n,x) \in \mathcal{I}_{\varepsilon,\alpha}, \, \varphi\in K\}\Big].
$$
By  the definition of $U_-(\varepsilon)$ and  $U_+(\varepsilon)$, it easily follows  that $0\leq U_-(\varepsilon) \leq U_+(\varepsilon)$,  $U_-(\varepsilon)$ is non-decreasing in $\varepsilon\in (0,c_+^*-c)$, and
$U_+(\varepsilon)$ is nonincreasing in $\varepsilon\in  (0,c_+^*-c)$.
Since $\emptyset\neq\mathcal{I}_{\varepsilon_1,\alpha}\subseteq \mathcal{I}_{\varepsilon_2,\alpha}$ for all $0<\varepsilon_2<\varepsilon_1<c_+^*-c$,
without loss of generality, we assume $\varepsilon<\frac{1}{2}\min\{c_+^*-c,c+c_-^*,c_+^*+c_-^*\}$.
 By {\bf (ACH)}-(i) and  {\bf (UB)}, there exist $l_0\in \mathbb{N}$ and $\tau_{0}\in (0,1)$ such that $|c_{l_0,+}^*-c^*_+|<\frac{\varepsilon}{3}$, $|c_{l_0,-}^*-c^*_-|<\frac{\varepsilon}{3}$, $\frac{1}{\tau_0} r^{*} \geq r^*_{l_0}\geq \tau_{0}r^*$, and $K\subseteq  C_{\phi^*_{l_0}}$. Thus, $\frac{\varepsilon}{2}\in (0,\frac{1}{2}\min\{c_{l_0,+}^*,c_{l_0,-}^*+c_{l_0,+}^*\})$. Applying Proposition~\ref{prop2.2} to $Q_{l_0}$, we easily see  that for any
$\mathfrak{c}\in [\max\{0,-c_{l_0,-}^*+\frac{\varepsilon}{3}\},c_{l_0,+}^*-\frac{\varepsilon}{3}]$, there holds
 $$
 T_{-n \mathfrak{c}}\circ T_{-y} \circ Q_{l_0}^{n}\circ T_{y}[\frac{\delta r^*_{l_0}}{16}h]\geq \frac{ \delta r^*_{l_0}}{4}h,\,  \forall
n\geq n_0(\frac{\varepsilon}{2}),  y\geq y_0(\frac{\varepsilon}{2}),  \delta\in [0,1],
$$
 where $n_0(\frac{\varepsilon}{2})$  and $y_0(\frac{\varepsilon}{2})$ are defined as in Proposition \ref{prop2.2} with $Q_{l_0}$ and $\frac{\varepsilon}{2}$.

 By {\bf (SP)} and Lemma~\ref{lemm2.1-3.1-4.1}, it follows that there exist $\delta_0\in (0,1)$, $\mathfrak{y}_0>y_0(\frac{\varepsilon}{2})$ and $\mathfrak{N}_0\in \mathbb{N}$ such that
$T_{-\mathfrak{y}_0}Q^{\mathfrak{N}_0}[\varphi]\geq \delta_0 r^* h> \frac{\delta_0 \tau_0 r^*_{l_0}}{16}h$ for all $\varphi\in K$, and hence,
 $$
 Q^n[\varphi]=Q^{n-\mathfrak{N}_0}\circ  T_{\mathfrak{y}_0}[ T_{-\mathfrak{y}_0}\circ Q^{\mathfrak{N}_0}[\varphi]]\geq Q_{l_0}^{n-\mathfrak{N}_0} \circ  T_{\mathfrak{y}_0}[ \frac{\delta_0 r^*_{l_0}}{16}h],\,
 \,  \forall  n\geq \mathfrak{N}_0, \, \varphi\in K.
 $$
  Thus,  for any $\mathfrak{c}\in [\max\{0,-c_{l_0,-}^*+\frac{\varepsilon}{3}\},c_{l_0,+}^*-\frac{\varepsilon}{3}], n\geq n^{**}:=n_0(\frac{\varepsilon}{2})+ \mathfrak{N}_0$ and  $\varphi\in K$,  we have
 $$
  T_{-(n-\mathfrak{N}_0)\mathfrak{c}}\circ T_{-\mathfrak{y}_0} \circ Q^n[\varphi]\geq  T_{-(n-\mathfrak{N}_0)\mathfrak{c}}\circ T_{-\mathfrak{y}_0} \circ Q_{l_0}^{n-\mathfrak{N}_0} \circ  T_{\mathfrak{y}_0}[ \frac{\delta_0 r^*_{l_0}}{16}h]
 \geq \frac{ \delta_0 r^*_{l_0}}{4}h>\frac{\delta_0 \tau_{0} r^*}{4}h.
 $$
 In other words, $Q^n[\varphi](\cdot,x)\geq \frac{ \delta_0 \tau_{0} r^*}{4}$ for all $(n,x,\varphi)\in \mathcal{J}\times K$, where
 $$
\mathcal{J}=\left\{(n,x)\in \mathbb{N}\times \mathbb{R}: n\geq  n^{**}\mbox{ and } \max\{0,-c_{l_0,-}^*+\frac{\varepsilon}{3}\}\leq \frac{x-\mathfrak{y}_0}{n-\mathfrak{N}_0}\leq c_{l_0,+}^*-\frac{\varepsilon}{3}\right\}.
$$
By the choices of $l_0$ and $\varepsilon$, we can verify that for any   $ \alpha\geq n^{**}+\mathfrak{y}_0+\frac{3 \mathfrak{N}_0c_{+}^*}{\varepsilon}+\mathfrak{N}_0|c_-^*|$, there holds
$\mathcal{I}_{\varepsilon,\alpha}\subseteq \mathcal{J}$,  and hence
$$
Q^n[\varphi](\cdot,x)\geq \frac{ \delta_0 \tau_{0} r^*}{4},
\,  \forall  (n,x,\varphi)\in \mathcal{I}_{\varepsilon,\alpha}\times K.
 $$
This, together with the definitions of $U_\pm(\varepsilon)$ and the arbitrariness of $\varepsilon$, implies that
$$
0< \frac{ \delta_0 \tau_{0} }{4}\leq U_-(\varepsilon) \leq U_+(\varepsilon)< U^*:=1+\inf\{\beta\in \mathbb{R}_+:
 \phi_{l_0}^*\leq \beta r^*\}<\infty,$$
where $\varepsilon\in (0,\frac{1}{2}\min\{c_+^*-c,c_-^*+c,c_+^*+c_-^*\}).
$

To finish the proof, it suffices to prove $U_\pm(\varepsilon)=1$ for all $\varepsilon\in (0,\frac{1}{2}\min\{c_+^*-c,c_-^*+c,c_+^*+c_-^*\})$. Otherwise, there exists $\varepsilon_0\in (0,\frac{1}{2}\min\{c_+^*-c,c_-^*+c,c_+^*+c_-^*\})$ such that $\{U_-(\varepsilon_0),U_+(\varepsilon_0)\}\neq \{1\}$. Thus, $\{U_-(\varepsilon),U_+(\varepsilon)\}\neq \{1\}$ for all $\varepsilon\in (0,\varepsilon_0]$.
 Due to the
monotonicity of $U_\pm$, we
may assume,  without loss of generality, that for some
$\varepsilon_1\in (0,\varepsilon_0)$, $U_\pm$ is continuous at
$\varepsilon_1$. Note that  $\{U_-(\varepsilon_1),U_+(\varepsilon_1)\}\neq \{1\}$ due to $\varepsilon_1\in (0,\varepsilon_0)$.

 In view of  {\bf (NM)},   there exist  $\gamma_0\in (0,U_-(\varepsilon_1))$, $d_0,z_0\in (0,\infty)$, and $N_0\in \mathbb{N}$ such that  one
 of the following properties holds true:
 \begin{itemize}
 \item [(a)]
 $T_{-z}\circ Q^{N_0} \circ T_{z} [\varphi](\cdot,0)\geq (U_-(\varepsilon_1)+\gamma_0)r^{*}, \, \forall (z,\varphi)\in [z_0,\infty)\times [\psi_1, \psi_2]_C$,
  \item [(b)] $T_{-z}\circ Q^{N_0} \circ T_{z}[\varphi](\cdot,0) \leq (U_+(\varepsilon_1)-\gamma_0)r^{*}, \, \forall (z,\varphi)\in [z_0,\infty)\times [\psi_1, \psi_2]_C$,
 \end{itemize}
 where $\psi_1=(U_-(\varepsilon_1)-\gamma_0){\xi}_{d_0} r^{*}$
 and $\psi_2=(U_+(\varepsilon_1)+\gamma_0)\tilde{\xi}_{d_0,\frac{U^*+\gamma_0}{U_-(\varepsilon_1)-\gamma_0}} r^{*}$.
 In the following, we only give the details to get a contradiction for
the case (a) since the case (b) can be addressed in a similar way.

According to the definition of $U_-(\tau)$, for any given $\tau\in
(\varepsilon_1,\varepsilon_0)$, there exist  sequences $\{\alpha_k\}_{k\in \mathbb{N}}$ in $(0,\infty)$ and
$\{(\theta_k,n_k,x_k,\varphi_k)\}_{k\in \mathbb{N}}$ in $M\times \mathcal{I}_{\tau,\alpha_k}\times K$ such that $\lim\limits_{k\rightarrow \infty}r^*(\theta_k)\in Int(\mathbb{R}_+^N)$, $\lim\limits_{k\rightarrow \infty}\alpha_k=\lim\limits_{k\rightarrow \infty}n_k=\infty$, and
$\lim\limits_{k\rightarrow \infty}Q^{n_k}[\varphi_k](\theta_k,x_k)-U_-(\tau)r^*(\theta_k)\in \partial(\mathbb{R}^N_+)$. Since
 for any bounded subset $\mathcal{B}$ of $\mathbb{R}$,
$\{n_k-N_0\}\times (x_k+\mathcal{B})\subseteq  \mathcal{I}_{\varepsilon_1,\alpha_k-N_0-\sup\limits_{x\in\mathcal{B}}|x|}$ for all
large $k$ and  bounded subset $\mathcal{B}$ of $\mathbb{R}$, we obtain
$$
\liminf\limits_{k\rightarrow \infty}\Big[
\sup\{\beta \in \mathbb{R}_+:T_{-x_k}\circ Q^{n_k-N_0}[\varphi_k](\cdot,x)\geq \beta r^* \mbox{ for all } x\in \mathcal{B}\}\Big]\in [U_-(\varepsilon_1),U_+(\varepsilon_1)]
$$
 and
$$
\limsup\limits_{k\rightarrow \infty}
\Big[\inf\{\beta\in \mathbb{R}_+:T_{-x_k}\circ Q^{n_k-N_0}[\varphi_k](\cdot,x)\leq \beta r^* \mbox{ for all }  x\in \mathcal{B}\}\Big] \in
[U_-(\varepsilon_1),U_+(\varepsilon_1)].
$$
 It then follows  that there exists $k_0\in \mathbb{N}$ such that $x_{k}\geq z_0$ and
 $$
 T_{-x_{k}}\circ Q^{n_{k}-N_0}[\varphi_k]\in [(U_-(\varepsilon_1)-\gamma_0)\xi_{d_0} r^{*},(U_+(\varepsilon_1)+\gamma_0)\tilde{\xi}_{d_0,\frac{U^*+\gamma_0}{U_-(\varepsilon_1)-\gamma_0}} r^{*}]_C,
 \,  \forall  k\geq k_0.
 $$

 In the case (a), we easily check that
 $$
 T_{-x_k}\circ Q^{n_k}[\varphi_k](\cdot,0)=T_{-x_k}\circ Q^{N_0} \circ T_{x_k} [T_{-x_k}\circ Q^{n_k-N_0}[\varphi_k]](\cdot,0),
 $$
 and hence,
 $$T_{-x_k}\circ Q^{n_k}[\varphi_k](\cdot,0)\geq (U_-(\varepsilon_1)+\gamma_0)r^*,  \,  \forall  k\geq k_0.
 $$
This implies that
$$
\liminf\limits_{k\rightarrow \infty}\Big[Q^{n_k}[\varphi_k](\theta_k,x_k)-(\gamma_0+U_-(\varepsilon_1))r^*(\theta_k)\Big]\in \mathbb{R}^N_+.
$$
By the choices of $\theta_k,n_k,x_k$, and $r^*(\cdot)$,  there holds
$$
U_-(\tau)\geq U_-(\varepsilon_1)+\gamma_0> U_-(\varepsilon_1).
$$
Letting
$\tau\rightarrow \varepsilon_1$, we  obtain $U_-(\varepsilon_1)\geq
U_-(\varepsilon_1)+\gamma_0>U_-(\varepsilon_1)$, a
contradiction.

Consequently, $U_-(\varepsilon)=U_+(\varepsilon)=1$ for all $\varepsilon\in
(0,\frac{1}{2}\min\{c_+^*-c,c_-^*+c,c_+^*+c_-^*\})$. This shows that  statement (i) holds true.

(ii)  follows from (i) by taking $K=\{\varphi\}$.
\end{proof}

\begin{rem} 
From the proof of Theorem \ref{thm3.1}, it is easy to see that
if each $Q_l$ in {\bf (ACH)} satisfies {\bf (SP)}, then the set $K$ can be replaced by an order interval $[u_0,v_0]_C$ with $u_0>0$.
\end{rem} 

\begin{thm} \label{thm3.1-0}
Assume that all the assumptions of Theorem~\ref{thm3.1} hold and
$c_+^*>0$.  Then for any $\varepsilon\in (0,\frac{1}{2}\min\{c_+^*,c_+^*+c_-^*\})$, the following statements are valid:
\begin{itemize}
\item [{\rm (i)}]  $\lim\limits_{n\rightarrow \infty}
\max\left\{||Q^n[\varphi](\cdot, x)-r^*||:n\max\{\varepsilon,-c_-^*+\varepsilon\}\leq x \leq n(c_+^*-\varepsilon)\mbox{ and } \varphi\in K\right\}= 0$ for any
 compact{,  uniformly bounded} subset $K$ of  $C_{+}\setminus \{0\}$.

\item [{\rm (ii)}]  $\lim\limits_{n\rightarrow \infty}
\max\left\{||Q^n[\varphi](\cdot, x)-r^*||:n\max\{\varepsilon,-c_-^*+\varepsilon\}\leq x \leq n(c_+^*-\varepsilon)\right\}= 0$ for all $\varphi\in   C_{+}\setminus \{0\}$.

\end{itemize}
\end{thm}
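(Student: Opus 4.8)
The plan is to deduce Theorem~\ref{thm3.1-0} directly from Theorem~\ref{thm3.1} by a single judicious choice of an auxiliary speed $c$, followed by an elementary comparison of $x$-windows that replaces the moving left endpoint $\alpha+nc$ occurring in Theorem~\ref{thm3.1} by the linear barrier $n\max\{\varepsilon,-c_-^*+\varepsilon\}$. Throughout I would fix $\varepsilon\in(0,\tfrac12\min\{c_+^*,c_+^*+c_-^*\})$ and abbreviate $a:=\max\{\varepsilon,-c_-^*+\varepsilon\}$ and $b:=c_+^*-\varepsilon$, so that the window in the statement is precisely $na\le x\le nb$.

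First I would extract the numerical facts forced by the bound on $\varepsilon$. Distinguishing the cases $c_-^*\ge 0$ and $c_-^*<0$ and using $\varepsilon<\tfrac12 c_+^*$ together with $\varepsilon<\tfrac12(c_+^*+c_-^*)$, one checks that $0<\varepsilon\le a<b<c_+^*$ and, crucially, $c_+^*-a\ge\varepsilon$, as well as $\max\{0,-c_-^*\}<a$. Hence the interval $(\max\{0,-c_-^*\},\,a)$ is nonempty, and I would pick any $c$ in it. Then $c\ge 0$ and $c>-c_-^*$, while $c<a<c_+^*$, so $c\in(-c_-^*,c_+^*)\cap\mathbb{R}_+$; moreover $c_+^*-c>c_+^*-a\ge\varepsilon$, so $\varepsilon\in(0,c_+^*-c)$. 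Thus Theorem~\ref{thm3.1}(i) applies with this $c$ and this $\varepsilon$.

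Now I would fix a compact, uniformly bounded set $K\subseteq C_+\setminus\{0\}$ and let $\eta>0$. By Theorem~\ref{thm3.1}(i) there is $\alpha_0>0$ with $\|Q^n[\varphi](\cdot,x)-r^*\|<\eta$ whenever $\varphi\in K$, $n\ge\alpha_0$, and $\alpha_0+nc\le x\le n(c_+^*-\varepsilon)=nb$. Put $N:=\max\{\alpha_0,\ \alpha_0/(a-c)\}$, which is finite since $a-c>0$. If $n\ge N$ and $na\le x\le nb$, then $x\ge na=nc+n(a-c)\ge nc+\alpha_0$, so the pair $(n,x)$ lies in the window above, giving $\|Q^n[\varphi](\cdot,x)-r^*\|<\eta$ for all $\varphi\in K$; by compactness of $K\times[na,nb]$ and continuity of $Q^n$ this yields $\max\{\|Q^n[\varphi](\cdot,x)-r^*\|:na\le x\le nb,\ \varphi\in K\}\le\eta$ for every $n\ge N$. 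Since $\eta>0$ is arbitrary, statement~(i) follows, and statement~(ii) is the special case $K=\{\varphi\}$, which is compact and uniformly bounded. I expect no genuine analytic obstacle here: all the real work is already contained in Theorem~\ref{thm3.1}, and the only thing to watch is the bookkeeping of parameter inequalities in the second step, where the precise bound $\varepsilon<\tfrac12\min\{c_+^*,c_+^*+c_-^*\}$ is exactly what leaves room for an admissible auxiliary speed $c$ together with an admissible $\varepsilon$ for Theorem~\ref{thm3.1}.
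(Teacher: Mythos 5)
Your proposal is correct and follows essentially the same route as the paper: both reduce the statement to Theorem~\ref{thm3.1} by choosing an auxiliary speed $c$ strictly between $\max\{0,-c_-^*\}$ and $\max\{\varepsilon,-c_-^*+\varepsilon\}$ and then verifying that the window $n\max\{\varepsilon,-c_-^*+\varepsilon\}\le x\le n(c_+^*-\varepsilon)$ is eventually contained in the region $\{n\ge\alpha,\ \alpha+nc\le x\le n(c_+^*-\varepsilon)\}$ (the paper takes $c=\tfrac{\varepsilon}{3}-\min\{0,c_-^*\}$ and shrinks $\varepsilon$ to $\varepsilon/3$, while you keep $\varepsilon$ fixed, but the bookkeeping is equivalent). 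Your parameter inequalities all check out, so the argument is sound.
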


\noindent
\begin{proof}  Let $J_{\varepsilon,n}=\{n\}\times [n\max\{\varepsilon,-c_-^*+\varepsilon\}, n(c_+^*-\varepsilon)]$ and
$$
\mathcal{J}_{\varepsilon,\alpha,c}=\{(n,x)\in \mathbb{N}\times \mathbb{R}:  \,  n\geq \alpha,  \, \alpha+nc\leq x \leq n(c_+^*-\varepsilon)\},
\quad  \forall  (n,\varepsilon,\alpha,c)\in \mathbb{N}\times \mathbb{R}_+^3.
$$
Since $J_{\varepsilon,n}\subseteq \mathcal{J}_{\frac{\varepsilon}{3},\frac{n\min\{1,\varepsilon\}}{3},\frac{\varepsilon}{3}-\min\{0,c_-^*\}}$ for all $\varepsilon\in (0,\frac{1}{2}\min\{c_+^*,c_+^*+c_-^*\})$ and $n \in \mathbb{N}$,
Theorem~\ref{thm3.1} yield all the desired  conclusions.
\end{proof}

For any given $(\varphi,\mu)\in L^{\infty}(M\times \mathbb{R},\mathbb{R}^{N})\times \mathbb{R}$, we define a function
$$
e_{\varphi,\mu}(\theta,x)=\varphi(\theta,x) e^{-\mu x}, \,  \forall (\theta,x)\in  M\times \mathbb{R}.
$$
Let   $ L,L_+$ and $L_-$ be monotone  linear operators on the space of functions
$$
\tilde{C}:=\left\{\sum\limits_{k=1}^l e_{\varphi_k,\mu_k}:\, l\geq 1,
(\varphi_k,\mu_k)\in L^{\infty}(M\times \mathbb{R},\mathbb{R}^{N})
\times \mathbb{R}, \,  \forall 1\leq k\leq l\right\}
$$
equipped with the pointwise ordering. Here and after, let us denote the set of all bounded maps from $M\times \mathbb{R}$ into $\mathbb{R}^{N}$ by $L^{\infty}(M\times \mathbb{R},\mathbb{R}^{N})$ with the pointwise ordering induced by $L^{\infty}(M\times \mathbb{R},\mathbb{R}^{N}_+)$.
For any  $\mu\in \mathbb{R}$ with $L_+[e_{\check{1},\mu}](\cdot,0)\in Y_+$, we  define $L_{+,\mu}:Y\to Y$ by
$$
L_{+,\mu}[\phi](\theta)=L_+[e_{\phi,\mu}](\theta,0),  \,   \forall \theta\in  M,\,
\phi\in  Y.
$$
Assume that $I_{L_+}:=\{\mu\in (0,\infty):L_+[e_{\check{1},\mu}](\cdot,0)\in Y_+\}\neq \emptyset$,   $T_{-z}\circ L_+ \circ T_z[e_{\zeta,\mu}]=L_+[e_{\zeta,\mu}] $ for all $(z,\mu,\zeta)\in \mathbb{R}\times I_{L_+}\times Y_+$, and   there exists
$l_0\in \mathbb{N}$ such that
$(L_{+,\mu})^{l_0}$ is a compact and strongly positive operator on $Y$ for all  $\mu\in I_{L_+}$. It follows that
$L_{+,\mu}$ has the principal eigenvalue $\lambda(\mu)$ and  a unique  strongly
positive eigenfunction $\zeta_\mu$ associated with  $\lambda(\mu)$ such that
$\inf\{\zeta_\mu^i(\theta):\theta\in M, 1\leq i\leq N
\}=1$.  Let
$${c^*_{L_+}}:=\inf\limits_{\mu\in I_{L_+}}\frac{1}{\mu}\log\lambda(\mu)$$
and define
$$
\underline{L}[\varphi;\alpha,\mu](\theta,x)=\min\{\alpha \zeta_\mu(\theta),L[\varphi](\theta,x)\},
 \,  \forall (\varphi,\theta,x)\in \tilde{C}\times M\times \mathbb{R},  \,   (\alpha,\mu)\in \mathbb{R}_+^2.
$$

To obtain the asymptotic annihilation properties, we make  the following  assumption.

\begin{description}
\item [{\bf (LC)}]  For any $\epsilon>0$ and $\mu\in I_{L_{+}}$, there exists $ \mathfrak{x}_{\epsilon,\mu}>0$ such that
$$
L[e_{\zeta_\mu,\mu}](\theta,x)\leq (1+\mu \epsilon) L_+[e_{\zeta_\mu,\mu}](\theta,x),  \,  \forall  \theta\in M,  \,  x\geq \mathfrak{x}_{\epsilon,\mu}.
$$
\end{description}

\begin{lemma} \label{lem3.2-000000} Let {\bf (LC)} hold. Then for any $(\epsilon,\alpha)\in (0,\infty)^2$, there exist $\mu:=\mu_\epsilon\in I_{L_+}$ and $\beta_{\epsilon,\alpha}>\alpha$
such that
$$
\underline{L}^n[v_{\alpha,\beta,\mu};\alpha,\mu](\theta,x)\leq  \beta e^{\mu(-x+n\max\{0,c^*_{L_+}+\epsilon\})}\zeta_\mu(\theta), \,  \forall (\theta,x,\beta,n)\in  M \times\mathbb{R}\times [\beta_{\epsilon,\alpha},\infty)\times \mathbb{N},
$$
where
$v_{\alpha,\beta,\mu}(\theta,x)=\min\{\alpha,\beta  e^{-\mu x}\}\zeta_\mu(\theta)$.
\end{lemma}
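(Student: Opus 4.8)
The plan is to combine the exponential eigenfunction structure of $L_+$ with assumption {\bf (LC)} and to propagate a \emph{capped} exponential bound through the iteration. Write $\Lambda:=e^{\mu\max\{0,\,c^*_{L_+}+\epsilon\}}$ once $\mu$ has been fixed; since $\mu>0$ one always has $\Lambda\ge1$, and this is exactly what keeps the induction below from degenerating. I would first record the elementary identity $L_+[e_{\zeta_\mu,\mu}](\theta,x)=\lambda(\mu)\,e^{-\mu x}\zeta_\mu(\theta)$ for all $(\theta,x)\in M\times\mathbb{R}$: since $T_z[e_{\zeta_\mu,\mu}]=e^{\mu z}e_{\zeta_\mu,\mu}$, the assumed translation identity $T_{-z}\circ L_+\circ T_z[e_{\zeta_\mu,\mu}]=L_+[e_{\zeta_\mu,\mu}]$ evaluated at $x=0$, together with the eigenrelation $L_{+,\mu}[\zeta_\mu]=\lambda(\mu)\zeta_\mu$, gives the claim after moving the scalar factor $e^{\mu z}$ across.

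Next I would fix the constants. Since $c^*_{L_+}=\inf_{\mu\in I_{L_+}}\frac1\mu\log\lambda(\mu)$, choose $\mu=\mu_\epsilon\in I_{L_+}$ with $\lambda(\mu)<e^{\mu(c^*_{L_+}+\epsilon/2)}$. As $\mu$ is now fixed and $e^{\mu\epsilon/2}>1$, pick $\epsilon'>0$ with $1+\mu\epsilon'\le e^{\mu\epsilon/2}$, so that $(1+\mu\epsilon')\lambda(\mu)<e^{\mu(c^*_{L_+}+\epsilon)}\le\Lambda$. Feeding this $\epsilon'$ and $\mu$ into {\bf (LC)} and using the identity above gives $\mathfrak{x}_{\epsilon',\mu}>0$ such that $L[e_{\zeta_\mu,\mu}](\theta,x)\le\Lambda\,e^{-\mu x}\zeta_\mu(\theta)$ for all $\theta\in M$ and $x\ge\mathfrak{x}_{\epsilon',\mu}$. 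Finally set $\beta_{\epsilon,\alpha}:=\alpha\,e^{\mu\mathfrak{x}_{\epsilon',\mu}}+1$, which is $>\alpha$; the point of the factor $e^{\mu\mathfrak{x}_{\epsilon',\mu}}$ is that for $\beta\ge\beta_{\epsilon,\alpha}$ the inequality $\beta e^{-\mu x}<\alpha$ already forces $x>\mathfrak{x}_{\epsilon',\mu}$.

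The core is a one-step estimate: for every $\beta'\ge\beta_{\epsilon,\alpha}$ one has $\underline{L}[v_{\alpha,\beta',\mu};\alpha,\mu]\le v_{\alpha,\beta'\Lambda,\mu}$. To prove it, fix $(\theta,x)$; since $\underline{L}[\,\cdot\,;\alpha,\mu]\le\alpha\zeta_\mu$ by definition, it suffices to also bound $\underline{L}[v_{\alpha,\beta',\mu};\alpha,\mu](\theta,x)$ by $\beta'\Lambda e^{-\mu x}\zeta_\mu(\theta)$. If $\beta'\Lambda e^{-\mu x}\ge\alpha$ this is immediate from the cap; otherwise $\beta' e^{-\mu x}\le\beta'\Lambda e^{-\mu x}<\alpha$ (as $\Lambda\ge1$), hence $x>\mathfrak{x}_{\epsilon',\mu}$ by the choice of $\beta_{\epsilon,\alpha}$, and then $v_{\alpha,\beta',\mu}\le\beta'e_{\zeta_\mu,\mu}$, the monotonicity and linearity of $L$ on $\tilde{C}$, and the bound $L[e_{\zeta_\mu,\mu}](\theta,x)\le\Lambda e^{-\mu x}\zeta_\mu(\theta)$ yield $L[v_{\alpha,\beta',\mu}](\theta,x)\le\beta'\Lambda e^{-\mu x}\zeta_\mu(\theta)$. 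Granting this, I would prove $\underline{L}^n[v_{\alpha,\beta,\mu};\alpha,\mu]\le v_{\alpha,\beta\Lambda^n,\mu}$ for all $n$ and all $\beta\ge\beta_{\epsilon,\alpha}$ by induction on $n$, the base case being trivial (or being the one-step estimate), and the inductive step combining the monotonicity of $\varphi\mapsto\underline{L}[\varphi;\alpha,\mu]$, the induction hypothesis, and the one-step estimate with $\beta'=\beta\Lambda^n\ge\beta\ge\beta_{\epsilon,\alpha}$ (again using $\Lambda\ge1$). Since $v_{\alpha,\beta\Lambda^n,\mu}(\theta,x)\le\beta\Lambda^n e^{-\mu x}\zeta_\mu(\theta)=\beta\,e^{\mu(-x+n\max\{0,\,c^*_{L_+}+\epsilon\})}\zeta_\mu(\theta)$, this is exactly the asserted inequality.

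The only genuinely delicate point --- the main obstacle --- is the region $x<\mathfrak{x}_{\epsilon',\mu}$, where {\bf (LC)} provides no comparison for $L$; it is handled not by estimating $L$ there but by the quantitative choice of $\beta_{\epsilon,\alpha}$, which makes both $\underline{L}[v_{\alpha,\beta',\mu};\alpha,\mu]$ and the target $v_{\alpha,\beta'\Lambda,\mu}$ coincide with the cap $\alpha\zeta_\mu$ on that region. The remaining points are routine: $v_{\alpha,\beta,\mu}$ and its $\underline{L}$-iterates lie in $\tilde{C}$ (they are bounded and $L$ maps $\tilde{C}$ into $\tilde{C}$), the map $\varphi\mapsto\underline{L}[\varphi;\alpha,\mu]$ is monotone (from monotonicity of $L$), and $v_{\alpha,\beta,\mu}\le\beta e_{\zeta_\mu,\mu}$ pointwise with $L[0]=0$, all of which are used tacitly above.
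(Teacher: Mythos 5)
Your argument is essentially the paper's: pick $\mu$ so that $\lambda(\mu)$ is within $e^{\mu\epsilon/2}$ of $e^{\mu c^*_{L_+}}$, use {\bf (LC)} together with the eigenrelation $L_+[e_{\zeta_\mu,\mu}](\theta,x)=\lambda(\mu)e^{-\mu x}\zeta_\mu(\theta)$ to dominate $L[e_{\zeta_\mu,\mu}]$ by $\Lambda e^{-\mu x}\zeta_\mu$ on $[\mathfrak{x},\infty)$, choose $\beta_{\epsilon,\alpha}$ large enough that the cap $\alpha\zeta_\mu$ takes over on $(-\infty,\mathfrak{x}]$, and iterate a one-step estimate. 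Your one-step bound $\underline{L}[v_{\alpha,\beta',\mu};\alpha,\mu]\le v_{\alpha,\beta'\Lambda,\mu}$ and the induction over the family $\{v_{\alpha,\beta',\mu}\}$ is a clean (and slightly sharper) way of organizing what the paper does via $\underline{L}[v]\le\Lambda v$; both are valid.

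The one genuine omission is the case $c^*_{L_+}=-\infty$, which the paper treats separately and which your opening step does not cover: if the infimum is $-\infty$ there is no $\mu$ with $\lambda(\mu)<e^{\mu(c^*_{L_+}+\epsilon/2)}$, so your choice of $\mu$ is vacuous. The fix is immediate — in that case $\Lambda=1$, and one instead picks $\mu\in I_{L_+}$ with $\frac{1}{\mu}\log\lambda(\mu)$ so negative, and then $\epsilon'$ so small, that $(1+\mu\epsilon')\lambda(\mu)<1$; the rest of your argument runs unchanged. You should add this case (or argue at the outset that you may always choose $\mu$ and $\epsilon'$ with $(1+\mu\epsilon')\lambda(\mu)<\Lambda$, which covers both situations at once).
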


\begin{proof} Fix $(\epsilon,\alpha)\in  Int(\mathbb{R}_+^2)$.  If $c^*_{L_+}<-\infty$, then by the definition of ${c^*_{L_+}}$, there exists $\mu:=\mu_\epsilon\in I_{L_+}$ such that $\lambda(\mu)<e^{\mu(c^*_{L_+}+\frac{\epsilon}{3})}$.
It follows from {\bf (LC)} that   for any $\theta\in M$ and  $x\geq \mathfrak{x}_\epsilon:=\mathfrak{x}_{\frac{\epsilon}{3},\mu_\epsilon}$, there holds
\begin{eqnarray*}
L[e_{\zeta_\mu,\mu}](\theta,x)&\leq& (1+\frac{\mu \epsilon}{3}) L_+[e_{\zeta_\mu,\mu}](\theta,x)
\\
&=&(1+\frac{\mu \epsilon}{3}) \lambda(\mu)\zeta_\mu(\theta) e^{-\mu x}
\\
&\leq& \zeta_\mu(\theta) e^{-\mu x+\mu(c^*_{L_+}+\epsilon)}.
\end{eqnarray*}
Taking  $\beta_{\epsilon,\alpha}=\alpha e^{\mu (|c^*_{L_+}+\epsilon|+\mathfrak{x}_{\epsilon})}$, we  have
$$\alpha \zeta_\mu(\theta)\leq \beta \zeta_\mu(\theta) e^{-\mu x+\mu(c^*_{L_+}+\epsilon)}, \, \forall (\theta,x,\beta)\in M\times (-\infty,\mathfrak{x}_{\epsilon}]\times [\beta_{\epsilon,\alpha},\infty),$$
$$\underline{L}[v_{\alpha,\beta,\mu};\alpha,\mu](\theta,x)\leq \beta \zeta_\mu(\theta) e^{-\mu x+\mu(c^*_{L_+}+\epsilon)}, \, \forall (\theta,x,\beta)\in M\times [\mathfrak{x}_{\epsilon},\infty)\times [\beta_{\epsilon,\alpha},\infty),$$ and
$$\underline{L}[v_{\alpha,\beta,\mu};\alpha,\mu](\theta,x)\leq \alpha \zeta_\mu(\theta),\, \forall (\theta,x,\beta)\in M\times \mathbb{R}\times [\beta_{\epsilon,\alpha},\infty). $$
It then follows that
\begin{eqnarray*}
\underline{L}[v_{\alpha,\beta,\mu};\alpha,\mu](\theta,x)&\leq& \min\{\alpha \zeta_\mu(\theta),\beta \zeta_\mu(\theta) e^{-\mu x+\mu(c^*_{L_+}+\epsilon)}\}
\\
&\leq& e^{\mu\max\{0,c^*_{L_+}+\epsilon\}}v_{\alpha,\beta,\mu}(\theta,x)
\end{eqnarray*}
 for all $(\theta,x,\beta)\in M\times \mathbb{R}\times [\beta_{\epsilon,\alpha},\infty)$,
and hence,
$$
\underline{L}^n[v_{\alpha,\beta,\mu};\alpha,\mu](\theta,x)\leq  \beta e^{\mu(-x+n\max\{0,c^*_{L_+}+\epsilon\})}\zeta_\mu(\theta),
\,  \forall
(\theta,x,\beta,n)\in M\times \mathbb{R}\times [\beta_{\epsilon,\alpha},\infty)\times \mathbb{N}.
$$

Next, if $c^*_{L_+}=-\infty$, then by  {\bf (LC)}, there exist $\mu:=\mu_\epsilon\in I_{L_+}$ and $\mathfrak{x}_\epsilon:=\mathfrak{x}_{\frac{\epsilon}{3},\mu_\epsilon}$ such that $\lambda(\mu)<\frac{1}{1+\frac{\mu \epsilon}{3}}$ and hence, $L[e_{\zeta_\mu,\mu}](\theta,x)\leq \zeta_\mu(\theta) e^{-\mu x}$ for any $\theta\in M$ and  $x\geq \mathfrak{x}_\epsilon$.
By the definitions of $\underline{L}, v_{\alpha,\beta,\mu}$ , we easily check that
$\underline{L}[v_{\alpha,\beta,\mu};\alpha,\mu]\leq v_{\alpha,\beta,\mu}$
 for all $\beta\geq \beta_{\epsilon,\alpha}:=\alpha e^{\mu \mathfrak{x}_{\epsilon}}$. Thus, we have
$$
\underline{L}^n[v_{\alpha,\beta,\mu};\alpha,\mu](\theta,x)\leq  \beta e^{-\mu x}\zeta_\mu(\theta),
\,  \forall
(\theta,x,\beta,n)\in M\times \mathbb{R}\times [\beta_{\epsilon,\alpha},\infty)\times \mathbb{N}.
$$
This completes the proof.
\end{proof}

In the following, we write  $\lim\limits_{l\to \infty}\psi_l= \psi$ in $L^\infty_{loc}(M\times \mathbb{R},\mathbb{R}^{N})$  provided that
$$\lim\limits_{l\to \infty} \sup\left\{||\psi_l(\theta,x)-\psi(\theta,x)||:(\theta,x)\in M\times [-k,k]\right\}=0, \quad \forall k\in \mathbb{N}.$$
In order to verify {\bf (LC)} in applications,  we introduce the
following assumption on  $L$ and $L_\pm$.
\begin{enumerate}
\item [{\bf (SLC)}]  For any real interval $I$ and  $ (\mu,\nu,\zeta)\in (0,\infty)^2\times Y_+$ with $\zeta\geq \check{1}:=(1,1,\cdots,1)^{T}\in \mathbb{R}^N$ and $\mathfrak{J}_{\mu}^\pm:=\{\theta\in M:L_\pm[e_{\check{1},\mu}](\theta,0)\in \mathbb{R}^N\}\neq \emptyset$,  there hold
\begin{enumerate}
\item [(i)]
the map $M\times \mathbb{R}\ni (\theta,z)\mapsto L_\pm[e_{\zeta\cdot {\bf 1}_{[z,\infty)},\mu}](\theta,0)\in \mathbb{R}^N$ is bounded and continuous on $M\times \mathbb{R}$ with $ L_+[e_{\check{1},\mu}](\cdot,0)\in Int(Y_+)$, $L_\pm[e_{\zeta,\mu}](\cdot,0)\in Y_+$, $\lim\limits_{z\to -\infty}L_\pm[e_{\zeta\cdot {\bf 1}_{[z,\infty)},\mu}](\theta,0)=L_\pm[e_{\zeta,\mu}](\theta,0)$ and $\lim\limits_{z\to \infty}L_\pm[e_{\zeta\cdot {\bf 1}_{[z,\infty)},\mu}](\theta,0)=0$ for all $\theta\in M$.

\item [(ii)]     $\lim\limits_{z\to \pm\infty}T_{-z}\circ L\circ T_{z}[e_{\zeta\cdot {\bf 1}_{I},\mu}]= L_\pm[e_{\zeta\cdot {\bf 1}_{I},\mu}]$  in $L^\infty_{loc}(M\times \mathbb{R},\mathbb{R}^{N})$,  where
$e_{\zeta\cdot {\bf 1}_{I},\mu}(\theta,x)=\zeta(\theta)e^{-\mu x}{\bf 1}_{I}(x)$ for all  $(\theta,x)\in M\times \mathbb{R}$.

\item [(iii)]   $T_{-z}\circ L\circ T_{z}[e_{\zeta\cdot {\bf 1}_{I},\mu}]$ is nonincreasing in $z\in \mathbb{R}$.

\item [(iv)]   There exists $\kappa=\kappa_\nu>1$ such that
$\frac{1}{\kappa} L_+[e_{\check{1},\nu}]\leq L_-[e_{\check{1},\nu}] \leq \kappa L_+[e_{\check{1},\nu}]$.

\end{enumerate}
\end{enumerate}

\begin{lemma} \label{lem3.2-000} Let {\bf (SLC)} hold. Then $\mathfrak{I}_\mu^+=\mathfrak{I}_\mu^-$ and $\mathfrak{I}_\mu:=\mathfrak{I}_\mu^\pm \in \{\emptyset, M\}$ for all $\mu\in (0,\infty)$.
Moreover, if $(\mu,\zeta)\in (0,\infty)\times Y_+$ with $\mathfrak{I}_\mu=M$ and $\zeta\geq \check{1}$,  then  the following statements hold:
\begin {itemize}
\item [{\rm (i)}]   $T_{-z}\circ L_\pm \circ T_z[e_{\zeta\cdot {\bf 1}_{I},\mu}]=L_\pm[e_{\zeta\cdot {\bf 1}_{I},\mu}] $ for all $z\in \mathbb{R}$ and any interval $I$ in $\mathbb{R}$. Hence,  for any $(\theta,x,z)\in M\times \mathbb{R}^2$, there hold $$e^{\mu x}L_\pm[ e_{\zeta\cdot {\bf 1}_{[x-z,\infty)},\mu}](\theta,x)=L_\pm[ e_{\zeta\cdot {\bf 1}_{[-z,\infty)},\mu}](\theta,0)$$ and $$e^{\mu x}L_\pm[ e_{\zeta\cdot {\bf 1}_{(-\infty,x-z]},\mu}](\theta,x)=L_\pm[ e_{\zeta\cdot {\bf 1}_{(-\infty,-z]},\mu}](\theta,0).$$

\item [{\rm (ii)}]   $ L_-[e_{\zeta\cdot {\bf 1}_{I},\mu}]\geq L[e_{\zeta\cdot {\bf 1}_{I},\mu}]\geq L_+[e_{\zeta\cdot {\bf 1}_{I},\mu}]$ for any interval $I$ in $\mathbb{R}$.

\item [{\rm (iii)}] For any $\gamma\in (0,\infty)$, there exists $\varrho_{\gamma,\mu,\zeta}>0$ such that
$$
L[e_{\zeta\cdot {\bf 1}_{(-\infty,x-\varrho]},\mu}](\theta,x)\leq\gamma
L[e_{\zeta\cdot {\bf 1}_{[x-\varrho,\infty)},\mu}](\theta,x), \, \forall (\theta,x,\varrho)\in M\times  \mathbb{R} \times [\varrho_{\gamma,\mu,\zeta},\infty).
$$
\item [{\rm (iv)}] For any $(\delta,\varrho)\in (0,\infty)^2$, there exist $\varrho_0:=\varrho_0(\delta,\varrho,\mu,\zeta)>\varrho$ and $\rho_{\delta,\varrho,\mu,\zeta}>0$ such that
$$
L[e_{\zeta\cdot {\bf 1}_{[x-\varrho_0,\infty)},\mu}](\theta,x)\leq (1+\delta)L_+[e_{\zeta\cdot {\bf 1}_{[x-\varrho_0,\infty)},\mu}](\theta,x),
\, \forall \theta\in M, x\geq \rho_{\delta,\varrho,\mu,\zeta}.
$$

\item [{\rm (v)}] For any $\epsilon>0$, there exists  $ \mathfrak{x}_{\epsilon,\mu,\zeta}>0$ such that $$
L[e_{\zeta,\mu}](\cdot,x)\leq  (1+\mu \epsilon) L_+[e_{\zeta,\mu}](\cdot,x),
\, \forall x\in  [ \mathfrak{x}_{\epsilon,\mu,\zeta},\infty).
$$
In particular, the assumption {\bf (LC)} holds.
\end {itemize}
\end{lemma}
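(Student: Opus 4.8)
The overall strategy is to unwind the four sub-items of \textbf{(SLC)} one at a time, using translation invariance to pass from the normalized ``base point'' $0$ to an arbitrary $x$, and then to combine the monotonicity in $z$ (item (iii)) with the two limit relations in (i) to get the comparison bounds. First I would settle the dichotomy $\mathfrak{I}_\mu^+=\mathfrak{I}_\mu^-=\mathfrak{I}_\mu\in\{\emptyset,M\}$: item (iv) of \textbf{(SLC)} forces $L_+[e_{\check 1,\mu}](\theta,0)\in\mathbb R^N$ iff $L_-[e_{\check 1,\mu}](\theta,0)\in\mathbb R^N$, so $\mathfrak{I}_\mu^+=\mathfrak{I}_\mu^-$; and since by (i) the map $(\theta,z)\mapsto L_\pm[e_{\zeta\cdot{\bf 1}_{[z,\infty)},\mu}](\theta,0)$ is continuous with limit $L_\pm[e_{\zeta,\mu}](\theta,0)$ as $z\to-\infty$, finiteness at one $\theta$ propagates; combined with the assumed spatial-translation structure of $L_\pm$ (the $L_\pm$ are limiting operators of $L$ in the two directions, hence translation invariant along the lines indicated) one gets that $\mathfrak{I}_\mu$ is the whole $M$ or empty. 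For item (i), I would use that $L_\pm$, being the one-sided limits of $T_{-z}\circ L\circ T_z$, commute with all translations $T_{z}$ on the relevant test functions; then the two displayed identities are just the change of variables $x\mapsto x$ with $z$ shifted, i.e.\ $T_{-x}\circ L_\pm\circ T_x[e_{\zeta\cdot{\bf1}_{[x-z,\infty)},\mu}]=L_\pm[e_{\zeta\cdot{\bf1}_{[-z,\infty)},\mu}]$ multiplied by the scalar $e^{-\mu x}$ coming out of $e_{\cdot,\mu}$.

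For item (ii): apply item (iii) of \textbf{(SLC)}, $z\mapsto T_{-z}\circ L\circ T_z[e_{\zeta\cdot{\bf1}_I,\mu}]$ is nonincreasing; letting $z\to+\infty$ gives, via item (ii) of \textbf{(SLC)}, the lower bound $L_+[e_{\zeta\cdot{\bf1}_I,\mu}]$, and letting $z\to-\infty$ gives the upper bound $L_-[e_{\zeta\cdot{\bf1}_I,\mu}]$, while the value at $z=0$ is $L[e_{\zeta\cdot{\bf1}_I,\mu}]$ itself; chaining these pointwise inequalities yields (ii). Items (iii) and (iv) are the quantitative heart: for (iii) I would split the kernel of $L$ acting on $e_{\zeta,\mu}$ into its contribution from $(-\infty,x-\varrho]$ and from $[x-\varrho,\infty)$; by (ii) of \textbf{(SLC)} the former tends to $L_-[e_{\zeta\cdot{\bf1}_{(-\infty,-\varrho]},\mu}](\cdot,0)e^{-\mu x}$ (after the translation normalization of item (i) of this lemma), which by (i) of \textbf{(SLC)} decays to $0$ as $\varrho\to\infty$, whereas the latter converges to $L_+[e_{\zeta\cdot{\bf1}_{[-\varrho,\infty)},\mu}](\cdot,0)e^{-\mu x}$ which stays bounded below by a strongly positive multiple of $e^{-\mu x}\zeta$ (using $L_+[e_{\check1,\mu}](\cdot,0)\in Int(Y_+)$, $\zeta\ge\check1$, and continuity in the truncation point near $z=0$); uniformity in $x$ is exactly item (i) of this lemma, which makes the ratio independent of $x$ after normalization. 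Item (iv) is the same decomposition read the other way: write $L[e_{\zeta\cdot{\bf1}_{[x-\varrho_0,\infty)},\mu}]$, choose $\varrho_0$ large so that (by (iii) just proved, or directly by the decay of the $L_-$-side) the difference between $L$ and $L_+$ on the tail $[x-\varrho_0,\infty)$ is controlled by the $(-\infty,x-\varrho_0]$-piece, which is a $\delta$-fraction of the main piece once $x$ is large; I would also invoke item (ii) of \textbf{(SLC)} to say $T_{-x}\circ L\circ T_x[e_{\zeta\cdot{\bf1}_{[-\varrho_0,\infty)},\mu}]\to L_+[e_{\zeta\cdot{\bf1}_{[-\varrho_0,\infty)},\mu}]$ as $x\to+\infty$, so that for $x\ge\rho_{\delta,\varrho,\mu,\zeta}$ the factor $(1+\delta)$ absorbs both the limiting defect and the truncated-tail error. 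Finally (v), and hence \textbf{(LC)}: take $\varrho_0$ from (iv) with a small $\delta$ depending on $\mu\epsilon$, and estimate $L[e_{\zeta,\mu}]=L[e_{\zeta\cdot{\bf1}_{[x-\varrho_0,\infty)},\mu}]+L[e_{\zeta\cdot{\bf1}_{(-\infty,x-\varrho_0)},\mu}]\le(1+\delta)L_+[e_{\zeta\cdot{\bf1}_{[x-\varrho_0,\infty)},\mu}]+\gamma L[e_{\zeta\cdot{\bf1}_{[x-\varrho_0,\infty)},\mu}]$ by (iv) and (iii); then bound $L[e_{\zeta\cdot{\bf1}_{[x-\varrho_0,\infty)},\mu}]\le L_-[e_{\zeta\cdot{\bf1}_{[x-\varrho_0,\infty)},\mu}]\le\kappa L_+[e_{\zeta\cdot{\bf1}_{[x-\varrho_0,\infty)},\mu}]$ by (ii) of this lemma and item (iv) of \textbf{(SLC)}, and finally $L_+[e_{\zeta\cdot{\bf1}_{[x-\varrho_0,\infty)},\mu}]\le L_+[e_{\zeta,\mu}]$ by monotonicity; collecting constants and choosing $\delta,\gamma$ small relative to $\mu\epsilon$ gives $L[e_{\zeta,\mu}](\cdot,x)\le(1+\mu\epsilon)L_+[e_{\zeta,\mu}](\cdot,x)$ for $x\ge\mathfrak x_{\epsilon,\mu,\zeta}$, which is precisely \textbf{(LC)} with $\zeta=\zeta_\mu$.

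The step I expect to be the main obstacle is item (iii), specifically getting the estimate \emph{uniformly in $x\in\mathbb R$} rather than just for large $x$: the naive decomposition gives an $x$-dependent split point, and one must use the exact translation-covariance established in item (i) of this lemma to reduce every inequality to the base point $x=0$ before taking $\varrho\to\infty$, so that the threshold $\varrho_{\gamma,\mu,\zeta}$ genuinely does not depend on $x$. A secondary technical point to watch is the interplay between the two kinds of test functions, $e_{\zeta,\mu}$ (untruncated) and $e_{\zeta\cdot{\bf1}_I,\mu}$ (truncated): several statements of \textbf{(SLC)} are phrased for the truncated ones, and passing to the untruncated limit in (v) requires the monotone convergence $L_+[e_{\zeta\cdot{\bf1}_{[z,\infty)},\mu}]\uparrow L_+[e_{\zeta,\mu}]$ as $z\to-\infty$ from (i) of \textbf{(SLC)}, together with the sandwiching from (ii) of this lemma, to be sure no mass is lost.
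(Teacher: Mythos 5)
Your proposal is correct and follows essentially the same route as the paper's proof: the dichotomy via \textbf{(SLC)}-(iv) and (i), items (i)--(ii) from translation covariance and monotonicity in $z$, the head/tail decomposition normalized to the base point $x=0$ (with compactness of $M$) for (iii)--(iv), and the telescoping estimate for (v). One small caution: in your step (v) the bound $L_-[e_{\zeta\cdot{\bf 1}_{[x-\varrho_0,\infty)},\mu}]\leq \kappa L_+[e_{\zeta\cdot{\bf 1}_{[x-\varrho_0,\infty)},\mu}]$ does not follow literally from \textbf{(SLC)}-(iv), which is stated only for the untruncated $e_{\check 1,\nu}$; this detour is unnecessary, since (as in the paper) one can first absorb the head into $(1+\gamma)L[e_{\zeta\cdot{\bf 1}_{[x-\varrho_1,\infty)},\mu}]$ by (iii) and then apply (iv) directly.
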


\begin{proof}  Clearly, by  the former statement of {\bf (SLC)}-(iv), we may check  that $\mathfrak{I}_\mu^+=\mathfrak{I}_\mu^-$ for all $\mu\in (0,\infty)$. Then $\mathfrak{I}_\mu^+=\mathfrak{I}_\mu^-=\emptyset$ or $\mathfrak{I}_\mu^+=\mathfrak{I}_\mu^-\neq\emptyset$.
If  $\mathfrak{I}_\mu^+=\mathfrak{I}_\mu^-\neq\emptyset$, then {\bf (SLC)}-(i) implies that $\mathfrak{I}_\mu^+=\mathfrak{I}_\mu^-=M$.
Now, let us fix $(\mu,\zeta)\in (0,\infty)^2\times Y_+$ with $\mathfrak{I}_\mu=M$ and $\zeta\geq \check{1}$.

(i) follows from  {\bf (SLC)}-(ii) and  the definition of $e_{\zeta\cdot {\bf 1}_{I},\mu}$.

 (ii) follows from {\bf (SLC)}-(ii,iii).

(iii) Fix $\gamma\in (0,\infty)$ and define $F:M\times\mathbb{R}\to \mathbb{R}^N$ by $$F(\theta,\varrho)=\gamma
L_+[e_{\zeta\cdot {\bf 1}_{[-\varrho,\infty)},\mu}](\theta,0)-L_-[e_{\zeta\cdot {\bf 1}_{(-\infty,-\varrho]},\mu}](\theta,0), \forall (\theta,\varrho)\in M\times\mathbb{R}.$$ In view of {\bf (SLC)}-(i,iii) and the definition of $F$, we easily see that $F\in C(M\times\mathbb{R},\mathbb{R}^N)$, $\lim\limits_{\varrho\to \infty} F(\theta, \varrho)\in Int(\mathbb{R}^N_+)$, and $ F(\theta, \varrho)$ is nondecreasing  in $\varrho$.  These, together with the compactness of $M$, imply that there exists $\varrho_{\gamma,\mu,\zeta}>0$ such that $F(\theta,\varrho)\in \mathbb{R}^N_+$ for all $\varrho\geq\varrho_{\gamma,\mu,\zeta}>0$ and $\theta\in M$.  Hence, by  (i), we obtain that $$L_-[e_{\zeta\cdot {\bf 1}_{(-\infty,x-\varrho]},\mu}](\theta,x)\leq\gamma
L_+[e_{\zeta\cdot {\bf 1}_{[x-\varrho,\infty)},\mu}](\theta,x),\, \forall
(\theta,x,\varrho)\in M\times  \mathbb{R} \times [\varrho_{\gamma,\mu,\zeta},\infty).
$$
Thus, (iii) follows from (ii).

(iv) Suppose that  $(\delta,\varrho)\in  Int(\mathbb{R}_+^2)$. In view of {\bf (SLC)}-(i), we easily see that  for any given $\theta_0\in M$, there exist $\varrho_{\theta_0}:=\varrho_{\theta_0,\delta,\varrho,\mu,\zeta}>\varrho>-\varrho>\hat{\varrho}_{\theta_0,\delta,\varrho,\mu,\zeta}=:\hat{\varrho}_{\theta_0}$ such that
$$L_-[e_{\zeta\cdot {\bf 1}_{[-\hat{\varrho}_{\theta_0},\infty)},\mu}](\theta_0,0)\ll \frac{\delta}{3}L_+[e_{\zeta\cdot {\bf 1}_{[-\varrho_{\theta_0},\infty)},\mu}](\theta_0,0).
$$
This, together with  {\bf (SLC)}-(i,iii) and the compactness of $M$, implies that there exist $\sigma_0\in(0,1)$ and $\varrho_0:=\varrho_0(\delta,\varrho,\mu,\zeta)>\varrho>-\varrho>\hat{\varrho}_0:=\hat{\varrho}_0(\delta,\varrho,\mu,\zeta)$ such that   $$
\frac{\delta}{3}L_+[e_{\zeta\cdot {\bf 1}_{[-\varrho_0,\infty)},\mu}](\theta,0)-L_-[e_{\zeta\cdot {\bf 1}_{[-\hat{\varrho}_0,\infty)},\mu}](\theta,0)\geq \sigma_0 \check{1}, \,   \,
 \, L_+[e_{\zeta \cdot {\bf 1}_{[-\varrho_0,\varrho_0],\mu}}](\theta,0)\ge \sigma_0  \check{1}, \,	 \forall \theta\in M.
 $$
Hence, by (i) and (ii), we have $$L[e_{\zeta\cdot {\bf 1}_{[x-\hat{\varrho}_0,\infty)},\mu}](\theta,x)\leq L_-[e_{\zeta\cdot {\bf 1}_{[x-\hat{\varrho}_0,\infty)},\mu}](\theta,x)\ll \frac{\delta}{3}L_+[e_{\zeta\cdot {\bf 1}_{[x-\varrho_0,\infty)},\mu}](\theta,x), \forall (\theta,x)\in M\times \mathbb{R}.$$

By {\bf (SLC)}-(ii), we easily check that $$\lim\limits_{x\to\infty}||T_{-x}\circ L \circ T_x[e_{\zeta\cdot {\bf 1}_{[-\varrho_0,-\hat{\varrho}_0]},\mu}](\cdot,0)- L_+[e_{\zeta\cdot {\bf 1}_{[-\varrho_0,-\hat{\varrho}_0]},\mu}](\cdot,0)||=0.$$ Thus, there exists  $\rho_{\delta,\varrho,\mu,\zeta}>0$
such that $$T_{-x}\circ L \circ T_x[e_{\zeta\cdot {\bf 1}_{[-\varrho_0,-\hat{\varrho_0})},\mu}](\cdot,0) \ll (1+\frac{\delta}{3}) L_+[e_{\zeta\cdot {\bf 1}_{[-\varrho_0,\infty)},\mu}](\cdot,0), \forall x\geq \rho_{\delta,\varrho,\mu,\zeta}.$$
It then follows from statement (i) that
$$
L[e_{\zeta\cdot {\bf 1}_{[x-\varrho_0,x-\hat{\varrho_0})},\mu}](\cdot,x) \ll (1+\frac{\delta}{3}) L_+[e_{\zeta\cdot {\bf 1}_{[x-\varrho_0,\infty)},\mu}](\cdot,x),
\, \forall x\geq \rho_{\delta,\varrho,\mu,\zeta}.
$$
This shows that $L[e_{\zeta\cdot {\bf 1}_{[x-\varrho_0,\infty)},\mu}](\cdot,x)\leq (1+\delta)L_+[e_{\zeta\cdot {\bf 1}_{[x-\varrho_0,\infty)},\mu}](\cdot,x)$ for all  $x\geq \rho_{\delta,\varrho,\mu,\zeta}$.

(v) Fix $\epsilon\in (0,\infty)$.  Let $\gamma=\delta:=-1+\sqrt{1+\mu \epsilon}$, $\varrho_1=\varrho_0({\delta,\varrho_{\gamma,\mu,\zeta},\mu,\zeta})$, and
$\mathfrak{x}_{\epsilon,\mu,\zeta}:=\rho_{\delta,\varrho_{\gamma,\mu,\zeta},\mu,\zeta}>0$. By applying (iii) and (iv), we obtain
\begin{eqnarray*}
L[e_{\zeta,\mu}](\cdot,x)&=&L[e_{\zeta\cdot{\bf 1}_{[x-\varrho_1,\infty)},\mu}](\cdot,x)+L[e_{\zeta\cdot{\bf 1}_{(-\infty,x-\varrho_1]},\mu}](\cdot,x)
\\
&\leq & (1+\gamma) L[e_{\zeta\cdot{\bf 1}_{[x-\varrho_1,\infty)},\mu}](\cdot,x)
\\
&\leq & (1+\delta)(1+\gamma)L_+[e_{\zeta\cdot{\bf 1}_{[x-\varrho_1,\infty)},\mu}](\cdot,x)
\\
&\leq&  (1+\mu \epsilon)L_+[e_{\zeta,\mu}](\cdot,x), \, \forall x\geq \mathfrak{x}_{\epsilon,\mu,\zeta}.
\end{eqnarray*}
This completes the proof.
\end{proof}

Now we are ready to prove  the asymptotic annihilation for  discrete-time systems  under the assumption {\bf (LC)},
which is highly nontrivial since we do not assume (P3) for the map $Q$.

\begin{thm} \label{thm3.4} Let  {\bf (LC)} hold. If there exists $k\in \mathbb{N}$ such that $Q[\varphi]\leq L[\varphi]$ for all $\varphi\in C_{\phi_k^*}$, then the following statements are valid:
\begin{itemize}
\item [{\rm (i)}] If $c_{L}^*:=c_{L_+}^*\geq 0$ and $\varphi\in C_{\phi_k^*}$ is such that $\varphi(\cdot,x) $ is zero for all sufficiently positive $x$, then $\lim\limits_{n\rightarrow \infty}
\sup\left\{
||Q^n[\varphi](\cdot,x)||:x\geq n(c_{L_+}^*+\varepsilon)\right\}=0$ for all $\varepsilon>0$.

\item [{\rm (ii)}] If $c_{L}^*:=c_{L_+}^*< 0$ and $\varphi\in C_{\phi_k^*}$ satisfies that $\varphi(\cdot,x) $ is zero for all sufficiently positive $x$, then $\lim\limits_{\sigma\rightarrow \infty}\sup\left\{||Q^n[\varphi](\cdot,x)||:x\geq \sigma \mbox{ with } n \in \mathbb{N}\right\}=0$, and $\lim\limits_{n\rightarrow \infty}
\sup\left\{||Q^n[\varphi](\cdot,x)||:x\geq n\varepsilon\right\}=0$ for all $\varepsilon>0$.

\end{itemize}
\end{thm}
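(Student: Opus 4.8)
The plan is to trap the non-monotone orbit $Q^n[\varphi]$ between $0$ and the truncated linear orbit $\underline{L}^n[\,\cdot\,;\alpha,\mu]$ of a carefully chosen exponential profile, and then to read off the front location from Lemma~\ref{lem3.2-000000}. To set this up, I would first fix $\varepsilon>0$ as in the statement and choose a small auxiliary number $\epsilon>0$: in case (i) with $\epsilon<\varepsilon$, and in case (ii) with $c^*_{L_+}+\epsilon\leq 0$, which is possible since $c^*_{L_+}<0$ (if $c^*_{L_+}=-\infty$ the corresponding alternative in Lemma~\ref{lem3.2-000000} applies directly). Let $\mu:=\mu_\epsilon\in I_{L_+}$, $\zeta_\mu$ and $\beta_{\epsilon,\alpha}$ be as in Lemma~\ref{lem3.2-000000}. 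Since $\zeta_\mu\geq\check{1}$ and $\phi_k^*\in Int(Y_+)$, one can fix $\alpha>0$ with $\phi_k^*\leq\alpha\zeta_\mu$; choosing $x_0$ so that $\varphi(\cdot,x)=0$ for $x\geq x_0$ and setting $\beta:=\max\{\beta_{\epsilon,\alpha},\,\alpha e^{\mu x_0}\}$, one verifies at once that $0\leq\varphi\leq v_{\alpha,\beta,\mu}$: indeed $\varphi\geq 0$ because $\varphi\in C_{\phi_k^*}\subseteq C_+$, and for $x<x_0$ one has $\beta e^{-\mu x}\geq\alpha$, so $v_{\alpha,\beta,\mu}(\cdot,x)=\alpha\zeta_\mu\geq\phi_k^*\geq\varphi(\cdot,x)$, while $\varphi$ vanishes for $x\geq x_0$.

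The crux is an induction establishing $Q^n[\varphi]\leq\underline{L}^n[v_{\alpha,\beta,\mu};\alpha,\mu]$ for every $n\in\mathbb{N}$, the base case $n=0$ being the inequality just obtained. For the inductive step, {\bf (UB)} ensures $Q^n[\varphi]\in C_{\phi_k^*}$, so the hypothesis $Q[\psi]\leq L[\psi]$ on $C_{\phi_k^*}$ yields $Q^{n+1}[\varphi]\leq L[Q^n[\varphi]]$; combining this with the a priori bound $Q^{n+1}[\varphi]\leq\phi_k^*\leq\alpha\zeta_\mu$ gives $Q^{n+1}[\varphi]\leq\min\{\alpha\zeta_\mu,\,L[Q^n[\varphi]]\}$, and since the iterates $\underline{L}^n[v_{\alpha,\beta,\mu};\alpha,\mu]$ stay bounded by $\alpha\zeta_\mu$ and $L$ is a monotone linear operator, the induction hypothesis propagates through $L$ and then through the truncation $\min\{\alpha\zeta_\mu,\cdot\}$, giving $Q^{n+1}[\varphi]\leq\underline{L}^{n+1}[v_{\alpha,\beta,\mu};\alpha,\mu]$. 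This is the step I expect to be the main obstacle, and it is precisely where the lack of monotonicity (and of (P3)) for $Q$ is bypassed: one never iterates $Q$ against an inequality, only the linear majorant $L$, while the a priori bound $Q^n[\varphi]\leq\phi_k^*$ supplied by {\bf (UB)} is exactly what legitimizes the truncation at level $\alpha\zeta_\mu$ on which Lemma~\ref{lem3.2-000000} rests.

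With the comparison in hand, Lemma~\ref{lem3.2-000000} gives
\[
0\leq Q^n[\varphi](\theta,x)\leq\beta\,e^{\mu\left(-x+n\max\{0,\,c^*_{L_+}+\epsilon\}\right)}\zeta_\mu(\theta),\qquad\forall\,(\theta,x,n)\in M\times\mathbb{R}\times\mathbb{N},
\]
whence $||Q^n[\varphi](\cdot,x)||\leq\beta\,||\zeta_\mu||\,e^{\mu(-x+n\max\{0,c^*_{L_+}+\epsilon\})}$. In case (i), $c^*_{L_+}\geq 0$ forces $\max\{0,c^*_{L_+}+\epsilon\}=c^*_{L_+}+\epsilon$, so for $x\geq n(c^*_{L_+}+\varepsilon)$ the exponent is at most $-\mu n(\varepsilon-\epsilon)$, giving $\sup\{||Q^n[\varphi](\cdot,x)||:x\geq n(c^*_{L_+}+\varepsilon)\}\leq\beta\,||\zeta_\mu||\,e^{-\mu n(\varepsilon-\epsilon)}\to 0$ as $n\to\infty$. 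In case (ii), the choice of $\epsilon$ forces $\max\{0,c^*_{L_+}+\epsilon\}=0$, so $||Q^n[\varphi](\cdot,x)||\leq\beta\,||\zeta_\mu||\,e^{-\mu x}$ uniformly in $n$; restricting to $x\geq\sigma$ yields the bound $\beta\,||\zeta_\mu||\,e^{-\mu\sigma}\to 0$ as $\sigma\to\infty$ (uniformly in $n\in\mathbb{N}$), and restricting to $x\geq n\varepsilon$ yields $\beta\,||\zeta_\mu||\,e^{-\mu n\varepsilon}\to 0$ as $n\to\infty$ for every $\varepsilon>0$. The rest — selecting $x_0$ and $\alpha$, unwinding the suprema, passing from componentwise bounds to norms — is routine; the genuine content sits in the induction above and, earlier, in Lemma~\ref{lem3.2-000000}, which already absorbed the analytic weight of hypothesis {\bf (LC)}.
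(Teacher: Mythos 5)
Your argument is correct and follows essentially the same route as the paper's proof: dominate $\varphi$ by $v_{\alpha,\beta,\mu_\epsilon}$, propagate $Q^n[\varphi]\leq \underline{L}^n[v_{\alpha,\beta,\mu_\epsilon};\alpha,\mu]$ by induction using {\bf (UB)} and the linear majorant $L$ (the paper states this induction as "easily seen"; you merely spell it out), and then invoke Lemma~\ref{lem3.2-000000} with $\epsilon$ chosen small in case (i) and with $c_{L_+}^*+\epsilon\leq 0$ in case (ii). The only cosmetic difference is your choice $\beta=\max\{\beta_{\epsilon,\alpha},\alpha e^{\mu x_0}\}$ versus the paper's unspecified "$\beta>\beta_{\epsilon,\alpha}$ large enough", which amounts to the same thing.
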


\noindent
\begin{proof}  Fix $\varepsilon>0$ and $\varphi\in C_{\phi_k^*}$ with $\varphi(\cdot,x) $ being zero for all sufficiently positive $x$. By {\bf(UB)} and $\varphi\leq \phi_{k}^*$, we have $Q^n[\varphi]\leq \phi_{k}^*$ for all $n\in \mathbb{N}$. Take $\alpha\geq ||\phi_{k}^*||$ and $$
\epsilon=\left\{
\begin{array}{ll}
\frac{\varepsilon}{3},  \ \ \ \ \ \ c_{L_+}^*\geq 0 \\
 -c_{L_+}^*, \ \ \ \  c_{L_+}^*<0.
\end{array}
\right.
$$
According to the choices of $\varphi$ and $\alpha$, there is $\beta>\beta_{\epsilon,\alpha}$ such that $\varphi(\theta,x)\leq \beta e^{-\mu_\epsilon x} \zeta_{\mu_\epsilon}(\theta)$, and hence, $\varphi(\theta,x)\leq v_{\alpha,\beta,\mu_\epsilon}(\theta,x)$ for all $(\theta,x)\in M\times \mathbb{R}$, where $\beta_{\epsilon,\alpha}$ and $\mu_\epsilon$ are defined as in Lemma~\ref{lem3.2-000000}. By the  method of induction, we then easily see that  $Q^n[\varphi]\leq \underline{L}^n[v_{\alpha,\beta,\mu_\epsilon}]$ for all $n\in \mathbb{N}$.
It follows from Lemma~\ref{lem3.2-000000} that
$$
Q^n[\varphi](\theta,x)\leq \underline{L}^n[v_{\alpha,\beta,\mu_\epsilon};\alpha,\mu](\theta,x)\leq  \beta e^{\mu_\epsilon(-x+n\max\{0,c_{L_+}^*+\epsilon\})}, \, \forall  (n,\theta,x)\in \mathbb{N}\times M\times \mathbb{R}.
$$
This, together with the choice of $\epsilon$, yields the desired results. \end{proof}

Theorem~\ref{thm3.4} suggests that $\{Q^n\}_{n\in \mathbb{N}}$ may exhibit some  asymptotic annihilation property. It is worth noting that when $Q$ admits the  translation invariance, Theorems~\ref{thm3.4} also provides a new proof for  the asymptotic annihilation property, which is quite different from  the approaches developed in \cite{w1982} and  \cite{bhn2010}.


Next we turn to  the existence and nonexistence of fixed points
for the  unilateral {\bf (UC)} type discrete-time systems.
Recall that $\phi$ is a  nontrivial  fixed point  of the map $Q$
 if $\phi\in C_{+}\setminus\{0\}$  and
$Q[\phi]=\phi$. 

 The following result is a straightforward consequence of Theorem \ref{thm3.1}-(ii) with $\varphi=W$.
\begin{lemma}\label{lemm3.1} Assume that all the assumptions of Theorem~\ref{thm3.1} hold. If $c_+^*>0 $ and  $W$ is  a  nontrivial  fixed point  of the map $Q$ in $C_+\setminus \{0\}$, then $W(\cdot,\infty)=r^*$.
\end{lemma}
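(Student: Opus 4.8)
The plan is to apply Theorem~\ref{thm3.1}-(ii) with the singleton set $K=\{W\}$ and deduce the pointwise limit $\lim_{x\to\infty}W(\theta,x)=r^*(\theta)$ uniformly in $\theta\in M$, which is exactly the assertion $W(\cdot,\infty)=r^*$. Since $W\in C_+\setminus\{0\}$, it qualifies as an admissible initial datum in Theorem~\ref{thm3.1}-(ii); the only technical point is to pick a speed $c$ in the allowed range so that the conclusion of Theorem~\ref{thm3.1} becomes a statement about $W$ itself rather than about its forward orbit. This is where the fixed-point property $Q^n[W]=W$ for all $n\in\mathbb{N}$ does the work.

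First I would fix any $c\in(0,c_+^*)$ (this interval is nonempty since $c_+^*>0$, and we may also need $c<c_+^*$ with $-c_-^*<c$, which holds automatically as $c>0\ge -c_-^*$ is not guaranteed, so more safely take $c$ with $\max\{0,-c_-^*\}<c<c_+^*$; if $c_+^*+c_-^*>0$ such $c$ exists). Apply Theorem~\ref{thm3.1}-(ii) with $\varphi=W$: for every $\varepsilon\in(0,c_+^*-c)$,
\[
\lim_{\alpha\to\infty}\sup\Big\{\|Q^n[W](\cdot,x)-r^*\|:\ n\ge\alpha,\ \alpha+nc\le x\le n(c_+^*-\varepsilon)\Big\}=0.
\]
Because $W$ is a fixed point, $Q^n[W]=W$ for every $n$, so the supremand is $\|W(\cdot,x)-r^*\|$, and the index set of admissible $x$ for a given large $\alpha$ is $\bigcup_{n\ge\alpha}[\alpha+nc,\,n(c_+^*-\varepsilon)]$. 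Since $c<c_+^*-\varepsilon$ (choose $\varepsilon$ small enough), the intervals $[\alpha+nc,n(c_+^*-\varepsilon)]$ eventually cover all sufficiently large reals: indeed for $n$ large the right endpoint grows like $n(c_+^*-\varepsilon)$ while the left endpoint of the next interval grows like $c+\alpha+nc$, and $n(c_+^*-\varepsilon)-(n+1)c=n(c_+^*-\varepsilon-c)-c\to\infty$, so consecutive intervals overlap and their union contains $[\alpha+\alpha c,\infty)$ or at least $[x_0(\alpha),\infty)$ for some $x_0(\alpha)\to\infty$. Hence the displayed limit says precisely that $\sup_{x\ge x_0(\alpha)}\|W(\cdot,x)-r^*\|\to 0$ as $\alpha\to\infty$, i.e. $\lim_{x\to\infty}\sup_{\theta\in M}\|W(\theta,x)-r^*(\theta)\|=0$.

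The main (and essentially only) obstacle is the bookkeeping in the previous step: verifying that the family of ``cones'' $\{(n,x):n\ge\alpha,\ \alpha+nc\le x\le n(c_+^*-\varepsilon)\}$, when projected onto the $x$-axis through the fixed-point identity, actually exhausts a half-line $[x_0(\alpha),\infty)$ with $x_0(\alpha)\to\infty$. This is a short interval-covering argument using only $0<c<c_+^*-\varepsilon$ and needs no further hypotheses. Once that is in hand, the conclusion $W(\cdot,\infty)=r^*$ is immediate, so the lemma follows. I would present this as a two-line deduction citing Theorem~\ref{thm3.1}-(ii), absorbing the covering remark into a single sentence.
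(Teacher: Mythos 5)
Your proposal is correct and matches the paper's argument: the paper likewise obtains the lemma as a direct consequence of Theorem~\ref{thm3.1}-(ii) applied with $\varphi=W$, using $Q^n[W]=W$ so that the supremum over $\mathcal{I}_{\varepsilon,\alpha}$ becomes a statement about $W(\cdot,x)$ for all large $x$. Your interval-covering remark is exactly the small bookkeeping step the paper leaves implicit (and note the admissible speed $c$ is already supplied by the hypothesis that all assumptions of Theorem~\ref{thm3.1} hold, so no separate existence argument for $c$ is needed).
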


\begin{lemma}\label{lemm3.1-999000}  Let {\bf (NM)} hold. If $a>b>0$ and $W$ is  a  nontrivial  fixed point  of the map $Q$ in $C_{a r^*}\setminus \{0\}$ with $W(\cdot,x)\geq b r^*$ for all large $x$, then $W(\cdot,\infty)=r^*$.
\end{lemma}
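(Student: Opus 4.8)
I would compare $W$ with scalar multiples of $r^*$ as $x\to+\infty$. Set
$$
\underline w:=\lim_{x\to\infty}\sup\{\beta\geq 0:\ W(\theta,x')\geq\beta r^*(\theta)\ \text{for all}\ \theta\in M,\ x'\geq x\},
$$
$$
\overline w:=\lim_{x\to\infty}\inf\{\beta\geq 0:\ W(\theta,x')\leq\beta r^*(\theta)\ \text{for all}\ \theta\in M,\ x'\geq x\}.
$$
Since $W\in C_{ar^*}$, $W(\cdot,x)\geq br^*$ for all large $x$, and $r^*\in Int(Y_+)$ is bounded away from $0$ uniformly on the compact space $M$, the bracketed quantities are monotone in $x$ and bounded, so both limits exist with $0<b\leq\underline w\leq\overline w\leq a$. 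As $r^*$ is also bounded above on $M$, proving $W(\cdot,\infty)=r^*$ is equivalent to proving $\underline w=\overline w=1$, so I would argue by contradiction and assume $\{\underline w,\overline w\}\neq\{1\}$.

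Then I would apply {\bf (NM)} with $(r,s,t)=(\underline w,\overline w,a)$ --- legitimate because $t\geq s\geq r>0$ and $\{r,s\}\neq\{1\}$ --- to get $n_0:=n(r,s)$, $\gamma_0:=\gamma(r,s)\in(0,r)$, $d_0:=d(r,s,t)$, $z_0:=z(r,s,t)$, $\varrho_0:=\frac{t+\gamma_0}{r-\gamma_0}>1$, and (unwinding the sup/inf definitions of $I_\cdot,S_\cdot$, whose defining inequality families are closed) the dichotomy: for every $\varphi\in[(r-\gamma_0)\xi_{d_0}r^*,(s+\gamma_0)\tilde\xi_{d_0,\varrho_0}r^*]_C$ and $y\geq z_0$, either $T_{-y}\circ Q^{n_0}\circ T_y[\varphi](\cdot,0)\geq(r+\gamma_0)r^*$ (case (a)) or $T_{-y}\circ Q^{n_0}\circ T_y[\varphi](\cdot,0)\leq(s-\gamma_0)r^*$ (case (b)). The central point --- and the step I expect to require the most care --- is to verify that for all large $y$ the translate $T_{-y}[W]\colon(\theta,x)\mapsto W(\theta,x+y)$ lies in that order interval: on the slab $|x|\leq d_0+1$ one uses $0\leq\xi_{d_0}\leq 1\leq\tilde\xi_{d_0,\varrho_0}$ together with the asymptotics $W(\cdot,x')\geq(r-\gamma_0)r^*$ and $W(\cdot,x')\leq(s+\gamma_0)r^*$ (which hold for large $x'$ since $\underline w=r$ and $\overline w=s$), while for $|x|\geq d_0+1$ one has $\xi_{d_0}=0$ and $\tilde\xi_{d_0,\varrho_0}=\varrho_0$, so the only constraint is $W\leq ar^*=tr^*\leq(s+\gamma_0)\varrho_0 r^*$, valid because $(s+\gamma_0)\varrho_0\geq\frac{(r+\gamma_0)(t+\gamma_0)}{r-\gamma_0}>t$.

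Having secured this membership, I would use that $W$ is a fixed point ($Q^{n_0}[W]=W$) and $T_y\circ T_{-y}=\mathrm{id}$ to rewrite, for all large $y$,
$$
W(\cdot,y)=T_{-y}[W](\cdot,0)=T_{-y}\circ Q^{n_0}\circ T_y\big[T_{-y}[W]\big](\cdot,0),
$$
so that case (a) forces $W(\theta,y)\geq(r+\gamma_0)r^*(\theta)$ for all $\theta\in M$ and all large $y$, contradicting $\underline w=r$, while case (b) forces $W(\theta,y)\leq(s-\gamma_0)r^*(\theta)$ for all large $y$, contradicting $\overline w=s$. Either way we reach a contradiction, hence $\underline w=\overline w=1$ and $W(\cdot,\infty)=r^*$. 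Note that monotonicity of $Q$ is never used, precisely because {\bf (NM)} already supplies an estimate uniform over the whole order interval; this is exactly the role it plays in place of (P3)/monotonicity.
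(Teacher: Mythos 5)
Your proposal is correct and follows essentially the same argument as the paper: introduce the tail liminf/limsup multiples $r=\underline w$, $s=\overline w$ of $r^*$, assume $\{r,s\}\neq\{1\}$, invoke {\bf (NM)} (the paper takes $t=a+1$ rather than $t=a$, an immaterial difference), check that $T_{-y}[W]$ lies in the prescribed order interval for large $y$, and use the fixed-point identity $W(\cdot,y)=T_{-y}\circ Q^{N_1}\circ T_y[T_{-y}[W]](\cdot,0)$ to contradict the choice of $r$ or $s$. Your extra care with the interval membership and with unwinding the sup/inf in the definitions of $I$ and $S$ only makes explicit what the paper leaves implicit.
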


\begin{proof}
Note that there exists $x_0>0$ such that $W(\cdot,x)\geq b r^*$ for all $x\geq x_0$. Let $t=a+1$, $r=\liminf\limits_{y\to \infty}\Big[\sup\{\beta\in \mathbb{R}_+:W(\cdot,y)\geq \beta r^*\}\Big]$,
and $s=\limsup\limits_{y\to \infty}\Big[\inf\{\beta\in \mathbb{R}_+:W(\cdot,y)\leq\beta r^*\}\Big]$. Then $W(\cdot,x)\leq t r^*$ for all $x\in \mathbb{R}$ and $t>s\geq r\geq b>0$ due to the choices of $x_0,t,r$ and $s$.   Now, it suffices to prove $r=s=1$. Otherwise, $\{r,s\}\neq \{1\}$.  In view of  {\bf (NM)}, there exist $N_1:=n(r,s)\in \mathbb{N}$, $\gamma_1:=\gamma(r,s)\in (0,r) $, and $(d_1,z_1):=(d(r,s,t),z(r,s,t))\in
 Int(\mathbb{R}_+^2)$
such that either  $I_{d_1,r-\gamma_1,s+\gamma_1,z_1,N_1,\frac{t+\gamma_1}{r-\gamma_1}}>r+\gamma_1$,  or
$S_{d_1,r-\gamma_1,s+\gamma_1,z_1,N_1,\frac{t+\gamma_1}{r-\gamma_1}}<s-\gamma_1$.  By the choices of $r,s$ and $t$, there exists $y_1> z_1$ such that $$
T_{-y}[W]\in [(r-\gamma_1)\xi_{d_1}r^{*},(s+\gamma_1)\tilde{\xi}_{d_1,\frac{t+\gamma_1}{r-\gamma_1}}r^{*}]_C, \,  \forall y\geq y_1.
$$
It then follows that either
 $$
 W(\cdot,y)=T_{-y}\circ Q^{N_1} \circ T_{y}[T_{-y}[W]](\cdot,0)\geq (r+\gamma_1)r^*,  \,  \forall y\geq y_1,
 $$
 or
 $$
 W(\cdot,y)=T_{-y}\circ Q^{N_1} \circ T_{y}[T_{-y}[W]](\cdot,0)\leq (s-\gamma_1)r^*,  \,  \forall y\geq y_1.
 $$
 This leads to a contradiction to the choices of $r$ and $s$. Thus, the desired conclusion holds true.
\end{proof}

\begin{lemma} \label{lemma-prop3.5}
 Assume that $Q_+$ satisfies {\bf (UC)}.  If there exist $r^{**}\in Int(Y_{+})$ and $x_0\in (-c_+^*,c_-^*)\cap \mathbb{R}_+$ such that $Q[r^{**}]\leq r^{**}$,  $Q|_{C_{{r^{**}}}}$ is monotone,  and $\{(T_{x_0}\circ Q)^n [r^{**}]:n\in \mathbb{N}\}$ is precompact in $C$ for some $x_0\in (-c_+^*,c_-^*)\cap \mathbb{R}_+$, then the following statements are valid:

\begin {itemize}
\item [{\rm (i)}]  If $x_0=0$, then $Q$ has a  nontrivial  fixed point $W$ in $C_{r^{**}}$ such that
$W(\cdot,\infty)=r^*$. Moreover,  $W(\cdot,x)$ is nondecreasing in $x$ provided that $ T_{-y}\circ Q[\varphi]\geq Q\circ T_{-y}[\varphi]$ for all $(y,\varphi)\in \mathbb{R}_+\times C_{r^{**}}$.

 \item [{\rm (ii)}] If $x_0>0$ and  $ T_{-y}\circ Q[\varphi]\geq Q\circ T_{-y}[\varphi]$ for all $(y,\varphi)\in \mathbb{R}_+\times C_{r^{**}}$, then $T_{x_0}\circ Q$ has a  nontrivial  fixed point $W$ in $C_{r^{**}}$ such that
$W(\cdot,\infty)=r^*$ and  $W(\cdot,x)$ is nondecreasing in $x$.
\end {itemize}
\end{lemma}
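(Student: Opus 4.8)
The plan is to build the fixed point by iterating $Q$ (or $T_{x_0}\circ Q$) starting from the supersolution $r^{**}$, extract a convergent subsequence using the assumed precompactness, and then identify the limit of the spatial profile at $+\infty$ via the earlier lemmas. First I would treat case (i), $x_0=0$. Set $W_n:=Q^n[r^{**}]$. Since $Q[r^{**}]\le r^{**}$ and $Q|_{C_{r^{**}}}$ is monotone, the sequence $\{W_n\}$ is nonincreasing and stays in $C_{r^{**}}$; by the precompactness of $\{Q^n[r^{**}]:n\in\mathbb{N}\}$ in $C$, it converges in $C$ to some $W\in C_{r^{**}}$, and the continuity of $Q$ on $C_{r^{**}}$ (which follows from our convention on continuous maps, since $r^{**}\le\phi_k^*$ for large $k$ by {\bf (UB)}) gives $Q[W]=W$. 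The delicate point is to show $W$ is \emph{nontrivial}, i.e.\ $W\neq 0$: this is exactly where {\bf (UC)} and the unilateral estimates enter. Using $x_0\in(-c_+^*,c_-^*)\cap\mathbb{R}_+$ together with $c_+^*>0$ (implicit since {\bf (UC)} gives $c_+^*+c_-^*>0$ and here $x_0< c_-^*$ forces room on the right), one applies Lemma~\ref{lemm2.1-3.1-4.1} and Proposition~\ref{prop2.2}-type lower bounds along the leftward-translated orbits of a fixed nonzero test function below $r^{**}$; since $Q^n$ pushes mass into the cone $Int(Y_+)$ at a positive speed, a uniform lower bound $W_n(\cdot,x)\ge\delta_0 r^*$ on an expanding interval survives in the limit, so $W(\cdot,x)\ge\delta_0 r^*>0$ for $x$ in a suitable range, whence $W\neq 0$.

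Once $W$ is known to be a nontrivial fixed point in $C_{r^{**}}\subseteq C_{ar^*}$ for suitable $a$, the profile limit $W(\cdot,\infty)=r^*$ follows from Lemma~\ref{lemm3.1-999000}: the lower bound just obtained shows $W(\cdot,x)\ge b r^*$ for all large $x$ with some $b>0$, and {\bf (NM)} then forces $r=s=1$ in the notation of that lemma, i.e.\ $W(\cdot,\infty)=r^*$. For the monotonicity-in-$x$ addendum in (i), assume $T_{-y}\circ Q[\varphi]\ge Q\circ T_{-y}[\varphi]$ for $(y,\varphi)\in\mathbb{R}_+\times C_{r^{**}}$. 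Since $r^{**}\in Int(Y_+)$ is spatially constant, $T_{-y}[r^{**}]=r^{**}$, and an induction using this property together with monotonicity of $Q|_{C_{r^{**}}}$ yields $T_{-y}[W_n]\ge W_n$ for all $n$ and all $y\ge 0$; passing to the limit gives $T_{-y}[W]\ge W$, i.e.\ $W(\theta,x-y)\ge W(\theta,x)$ for $y\ge 0$, which is precisely that $W(\cdot,x)$ is nondecreasing in $x$.

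For case (ii), $x_0>0$, the argument is parallel but applied to the map $\tilde Q:=T_{x_0}\circ Q$. One checks $\tilde Q[r^{**}]=T_{x_0}[Q[r^{**}]]\le T_{x_0}[r^{**}]=r^{**}$ (again using that $r^{**}$ is spatially constant), that $\tilde Q|_{C_{r^{**}}}$ is monotone, and that $\{\tilde Q^n[r^{**}]\}$ is precompact by hypothesis; so $\tilde Q^n[r^{**}]\downarrow W$ in $C$ with $\tilde Q[W]=W$. The translation property $T_{-y}\circ Q[\varphi]\ge Q\circ T_{-y}[\varphi]$ upgrades, for $y\ge 0$, to $T_{-y}\circ\tilde Q[\varphi]=T_{x_0-y}\circ Q[\varphi]\ge T_{-y}\circ Q\circ T_{x_0}[\cdots]$-type comparisons, giving again $T_{-y}[W]\ge W$ and hence monotonicity of $W$ in $x$; nontriviality of $W$ and $W(\cdot,\infty)=r^*$ follow from the same combination of the spreading lower bound and Lemma~\ref{lemm3.1-999000}, now with the shifted characteristic speeds $-c_\pm^*\mp x_0$, which are still admissible because $x_0\in(-c_+^*,c_-^*)$ keeps both one-sided speeds of $\tilde Q$ positive in sum.

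\smallskip
\noindent\textbf{Main obstacle.} The routine parts are the monotone-convergence/precompactness extraction and the $x$-monotonicity addenda; the genuine difficulty is establishing \emph{nontriviality} of the limit $W$ — one must produce a uniform positive lower bound $W(\cdot,x)\ge\delta_0 r^*$ on a region that does \emph{not} shrink under iteration of $Q$ (resp.\ $T_{x_0}\circ Q$), and this requires carefully combining {\bf (SP)} (Lemma~\ref{lemm2.1-3.1-4.1}) with the {\bf (UC)}-driven spreading estimates of Proposition~\ref{prop2.2} applied to the minorant maps $Q_l$ from {\bf (ACH)}, tracking the shift $x_0$ through the characteristic cone so that the condition $x_0\in(-c_+^*,c_-^*)\cap\mathbb{R}_+$ is exactly what is needed. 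Absent (P3) for $Q$ itself, one cannot simply translate a bump and compare; the leftward estimates on discrete orbits of compactly supported data are the substitute, and getting their ranges to be compatible with the decreasing iteration $Q^n[r^{**}]\downarrow W$ is the crux.
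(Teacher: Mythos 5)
Your overall skeleton matches the paper's: iterate $\mathcal{Q}=T_{x_0}\circ Q$ downward from the supersolution $r^{**}$, use precompactness to pass to a limit fixed point $W$, establish a lower bound $W(\cdot,y)\geq \delta r^*$ for large $y$, then conclude $W(\cdot,\infty)=r^*$ from Lemma~\ref{lemm4.1} together with Lemma~\ref{lemm3.1-999000}, and get monotonicity in $x$ by the induction $T_{-y}[W_n]\geq W_n$ (note, by the way, that $T_{-y}[W]\ge W$ reads $W(\theta,x+y)\ge W(\theta,x)$, not $W(\theta,x-y)\ge W(\theta,x)$ as you wrote; the conclusion is still ``nondecreasing'').

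The genuine problem is in your treatment of the crux, nontriviality. You propose to obtain the lower bound by ``combining {\bf (SP)} (Lemma~\ref{lemm2.1-3.1-4.1}) with the {\bf (UC)}-driven spreading estimates of Proposition~\ref{prop2.2} applied to the minorant maps $Q_l$ from {\bf (ACH)}.'' Neither {\bf (SP)} nor {\bf (ACH)} is among the hypotheses of this lemma, so Lemma~\ref{lemm2.1-3.1-4.1} and the minorants $Q_l$ are simply not available here; a proof routed through them does not prove the statement as given. Moreover, Proposition~\ref{prop2.2} is stated for $c\ge 0$ and does not by itself cover the shift by $x_0>0$ in case (ii). The missing observation that makes the unadorned hypotheses suffice is that $r^{**}\geq r^*$ (this follows from $Q[r^{**}]\le r^{**}$, assumption {\bf (A)}, and Proposition~\ref{prop2.30000}). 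Once this is in hand, the explicit bump $\frac{r^*}{16}h$ lies below $r^{**}$, and one applies the second part of Proposition~\ref{prop2.1} \emph{directly to $Q$} with $c=-x_0$ and $\varepsilon=\tfrac13\min\{c_-^*-x_0,\,c_+^*+x_0,\,c_+^*+c_-^*\}$ (admissible precisely because $x_0\in(-c_+^*,c_-^*)$); only the monotonicity of $Q|_{C_{r^{**}}}$ is needed, since all iterates of $\frac{r^*}{16}h$ and of $r^{**}$ stay in $C_{r^{**}}$. This yields $T_{-y}\circ(T_{n_0x_0}\circ Q^{n_0})^n[r^{**}](\cdot,0)\geq \frac{r^*}{4}$ for $y\geq y_0$, and in case (ii) the translation hypothesis upgrades this to $(T_{x_0}\circ Q)^{nn_0}[r^{**}]$ via $(T_{x_0}\circ Q)^{n_0}\geq T_{n_0x_0}\circ Q^{n_0}$, so $W_{nn_0}(\cdot,y)\geq\frac{r^*}{4}$ and hence $W(\cdot,y)\geq\frac{r^*}{4}$ for $y\geq y_0$. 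Your instinct that a spreading estimate on a small bump below $r^{**}$ is the key is correct, but the specific machinery you invoke imports assumptions the lemma does not make, and without noting $r^{**}\ge r^*$ you cannot even place a bump of size comparable to $r^*$ under $r^{**}$ to start the comparison.
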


\begin{proof}
Note that $\{c_+^*+x_0,c_-^*-x_0\}>0$. By Proposition~\ref{prop2.30000} and the conditions of $Q,Q_{+}$,  we  easily see $r^{**}\geq r^{*}$. Let $\mathcal{Q}=T_{x_0}\circ Q$ and
$$
W_n(\cdot,x)=\mathcal{Q}^n[r^{**}](\cdot,x),   \,  \forall (x,n)\in \mathbb{R}\times \mathbb{N}.
$$
Then $0\leq W_{n+1}\leq W_n\leq r^{**}$ for all $n\in \mathbb{N}$. By the compactness of $\{\mathcal{Q}^n [r^{**}]:n\in \mathbb{N}\}$ in $C$,   it follows that there exists $W\in C_{r^{**}}$ such that $\lim\limits_{n\to \infty}||W_n-W||=0$, and hence, $T_{x_0}\circ Q[W]=W$.

 By applying Proposition~\ref{prop2.1} to $Q$ with $c=-x_0$ and $\varepsilon=\frac{\min\{c_-^*-x_0,c_+^*+x_0,c_+^*+c_-^*\}}{3}$, we see that there exist $n_0,y_0>0$ such that
 $$
  T_{-y}\circ (T_{n_0 x_0}\circ Q^{n_0})^n\circ T_y [\frac{r^{**}}{16}h](\cdot,0)\geq T_{-y}\circ (T_{n_0 x_0}\circ Q^{n_0})^n\circ T_y [\frac{r^*}{16}h](\cdot,0)\geq \frac{r^*}{4}>0
 $$
 and
 $$T_{-y}\circ (T_{n_0 x_0}\circ Q^{n_0})^n [r^{**}](\cdot,0)\geq  T_{-y}\circ (T_{n_0 x_0}\circ Q^{n_0})^n [r^*](\cdot,0)\geq \frac{r^*}{4}>0
 $$
 for all  $(n,y)\in \mathbb{N}\times[y_0,\infty)$.  Then we have the following
 claim.

{\bf Claim.} $W_{nn_0}(\cdot,y)\geq \frac{r^*}{4}$ for all $(n,y)\in \mathbb{N}\times[y_0,\infty)$.

Indeed, if $x_0=0$, then we have
$$
W_{nn_0}(\cdot,y)=T_{-y}\circ  (Q^{n_0})^n [r^{**}](\cdot,0)\geq \frac{r^*}{4}>0,    \,  \forall (n,y)\in \mathbb{N}\times[y_0,\infty).
$$
If $x_0>0$ and $ T_{-y}\circ Q[\varphi]\geq Q\circ T_{-y}[\varphi]$ for all $(y,\varphi)\in \mathbb{R}_+\times C_{r^{**}}$, then $  Q\circ T_{x_0}[\varphi] \geq T_{x_0}\circ Q [\varphi]$ for all $\varphi\in  C_{r^{**}}$.
It follows that $(T_{x_0}\circ Q)^{n_0}\geq T_{n_0x_0}\circ Q^{n_0}$, and hence, for any $(n,y)\in \mathbb{N}\times[y_0,\infty)$, there holds
\begin{eqnarray*}
W_{nn_0}(\cdot,y)&=&(T_{x_0}\circ Q)^{nn_0}[r^{**}](\cdot,y)
\\
&\geq& T_{-y}\circ  (T_{n_0x_0}\circ Q^{n_0})^n[r^{**}](0,\cdot)\geq \frac{r^*}{4}>0.
\end{eqnarray*}
This proves the claim.  Thus, $W(\cdot,y)\geq \frac{r^*}{4}$ for all $y\geq y_0$.
In view of  Lemmas~\ref{lemm4.1} and \ref{lemm3.1}, we obtain $W(\cdot,\infty)=r^*$.   Additionally,  we easily verify that $W(\cdot,x)$ is nondecreasing in $x$ provided that $ T_{-y}\circ Q[\varphi]\geq Q\circ T_{-y}[\varphi]$ for all $(y,\varphi)\in \mathbb{R}_+\times C_{r^{**}}$.
\end{proof}


\begin{lemma}\label{lemm3.2} Assume that there exist $r^{**}\in Int(Y_{+})$ and $W\in C_{r^{**}}\setminus \{0\}$ such that $Q[r^{**}]\leq r^{**}$, $Q|_{C_{r^{**}}}$ is monotone, and $W$ is  a  nontrivial  fixed point  of the map $Q$ in $C_{r^{**}}\setminus \{0\}$ with $W(\cdot,s)$ being nondecreasing in $s\in \mathbb{R}$. If $\{(T_{-z}\circ Q)^n [r^{**}]:n\in \mathbb{N}\}$ is
 precompact in $C$ for some  $z\in (0,\infty)$, then  $T_{-z}\circ Q$ has a  nontrivial  fixed point $W_z$ in $C_+$. Moreover, if  $W(\cdot,\infty)=r^*$ and $ T_{-y}\circ Q[\varphi]\geq Q\circ T_{-y}[\varphi]$ for all $(y,\varphi)\in \mathbb{R}_+\times C_{r^{**}}$, then $W_z(\cdot,\infty)\geq r^*$ and $W_z(\cdot,x)$ is nondecreasing in $x$.
\end{lemma}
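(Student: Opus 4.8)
The plan is to obtain $W_z$ as the limit of the monotone decreasing iteration of $\mathcal{Q}_z:=T_{-z}\circ Q$ started from the spatially homogeneous supersolution $r^{**}$, and to establish its nontriviality (and, under the additional hypotheses, its monotonicity in $x$ and the lower bound on its limit at $+\infty$) by comparing the iterates from below with the given fixed point $W$. The one substantive idea is the comparison $\mathcal{Q}_z[W]\ge W$, which converts the left shift $T_{-z}$ with $z>0$ together with the spatial monotonicity of $W$ into a subsolution of $\mathcal{Q}_z$; this is where $z>0$ is used essentially.

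First I would record the structural facts about $\mathcal{Q}_z$. Since $r^{**}\in Int(Y_+)$ is independent of $x$, we have $T_{-z}[r^{**}]=r^{**}$, so $\mathcal{Q}_z[r^{**}]=T_{-z}\circ Q[r^{**}]\le T_{-z}[r^{**}]=r^{**}$; together with $Q:C_+\to C_+$, the monotonicity of $Q|_{C_{r^{**}}}$, and the obvious monotonicity of $T_{-z}$, this shows $\mathcal{Q}_z(C_{r^{**}})\subseteq C_{r^{**}}$ and that $\mathcal{Q}_z$ is monotone on $C_{r^{**}}$. By {\bf (UB)} and the compactness of $M$, $C_{r^{**}}\subseteq C_{\phi_k^*}$ for a suitable $k$, and $C_{\phi_k^*}\subseteq C_r$ for some $r\in Int(\mathbb{R}^N_+)$, so $Q$—hence $\mathcal{Q}_z$—is continuous on $C_{r^{**}}$. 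Now set $W_n:=\mathcal{Q}_z^n[r^{**}]$. From $\mathcal{Q}_z[r^{**}]\le r^{**}$ and monotonicity the sequence $\{W_n\}$ is nonincreasing and bounded below by $0$; by the assumed precompactness of $\{W_n\}$ in $C$, it converges in $C$ to its pointwise infimum $W_z\in C_{r^{**}}$, and continuity of $\mathcal{Q}_z$ yields $\mathcal{Q}_z[W_z]=W_z$. The key point is the lower bound $W_n\ge W$ for all $n$: since $Q[W]=W$ we have $\mathcal{Q}_z[W]=T_{-z}[W]$, and because $W(\cdot,x)$ is nondecreasing in $x$ and $z>0$, $T_{-z}[W](\cdot,x)=W(\cdot,x+z)\ge W(\cdot,x)$, i.e. $\mathcal{Q}_z[W]\ge W$; an induction from $W_0=r^{**}\ge W$ then gives $W_n\ge W$, so $W_z\ge W>0$. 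This produces the desired nontrivial fixed point of $T_{-z}\circ Q$.

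For the "moreover" part, assume in addition $W(\cdot,\infty)=r^*$ and $T_{-y}\circ Q[\varphi]\ge Q\circ T_{-y}[\varphi]$ for all $(y,\varphi)\in\mathbb{R}_+\times C_{r^{**}}$. I would propagate the spatial monotonicity through the iteration: $W_0=r^{**}$ is constant in $x$, and if $T_{-y}[W_n]\ge W_n$ for all $y\ge0$, then, using that translations commute and the hypothesis applied with $\varphi=W_n$,
\[
T_{-y}[W_{n+1}]=T_{-z}\bigl(T_{-y}\circ Q[W_n]\bigr)\ge T_{-z}\bigl(Q\circ T_{-y}[W_n]\bigr)\ge T_{-z}\circ Q[W_n]=W_{n+1},
\]
the last inequality by monotonicity of $T_{-z}\circ Q$ on $C_{r^{**}}$ and the induction hypothesis. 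Letting $n\to\infty$, $W_z(\cdot,x)$ is nondecreasing in $x$; being also bounded above by $r^{**}$, $W_z(\cdot,\infty)$ exists, and since $W_z\ge W$ and $W(\cdot,\infty)=r^*$ we conclude $W_z(\cdot,\infty)\ge r^*$.

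I expect the only delicate points to be bookkeeping rather than substantial: verifying that $\mathcal{Q}_z$ stays inside an order interval on which $Q$ is continuous and monotone so the monotone-iteration machinery applies, and being careful that $W_z(\cdot,\infty)$ is understood as the pointwise limit—equivalently, by the monotonicity in $x$ together with a Dini-type argument, the $Y$-valued limit. The genuine obstacle, and the only place where a real idea enters, is establishing $\mathcal{Q}_z[W]\ge W$, i.e. that the given profile $W$ serves as a subsolution for the left-shifted map; everything else is a routine application of the comparison/iteration scheme and of Lemmas~\ref{lemm4.1} and \ref{lemm3.1}.
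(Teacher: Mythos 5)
Your proposal is correct and follows essentially the same route as the paper: the monotone decreasing iteration $(T_{-z}\circ Q)^n[r^{**}]$ converging by precompactness, with nontriviality secured by the comparison $T_{-z}\circ Q[W]=T_{-z}[W]\geq W$ coming from the spatial monotonicity of $W$ and $z>0$. The only difference is that you spell out the induction for the spatial monotonicity of $W_z$ and the limit at $+\infty$, which the paper leaves as "we easily see".
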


\begin{proof}  By  the conditions on $W$ and $z$, it follows that $r^{**}\geq T_{-z}\circ Q[W]=T_{-z}[W]\geq W>0$.  We can define $W_z:=\lim\limits_{n\to \infty} (T_{-z}\circ Q)^n [r^{**}]$ due to the compactness and monotonicity. Then $T_{-z}\circ Q[W_z]=W_z$ and $W\leq W_z\leq r^{**}$.  Moreover, if the additional conditions hold, then we easily see that $W_z(\cdot,\infty)\geq r^*$  and $W_z(\cdot,x)$ is nondecreasing in $x$. \end{proof}

With the help of  Lemmas~\ref{lemma-prop3.5} and \ref{lemm3.2}, we can easily prove the following result.
\begin{prop} \label{prop3.5}
 Assume that $c_-^*>0$  and $Q_+$ satisfies {\bf (UC)}.  If   there exists $r^{**}\in Int(Y_{+})$  such that $Q[r^{**}]\leq r^{**}$, $Q|_{C_{r^{**}}}$ is monotone, and  $\{Q^n [r^{**}]:n\in \mathbb{N}\}$ is precompact in $C$,
   then the following statements are valid:

\begin {itemize}
\item [{\rm (i)}] If $c_+^*>0$, then
 $Q$ has a  nontrivial  fixed point $W$ in $C_{r^{**}}$ such that
$W(\cdot,\infty)=r^*$ and $\lim\limits_{n\to \infty}||Q^n[r^{**}]-W||=0$. Moreover,  $W(\cdot,s)$ is nondecreasing in $s\in \mathbb{R}$ provided that $ T_{-y}\circ Q[\varphi]\geq Q\circ T_{-y}[\varphi]$ for all $(y,\varphi)\in \mathbb{R}_+\times C_{r^{**}}$.

\item [{\rm (ii)}] If $c_+^*\leq 0$, $\{(T_{x_0}\circ Q)^n [r^{**}]:n\in \mathbb{N}\}$ is precompact in $C$ for some $x_0\in (-c_+^*,c_-^*)\cap \mathbb{R}_+$, and $ T_{-y}\circ Q[\varphi]\geq Q\circ T_{-y}[\varphi]$ for all $(y,\varphi)\in \mathbb{R}_+\times C_{r^{**}}$,
then $Q$ has a  nontrivial  fixed point $W$ in $C_{r^{**}}$ such that
$W(\cdot,\infty)\geq r^*$ and $\lim\limits_{n\to \infty}||Q^n[r^{**}]-W||=0$, and $W(\cdot,s)$ is nondecreasing in $s\in \mathbb{R}$.
\end {itemize}
\end{prop}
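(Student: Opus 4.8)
The plan is to deduce everything from Lemmas~\ref{lemma-prop3.5} and \ref{lemm3.2} once the orbit of $r^{**}$ is understood as a monotone sequence. Since $Q[r^{**}]\leq r^{**}$ and $Q|_{C_{r^{**}}}$ is monotone, induction gives $Q^{n+1}[r^{**}]\leq Q^{n}[r^{**}]$ for all $n$; as $\{Q^{n}[r^{**}]\}_{n\in\mathbb{N}}$ is precompact in $C$, it converges to some $W\in C_{r^{**}}$ with $\lim_{n\to\infty}\|Q^{n}[r^{**}]-W\|=0$, and continuity of $Q$ on $C_{r^{**}}$ forces $Q[W]=W$. This already yields the convergence statement in both (i) and (ii); what remains is to show $W\neq 0$, to identify $W(\cdot,\infty)$, and (under the translation hypothesis) to prove the monotonicity of $W$ in $x$.

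For (i), the hypotheses $c_+^*>0$ and $c_-^*>0$ give $0\in(-c_+^*,c_-^*)\cap\mathbb{R}_+$, and $\{(T_0\circ Q)^{n}[r^{**}]\}=\{Q^{n}[r^{**}]\}$ is precompact, so Lemma~\ref{lemma-prop3.5}(i) applies directly. The fixed point it produces is, by the construction in that lemma's proof, exactly $W=\lim_{n\to\infty}Q^{n}[r^{**}]$; it is nontrivial, satisfies $W(\cdot,\infty)=r^*$, and is nondecreasing in $x$ as soon as $T_{-y}\circ Q\geq Q\circ T_{-y}$ on $\mathbb{R}_+\times C_{r^{**}}$. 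This completes (i).

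For (ii) one first propagates the monotonicity in $x$ along the orbit. Since $r^{**}\in Int(Y_+)$ is spatially homogeneous, $T_{-y}[r^{**}]=r^{**}$, so the translation hypothesis gives $T_{-y}[Q[r^{**}]]\geq Q[r^{**}]$ for $y\geq0$, i.e.\ $Q[r^{**}]$ is nondecreasing in $x$; inductively, if $Q^{n}[r^{**}]$ is nondecreasing then $T_{-y}[Q^{n}[r^{**}]]\geq Q^{n}[r^{**}]$, hence $T_{-y}[Q^{n+1}[r^{**}]]\geq Q[T_{-y}[Q^{n}[r^{**}]]]\geq Q^{n+1}[r^{**}]$, so every $Q^{n}[r^{**}]$ and thus $W$ is nondecreasing in $x$. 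For the nontriviality and the behaviour at $+\infty$, apply Lemma~\ref{lemma-prop3.5}(ii) with the given $x_0>0$ (note $x_0>-c_+^*\geq0$) and the translation hypothesis: $T_{x_0}\circ Q$ has a nontrivial, nondecreasing fixed point $V\in C_{r^{**}}$ with $V(\cdot,\infty)=r^*$. The identity $T_{x_0}\circ Q[V]=V$ rewrites as $Q[V]=T_{-x_0}[V]\geq V$ (the inequality because $V$ is nondecreasing and $x_0\geq0$), so $V$ is a subsolution of $Q$ in $C_{r^{**}}$; monotonicity of $Q$ then gives $V\leq Q^{n}[V]\leq Q^{n}[r^{**}]$ for all $n$, whence $V\leq W$ upon letting $n\to\infty$. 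Therefore $W\geq V>0$, so $W$ is a nontrivial fixed point of $Q$ in $C_{r^{**}}$, and since $V$ and $W$ are nondecreasing and bounded by $r^{**}$ their limits at $x=+\infty$ exist and satisfy $W(\cdot,\infty)\geq V(\cdot,\infty)=r^*$. Lemma~\ref{lemm3.2} provides the abstract bookkeeping for this passage between $Q$ and its shift $T_{x_0}\circ Q$.

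The one genuinely delicate point is the nontriviality of $W$ in case (ii): when $c_+^*\leq0$ a spatially homogeneous datum cannot be spread rightward at a positive speed, so the needed positivity must be imported from the shifted map $T_{x_0}\circ Q$ — which is precisely what Proposition~\ref{prop2.1}, via Lemma~\ref{lemma-prop3.5}, supplies — and the comparison $V\leq Q^{n}[r^{**}]$ transfers it back to $Q$. Everything else is a routine monotone-iteration and precompactness argument, so I expect no further obstacles.
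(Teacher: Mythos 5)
Your proof is correct and follows essentially the same route as the paper, whose entire argument is the one-line remark that the proposition follows from Lemmas~\ref{lemma-prop3.5} and \ref{lemm3.2}. In part (ii) you unpack the content of Lemma~\ref{lemm3.2} inline (the monotone iteration from the subsolution $V$ with $Q[V]=T_{-x_0}[V]\geq V$, applied to the pair $T_{x_0}\circ Q$ and $Q=T_{-x_0}\circ(T_{x_0}\circ Q)$) rather than citing it as a black box, but the argument is the same.
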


\begin{thm} \label{thm3.6}
Assume that $Q$ satisfies  {\bf (ACH)}, {\bf (NM)}, and  $Q$ has at least one fixed point in $\mathcal{K}$ for any closed, convex, and positively invariant set $\mathcal{K}$ of $Q$. Let $c_-^*>0$  and $T_{-y}\circ Q_l[\varphi]\geq Q_l\circ T_{-y}[\varphi]$ for all $l\in \mathbb{N}$ and $(y,\varphi)\in \mathbb{R}_+\times C_{\phi_{1}^{*}}$. If  for any $l\in \mathbb{N}$, there exists   $z:=z_l\in (-c_{l,+}^*,c_{l,-}^*)\cap \mathbb{R}_+$ such that $\{( Q_l)^n [\phi_{1}^{*}]:n\in \mathbb{N}\}$ and $\{(T_z\circ Q_l)^n [\phi_{1}^{*}]:n\in \mathbb{N}\}$ are precompact in $C$,  then $Q$ has a  nontrivial  fixed point $W$ in $C_{\phi_1^*}$ such that
$W(\cdot,\infty)=r^*$.
\end{thm}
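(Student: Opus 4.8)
The plan is to produce the desired fixed point of $Q$ without ever iterating the (non‑monotone) map $Q$ itself: I would manufacture a lower barrier from one of the monotone minorants $Q_l$ supplied by {\bf (ACH)}, trap $Q$ in the order interval between this barrier and $\phi_1^*$, apply the standing abstract fixed‑point hypothesis on that interval, and finally identify the limit at $+\infty$ via Lemma~\ref{lemm3.1-999000}.

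First I would fix an index. Since $c_-^*=\lim_{l\to\infty}c_{l,-}^*>0$ by {\bf (ACH)}(i), choose $l\in\mathbb{N}$ with $c_{l,-}^*>0$, and apply Proposition~\ref{prop3.5} to the monotone map $Q_l$ with $r^{**}:=\phi_1^*$. Its hypotheses all hold: $Q_l\leq Q$ on $C_{\phi_l^*}\supseteq C_{\phi_1^*}$ together with {\bf (UB)} give $Q_l[\phi_1^*]\leq Q[\phi_1^*]\leq\phi_1^*$; $Q_l$ is monotone on $C_+$; $Q_{l,+}$ satisfies {\bf (UC)} by {\bf (ACH)}(iv); $\{Q_l^n[\phi_1^*]\}_{n\in\mathbb{N}}$, and when $c_{l,+}^*\leq0$ also $\{(T_{z_l}\circ Q_l)^n[\phi_1^*]\}_{n\in\mathbb{N}}$ with $z_l\in(-c_{l,+}^*,c_{l,-}^*)\cap\mathbb{R}_+$, is precompact in $C$ by hypothesis; and $T_{-y}\circ Q_l[\varphi]\geq Q_l\circ T_{-y}[\varphi]$ on $\mathbb{R}_+\times C_{\phi_1^*}$ by hypothesis. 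Thus, whether $c_{l,+}^*>0$ (Proposition~\ref{prop3.5}(i)) or $c_{l,+}^*\leq0$ (Proposition~\ref{prop3.5}(ii)), I obtain a nontrivial fixed point $W_l\in C_{\phi_1^*}$ of $Q_l$ with $W_l(\cdot,x)$ nondecreasing in $x$ and $W_l(\cdot,\infty)\geq r_l^*$.

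Next comes the heart of the argument. Set $\mathcal{K}:=[W_l,\phi_1^*]_C$; it is nonempty (it contains $\phi_1^*$ because $W_l\leq\phi_1^*$), closed, and convex. It is positively invariant under $Q$: for $\varphi\in\mathcal{K}$, from $\varphi\leq\phi_1^*$ and {\bf (UB)} (with $k=1$) one gets $Q[\varphi]\leq\phi_1^*$, and from $\varphi\in C_{\phi_1^*}\subseteq C_{\phi_l^*}$, {\bf (ACH)}(ii), $\varphi\geq W_l$ and the monotonicity of $Q_l$ one gets $Q[\varphi]\geq Q_l[\varphi]\geq Q_l[W_l]=W_l$. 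Hence $Q[\mathcal{K}]\subseteq\mathcal{K}$, so the standing fixed‑point assumption gives a fixed point $W\in\mathcal{K}$ of $Q$; since $W\geq W_l>0$ it is nontrivial, and $W\leq\phi_1^*$ gives $W\in C_{\phi_1^*}$. Finally, since $W_l$ is nondecreasing in $x$ with $W_l(\cdot,\infty)\geq r_l^*\in Int(Y_+)$ and $M$ is compact, a Dini‑type argument yields $b\in(0,\infty)$ with $W_l(\cdot,x)\geq b\,r^*$ for all sufficiently large $x$, whence $W(\cdot,x)\geq W_l(\cdot,x)\geq b\,r^*$ for all large $x$; choosing $a>\max\{b,1\}$ large enough that $\phi_1^*\leq a\,r^*$ (possible since $\phi_1^*,r^*\in Int(Y_+)$ and $M$ is compact), we have $W\in C_{ar^*}\setminus\{0\}$, and {\bf (NM)} with Lemma~\ref{lemm3.1-999000} forces $W(\cdot,\infty)=r^*$.

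The conceptual crux is the positive‑invariance step: the non‑monotonicity of $Q$ is circumvented by taking the upper endpoint of the trapping interval to be $\phi_1^*$, where {\bf (UB)} applies, while the lower comparison $Q\geq Q_l$ is still available on the larger set $C_{\phi_l^*}$; once this is seen, the rest is bookkeeping plus appeals to Propositions~\ref{prop3.5} and Lemma~\ref{lemm3.1-999000}. The only genuinely technical point I expect to write out is the implication "$W_l$ nondecreasing in $x$ with $W_l(\cdot,\infty)\geq r_l^*$" $\Longrightarrow$ "$W_l(\cdot,x)\geq b\,r^*$ for large $x$" (the Dini/compactness passage), together with the routine check that a single index $l$ can simultaneously satisfy all invoked hypotheses — which it does, since the precompactness assumptions and the existence of $z_l$ are postulated for every $l$ and $c_{l,-}^*>0$ holds for all large $l$.
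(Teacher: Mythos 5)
Your proposal is correct and follows essentially the same route as the paper's proof: apply Proposition~\ref{prop3.5} to a monotone minorant $Q_{l_0}$ with $c_{l_0,-}^*>0$ to obtain a fixed point $W^*$ of $Q_{l_0}$ with $W^*(\cdot,\infty)\geq r_{l_0}^*$, trap $Q$ in the closed convex invariant set $[W^*,\phi_1^*]_C$ (invariance from $Q\geq Q_{l_0}$ on $C_{\phi_{l_0}^*}$ and {\bf (UB)}), invoke the standing fixed-point hypothesis, and conclude $W(\cdot,\infty)=r^*$ via Lemma~\ref{lemm3.1-999000}. Your write-up merely makes explicit some checks the paper leaves implicit (the invariance verification and the passage from $W^*(\cdot,\infty)\geq r_{l_0}^*$ to a uniform lower bound $br^*$ for large $x$).
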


\begin{proof}  Clearly, there is $l_0\in \mathbb{N}$ such that $c_{l_0,-}^*>0$.  In view of { \bf (ACH)}-(i), we have $r_{l_0}^*\leq  \phi_1^*$. By applying Proposition~\ref{prop3.5} to
$Q_{l_0}|_{C_{ \phi_{1}^{*}}  }$, we obtain that $Q_{l_0}[W^*]=W^*$ for some $W^*\in C_{\phi_{1}^{*} }$ with $W^*(\cdot,\infty)\geq r_{l_0}^*$. Let $\mathcal{K}=[W^*,\phi_1^*]_C$. Then $Q[{\mathcal{K}}]\subseteq{\mathcal{K}}\neq \emptyset$  and $\mathcal{K}$ is a closed and convex subset of $C$. It follows that we obtain that $Q$ has a  nontrivial  fixed point $W$ in $[W^*,\phi_1^*]_C$, and hence, there exists $x_0>0$ such that $\frac{r_{l_0}^*}{2}\leq W(\cdot,x)\leq \phi_1^*$ for all $x\geq x_0$. Thus, the desired conclusion follows from  Lemma~\ref{lemm3.1-999000}.
\end{proof}

\begin{rem}  \label{rem3.1-sp-monoty}
  In view of  Proposition~\ref{prop3.5}-(i),   in Therorem~\ref{thm3.6} we can use $c_+^*>0$ to replace  the assumption that  $T_{-y}\circ Q_l[\varphi]\geq Q_l\circ T_{-y}[\varphi]$ for all $(l, y,\varphi)\in \mathbb{N}\times
 \mathbb{R}_+\times C_{r^*_l}$.
  \end{rem}
\subsection{The bilateral limit case}
Next we study the  propagation dynamics for the discrete-time
systems in  the bilateral limit case.  For convenience,  we define the reflection operator $\mathcal{S}$ on
$C$ by
$$[\mathcal{S}(\varphi)](\theta,x)= \varphi(\theta,-x),   \, \forall
(\theta,x) \in M \times  \mathbb{R},  \, \varphi\in C.
$$
We need  a series of basic assumptions.
\begin{enumerate}
\item [{\bf (A$_-$)}] There exist  $r_{-}^*\in Int(Y_+)$ and a continuous map $Q_-:  C_+\to C_+$ such that $Q_-[r_{-}^*]=r_{-}^*$ and  $\lim\limits_{y\to -\infty}T_{-y}\circ Q^n \circ T_y[\varphi]= Q_-^n[\varphi]$  in $C$ for all $\varphi\in C_+$ and $n\in \mathbb{N}$.

\item [{\bf (SP$_-$)}]  There exist $(\rho^*_-,N^*_-)\in (0,\infty)\times \mathbb{N}$ and  $\varrho^*_-\in (\rho^*_-,\infty)$  such that for any  $a\in \mathbb{R}$  and $\varphi\in
C_{+}\setminus\{0\}$ with   $\varphi(\cdot, a)\subseteq Y_+\setminus \{0\}$,
we have  $Q^n[\varphi](\cdot,x)\in Int(Y_+)$ for all  $n\geq N^*_-$ and $x-a\in [-n\varrho^*_-,-n\rho^*_-]$.


\item [{\bf(UC$_-$)}]  There exist $c_-^*(-\infty),c_+^*(-\infty)\in \mathbb{R}$   such that $c_+^*(-\infty)+c_-^*(-\infty)>0$, and
$$
\lim\limits_{n\rightarrow \infty}
\max\limits_{x\in \mathcal{A}_{\varepsilon,n}^-}
||Q_-^n[\varphi](\cdot,x)-r_{-}^*(\cdot)||=0, \, \forall \varepsilon\in (0,\frac{c_+^*(-\infty)+c_-^*(-\infty)}{2}), \, \varphi\in
C_{+}\setminus\{0\},
$$
where $\mathcal{A}_{\varepsilon,n}^-= n{[-c_-^*(-\infty)+\varepsilon,c_+^*(-\infty)-\varepsilon]}$.

\item [{\bf(ACH$_-$)}] There exist sequences $\{c_{-l,\pm}^*\}_{l=1}^\infty$ in $\mathbb{R}$, $\{r_{-l}^*\}_{l=1}^\infty$ in $Int(Y_+)$, $\{Q_{-l}\}_{l=1}^\infty$, and $\{Q_{-l,-}\}_{l=1}^\infty$ such that for any positive integer  $l$, there hold
\begin{enumerate}
\item [(i)]   $c_+^*(-\infty):=\lim\limits_{j\to \infty}c_{-j,+}^*$,  $c_-^*(-\infty):=\lim\limits_{j\to \infty}c_{-j,-}^*$, and $r_{-l}^*\leq \phi_1^*$.

\item [(ii)]   $Q_{-l},Q_{-l,-}:C_+ \to C_+$ are continuous and monotone maps with $Q\geq Q_{-l}$ in $C_{\phi_l^*}$,  where $\phi_l^*$ is defined as in {\bf (UB)}.

\item [(iii)]  $Q_{-l}$ is subhomogeneous on $[0,r_{-l}^*]_ C$.

\item [(iv)]      $Q_{-l,-}$  satisfies {\bf (UC$_-$)} with  $(c_{-l,-}^*,c_{-l,+}^*,r^*_{-l})$.

\item [(v)]   $(Q_{-l},Q_{-l,-},r^*_{-l})$  satisfies {\bf (A$_-$)}.
\end{enumerate}

\item [{\bf(NM$_-$)}]  $\mathcal{S}\circ Q \circ \mathcal{S} $ satisfies {\bf(NM)} with $r^*$ replaced by $r_-^*$.

\item [{\bf(LC$_-$)}]  $(\mathcal{S}\circ L \circ \mathcal{S}, \mathcal{S}\circ L_- \circ \mathcal{S}) $ satisfies {\bf(LC)} with $(L,L_+)$ replaced by $(\mathcal{S}\circ L \circ \mathcal{S}, \mathcal{S}\circ L_- \circ \mathcal{S})$.

\item [{\bf (GLC)}]  There exists a sequence of linear operators $\{(L_k,L_{k,+})\}_{k\in \mathbb{N}}$   with  {\bf (LC)}
such that
$$
Q[\varphi]\leq L_k[\varphi],  \, \forall \varphi\in C_{\phi_k^*},  \, k\in \mathbb{N}.
$$

\item [{\bf (GLC$_-$)} ] There exists a sequence of linear operators $\{(\hat{L}_k,\hat{L}_{k,-})\}_{k\in \mathbb{N}}$   with  {\bf (LC$_-$)}
such that
$$
Q [\varphi]\leq
\hat{L}_k[\varphi],\, \forall \varphi\in C_{\phi_k^*},  \, k\in \mathbb{N}.
$$
\end{enumerate}
Define
$$
 \tilde{Q}[\varphi](\theta,x)=Q[\varphi(\cdot,-\cdot)](\theta,-x), \,
 \tilde{Q}_+[\varphi](\theta,x)=Q_-[\varphi(\cdot,-\cdot)](\theta,-x), \,
 \forall (\varphi,\theta,x)\in C_+\times M\times \mathbb{R}.
 $$
 Clearly, $\tilde{Q}=\mathcal{S}\circ Q \circ \mathcal{S}$, and $\tilde{Q}_+=\mathcal{S}\circ Q_- \circ \mathcal{S}$.
 It then follows that $Q$ and $Q_-$ satisfy {\bf (A$_-$)}, {\bf(SP$_-$)}, {\bf(UC$_-$)}, {\bf(NM$_-$)}, {\bf(ACH$_-$)}, {\bf (LC$_-$)}, {\bf (GLC$_-$)}  if and only if $\tilde{Q}$ and $\tilde{Q}_+$ satisfy {\bf (A)}, {\bf(SP)}, {\bf(UC)}, {\bf(NM)},{\bf(ACH)}, {\bf (LC)}, {\bf (GLC)}, respectively, with different parameters.


Let  ${\bf {1}}_{\mathbb{R}_+}:\mathbb{R} \to \mathbb{R}$ be defined by ${\bf {1}}_{\mathbb{R}_+}(x)=1, \forall x\in \mathbb{R}_+$, and   ${\bf {1}}_{\mathbb{R}_+}(x)=0, \forall x\in (-\infty,0)$.
Our first result is on  the upward convergence under  the bilateral {\bf(UC)}/{\bf(UC)}-type assumption.

\begin{thm} \label{thm3.1-bil-uc/uc}
Assume that $Q$ satisfies  {\bf (ACH)}, {\bf (NM)}, {\bf (SP)},  {\bf (ACH$_-$)}, {\bf (NM$_-$)}, and {\bf (SP$_-$)}.
Let $\varphi\in C_{+}\setminus\{0\}$. Then the following statements are valid:
\begin{itemize}
\item [{\rm (i)}]
If $\min\{c_+^*,c_-^*(-\infty)\}>0$, then $$\lim\limits_{n\rightarrow \infty}
\max\left\{||Q^n[\varphi](\cdot, x)-r^*\cdot {\bf {1}}_{\mathbb{R}_+}(x)-r^*_-\cdot (1-{\bf {1}}_{\mathbb{R}_+}(x))||:x\in\mathcal{B}_{n,\varepsilon}\right\}= 0,$$ where
$\varepsilon\in (0,\frac{1}{2}\min\{c_+^*,c_-^*(-\infty),c_+^*+c_-^*,c_+^*(-\infty)+c_-^*(-\infty)\})$ and $\mathcal{B}_{n,\varepsilon}=[n\max\{\varepsilon,-c_-^*+\varepsilon\}, n(c_+^*-\varepsilon)]\bigcup [n(-c_-^*(-\infty)+\varepsilon),n\min\{-\varepsilon,c_+^*(-\infty)-\varepsilon\}]$.

\item [{\rm (ii)}] If $\min\{c_-^*,c_+^*,c_-^*(-\infty),c_+^*(-\infty)\}>0$, then $$\lim\limits_{\alpha \rightarrow \infty}
\sup\left\{||Q^n[\varphi](\cdot, x)-r^*\cdot {\bf {1}}_{\mathbb{R}_+}(x)-r^*_-(1-{\bf {1}}_{\mathbb{R}_+}(x))||:(n,x)\in \mathcal{C}_{\alpha,\varepsilon}\right\}= 0,$$
where $\varepsilon\in (0,\min\{c_+^*,c_-^*(-\infty)\})$ and $\mathcal{C}_{\alpha,\varepsilon}=\{(n,x)\in \mathbb{N}\times \mathbb{R}:n\geq \alpha \mbox{ and } x \in [\alpha,  n(c_+^*-\varepsilon)] \bigcup [n(-c_-^*(-\infty)+\varepsilon),-\alpha]\}$.
\end{itemize}
\end{thm}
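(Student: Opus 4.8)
The plan is to reduce the bilateral statement to two unilateral statements, one governing the behavior on the positive spatial side and one on the negative side, by using the reflection operator $\mathcal{S}$ and the correspondence between the hypotheses on $(Q,Q_-)$ and those on $(\tilde{Q},\tilde{Q}_+)=(\mathcal{S}\circ Q\circ\mathcal{S},\mathcal{S}\circ Q_-\circ\mathcal{S})$ noted just before the statement. Fix $\varphi\in C_+\setminus\{0\}$. For the positive side, I would apply Theorem~\ref{thm3.1-0} (in case (i)) and Theorem~\ref{thm3.1} (in case (ii)) to $Q$ itself with $K=\{\varphi\}$: assumptions {\bf (ACH)}, {\bf (NM)}, {\bf (SP)} hold by hypothesis, and $c_+^*>0$ is part of the standing assumption in each case, so those theorems yield
$$
\lim_{n\to\infty}\max\bigl\{\|Q^n[\varphi](\cdot,x)-r^*\|:n\max\{\varepsilon,-c_-^*+\varepsilon\}\le x\le n(c_+^*-\varepsilon)\bigr\}=0
$$
for case~(i), and the corresponding $\alpha$-uniform statement on $\{(n,x):n\ge\alpha,\ \alpha+nc\le x\le n(c_+^*-\varepsilon)\}$ for case~(ii). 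For the negative side, I would apply the same two theorems to $\tilde{Q}$: by the displayed equivalence, $\tilde{Q}$ satisfies {\bf (ACH)}, {\bf (NM)}, {\bf (SP)} with parameters $c_\pm^*(-\infty)$ and fixed point $r_-^*$, so Theorem~\ref{thm3.1-0}/\ref{thm3.1} applied to $\tilde{Q}^n[\mathcal{S}\varphi]$ gives convergence to $r_-^*$ on the interval $n\max\{\varepsilon,-c_+^*(-\infty)+\varepsilon\}\le \tilde{x}\le n(c_-^*(-\infty)-\varepsilon)$; undoing the reflection $x=-\tilde{x}$ converts this into convergence of $Q^n[\varphi](\cdot,x)$ to $r_-^*$ on $n(-c_-^*(-\infty)+\varepsilon)\le x\le n\min\{-\varepsilon,c_+^*(-\infty)-\varepsilon\}$.

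Having both one-sided limits, I would then check that the region $\mathcal{B}_{n,\varepsilon}$ (resp.\ $\mathcal{C}_{\alpha,\varepsilon}$) in the theorem is exactly the union of the positive-side interval and the negative-side interval produced above, so that on $\mathcal{B}_{n,\varepsilon}\cap\mathbb{R}_+$ the target $r^*\cdot{\bf 1}_{\mathbb{R}_+}(x)+r_-^*\cdot(1-{\bf 1}_{\mathbb{R}_+}(x))$ equals $r^*$ and on $\mathcal{B}_{n,\varepsilon}\cap(-\infty,0)$ it equals $r_-^*$; the $x=0$ point is not in the union because of the $\pm\varepsilon$ buffers. The smallness requirement $\varepsilon<\tfrac12\min\{c_+^*,c_-^*(-\infty),c_+^*+c_-^*,c_+^*(-\infty)+c_-^*(-\infty)\}$ is precisely what makes each one-sided interval nonempty and keeps the two intervals disjoint from $0$; in case~(ii) the extra hypothesis $\min\{c_-^*,c_+^*(-\infty)\}>0$ is what lets the positive-side interval start at a fixed $\alpha$ (rather than growing like $n$) and symmetrically on the negative side. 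A short bookkeeping argument with $\max/\min$ then identifies $\mathcal{B}_{n,\varepsilon}$ and $\mathcal{C}_{\alpha,\varepsilon}$ with these unions, and taking the maximum of the two one-sided error bounds finishes the proof.

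The only genuinely delicate point is making sure the reflection dictionary is applied with the right relabeling of speeds: under $\mathcal{S}$ the ``rightward'' speed $c_+^*$ of $\tilde{Q}$ corresponds to the ``leftward'' speed $c_-^*(-\infty)$ of $Q$ and vice versa, so one must be careful that the interval $n\max\{\varepsilon,-c_+^*(-\infty)+\varepsilon\}\le\tilde x\le n(c_-^*(-\infty)-\varepsilon)$ for $\tilde Q$ reflects to exactly the claimed window for $Q$, and that the hypotheses {\bf (ACH$_-$)}, {\bf (NM$_-$)}, {\bf (SP$_-$)} of the statement are literally the translates of {\bf (ACH)}, {\bf (NM)}, {\bf (SP)} for $\tilde Q$. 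Everything else is a direct invocation of the unilateral theorems together with the elementary observation that a maximum over a union is the maximum of the two maxima, so no new estimate is needed.
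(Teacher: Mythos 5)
Your proposal is correct and follows essentially the same route as the paper: the authors likewise apply Theorem~\ref{thm3.1-0}-(ii) (for part (i)) and Theorem~\ref{thm3.1}-(ii) with $c=0$ (for part (ii)) to both $\{Q,\varphi\}$ and $\{\tilde{Q},\mathcal{S}[\varphi]\}$, then undo the reflection and combine the two one-sided windows. Your remarks on the speed relabeling under $\mathcal{S}$ and on why $\min\{c_-^*,c_+^*(-\infty)\}>0$ is needed to take a fixed $\alpha$ in part (ii) match the roles these hypotheses play in the paper's argument.
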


\begin{proof}
By applying Theorem~\ref{thm3.1-0}-(ii) to  $\{Q, \varphi\}$ and $\{\tilde{Q}, \mathcal{S}[\varphi]\}$, we have
$$
\lim\limits_{n\rightarrow \infty}
\Big[\max\{||Q^n[\varphi](\cdot, x)-r^*||:n\max\{\varepsilon,-c_-^*+\varepsilon\}\leq x \leq n(c_+^*-\varepsilon)\}\Big]= 0
$$
for all $\varepsilon\in (0,\frac{1}{2}\min\{c_+^*,c_+^*+c_-^*\})$,
and
$$\lim\limits_{n\rightarrow \infty}
\Big[\max\{||\tilde{Q}^n[\mathcal{S}[\varphi]](\cdot, x)-r^*_-||:n\max\{\varepsilon,-c_+^*(-\infty)+\varepsilon\}\leq x \leq n(c_-^*(-\infty)-\varepsilon)\}\Big]= 0$$
 for all $\varepsilon\in (0,\frac{1}{2}\min\{c_-^*(-\infty),c_+^*(-\infty)+c_-^*(-\infty)\})$. The latter implies that
 $$\lim\limits_{n\rightarrow \infty}
\Big[\max\{||\mathcal{S}\circ {Q}^n[\varphi](\cdot, x)-r^*_-||:n\max\{\varepsilon,-c_+^*(-\infty)+\varepsilon\}\leq x \leq n(c_-^*(-\infty)-\varepsilon)\}\Big]= 0,
$$
and hence,
$$\lim\limits_{n\rightarrow \infty}\Big[
\max\{|| {Q}^n[\varphi](\cdot, x)-r^*_-||: n(-c_-^*(-\infty)+\varepsilon)\leq x \leq n\min\{-\varepsilon,c_+^*(-\infty)-\varepsilon\}\}\Big]= 0
$$
for all $\varepsilon\in (0,\frac{1}{2}\min\{c_-^*(-\infty),c_+^*(-\infty)+c_-^*(-\infty)\})$.
Thus, statement (i) holds true.

(ii) By applying Theorem~\ref{thm3.1}-(ii) with $c=0$ to $\{Q, \varphi\}$ and $\{\tilde{Q}, \mathcal{S}[\varphi]\}$, we obtain
 $$
 \lim\limits_{\alpha \rightarrow \infty}
\Big[\sup\{||Q^n[\varphi](\cdot, x)-r^*||:n\geq \alpha \mbox{ and } \alpha \leq x \leq n(c_+^*-\varepsilon)\}\Big]= 0
$$
 for all $\varepsilon\in (0,c_+^*)$,
and
$$\lim\limits_{\alpha \rightarrow \infty}
\Big[\sup\{||\tilde{Q}^n[\mathcal{S}[\varphi]](\cdot, x)-r^*_-||:n\geq \alpha \mbox{ and } \alpha \leq x \leq n(c_-^*(-\infty)-\varepsilon)\}\Big]= 0
$$
for all $\varepsilon\in (0,c_-^*(-\infty))$. It then follows that
$$\lim\limits_{\alpha \rightarrow \infty}
\Big[\sup\{||{Q}^n[\varphi](\cdot, -x)-r^*_-||:n\geq \alpha \mbox{ and } \alpha \leq x \leq n(c_-^*(-\infty)-\varepsilon)\}\Big]= 0
$$
for all $ \varepsilon\in (0,c_-^*(-\infty))$, and hence,
$$\lim\limits_{\alpha \rightarrow \infty}
\Big[\sup\{||{Q}^n[\varphi](\cdot, x)-r^*_-||:n\geq \alpha \mbox{ and } -\alpha \geq x \geq n(-c_-^*(-\infty)+\varepsilon)\}\Big]= 0
$$
for all $\varepsilon\in (0,c_-^*(-\infty))$.  This shows that statement (ii) holds
true. \end{proof}

Applying Theorem~\ref{thm3.4} to $\{(Q, \varphi),(\tilde{Q}, \mathcal{S}[\varphi])\}$, we have the following
result on the asymptotic annihilation under the bilateral {\bf (GLC)} /{\bf (GLC)}-type assumption.

\begin{thm} \label{thm3.1-bil-meal/meal}
Assume that   {\bf (GLC)},  and {\bf (GLC$_-$)}  hold. Let $\varphi\in { C_{+}}$ with the compact support,
$c^*_1:=\liminf\limits_{k\to \infty}c^*_{L_k}$, and
$c^*_2:=\liminf\limits_{k\to \infty}c^*_{\mathcal{S}\circ\hat{L}_k\circ \mathcal{S}}$.  Then the following statements are valid:
\begin{itemize}
\item [{\rm (i)}] If $c^*_1\geq 0$ and  $c^*_2\geq 0$, then $$\lim\limits_{n\rightarrow \infty}
\sup\left\{
||Q^n[\varphi](\cdot,x)||: x\geq  n(c^*_1+\varepsilon) \mbox{ or } x\leq - n(c^*_2+\varepsilon)\right\}=0,  \forall \varepsilon>0.$$

\item [{\rm (ii)}] If $c^*_1< 0$ and  $c^*_2\geq 0$, then  $$\lim\limits_{n\rightarrow \infty}\sup\left\{
||Q^n[\varphi](\cdot,x)||:x\geq n\varepsilon \mbox{ or } x\leq - n(c^*_2+\varepsilon)\right\}=0,  \forall \varepsilon>0.$$

\item [{\rm (iii)}] If $c^*_1\geq 0$ and  $c^*_2< 0$, then  $$\lim\limits_{n\rightarrow \infty}\sup\left\{
||Q^n[\varphi](\cdot,x)||:x\leq -n\varepsilon \mbox{ or } x\geq  n(c^*_1+\varepsilon)\right\}=0,
\forall \varepsilon>0.$$

\item [{\rm (iv)}] If $c^*_1< 0$ and  $c^*_2< 0$, then  $$\lim\limits_{\alpha\rightarrow \infty}\sup\left\{||Q^n[\varphi](\cdot,x)||:|x|,n\geq \alpha \mbox{ with } n \in \mathbb{N}\right \}=0$$
 and $$\lim\limits_{n\rightarrow \infty}\sup\left\{
||Q^n[\varphi](\cdot,x)||:|x|\geq n\varepsilon\right\}=0,
\forall \varepsilon>0.
$$

\end{itemize}
\end{thm}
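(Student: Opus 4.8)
The plan is to deduce the entire statement from Theorem~\ref{thm3.4}, applied once in the rightward direction to $Q$ and once to $\tilde{Q}=\mathcal{S}\circ Q\circ\mathcal{S}$, and then to transport the second estimate back by the reflection identity. Since $\mathcal{S}^2=\mathrm{id}$, we have $\tilde{Q}^n=\mathcal{S}\circ Q^n\circ\mathcal{S}$, hence $\tilde{Q}^n[\mathcal{S}[\varphi]](\theta,x)=Q^n[\varphi](\theta,-x)$; thus any decay of $\tilde{Q}^n[\mathcal{S}[\varphi]](\cdot,x)$ for large positive $x$ is exactly decay of $Q^n[\varphi](\cdot,x)$ for sufficiently negative $x$. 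Because $\varphi$ has compact support, both $\varphi$ and $\mathcal{S}[\varphi]$ vanish for all sufficiently positive $x$, which is precisely the hypothesis on the initial datum required by Theorem~\ref{thm3.4}.

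First I would fix $\varepsilon>0$ and choose the index $k$. Using {\bf (UB)} (the chain $\phi_k^*\nearrow$ and $Y_+=\bigcup_k[0,\phi_k^*]_Y$) together with boundedness of $\varphi$ and compactness of $M$, pick $k$ large so that $\varphi\in C_{\phi_k^*}$; monotonicity of $\{\phi_k^*\}$ keeps this valid for every larger index, so it can be combined freely with any index-selection coming from the $\liminf$. By the definition $c^*_1=\liminf_k c^*_{L_k}$, I additionally take $k$ with $c^*_{L_k}\le c^*_1+\tfrac{\varepsilon}{2}$ when $c^*_1\ge 0$, and with $c^*_{L_k}<0$ when $c^*_1<0$ (possible along a suitable subsequence; the case $c^*_1=-\infty$, allowed since $c^*_{L_+}$ is an infimum, falls into this second branch without change). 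Now {\bf (GLC)} gives $Q[\psi]\le L_k[\psi]$ for all $\psi\in C_{\phi_k^*}$ and $L_k$ satisfies {\bf (LC)}, so Theorem~\ref{thm3.4} applies to $Q$: in the branch $c^*_{L_k}=c^*_{L_k,+}\ge 0$ it yields $\lim_{n\to\infty}\sup\{||Q^n[\varphi](\cdot,x)||:x\ge n(c^*_{L_k}+\tfrac{\varepsilon}{2})\}=0$, hence $\lim_{n\to\infty}\sup\{||Q^n[\varphi](\cdot,x)||:x\ge n(c^*_1+\varepsilon)\}=0$; in the branch $c^*_{L_k}<0$ it yields both $\lim_{\sigma\to\infty}\sup\{||Q^n[\varphi](\cdot,x)||:x\ge\sigma,\ n\in\mathbb{N}\}=0$ and $\lim_{n\to\infty}\sup\{||Q^n[\varphi](\cdot,x)||:x\ge n\varepsilon\}=0$. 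For the leftward tail I repeat the argument with $\tilde{Q}$ in place of $Q$, $\mathcal{S}[\varphi]$ in place of $\varphi$, and $\mathcal{S}\circ\hat{L}_k\circ\mathcal{S}$ in place of $L_k$: since $\phi_k^*\in Int(Y_+)$ is independent of $x$, we have $\mathcal{S}[\phi_k^*]=\phi_k^*$, so $\mathcal{S}$ maps $C_{\phi_k^*}$ into itself and is order preserving, whence {\bf (GLC$_-$)} gives $\tilde{Q}[\psi]\le(\mathcal{S}\circ\hat{L}_k\circ\mathcal{S})[\psi]$ on $C_{\phi_k^*}$, and {\bf (LC$_-$)} says $\mathcal{S}\circ\hat{L}_k\circ\mathcal{S}$ satisfies {\bf (LC)}. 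Theorem~\ref{thm3.4} then controls $\tilde{Q}^n[\mathcal{S}[\varphi]](\cdot,x)=Q^n[\varphi](\cdot,-x)$ for large positive $x$ in the regime governed by $c^*_2=\liminf_k c^*_{\mathcal{S}\circ\hat{L}_k\circ\mathcal{S}}$, which after $x\mapsto-x$ is decay of $Q^n[\varphi](\cdot,x)$ on $\{x\le-n(c^*_2+\varepsilon)\}$ (resp. on $\{x\le-n\varepsilon\}$, resp. uniformly on $\{x\le-\sigma\}$) in the corresponding branch.

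Finally I would assemble the four cases by intersecting/uniting the regions: a finite maximum of quantities tending to $0$ tends to $0$, and the supremum over a union of regions is the maximum of the two suprema. Case (i) ($c^*_1,c^*_2\ge 0$) combines the two ``$n(c^*+\varepsilon)$'' estimates into decay on $\{x\ge n(c^*_1+\varepsilon)\}\cup\{x\le-n(c^*_2+\varepsilon)\}$; cases (ii) and (iii) replace the negative side by the sharper ``$x\ge n\varepsilon$'' (resp. ``$x\le-n\varepsilon$'') conclusion of Theorem~\ref{thm3.4}(ii); case (iv) ($c^*_1,c^*_2<0$) combines the two $\sigma\to\infty$ statements to obtain $\lim_{\alpha\to\infty}\sup\{||Q^n[\varphi](\cdot,x)||:|x|\ge\alpha,\ n\in\mathbb{N}\}=0$ (a fortiori with the extra constraint $n\ge\alpha$) and the two ``$n\varepsilon$'' statements to obtain $\lim_{n\to\infty}\sup\{||Q^n[\varphi](\cdot,x)||:|x|\ge n\varepsilon\}=0$. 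The content here is entirely the invocation of Theorem~\ref{thm3.4}; the only point requiring care—and the place where the monotonicity of $\{\phi_k^*\}$ in {\bf (UB)} is genuinely used—is the bookkeeping of the $\liminf$, namely that a single index $k$ can be chosen simultaneously satisfying $\varphi\in C_{\phi_k^*}$ and $c^*_{L_k}$ close to $c^*_1$ (and likewise on the reflected side), and that one may let $\varepsilon\downarrow 0$ afterwards.
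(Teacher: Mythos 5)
Your proposal is correct and follows essentially the same route as the paper: the authors likewise fix $\varepsilon>0$, use {\bf (UB)} together with the $\liminf$ definitions to pick indices $k_0,k_1$ with $\varphi\le\inf\{\phi_{k_0}^*,\phi_{k_1}^*\}$ and $c^*_{L_{k_0}},c^*_{\mathcal{S}\circ\hat L_{k_1}\circ\mathcal{S}}$ within $\varepsilon/6$ of $c^*_1,c^*_2$, apply Theorem~\ref{thm3.4} to the pair $(Q,L_{k_0},\varphi)$ and to the reflected pair $(\tilde Q,\mathcal{S}\circ\hat L_{k_1}\circ\mathcal{S},\mathcal{S}[\varphi])$, and combine the two tail estimates. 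Your write-up is in fact somewhat more explicit than the paper's (which only details case (i)), in particular on the $\liminf$ bookkeeping and on the branches $c^*_i<0$.
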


\begin{proof}   We only prove (i) since the other statements can be dealt with some similar arguments. Fix $\varepsilon>0$ and $\varphi\in C_+$ with  $\varphi$ having the compact support. In view of  {\bf (UB)}, {\bf (GLC)},  and {\bf (GLC$_-$)}, we have $\varphi\leq \inf\{\phi_{k_0}^*, \phi_{k_1}^*\}$, $|c^*_{L_{k_0}}-c^*_1|<\frac{\varepsilon}{6}$,  and $|c^*_{\mathcal{S}\circ\hat{L}_{k_1}\circ \mathcal{S}}-c^*_2|<\frac{\varepsilon}{6}$  for some $(k_0,k_1)\in \mathbb{N}^2$. 
By applying Theorem~\ref{thm3.4}-(i) to $\{(Q, L_{k_0},\varphi,\frac{\varepsilon}{3}),(\tilde{Q},\mathcal{S}\circ\hat{L}_{k_1}\circ \mathcal{S}, \mathcal{S}[\varphi],\frac{\varepsilon}{3})\}$,  we obtain
\begin{equation}\label{4.3}
\lim\limits_{n\rightarrow \infty}
\sup\left\{
||Q^n[\varphi](\cdot,x)||:x\geq n(c^*_{L_{k_0}}+\frac{\varepsilon}{3}) \mbox{ or } x\leq -n(c^*_{\mathcal{S}\circ\hat{L}_{k_0}\circ \mathcal{S}}+\frac{\varepsilon}{3})\right \}=0.
\end{equation}
It then follows from~\eqref{4.3} and choices of $k_0$ and $k_1$ that $$\lim\limits_{n\rightarrow \infty}
\sup\left\{
||Q^n[\varphi](\cdot,x)||: x\geq  n(c^*_1+\varepsilon) \mbox{ or } x\leq - n(c^*_2+\varepsilon)\right\}=0,  \forall \varepsilon>0.$$
This completes the proof.
\end{proof}

Recall that $W$ is a  nontrivial  fixed point  of the map $Q$
 if $W\in C_{+}\setminus \{0\}$  and
$Q[W]=W$. 
The following result is on the existence of fixed points  under
the bilateral {\bf(UC)}/{\bf(UC)}-type assumption.

\begin{thm} \label{thm3.6-bil-uc/uc-fixp}
Assume that   {\bf (ACH)}, {\bf (NM)}, {\bf (SP)},  {\bf (ACH$_-$)}, {\bf (NM$_-$)}, and {\bf (SP$_-$)} hold,  and  $Q$ has at least one fixed point in $\mathcal{K}$ for any closed, convex and positively invariant set $\mathcal{K}\subseteq C_{ \phi_1^* }$ of $Q$.   Let  $T_{-y}\circ Q_l[\varphi]\geq Q_l\circ T_{-y}[\varphi]$ for all $l\in \mathbb{Z}\setminus \{0\}$ and $(y,\varphi)\in \mathbb{R}\times C_{\phi_{1}^{*}}$ with $ly\geq 0$,  where $Q_l$ is defined as in  {\bf (ACH)} and {\bf (ACH$_-$)}.
Then $Q$ has a  nontrivial  fixed point $W$ in $C_{ \phi_{1}^* }$ such that
$W(\cdot,\pm\infty)=r^*_\pm$ provided that one of  the following conditions
is satisfied:
\begin{itemize}
\item [{\rm (i)}]
 $\min\{c_-^*,c_+^*(-\infty)\}>0$ and  for any $|l|\in \mathbb{Z}_+\backslash \{0\}$, there exists   $z:=z_l\in (-c_{l,+}^*,c_{l,-}^*)$ such that $l z_l\geq 0$, $\{( Q_l)^n [ \phi_{1}^{*} ]:n\in \mathbb{N}\}$, and $\{(T_z\circ Q_l)^n [ \phi_{1}^{*}]:n\in \mathbb{N}\}$ are precompact in $C$.

\item [{\rm (ii)}]
 $\min\{c_-^*,c_-^*(-\infty)\}>0$ and  for any $l\in \mathbb{Z}_+\backslash \{0\}$, there exists   $z:=z_l\in (-c_{l,+}^*,c_{l,-}^*)\cap \mathbb{R}_+$ such that $\{( Q_l)^n [ \phi_{1}^{*} ]:n\in \mathbb{N}\}$ and $\{(T_z\circ Q_l)^n [ \phi_{1}^{*}]:n\in \mathbb{N}\}$ are precompact in $C$.

\item [{\rm (iii)}]
 $\min\{c_+^*,c_+^*(-\infty)\}>0$ and  for any $-l\in \mathbb{Z}_+\backslash \{0\}$, there exists   $z:=z_l\in (-c_{l,+}^*,c_{l,-}^*)\cap (-\infty,0]$ such that $\{( Q_l)^n [ \phi_{1}^{*} ]:n\in \mathbb{N}\}$ and $\{(T_z\circ Q_l)^n [ \phi_{1}^{*} ]:n\in \mathbb{N}\}$ are precompact in $C$.
\end{itemize}
\end{thm}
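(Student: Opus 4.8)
The plan is to use the reflection operator $\mathcal S$ to convert every assertion about the behavior of $Q$ near $-\infty$ into one about the behavior near $+\infty$ of $\tilde Q:=\mathcal S\circ Q\circ\mathcal S$, and then to handle conditions (ii) and (iii) by the results already proved (Theorem~\ref{thm3.6} and Lemma~\ref{lemm3.1}), and the genuinely bilateral condition (i) by a separate order-interval construction. As recorded in the paragraph preceding the theorem, $\tilde Q$ inherits \textbf{(A)}, \textbf{(SP)}, \textbf{(NM)}, \textbf{(ACH)} from \textbf{(A$_-$)}, \textbf{(SP$_-$)}, \textbf{(NM$_-$)}, \textbf{(ACH$_-$)} of $Q$, with $\tilde Q_+=\mathcal S\circ Q_-\circ\mathcal S$ and $r^*_{\tilde Q}=r^*_-$. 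Unwinding the definitions — in particular reflecting \textbf{(UC$_-$)} — the rightward and leftward speeds of $\tilde Q$ equal the leftward and rightward speeds of $Q$ near $-\infty$, so $c_+^*(\tilde Q)=c_-^*(-\infty)$ and $c_-^*(\tilde Q)=c_+^*(-\infty)$; the positively indexed approximations of $\tilde Q$ are $\mathcal S\circ Q_{-l}\circ\mathcal S$ $(l\in\mathbb{N})$, with leftward and rightward speeds $c_{-l,+}^*$ and $c_{-l,-}^*$; the translation-monotonicity hypothesis of the theorem for the negative indices is exactly the one needed for these approximations of $\tilde Q$; and, since $(T_x\circ\mathcal S\circ R\circ\mathcal S)^n=\mathcal S\circ(T_{-x}\circ R)^n\circ\mathcal S$ and $\mathcal S$ is a homeomorphism of $C$, the two precompactness conditions for $Q_l$ with a shift $z_l\le 0$ become those for $\tilde Q$ with the shift $-z_l\ge 0$. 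I would record these correspondences once, at the outset.

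Under (ii) we have $c_-^*>0$, so the argument of Theorem~\ref{thm3.6} applies to $Q$ (the translation-monotonicity and precompactness data it needs for $l\in\mathbb{N}$ are precisely those supplied by (ii), and its proof invokes the fixed-point hypothesis only for subsets of $C_{\phi_1^*}$), producing a nontrivial fixed point $W\in C_{\phi_1^*}$ with $W(\cdot,\infty)=r^*$; since $c_+^*(\tilde Q)=c_-^*(-\infty)>0$ and $\mathcal S W$ is a nontrivial fixed point of $\tilde Q$, which satisfies \textbf{(ACH)}, \textbf{(NM)}, \textbf{(SP)}, Lemma~\ref{lemm3.1} applied to $\tilde Q$ gives $\mathcal S W(\cdot,\infty)=r^*_-$, that is, $W(\cdot,-\infty)=r^*_-$. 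Under (iii), $c_-^*(\tilde Q)=c_+^*(-\infty)>0$, so the argument of Theorem~\ref{thm3.6} applies to $\tilde Q$ (using the correspondences above and the data (iii) supplies for negative indices), giving a nontrivial fixed point $\tilde W\in C_{\phi_1^*}$ of $\tilde Q$ with $\tilde W(\cdot,\infty)=r^*_-$; then $W:=\mathcal S\tilde W$ is a nontrivial fixed point of $Q$ with $W(\cdot,-\infty)=r^*_-$, and since $c_+^*>0$, Lemma~\ref{lemm3.1} forces $W(\cdot,\infty)=r^*$.

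Under (i) we have $c_-^*>0$ and $c_+^*(-\infty)>0$, but neither $c_+^*>0$ nor $c_-^*(-\infty)>0$ is available, so $W(\cdot,-\infty)=r^*_-$ cannot be read off Lemma~\ref{lemm3.1}; instead I build a fixed point that is bounded below near \emph{both} ends, where the monotone approximations become essential. Pick $l_0\in\mathbb{N}$ with $c_{l_0,-}^*>0$ and $m\in\mathbb{N}$ with $c_{-m,+}^*>0$. Proposition~\ref{prop3.5} applied to $Q_{l_0}|_{C_{\phi_1^*}}$ — part (i) if $c_{l_0,+}^*>0$, otherwise part (ii) with the shift $z_{l_0}\ge 0$ and the translation monotonicity — yields a nontrivial fixed point $W_+^*$ of $Q_{l_0}$ in $C_{\phi_1^*}$ with $W_+^*(\cdot,\infty)\ge r_{l_0}^*$; Proposition~\ref{prop3.5} applied to $\mathcal S\circ Q_{-m}\circ\mathcal S|_{C_{\phi_1^*}}$ (whose leftward speed is $c_{-m,+}^*>0$), and reflecting back, yields a nontrivial fixed point $W_-^*$ of $Q_{-m}$ in $C_{\phi_1^*}$ with $W_-^*(\cdot,-\infty)\ge r_{-m}^*$. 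Put $V=\max\{W_+^*,W_-^*\}\le\phi_1^*$. Because $Q\ge Q_{l_0}$ and $Q\ge Q_{-m}$ on $C_{\phi_1^*}$ (as $\phi_1^*\le\phi_{l_0}^*$ and $\phi_1^*\le\phi_m^*$) and $Q_{l_0}$, $Q_{-m}$ are monotone, every $\varphi\in[V,\phi_1^*]_C$ satisfies $Q[\varphi]\ge\max\{Q_{l_0}[W_+^*],Q_{-m}[W_-^*]\}=V$ and $Q[\varphi]\le Q[\phi_1^*]\le\phi_1^*$ by \textbf{(UB)}; hence $\mathcal K:=[V,\phi_1^*]_C\subseteq C_{\phi_1^*}$ is nonempty, closed, convex and positively invariant, and by hypothesis $Q$ has a fixed point $W\in\mathcal K$, which is nontrivial since $W\ge W_+^*>0$. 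Finally $W\ge W_+^*$ forces $W(\cdot,x)\ge b_+r^*$ for all large $x$ and some $b_+>0$ (as $r_{l_0}^*,r^*\in Int(Y_+)$), while $W\le\phi_1^*\le ar^*$ for suitable $a$, so Lemma~\ref{lemm3.1-999000} gives $W(\cdot,\infty)=r^*$; symmetrically, $W\ge W_-^*$ forces $\mathcal S W(\cdot,x)\ge b_-r^*_-$ for all large $x$, and Lemma~\ref{lemm3.1-999000} applied to $\tilde Q$ (which, by \textbf{(NM$_-$)}, satisfies \textbf{(NM)} with $r^*$ replaced by $r^*_-$) gives $W(\cdot,-\infty)=r^*_-$.

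The work here is essentially bookkeeping rather than a new idea: the delicate point is to track precisely how the speed parameters, the signs of the shifts $z_l$, and the direction appearing in the translation-monotonicity hypothesis transform under $\mathcal S$, so that the hypotheses of Proposition~\ref{prop3.5}, Theorem~\ref{thm3.6} and Lemma~\ref{lemm3.1} are genuinely verified in each of the three cases; and, in case (i), to use the \emph{monotone} maps $Q_{l_0}$ and $Q_{-m}$ — not $Q$ itself, which is non-monotone — when checking that $[\max\{W_+^*,W_-^*\},\phi_1^*]_C$ is positively invariant under $Q$.
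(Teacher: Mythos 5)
Your proof is correct and follows essentially the same route as the paper: cases (ii) and (iii) are handled via Theorem~\ref{thm3.6} and Lemma~\ref{lemm3.1} together with the reflection $\mathcal{S}$, and case (i) by producing fixed points of the monotone approximations near each end (the paper routes this through Theorem~\ref{thm3.6} applied to $Q_{l_0}$ and $\mathcal{S}\circ Q_{-l_0}\circ\mathcal{S}$, which internally is your Proposition~\ref{prop3.5} argument), forming the order interval $[W_+^*,\phi_1^*]_C\cap[W_-^*,\phi_1^*]_C$, invoking the fixed-point hypothesis, and concluding with Lemma~\ref{lemm3.1-999000}. One cosmetic slip: the intermediate inequality $Q[\varphi]\le Q[\phi_1^*]$ would require monotonicity of $Q$, but the bound you actually need, $Q[\varphi]\le\phi_1^*$, follows directly from \textbf{(UB)}, so nothing is lost.
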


\begin{proof}
	(i)  Clearly, there is $l_0\in \mathbb{N}$ such that $c_{l_0,-}^*>0$ and $c_{-l_0,+}^*>0$. By applying Theorem~\ref{thm3.6} to $Q_{l_0}$ and $\mathcal{S}\circ Q_{-l_0}\circ \mathcal{S}$, we obtain that $Q_{l_0}[W^*_+]=W^*_+$ and $Q_{-l_0}[W^*_-]=W^*_-$ for some $(W^*_+,W^{*}_-)\in C_{\phi_{1}^*}\times C_{\phi_{1}^*}$ with $W^*_+(\cdot,\infty)=r_{l_0}^*$ and $W^*_-(\cdot,-\infty)=r_{-l_0}^*$. Let $\mathcal{K}=[W^*_+, \phi_1^* ]_C\cap [W^*_-,\phi_1^*]_C$. Then $Q[\mathcal{K}]\subseteq \mathcal{K}\neq \emptyset$  and $\mathcal{K}$ is a closed and convex subset of $C$. It follows  that $Q$ has a  nontrivial  fixed point $W$ in $ {\mathcal{K}}$. Thus, there exists $x_0>0$ such that $\frac{\inf\{r^*_{l_0},r^*_{-l_0}\}}{2}\leq W(\cdot,x)\leq \phi_1^*$ for all $|x|\geq x_0$. Now the desired conclusion follows from Lemma~\ref{lemm3.1-999000}.

(ii) Theorem~\ref{thm3.6}  implies that $Q$ has a  nontrivial  fixed point $W$ in $C_{\phi_{1}^* }$ with  $W(\cdot,\infty)=r^*$. By applying Lemma~\ref{lemm3.1} to $\{(\mathcal{S}[W],\tilde{Q})\}$, we have $\mathcal{S}[W](\cdot,\infty)=r_-^*$, and hence, $W(\cdot,-\infty)=r_-^*$

(iii)  follows from statement (ii), as applied  to $\tilde{Q}$.
\end{proof}

\begin{rem}  \label{rem3.2-sp-monoty}  By Remark~\ref{rem3.1-sp-monoty}, it follows that in
 Theorem~\ref{thm3.6-bil-uc/uc-fixp}, we can use  the condition that
$\min\{c_+^*,c_-^*,c_+^*(-\infty),c_-^*(-\infty)\}>0$ to replace
the assumption that  $T_{-y}\circ Q_l[\varphi]\geq Q_l\circ T_{-y}[\varphi]$ for all  $(l, y,\varphi)\in \mathbb{Z}\setminus\{0\}
\times \mathbb{R}\times C_{r^*_l}$ with $ly \geq 0$.

\end{rem}

To present our last result,  we need the following assumption on the  global asymptotic stability of a positive fixed point.
\begin{enumerate}
\item [{\bf(GAS)}] There exists $W\in C_+\setminus\{0\}$ such that $\lim\limits_{n\to \infty}Q^n[\varphi]= W$ in $C$  for all $\varphi\in C_{+}\setminus\{0\}$.
\end{enumerate}

\begin{thm} \label{thm3.1-bil-gas}
Assume that $Q$ satisfies {\bf (GAS)} and $\varphi\in  C_{+}\setminus\{0\}$.  If {\bf (ACH)}, {\bf (NM)}, {\bf (SP)},   {\bf (ACH$_-$)}, {\bf (NM$_-$)}, and {\bf (SP$_-$)} hold,   and  $$\min\{c_-^*,c_+^*,c_-^*(-\infty),c_+^*(-\infty)\}>0, $$
 then
$$
\lim\limits_{n\rightarrow \infty}
\sup\left\{||Q^n[\varphi](\cdot, x)-W(\cdot,x)||:n(-c_-^*(-\infty)+\varepsilon)\leq x\leq n(c_+^*-\varepsilon)\right\}= 0
$$
for all $\varepsilon\in (0,\min\{c_+^*,c_-^*(-\infty)\})$.
\end{thm}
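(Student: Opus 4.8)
The plan is to glue together two pieces of information: the global attractivity \textbf{(GAS)}, which gives $Q^n[\varphi]\to W$ only in the locally uniform topology of $C$, and the bilateral spreading result Theorem~\ref{thm3.1-bil-uc/uc}(ii), which on the linearly moving set $\mathcal{C}_{\alpha,\varepsilon}$ controls $Q^n[\varphi]$ and shows it is close to $r^*$ on the right and to $r_-^*$ on the left. The point is that \textbf{(GAS)} is worthless on the window $[n(-c_-^*(-\infty)+\varepsilon),n(c_+^*-\varepsilon)]$ because this set expands linearly, whereas the spreading estimates are tailor-made for exactly such sets but converge to $r^*$ and $r_-^*$ rather than to $W$; the bridge is that $W$ itself is pinched between these two plateaus, i.e.\ $W(\cdot,+\infty)=r^*$ and $W(\cdot,-\infty)=r_-^*$.

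First I would check that $W$ is a fixed point of $Q$ and identify its limits at $\pm\infty$. By \textbf{(UB)} there is $k$ with $\varphi\le\phi_k^*$, so the orbit $\{Q^n[\varphi]\}$ stays in $C_{\phi_k^*}$, hence in an order interval $C_r$ with $r\in Int(\mathbb{R}^N_+)$ a constant; since $Q|_{C_r}$ is continuous and $Q^n[\varphi]\to W$ in $C$, passing to the limit in $Q^{n+1}[\varphi]=Q[Q^n[\varphi]]$ gives $Q[W]=W$, and $W\in C_+\setminus\{0\}$ by \textbf{(GAS)}. Now Lemma~\ref{lemm3.1} applies to $Q$ (its hypotheses \textbf{(ACH)}, \textbf{(NM)}, \textbf{(SP)} hold and $c_+^*>0$), giving $W(\cdot,+\infty)=r^*$. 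Since $\mathcal{S}[W]$ is a nontrivial fixed point of $\tilde Q=\mathcal{S}\circ Q\circ\mathcal{S}$, and (by the discussion preceding Theorem~\ref{thm3.1-bil-uc/uc}) $\tilde Q$ satisfies the hypotheses of Theorem~\ref{thm3.1} with forward speed equal to $c_-^*(-\infty)>0$, Lemma~\ref{lemm3.1} applied to $\tilde Q$ yields $\mathcal{S}[W](\cdot,+\infty)=r_-^*$, i.e.\ $W(\cdot,-\infty)=r_-^*$.

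Next, fix $\varepsilon\in(0,\min\{c_+^*,c_-^*(-\infty)\})$ and $\eta>0$, and choose constants in the following order. First pick $A>0$ so large that (a) $\|W(\cdot,x)-r^*\|<\eta/3$ for $x\ge A$ and $\|W(\cdot,x)-r_-^*\|<\eta/3$ for $x\le -A$, and (b) $\sup\{\|Q^n[\varphi](\cdot,x)-r^*\|:(n,x)\in\mathcal{C}_{A,\varepsilon},\,x>0\}<\eta/3$ together with the analogous supremum with $r_-^*$ over $x<0$ is $<\eta/3$; item (a) is possible by the previous paragraph, and item (b) by Theorem~\ref{thm3.1-bil-uc/uc}(ii), whose supremum over $\mathcal{C}_{\alpha,\varepsilon}$ tends to $0$ as $\alpha\to\infty$ (here one uses $\min\{c_-^*,c_+^*,c_-^*(-\infty),c_+^*(-\infty)\}>0$). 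Then, by \textbf{(GAS)} and the definition of $\|\cdot\|_C$ (convergence in $C$ forces $\sup_{|x|\le A}\|\cdot\|\to0$), pick $N_1$ with $\sup_{|x|\le A}\|Q^n[\varphi](\cdot,x)-W(\cdot,x)\|<\eta$ for $n\ge N_1$. Finally pick $N_2\ge A$ with $n(c_+^*-\varepsilon)\ge A$ and $n(c_-^*(-\infty)-\varepsilon)\ge A$ for all $n\ge N_2$ (possible since $c_+^*-\varepsilon>0$ and $c_-^*(-\infty)-\varepsilon>0$).

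For $n\ge\max\{N_1,N_2\}$ one then splits $[n(-c_-^*(-\infty)+\varepsilon),n(c_+^*-\varepsilon)]=[n(-c_-^*(-\infty)+\varepsilon),-A]\cup[-A,A]\cup[A,n(c_+^*-\varepsilon)]$. On the middle piece $[-A,A]$ the bound $\|Q^n[\varphi](\cdot,x)-W(\cdot,x)\|<\eta$ is the choice of $N_1$. On $[A,n(c_+^*-\varepsilon)]$ one has $(n,x)\in\mathcal{C}_{A,\varepsilon}$, so by (b) $\|Q^n[\varphi](\cdot,x)-r^*\|<\eta/3$ and by (a) $\|r^*-W(\cdot,x)\|<\eta/3$, whence $\|Q^n[\varphi](\cdot,x)-W(\cdot,x)\|<2\eta/3<\eta$; the piece $[n(-c_-^*(-\infty)+\varepsilon),-A]$ is handled identically with $r_-^*$ in place of $r^*$. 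Thus $\sup\{\|Q^n[\varphi](\cdot,x)-W(\cdot,x)\|:n(-c_-^*(-\infty)+\varepsilon)\le x\le n(c_+^*-\varepsilon)\}<\eta$ for all large $n$, and letting $\eta\downarrow0$ completes the argument. I expect the only genuinely delicate point to be this three-regime matching (and the correct order of quantifiers, with $A$ fixed \emph{before} $N_1,N_2$); the rest is routine bookkeeping together with the already-established identification $W(\cdot,\pm\infty)=r_\pm^*$.
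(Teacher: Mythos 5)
Your proposal is correct and follows essentially the same route as the paper: the paper likewise applies Theorem~\ref{thm3.1-bil-uc/uc}(ii) to control both $Q^n[\varphi]$ and $W$ on the outer region $\mathcal{C}_{\alpha_0,\varepsilon}$ (where both are within $\gamma/2$ of the plateau profile $r^*\cdot{\bf 1}_{\mathbb{R}_+}+r^*_-(1-{\bf 1}_{\mathbb{R}_+})$) and then uses {\bf (GAS)} on the compact middle interval $[-\alpha_0,\alpha_0]$, exactly your three-regime matching with the same order of quantifiers. Your extra preliminary step of verifying $Q[W]=W$ and invoking Lemma~\ref{lemm3.1} to get $W(\cdot,\pm\infty)=r^*_{\pm}$ is a legitimate (slightly more explicit) way of obtaining what the paper gets by applying the spreading theorem directly to $W$.
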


\begin{proof}
Given  $\gamma>0$ and $\varepsilon\in (0,\min\{c_+^*,c_-^*(-\infty)\})$.  In view of Theorem~\ref{thm3.1-bil-uc/uc}-(ii),  there exists $\alpha_0>0$ such that
$||Q^n[\varphi](\cdot, x)-r^*\cdot {\bf {1}}_{\mathbb{R}_+}(x)-r^*_-(1-{\bf {1}}_{\mathbb{R}_+}(x))||<\frac{\gamma}{2}$ and $||W(\cdot, x)-r^*\cdot {\bf {1}}_{\mathbb{R}_+}(x)-r^*_-(1-{\bf {1}}_{\mathbb{R}_+}(x))|<\frac{\gamma}{2}$ for all
$(n,x)\in \mathcal{C}_{\alpha_0,\varepsilon}$. Thus, $||Q^n[\varphi](\cdot, x)-W(\cdot, x)||<\gamma$ for all
$(n,x)\in \mathcal{C}_{\alpha_0,\varepsilon}$. It follows from {\bf (GAS)} that there exists $N_0>0$ such that $||Q^n[\varphi](\cdot, x)-W(\cdot, x)||<\gamma$ for all $x\in [-\alpha_0,\alpha_0]$ and $n>N_0$. This, together with the choices of $\mathcal{C}_{\alpha_0,\varepsilon}$, $\alpha_0$, and $N_0$, implies that $||Q^n[\varphi](\cdot, x)-W(\cdot, x)||<\gamma$ for all $x\in [n(-c_-^*(-\infty)+\varepsilon), n(c_+^*-\varepsilon)]$ and $n>\max\{N_0,\alpha_0\}$.  Now the arbitrariness of $\gamma$ completes the proof.
 \end{proof}


\section{Continuous-time systems}
In this section, under the bilateral {\bf (UC)} assumption we  study the existence of equilibrium points or traveling wave solutions and the asymptotic behavior  of other solutions for continuous-time autonomous and nonautonomous systems.  The unilateral  {\bf (UC)} case can be
dealt with 	in a similar  way as  in Section~\ref{sec3.1}.
\subsection{Autonomous  semiflows}

A map $Q:\mathbb{R}_{+}\times{C}_+\to{C}_+$ is said to be a
continuous-time semiflow on $C_+$ if  for any given vector $r\in Int(\mathbb{R}^N_+)$,
$Q|_{\mathbb{R}_{+}\times C_r}: \mathbb{R}_{+}\times{C}_r\to{C}_+$
is continuous,  $Q_0=Id|_{{C}_+}$,  and $Q_{t}\circ
Q_s=Q_{t+s}$ for all $t,s\in\mathbb{R}_+$, where $Q_t\triangleq
Q(t,\cdot)$ for all $t\in\mathbb{R}_+$.

Let $\{Q_t\}^{\infty}_{t=0}$ be a continuous-time semiflow on $C_+$.
We say that $W$ is a  nontrivial  equilibrium point of $\{Q_t\}^{\infty}_{t=0}$ if $W\in C_+\setminus\{0\}$  and
$Q_t[W]=W$ for all $t\in \mathbb{R}_+$. 

 We start with  the upward convergence under the bilateral {\bf(UC)}/{\bf(UC)}-type assumption.


\begin{thm} \label{thm4.1-bil-uc/uc} Assume that $\{Q_t\}_{t\geq 0}$ is a continuous-time semiflow on $C_+$ such that  $Q_{t_0}$ satisfies all the assumptions of Theorem~\ref{thm3.1-bil-uc/uc} for some $t_0>0$.
Let $\varphi\in C_{+}\setminus\{0\}$. Then the following statements are valid:
\begin{itemize}
\item [{\rm (i)}]
If $\min\{c_+^*,c_-^*(-\infty)\}>0$, then $$\lim\limits_{t\rightarrow \infty}
\max\left\{||Q_t[\varphi](\cdot, x)-r^*\cdot {\bf {1}}_{\mathbb{R}_+}(x)-r^*_-\cdot (1-{\bf {1}}_{\mathbb{R}_+}(x))||:x\in\mathcal{B}_{t,\varepsilon}\right\}= 0$$
for all  $\varepsilon\in (0,\frac{1}{2t_0}\min\{c_+^*,c_-^*(-\infty),c_+^*+c_-^*,c_+^*(-\infty)+c_-^*(-\infty)\})$, where  $$\mathcal{B}_{t,\varepsilon}=[t\max\{\varepsilon,-\frac{c_-^*}{t_0}+\varepsilon\}, t(\frac{c_+^*}{t_0}-\varepsilon)]\bigcup [t(-\frac{c_-^*(-\infty)}{t_0}+\varepsilon),t\min\{-\varepsilon,\frac{c_+^*(-\infty)}{t_0}-\varepsilon\}].$$

\item [{\rm (ii)}] If $\min\{c_-^*,c_+^*,c_-^*(-\infty),c_+^*(-\infty)\}>0$, then $$\lim\limits_{\alpha \rightarrow \infty}
\sup\left\{||Q_t[\varphi](\cdot, x)-r^*\cdot {\bf {1}}_{\mathbb{R}_+}(x)-r^*_-(1-{\bf {1}}_{\mathbb{R}_+}(x))||:(t,x)\in \mathcal{C}_{\alpha,\varepsilon}\right\}= 0$$ for all $\varepsilon\in (0,\frac{1}{t_0}\min\{c_+^*,c_-^*(-\infty)\})$, where  $$\mathcal{C}_{\alpha,\varepsilon}=\{(t,x)\in \mathbb{R}_+\times \mathbb{R}:t\geq \alpha t_0 \mbox{ and } x \in [\alpha,  t(\frac{c_+^*}{t_0}-\varepsilon)] \bigcup [t(-\frac{c_-^*(-\infty)}{t_0}+\varepsilon),-\alpha]\}.$$
\end{itemize}
\end{thm}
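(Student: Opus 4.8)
The plan is to reduce the continuous‑time statement to the discrete‑time result of Theorem~\ref{thm3.1-bil-uc/uc} applied to the time‑$t_0$ map $Q_{t_0}$, and then to fill in the intermediate times by interpolation. Given a large time $t$, write $t=nt_0+s$ with $n=\lfloor t/t_0\rfloor\in\mathbb{N}$ and $s=s(t)\in[0,t_0)$, so that by the semiflow property $Q_t[\varphi]=Q_{t_0}^{\,n}\big[Q_s[\varphi]\big]$ and $n\to\infty$ as $t\to\infty$. The point is that, as $s$ ranges over the compact interval $[0,t_0]$, the functions $Q_s[\varphi]$ stay inside one fixed set $K:=\{Q_s[\varphi]:s\in[0,t_0]\}$ which is compact (continuity of the semiflow on $[0,t_0]\times C_r$), uniformly bounded (using {\bf (UB)}), and contained in $C_{+}\setminus\{0\}$ (since $\varphi\neq 0$, {\bf (SP)} prevents any $Q_s[\varphi]$ from vanishing). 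Thus $Q_t[\varphi](\cdot,x)=Q_{t_0}^{\,n}[\psi](\cdot,x)$ with $\psi=Q_s[\varphi]\in K$, and it suffices to control $Q_{t_0}^{\,n}$ on $K$.

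Next I would record that Theorem~\ref{thm3.1-bil-uc/uc} in fact holds with ``$\varphi\in C_{+}\setminus\{0\}$'' replaced by ``$\psi\in K$'' (uniformly over $\psi$), for any compact, uniformly bounded $K\subseteq C_{+}\setminus\{0\}$. This costs nothing new: the proof of Theorem~\ref{thm3.1-bil-uc/uc} only invokes Theorem~\ref{thm3.1-0}-(ii) for part~(i) and Theorem~\ref{thm3.1}-(ii) (with $c=0$) for part~(ii), applied to $\{Q,\varphi\}$ and to $\{\tilde Q,\mathcal{S}[\varphi]\}$; each of those has the companion statement Theorem~\ref{thm3.1-0}-(i), resp.\ Theorem~\ref{thm3.1}-(i), which is already uniform over such a $K$. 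Running the same argument with the ``(i)''‑versions applied to $Q_{t_0}$ and to $\tilde Q=\mathcal{S}\circ Q_{t_0}\circ\mathcal{S}$ (which satisfies the unstarred hypotheses precisely because $Q_{t_0}$ satisfies the starred ones) yields, for every admissible $\varepsilon'>0$,
\[
\lim_{n\to\infty}\ \max\Big\{\|Q_{t_0}^{\,n}[\psi](\cdot,y)-r^*\,{\bf 1}_{\mathbb{R}_+}(y)-r^*_-(1-{\bf 1}_{\mathbb{R}_+}(y))\|:\ y\in \widehat{\mathcal{B}}_{n,\varepsilon'},\ \psi\in K\Big\}=0,
\]
and likewise the corresponding $\widehat{\mathcal{C}}_{\alpha,\varepsilon'}$‑version for part~(ii), where $\widehat{\mathcal{B}}_{n,\varepsilon'}$ and $\widehat{\mathcal{C}}_{\alpha,\varepsilon'}$ are the discrete space–time regions of Theorem~\ref{thm3.1-bil-uc/uc} written in terms of $n$.

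Finally I would match the space–time regions. Since $|t-nt_0|\le t_0$ is bounded, each endpoint of the continuous region $\mathcal{B}_{t,\varepsilon}$ agrees with the corresponding endpoint of $\widehat{\mathcal{B}}_{n,\varepsilon'}$ up to an $O(1)$ error and a rescaling of the speed thresholds by the factor $t_0$; choosing $\varepsilon'$ slightly smaller than $\varepsilon t_0$ one gets $\mathcal{B}_{t,\varepsilon}\subseteq\widehat{\mathcal{B}}_{n,\varepsilon'}$ for all sufficiently large $t$ (equivalently large $n$), and similarly $\mathcal{C}_{\alpha,\varepsilon}$ lies inside the discrete $\widehat{\mathcal{C}}$‑region once $\alpha$ is large. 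Feeding these inclusions, together with $Q_t[\varphi](\cdot,x)=Q_{t_0}^{\,n}[\psi](\cdot,x)$, $\psi=Q_s[\varphi]\in K$, into the uniform discrete limits above gives exactly the two claimed limits. The only genuinely delicate points are this region bookkeeping (absorbing the $O(1)$ discrepancies coming from $t\neq nt_0$) and the verification that $\{Q_s[\varphi]:s\in[0,t_0]\}$ is compact, uniformly bounded, and bounded away from $0$ in the relevant sense; everything else is a transcription of the discrete‑time theorem through the decomposition $Q_t=Q_{t_0}^{\,n}\circ Q_s$.
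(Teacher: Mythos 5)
Your proposal is correct and follows essentially the same route as the paper: the authors also decompose $Q_t=Q_{t_0}^{\,n}\circ Q_s$ (after normalizing $t_0=1$), take $K=Q([0,t_0]\times\{\varphi\})$ as the compact subset of $C_+\setminus\{0\}$, invoke the compact-set versions Theorem~\ref{thm3.1-0}-(i) and Theorem~\ref{thm3.1}-(i) for $Q_{t_0}$ and for $\mathcal{S}\circ Q_{t_0}\circ\mathcal{S}$, and absorb the $O(1)$ discrepancy between $t$ and $nt_0$ by shrinking $\varepsilon$. The only cosmetic difference is that you cite {\bf (SP)} to see $K\subseteq C_+\setminus\{0\}$ (the paper simply asserts this), but this does not change the argument.
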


\begin{proof}
Since  $\{Q_t\}_{t\geq 0}$  is an autonomous semiflow, we assume that  $t_0=1$ in our proof. Otherwise, we consider the autonomous semiflow $\{\hat{Q}_t\}_{t\geq 0}:=\{Q_{t_0t}\}_{t\geq 0}$ instead of
$\{Q_t\}_{t\geq 0}$.
Let $\varphi\in C_{+}\setminus\{0\}$ and $K=Q[[0,1]\times \{\varphi\}]$. Then $K\subseteq C_+\setminus \{0\}$ and  $K$ is compact in $C$.

(i) Given any $\varepsilon\in (0,\frac{1}{2}\min\{c_+^*,c_-^*(-\infty),c_+^*+c_-^*,c_+^*(-\infty)+c_-^*(-\infty)\})$ and $\gamma>0$.
By applying Theorem~\ref{thm3.1-0}-(i) to $Q_1$ with $K$, we have
\begin{equation*}\label{4.1}
\lim\limits_{n\rightarrow \infty}
\max \left \{||Q_n[Q_t[\varphi]](\cdot, x)-r^*||:\, x\in  \left [n\max\{\frac{\varepsilon}{3},-c_-^*+\frac{\varepsilon}{3}\}, n(c_+^*-\frac{\varepsilon}{3})\right ], \, t\in [0,1]\right \}= 0.
\end{equation*}
This implies that there is $N_0\in \mathbb{N}$
such that $||Q_{t+n}[\varphi](\cdot,x)-r^*||<\gamma$ for all $n>N_0$, $x\in  \left [n\max\{\frac{\varepsilon}{3},-c_-^*+\frac{\varepsilon}{3}\}, n(c_+^*-\frac{\varepsilon}{3})\right ]$, and $t\in [0,1]$. As a result,
$||Q_{t}[\varphi](\cdot,x)-r^*||<\gamma$ for all $t>1+N_0+\frac{3c_+^*}{\varepsilon}
$ and $x\in   [t\max\{\varepsilon,-c_-^*+\varepsilon\}, t(c_+^*-\varepsilon) ]$.
By our aforementioned discussions, as applied   to  $\{\mathcal{S}[\varphi],\{\mathcal{S}\circ Q_t\circ \mathcal{S}\}_{t\in \mathbb{R}_+}\}$, there is $N_1\in \mathbb{N}$
such that
$||Q_{t}[\varphi](\cdot,x)-r^*_-||<\gamma$ for all $t>1+N_1+\frac{3c_-^*(-\infty)}{\varepsilon}
$ and $x\in   [t(-c_-^*(-\infty)+\varepsilon ),t\min\{-\varepsilon,c_+^*(-\infty)-\varepsilon\}]$. Hence, (i) follows from the arbitrariness of $\gamma$.

(ii) Given $\varepsilon\in (0,\min\{c_+^*,c_-^*(-\infty)\})$ and $\gamma>0$.
Applying Theorem~\ref{thm3.1}-(i) to $Q_1$ with $K$ and $c=0$, we have
\begin{equation*}\label{4.2}
\lim\limits_{\alpha\rightarrow \infty}
\sup \left \{||Q_n[Q_t[\varphi]](\cdot, x)-r^*||:n\geq \alpha, x\in  \left [\alpha, n(c_+^*-\frac{\varepsilon}{3})\right ],  \,  t\in [0,1]\right \}= 0.
\end{equation*}
Thus,  there is $\alpha_0>0$
such that $||Q_{t+n}[\varphi](\cdot,x)-r^*||<\gamma$ for all $n\geq \alpha_0$, $x\in  \left [\alpha_0, n(c_+^*-\frac{\varepsilon}{3})\right ]$, and $t\in [0,1]$. This implies that
$||Q_{t}[\varphi](\cdot,x)-r^*||<\gamma$ for all $t>1+\alpha_0+\frac{3c_+^*}{\varepsilon}
$ and $x\in  \left [1+\alpha_0+\frac{3c_+^*}{\varepsilon}, t(c_+^*-\varepsilon)\right ]$. In view of  our  aforementioned
discussions, as applied to  $\{\mathcal{S}[\varphi],\{\mathcal{S}\circ Q_t\circ \mathcal{S}\}_{t\in \mathbb{R}_+}\}$, it follows  that there is $\alpha_1\in \mathbb{N}$ such that
$||Q_{t}[\varphi](\cdot,x)-r^*_-||<\gamma$ for all $t>1+\alpha_1+\frac{3c_-^*(-\infty)}{\varepsilon}
$ and $x\in  [t(-c_-^*(-\infty)+\varepsilon),-1-\alpha_1-\frac{3c_-^*(-\infty)}{\varepsilon}]$. Hence, (ii) follows from the arbitrariness of $\gamma$.
\end{proof}



We need the following uniform continuity,  as introduced in \cite{yz2020},
to prove the  asymptotic annihilation for $Q_t$   under the bilateral {\bf (GLC)} /{\bf (GLC)} -type assumption.

\begin{enumerate}
\item [{\bf (SC)}]
For any $s>0$ and $\phi\in Int(Y_+)$, there holds
$\lim\limits_{C_{\phi}\ni\varphi\to 0}T_{-y}\circ Q_t \circ T_{y}[\varphi](\cdot,0)=0$ in $Y$ uniformly for all $(t,y)\in [0,s] \times \mathbb{R}$.

\end{enumerate}

\begin{thm} \label{thm4.1-bil-meal/meal} Assume that $\{Q_t\}_{t\geq 0}$ is a continuous-time semiflow on $C_+$ such that {\bf (SC)} holds and $Q_{t_0}$ satisfies all the assumptions of Theorem~\ref{thm3.1-bil-meal/meal} for some $t_0>0$.
Let  $\varphi\in { C_{+}}$ with the compact support,
$c^*_1:=\liminf\limits_{k\to \infty}c^*_{L_k}$, and
$c^*_2:=\liminf\limits_{k\to \infty}c^*_{\mathcal{S}\circ\hat{L}_k\circ \mathcal{S}}$. Then the following statements are valid:
\begin{itemize}
\item [{\rm (i)}] If $c^*_1\geq 0$ and  $c^*_2\geq 0$, then $$\lim\limits_{t\rightarrow \infty}
\sup\left\{
||Q_t[\varphi](\cdot,x)||: x\geq  t\left(\frac{c^*_1}{t_0}+\varepsilon\right) \mbox{ or } x\leq - t\left(\frac {c^*_2}{t_0}+\varepsilon\right)\right\}=0, \forall \varepsilon>0.$$

\item [{\rm (ii)}] If $c^*_1< 0$ and  $c^*_2\geq 0$, then  $$\lim\limits_{t\rightarrow \infty}\sup\left\{
||Q_t[\varphi](\cdot,x)||:x\geq t\varepsilon \mbox{ or } x\leq - t\left(\frac{c^*_2}{t_0}+\varepsilon\right)\right\}=0, \forall \varepsilon>0.$$

\item [{\rm (iii)}] If $c^*_1\geq 0$ and  $c^*_2< 0$, then  $$\lim\limits_{t\rightarrow \infty}\sup\left\{
||Q_t[\varphi](\cdot,x)||:x\leq -t\varepsilon \mbox{ or } x\geq  t\left(\frac{c^*_1}{t_0}+\varepsilon\right)\right\}=0, \forall \varepsilon>0.$$

\item [{\rm (iv)}] If $c^*_1< 0$  and   $c^*_2< 0$, then  $\lim\limits_{\alpha\rightarrow \infty}\sup\left\{||Q_t[\varphi](\cdot,x)||:|x|,t\geq \alpha \right\}=0$ and $$\lim\limits_{t\rightarrow \infty}\sup\left\{
||Q_t[\varphi](\cdot,x)||:|x|\geq t\varepsilon \right\}=0, \forall \varepsilon>0.$$

\end{itemize}
\end{thm}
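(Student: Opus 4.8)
The plan is to deduce Theorem~\ref{thm4.1-bil-meal/meal} from the discrete-time asymptotic annihilation, Theorem~\ref{thm3.1-bil-meal/meal}, upgrading its conclusion from integer multiples of $t_0$ to all $t>0$ by means of the uniform-continuity hypothesis {\bf (SC)}. Since $\{Q_t\}_{t\ge 0}$ is autonomous, I would first replace it by $\{Q_{t_0 t}\}_{t\ge 0}$ and thereby assume $t_0=1$, so that $Q_1$ itself satisfies {\bf (GLC)} and {\bf (GLC$_-$)}. I would then carry out statement (i) in detail and obtain (ii)--(iv) by running the same scheme off the matching branch of Theorem~\ref{thm3.1-bil-meal/meal}; the left half-line $x\le-t(\cdot)$ is, as usual, handled by feeding the reflected semiflow into the argument, using that $\tilde Q=\mathcal S\circ Q\circ\mathcal S$ inherits all the hatted hypotheses.

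Here is the core of the argument. Fix $\varphi\in C_+$ with compact support and $\varepsilon,\gamma>0$, and choose $k$ with $\varphi\le\phi_k^*$, so that $Q_1^{\,n}[\varphi]\le\phi_k^*$ for all $n$ by {\bf (UB)}. Applying Theorem~\ref{thm3.1-bil-meal/meal}(i) to the discrete map $Q_1$ with $\varepsilon$ replaced by $\varepsilon/2$ yields $N_1\in\mathbb N$ and a threshold $\delta'>0$ (to be pinned down below from {\bf (SC)}) such that $\|Q_1^{\,n}[\varphi](\cdot,x)\|<\delta'$ whenever $n>N_1$ and $x\ge n(c_1^*+\tfrac\varepsilon2)$, and symmetrically for $x\le-n(c_2^*+\tfrac\varepsilon2)$. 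For a large real time $t$, write $t=n+\tau$ with $n=\lfloor t\rfloor>N_1$ and $\tau\in[0,1)$, so that $Q_t=Q_\tau\circ Q_1^{\,n}$. The key observation is that, because the norm on $C$ is the weighted norm $\sum_{m}2^{-m}\sup_{|x|\le m}\|\cdot\|$, the half-line smallness of $Q_1^{\,n}[\varphi]$ together with the uniform bound $Q_1^{\,n}[\varphi]\le\phi_k^*$ forces the translate $T_{-x}Q_1^{\,n}[\varphi]$ to satisfy $\|T_{-x}Q_1^{\,n}[\varphi]\|_C<\delta'$ as soon as $x$ exceeds $n(c_1^*+\tfrac\varepsilon2)$ by a constant $M=M(\delta',\phi_k^*)$ independent of $n$. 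Now fix $\delta'$, hence everything above, through {\bf (SC)} (with $s=1$, $\phi=\phi_k^*$), so that $\psi\in C_{\phi_k^*}$ with $\|\psi\|_C<\delta'$ forces $\|T_{-y}\circ Q_\tau\circ T_y[\psi](\cdot,0)\|_Y<\gamma$ for all $\tau\in[0,1]$ and $y\in\mathbb R$. Specializing to $\psi=T_{-x}Q_1^{\,n}[\varphi]$, $y=x$, and using $Q_\tau\circ Q_1^{\,n}=Q_t$ and $T_x\circ T_{-x}=\mathrm{id}$, one gets
$$\|Q_t[\varphi](\cdot,x)\|=\|T_{-x}\circ Q_\tau\circ T_x[T_{-x}Q_1^{\,n}[\varphi]](\cdot,0)\|<\gamma$$
for every $x\ge t(c_1^*+\varepsilon)$, once $n$ is so large that $t(c_1^*+\varepsilon)\ge n(c_1^*+\tfrac\varepsilon2)+M$; letting $\gamma\downarrow0$ gives the right half-line of (i), and $\mathcal S$ gives the left half-line.

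The genuine obstacle --- and the reason {\bf (SC)} is indispensable here, in contrast to Theorem~\ref{thm4.1-bil-uc/uc}, where the compact-set form of Theorem~\ref{thm3.1-0} sufficed --- is that $Q_\tau[\varphi]$ need not have compact support, so one can neither feed the family $\{Q_\tau[\varphi]:\tau\in[0,1]\}$ directly into Theorem~\ref{thm3.1-bil-meal/meal}, nor (lacking monotonicity of $\{Q_t\}$) dominate $Q_\tau\circ Q_1^{\,n}[\varphi]$ by $Q_\tau$ applied to an exponentially decaying envelope. The translation device sidesteps both difficulties: it converts ``bridging integer to real times'' into ``$T_{-x}Q_1^{\,n}[\varphi]$ is $C$-small'', which the weighted norm makes cheap once the discrete theorem has placed $Q_1^{\,n}[\varphi]$ below $\delta'$ on a half-line, so that only the qualitative half-line annihilation --- not its rate --- is used. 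The remaining work, namely verifying the constant $M(\delta',\phi_k^*)$, checking that the thresholds on $t$ are compatible, and transcribing the argument for (ii)--(iv) from the corresponding cases of Theorem~\ref{thm3.1-bil-meal/meal}, is routine.
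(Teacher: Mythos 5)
Your proposal is correct and follows essentially the same route as the paper's proof: reduce to $t_0=1$, write $t=n+\tau$, use Theorem~\ref{thm3.1-bil-meal/meal} to make $Q_1^{\,n}[\varphi]$ small on the relevant half-lines, and then pass from integer to real times via the identity $Q_t[\varphi](\cdot,x)=T_{-x}\circ Q_\tau\circ T_x\bigl[T_{-x}Q_1^{\,n}[\varphi]\bigr](\cdot,0)$ together with {\bf (SC)}. The only cosmetic difference is that the paper phrases the {\bf (SC)} smallness hypothesis as ``$\|\psi(\cdot,x)\|<\delta$ on a window $[-d,d]$'' rather than as smallness in the weighted $C$-norm, and uses an $\varepsilon/3$ rather than $\varepsilon/2$ splitting; your deferred bookkeeping (choosing $M$ so the tail $2^{-M}\|\phi_k^*\|$ is absorbed, i.e.\ aiming for $\delta'/2$ at each stage) is indeed routine.
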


\begin{proof}  We only prove (i) since we can use similar arguments to deal with (ii-iv). Since $\{Q_t\}_{t\geq 0}$  is an autonomous semiflow, we assume that  $t_0=1$ without loss of generality. Fix $\varepsilon>0$, $\gamma>0$, $k_0\in \mathbb{N}$, and $\varphi\in C_{\phi_{k_0}^*}$ with  $\varphi$ having the compact support.

It follows from  {\bf (SC)} that there
exist $\delta=\delta(\gamma)>0$ and $d=d(\gamma)>0$ such
that if $\psi\in C_{\phi_{k_0}^*}$ with $||\psi(\cdot,x)||<\delta$ for all
$x\in [-d,d] $,  then
$||T_{-y}\circ Q_{t}\circ T_{y}[\psi](\cdot,0)||<\gamma$ for all $t\in[0,1]$ and $y\in \mathbb{R}$.

By virtue of  Theorem~\ref{thm3.1-bil-meal/meal}-(i),  we obtain
\begin{equation}\label{4.3-2}
\lim\limits_{n\rightarrow \infty}
\sup\left\{
||Q_n[\varphi](\cdot,x)||:x\geq n(c^*_1+\frac{\varepsilon}{3}) \mbox{ or } x\leq -n(c^*_2+\frac{\varepsilon}{3})\right \}=0.
\end{equation}
It then follows from~\eqref{4.3-2} that there is an integer $n_1>0$
such that
$$
||Q_{n}[\varphi] (\cdot,x)||<\delta,  \,  \,  \forall
x\in [ n(c^*_1+\frac{\varepsilon}{3}), \infty)\bigcup (-\infty, -n(c^*_2+\frac{\varepsilon}{3})], \, n>n_1.
$$
Let $n_2=\max\{n_1,\frac{3d}{\varepsilon}\}$. Then for any $n>n_2$ and  $y\in  [n(c^*_1+\frac{2\varepsilon}{3}),\infty)\bigcup (-\infty, -n(c^*_2+\frac{2\varepsilon}{3})]$, we have
$$
||T_{-y}\circ Q_{n}[\varphi](\cdot,x)||<\delta, \,  \,  \forall x\in [-d,d].
$$
According to these discussions,  we know that
\begin{eqnarray*}
||Q_{t+n}[\varphi](\cdot,y)||&=&||T_{-y} \circ Q_{t}\circ T_{y} \circ T_{-y} \circ Q_{n}[\varphi](\cdot,0)||\\
&=&||T_{-y} \circ Q_{t}\circ T_{y} [ T_{-y} \circ Q_{n}[\varphi]](
\cdot,0)||<\gamma,
\end{eqnarray*}
where  $n>n_2$,   $y\in  [n(c^*_1+\frac{2\varepsilon}{3}),\infty)\bigcup (-\infty, -n(c^*_2+\frac{2\varepsilon}{3})]$, and $t\in [0,1]$.
 In particular,
$||Q_{t}[\varphi](\cdot,y)||<\gamma$ for all $t>1+n_2
$ and $y\in  [ t(c^*_1+\varepsilon),\infty)\bigcup (-\infty, - t(c^*_2+\varepsilon)]$.
Thus, (i) follows from the arbitrariness of $\gamma$.
\end{proof}

The following  result is on the existence of equilibrium points  under  the bilateral {\bf(UC)}/{\bf(UC)}-type assumption.

\begin{thm} \label{thm4.3.6-bil-uc/uc-fixp}
Suppose that  $t_0>0$, $r^{**}\in \underline {{\phi}_1^* } +Y_{+}$, and  ${Q}$ and $\underline{Q}$ are continuous-time semiflows on $C_+$ such that $Q_t[C_{r^{**}}]\subseteq C_{r^{**}}$, $Q_t[\varphi]\geq \underline{Q}_t[\varphi]\geq \underline{Q}_t[\psi]$ for all $(t,\varphi,\psi)\in \mathbb{R}_+\times C_{r^{**}}\times C_{r^{**}} $ with $\varphi\geq \psi$, and $\underline{Q}_{t_0}$ satisfies all conditions in Theorem~\ref{thm3.6-bil-uc/uc-fixp} with  parameters $r,c$ and $\phi$ being underlined. If
$\{Q_t\}_{t\in\mathbb{R}_+}$ has at least one equilibrium point in $\mathcal{K}$ for any nonempty, closed, convex and positively invariant set $\mathcal{K}\subseteq C_{r^{**}}$ of $Q_t$,  then  $\{Q_t\}_{t\in\mathbb{R}_+}$ has an  equilibrium point  $W$ in $C_{r^{**}}$ with $W(\cdot,\infty)=r^*$ and $W(\cdot,-\infty)=r^*_-$.
\end{thm}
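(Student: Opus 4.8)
The plan is to reduce the problem to the discrete-time theory via the time-$t_0$ map $Q_{t_0}$, using the auxiliary semiflow $\underline Q$ only to manufacture a lower barrier, and then to place the whole argument inside a convex, positively invariant subset of $C_{r^{**}}$ where the standing fixed-point hypothesis can be applied. First I would apply Theorem~\ref{thm3.6-bil-uc/uc-fixp} to $\underline Q_{t_0}$ — which by assumption satisfies all the hypotheses of that theorem with the underlined parameters — to obtain a nontrivial fixed point $\underline W$ of $\underline Q_{t_0}$ with $\underline W\in C_{\underline{\phi_1^*}}\subseteq C_{r^{**}}$ and $\underline W(\cdot,\pm\infty)=r^*_\pm$. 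Since $\underline W$ is a priori fixed only for the time-$t_0$ map, the key preparatory step is to upgrade it to a subsolution for the whole semiflow $\{\underline Q_t\}$: because $\underline Q_{t_0}[\underline W]=\underline W$, the curve $u\mapsto\underline Q_u[\underline W]$ is $t_0$-periodic and $\{\underline Q_u[\underline W]:u\in[0,t_0]\}$ is compact in $C$, so I set
$$
\underline W^*:=\sup_{0\le s\le t_0}\underline Q_s[\underline W]
$$
(pointwise supremum). One checks $\underline W^*\in C$ (boundedness is immediate from $\underline W\le\underline W^*\le r^{**}$, and continuity from the compactness of the orbit segment by a short upper/lower semicontinuity argument at each point), and, using monotonicity of $\underline Q_t$ on $C_{r^{**}}$ together with the $t_0$-periodicity,
$$
\underline Q_t[\underline W^*]\ge\sup_{0\le s\le t_0}\underline Q_{t+s}[\underline W]=\sup_{0\le u\le t_0}\underline Q_u[\underline W]=\underline W^*,\qquad\forall\,t\ge0.
$$
Also $\underline W^*\ge\underline W$, so $\underline W^*\ne0$ and $\underline W^*(\cdot,x)\ge\frac{1}{2}r^*_\pm$ for all $|x|$ large.

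Next I would take $\mathcal K:=[\underline W^*,r^{**}]_C$, which is nonempty (it contains $r^{**}$ since $\underline W^*\le r^{**}$), closed, convex, and contained in $C_{r^{**}}$, and verify it is positively invariant under $\{Q_t\}$: for $\varphi\in\mathcal K$ the upper bound $Q_t[\varphi]\le r^{**}$ is precisely the assumed invariance $Q_t[C_{r^{**}}]\subseteq C_{r^{**}}$, while $Q_t[\varphi]\ge\underline Q_t[\varphi]\ge\underline Q_t[\underline W^*]\ge\underline W^*$ follows from the domination of $\underline Q_t$ by $Q_t$, the monotonicity of $\underline Q_t$ on $C_{r^{**}}$, and the subsolution inequality just established. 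The standing hypothesis on closed convex positively invariant sets then provides an equilibrium $W$ of $\{Q_t\}$ lying in $\mathcal K$; in particular $Q_{t_0}[W]=W$, $\underline W^*\le W\le r^{**}$, so $W\ne0$ and $W(\cdot,x)\ge\frac{1}{2}r^*_\pm$ for all $|x|$ large.

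Finally it remains to identify the tails of $W$. Viewing $W$ as a fixed point of the discrete map $Q_{t_0}$, the bound $W\le r^{**}$ gives $W\in C_{ar^*}$ for some $a>1$ (as $r^{**},r^*\in Int(Y_+)$), and $W\ge\underline W$ gives $W(\cdot,x)\ge br^*$ for all large $x$ with some $b\in(0,1)$; Lemma~\ref{lemm3.1-999000} then yields $W(\cdot,\infty)=r^*$, and its reflection under $\mathcal S$ (via {\bf (NM$_-$)}) yields $W(\cdot,-\infty)=r^*_-$. I expect this last step to be the main obstacle: Lemma~\ref{lemm3.1-999000} is stated for a map satisfying {\bf (NM)}, whereas the hypotheses put {\bf (NM)} and {\bf (NM$_-$)} on $\underline Q_{t_0}$ rather than on $Q_{t_0}$ itself, so one must transfer these conditions to $Q_{t_0}$ across the domination $Q_{t_0}\ge\underline Q_{t_0}$ on $C_{r^{**}}$ — which passes the lower ($I$-)alternative of {\bf (NM)} directly — and then use the a priori pinch $\underline W\le W\le r^{**}$ to control $\limsup_{x\to\pm\infty}W(\cdot,x)$ and thereby dispense with the upper ($S$-)alternative; equivalently, one may squeeze the monotone decreasing orbit $\{\underline Q_{nt_0}[W]\}_n$ down to a fixed point of $\underline Q_{t_0}$ trapped between $\underline W^*$ and $W$ and invoke Theorem~\ref{thm3.1-bil-uc/uc} for $\underline Q_{t_0}$. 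The remaining points — the semicontinuity bookkeeping for $\underline W^*$, closedness and convexity of $\mathcal K$, and the elementary order estimates — are routine.
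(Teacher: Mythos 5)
Your proof is correct and takes essentially the same route as the paper's: obtain $\underline{W}$ from Theorem~\ref{thm3.6-bil-uc/uc-fixp} applied to $\underline{Q}_{t_0}$, take $\mathcal{K}$ to be the set of functions squeezed between the orbit segment $\{\underline{Q}_s[\underline{W}]:s\in[0,t_0]\}$ and $r^{**}$ (the paper writes this as $\bigcap_{t\in [0,t_0]}[\underline{Q}_{t}[\underline{W}],r^{**}]_C$, which is the same set as your $[\underline{W}^*,r^{**}]_C$ but sidesteps the continuity check on the pointwise supremum), invoke the standing fixed-point hypothesis, and identify the tails via Lemma~\ref{lemm3.1-999000}. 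The \textbf{(NM)}/\textbf{(NM$_-$)} concern you raise at the end is a fair reading of the statement, but the paper resolves it simply by taking \textbf{(NM)} and \textbf{(NM$_-$)} (with the non-underlined $r^*,r^*_-$) to hold for $Q_{t_0}$ itself and applying Lemma~\ref{lemm3.1-999000} directly to $(Q_{t_0},W)$ and $(\mathcal{S}\circ Q_{t_0}\circ\mathcal{S},\mathcal{S}[W])$, so your transfer-across-domination and orbit-squeezing machinery (which, as sketched, would only yield lower bounds on the tails anyway) is not needed.
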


\begin{proof} By applying Theorem~\ref{thm3.6-bil-uc/uc-fixp} to  $\underline{Q}_{t_0}$, we know that   $\underline{Q}_{t_0}$ has a  nontrivial  fixed point $\underline{W}$ in $C_{r^{**}}$ such that $\underline{W}(\cdot,\infty)=\underline{r}^*$ and $\underline{W}(\cdot,-\infty)=\underline{r}^*_-$.  Let $\mathcal{K}=\bigcap\limits_{t\in [0,t_0]} [\underline{Q}_{t}[\underline{W}],r^{**}]_C$. Then $\mathcal{K}$ is a nonempty, closed, convex and positively invariant set of $Q_t$, and hence,  $\{Q_t\}_{t\in\mathbb{R}_+}$ has an  equilibrium point  $W$ in $\mathcal{K}$ with $\liminf\limits_{x\to \infty}W(\cdot,x)\geq\underline{r}^*$ and $\liminf\limits_{x\to-\infty}W(\cdot,x)\geq\underline{r}^*_-$.
By Lemma~\ref{lemm3.1-999000}, as applied  to  $\{(Q_{t_0},W)\}$ and $\{(\mathcal{S}\circ Q_{t_0} \circ \mathcal{S},\mathcal{S}[W])\}$, we have $W(\cdot,\infty)=r^*$ and $W(\cdot,-\infty)=r^*_-$.\end{proof}

\begin{rem}  \label{rem4.1-sp-monoty}
By Remark~\ref{rem3.2-sp-monoty} and the proof of Theorems~\ref{thm4.3.6-bil-uc/uc-fixp}, it follows  that
in Theorem~\ref{thm4.3.6-bil-uc/uc-fixp}, we can use
$\min\{c_+^*,c_-^*,c_+^*(-\infty),c_-^*(-\infty)\}>0$ to
replace  the assumption that  $T_{-y}\circ Q_l[\varphi]\geq Q_l\circ T_{-y}[\varphi]$ for all $l\in \mathbb{Z}\setminus\{0\}$ and $(y,\varphi)\in \mathbb{R}_+\times C_{r^*_l}$.
\end{rem}

To present our last result in this subsection, we need the following assumption.

\

 {\bf(GAS-CSF)} There exists $W\in C_+ $ such that $\lim\limits_{t\to \infty }Q_t[\varphi]=W$ in $C$ for all $\varphi\in C_+\setminus \{0\}$.

\begin{thm} \label{thm4.1-bil-gas}
Assume that $\{Q_t\}_{t\geq 0}$ is a continuous-time semiflow on $C_+$ such that $\{Q_t\}_{t\geq 0}$ satisfies {\bf (GAS-CSF)}. Let $\varphi\in C_{+}\setminus\{0\}$ and $t_0\in (0,\infty)$.
If $Q_{t_0}$ satisfies all the assumptions of Theorem~\ref{thm3.1-bil-uc/uc} with $Q$ replaced by $Q_{t_0}$, then $$\lim\limits_{t \rightarrow \infty}
\sup\left\{||Q_t[\varphi](\cdot, x)-W(\cdot,x)||:t(-\frac{c_-^*(-\infty)}{t_0}+\varepsilon)\leq x\leq t(\frac{c_+^*}{t_0}-\varepsilon)\right\}= 0$$ for all  $\varepsilon\in (0,\frac{1}{t_0}\min\{c_+^*,c_-^*(-\infty)\})$.
\end{thm}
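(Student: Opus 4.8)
The plan is to reproduce, now in continuous time, the two–region argument behind Theorem~\ref{thm3.1-bil-gas}: split the admissible range of $x$ into a moving cone, on which Theorem~\ref{thm4.1-bil-uc/uc}-(ii) pins $Q_t[\varphi](\cdot,x)$ down to the step profile $r^*\cdot{\bf 1}_{\mathbb{R}_+}(x)+r^*_-\cdot(1-{\bf 1}_{\mathbb{R}_+}(x))$, and a fixed bounded window $|x|\le\alpha_0$, on which {\bf (GAS-CSF)} by itself already yields the (locally uniform) convergence $Q_t[\varphi]\to W$. Passing to the limit $t\to\infty$ in the cone estimate transfers it to $W$, and then the two estimates are glued along the whole moving interval.

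In more detail, first I would fix $\gamma>0$ and $\varepsilon\in(0,\tfrac{1}{t_0}\min\{c_+^*,c_-^*(-\infty)\})$ and invoke Theorem~\ref{thm4.1-bil-uc/uc}-(ii) (applicable since $\{Q_t\}$ is an autonomous semiflow with $Q_{t_0}$ satisfying the hypotheses of Theorem~\ref{thm3.1-bil-uc/uc}): there is $\alpha_0>0$ with
\[
\big\|Q_t[\varphi](\cdot,x)-r^*\cdot{\bf 1}_{\mathbb{R}_+}(x)-r^*_-\cdot(1-{\bf 1}_{\mathbb{R}_+}(x))\big\|<\tfrac{\gamma}{2}\qquad\text{for all }(t,x)\in\mathcal{C}_{\alpha_0,\varepsilon},
\]
with $\mathcal{C}_{\alpha_0,\varepsilon}$ the cone from that theorem. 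For any fixed $x$ with $|x|\ge\alpha_0$, one has $(t,x)\in\mathcal{C}_{\alpha_0,\varepsilon}$ for all large $t$; since {\bf (GAS-CSF)} gives $Q_t[\varphi]\to W$ in $C$ and convergence in the $C$-norm entails $Q_t[\varphi](\cdot,x)\to W(\cdot,x)$ in $Y$, I would let $t\to\infty$ to obtain $\|W(\cdot,x)-r^*\cdot{\bf 1}_{\mathbb{R}_+}(x)-r^*_-\cdot(1-{\bf 1}_{\mathbb{R}_+}(x))\|\le\tfrac{\gamma}{2}$ for all $|x|\ge\alpha_0$. The triangle inequality then yields $\|Q_t[\varphi](\cdot,x)-W(\cdot,x)\|<\gamma$ for every $(t,x)\in\mathcal{C}_{\alpha_0,\varepsilon}$.

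For the bounded window, {\bf (GAS-CSF)} together with the explicit form of $\|\cdot\|_C$ forces $\sup_{\theta\in M,\,|x|\le\alpha_0}\|Q_t[\varphi](\theta,x)-W(\theta,x)\|\to0$, so there is $N_0>0$ with $\|Q_t[\varphi](\cdot,x)-W(\cdot,x)\|<\gamma$ whenever $t>N_0$ and $|x|\le\alpha_0$. Finally I would take $t$ large enough that $t>N_0$, $t\ge\alpha_0 t_0$, $t(\tfrac{c_+^*}{t_0}-\varepsilon)\ge\alpha_0$, and $t(-\tfrac{c_-^*(-\infty)}{t_0}+\varepsilon)\le-\alpha_0$; then any $x$ in $[t(-\tfrac{c_-^*(-\infty)}{t_0}+\varepsilon),\,t(\tfrac{c_+^*}{t_0}-\varepsilon)]$ is either covered by the bounded-window estimate (if $|x|\le\alpha_0$) or lies in $\mathcal{C}_{\alpha_0,\varepsilon}$ (if $|x|>\alpha_0$), so $\|Q_t[\varphi](\cdot,x)-W(\cdot,x)\|<\gamma$ in either case. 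Since $\gamma>0$ is arbitrary, this is the claimed limit.

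The whole argument is bookkeeping on top of Theorems~\ref{thm3.1-bil-uc/uc} and~\ref{thm4.1-bil-uc/uc}; the one step needing a little care — and the place I would be most careful — is the passage $t\to\infty$ transferring the cone estimate from $Q_t[\varphi]$ to the equilibrium $W$, which hinges on convergence in $C$ being genuine locally uniform convergence, so that the pointwise-in-$x$ limits used here and in the bounded-window step are legitimate. Note that one need not check $W\ne0$ separately, since this route never feeds $W$ itself into Theorem~\ref{thm4.1-bil-uc/uc}.
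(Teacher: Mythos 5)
Your proof is correct and follows essentially the same two-region argument as the paper's: Theorem~\ref{thm4.1-bil-uc/uc}-(ii) on the moving cone $\mathcal{C}_{\alpha_0,\varepsilon}$, {\bf (GAS-CSF)} on the fixed window $|x|\le\alpha_0$, and a gluing of the two for large $t$. The only (minor) deviation is that the paper obtains the cone estimate for $W$ by invoking the spreading result with $W$ itself as initial datum, whereas you get it by letting $t\to\infty$ in the estimate for $Q_t[\varphi]$ using local uniform convergence in $C$ — an equally valid route that, as you note, avoids checking $W\neq 0$.
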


\begin{proof}
Given  $\gamma>0$ and $\varepsilon\in (0,\infty)$.  Applying Theorem~\ref{thm4.1-bil-uc/uc}-(ii)  to $Q_{t_{0}}$,  we see that there exists $\alpha_0>0$ such that
$$||Q_t[\varphi](\cdot, x)-r^*||<\frac{\gamma}{2}, ||W(\cdot, x)-r^*||<\frac{\gamma}{2}, \forall
t\geq \alpha_0 t_0,  x \in [\alpha_0,  t(\frac{c_+^*}{t_0}-\varepsilon)] $$ and $$||Q_t[\varphi](\cdot, x)-r^*_-||<\frac{\gamma}{3}, ||W(\cdot, x)-r^*_-||<\frac{\gamma}{2}, \forall
t\geq \alpha_0 t_0, x \in [t(-\frac{c_-^*(-\infty)}{t_0}+\varepsilon),-\alpha_0].$$ Thus, we have
$$
||Q_t[\varphi](\cdot, x)-W(\cdot, x)||<\gamma,\, \,  \forall
t\geq \alpha_0 t_0, x \in [\alpha_0,  t(\frac{c_+^*}{t_0}-\varepsilon)] \bigcup [t(-\frac{c_-^*(-\infty)}{t_0}+\varepsilon),-\alpha_0].$$
 It follows from {\bf (GAS-CSF)} that there exists $T_0>0$ such that
 $$
 ||Q_t[\varphi](\cdot, x)-W(\cdot, x)||<\gamma, \, \,  \forall x\in [-\alpha_0,\alpha_0], \,  t>T_0.
 $$
 These, together with the choices of $\alpha_0$ and $T_0$, imply that $||Q_t[\varphi](\cdot, x)-W(\cdot, x)||<\gamma$ for all  $t>\max\{T_0,\alpha_0 t_0\}$ and $x\in [t(-\frac{c_-^*(-\infty)}{t_0}+\varepsilon), t(\frac{c_+^*}{t_0}-\varepsilon)]$. Now  the arbitrariness of $\gamma$ gives rise to the desired result. \end{proof}

\subsection{Nonautonomous systems}
Assume that
 $P:\mathbb{R}_{+}\times  {C}_+\to{C}_+$ is a map such that for any given $r\in Int(\mathbb{R}^N_+)$,
$P|_{\mathbb{R}_{+}\times C_r}: \mathbb{R}_{+}\times {C}_r\to{C}_+$
is continuous.  For any given $c\in \mathbb{R}$, we define a family of
mappings $Q_t:=T_{-ct}\circ P[t,\cdot]$  with parameter
$t\in \mathbb{R}_+$.

We say that $W(\cdot,x-ct)$ is a travelling wave of $P$ if
$W:M\times\mathbb{R}\to \mathbb{R}_+$ is a bounded and
nonconstant continuous function such that
$P[t,W](\theta,x)=W(\theta,x-tc)$ for all $(\theta,x)\in
M\times \mathbb{R}$ and $t\in \mathbb{R}_+$. 
By applying Theorem~\ref{thm4.1-bil-uc/uc} to  $\{\varphi,\{Q_t\}_{t\in \mathbb{R}_+}\}$, we have  the following result on the  upward convergence under the bilateral {\bf(UC)}/{\bf(UC)}-type assumption.

\begin{thm} \label{thm5.1-bil-uc/uc} Assume  that there exist $t_0>0$ and  $c\in \mathbb{R}$ such that $Q_t:=T_{-ct}\circ P[t,\cdot]$ is a continuous-time semiflow on $C_+$
 and  $Q_{t_{0}}$  satisfies all the conditions in Theorem~\ref{thm3.1-bil-uc/uc}.
Let $\varphi\in  C_{+}\setminus\{0\}$. Then the following statements are valid:
\begin{itemize}
\item [{\rm (i)}]
If $\min\{c_+^*,c_-^*(-\infty)\}>0$, then $$\lim\limits_{t\rightarrow \infty}
\max\left\{||P[t,\varphi](\cdot, x)-r^*\cdot {\bf {1}}_{\mathbb{R}_+}(x-ct)-r^*_-\cdot (1-{\bf {1}}_{\mathbb{R}_+}(x-ct))||:x\in\mathcal{B}_{t,\varepsilon,c}\right\}= 0$$
for all  $\varepsilon\in (0,\frac{1}{2t_0}\min\{c_+^*,c_-^*(-\infty),c_+^*+c_-^*,c_+^*(-\infty)+c_-^*(-\infty)\})$, where   $$\mathcal{B}_{t,\varepsilon,c}=[t\max\{\varepsilon+c,c-\frac{c_-^*}{t_0}+\varepsilon\}, t(c+\frac{c_+^*}{t_0}-\varepsilon)]\bigcup [t(c-\frac{c_-^*(-\infty)}{t_0}+\varepsilon),t\min\{c-\varepsilon,c+\frac{c_+^*(-\infty)}{t_0}-\varepsilon\}].$$

\item [{\rm (ii)}] If $\min\{c_-^*,c_+^*,c_-^*(-\infty),c_+^*(-\infty)\}>0$, then $$\lim\limits_{\alpha \rightarrow \infty}
\sup\left\{||P[t,\varphi](\cdot, x)-r^*\cdot {\bf {1}}_{\mathbb{R}_+}(x-ct)-r^*_-(1-{\bf {1}}_{\mathbb{R}_+}(x-ct))||:(t,x)\in \mathcal{C}_{\alpha,\varepsilon,c}\right\}= 0$$
for all $\varepsilon\in (0,\frac{1}{t_0}\min\{c_+^*,c_-^*(-\infty)\})$, where $$\mathcal{C}_{\alpha,\varepsilon,c}=\{(t,x)\in \mathbb{R}_+\times \mathbb{R}:t\geq \alpha t_0 \mbox{ and } x \in [\alpha+ct,  t(c+\frac{c_+^*}{t_0}-\varepsilon)] \bigcup [t(c-\frac{c_-^*(-\infty)}{t_0}+\varepsilon),ct-\alpha]\}.
$$
\end{itemize}
\end{thm}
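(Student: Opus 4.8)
The plan is to derive the theorem from Theorem~\ref{thm4.1-bil-uc/uc} by a spatial change of variable. By hypothesis, $\{Q_t\}_{t\in\mathbb{R}_+}$ with $Q_t=T_{-ct}\circ P[t,\cdot]$ is a continuous-time semiflow on $C_+$ and $Q_{t_0}$ satisfies all the assumptions of Theorem~\ref{thm3.1-bil-uc/uc}, so Theorem~\ref{thm4.1-bil-uc/uc} applies to $\{Q_t\}_{t\in\mathbb{R}_+}$ and to the given $\varphi\in C_{+}\setminus\{0\}$; it yields the upward convergence of $Q_t[\varphi](\cdot,x)$ to $r^*\cdot{\bf {1}}_{\mathbb{R}_+}(x)+r^*_-\cdot(1-{\bf {1}}_{\mathbb{R}_+}(x))$ on the regions $\mathcal{B}_{t,\varepsilon}$ (in case~(i)) and $\mathcal{C}_{\alpha,\varepsilon}$ (in case~(ii)) defined there.

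First I would record the elementary identity
$$
P[t,\varphi](\theta,x)=T_{ct}\circ Q_t[\varphi](\theta,x)=Q_t[\varphi](\theta,x-ct),\qquad \forall (\theta,x)\in M\times\mathbb{R},\ t\in\mathbb{R}_+,
$$
which is immediate from $Q_t=T_{-ct}\circ P[t,\cdot]$. Hence, for any $g\colon M\times\mathbb{R}\to\mathbb{R}^N$ and any $E\subseteq\mathbb{R}$,
$$
\sup\{||P[t,\varphi](\cdot,x)-g(\cdot,x-ct)||:x\in ct+E\}=\sup\{||Q_t[\varphi](\cdot,y)-g(\cdot,y)||:y\in E\},
$$
and the same holds with $\sup$ replaced by $\max$, or taken jointly over $(t,x)$ with $x-ct\in E$. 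Applying this with $g=r^*\cdot{\bf {1}}_{\mathbb{R}_+}+r^*_-\cdot(1-{\bf {1}}_{\mathbb{R}_+})$ and putting $y=x-ct$ turns the two conclusions of Theorem~\ref{thm4.1-bil-uc/uc} into the two assertions of the present theorem, since the target automatically becomes $r^*\cdot{\bf {1}}_{\mathbb{R}_+}(x-ct)+r^*_-\cdot(1-{\bf {1}}_{\mathbb{R}_+}(x-ct))$, provided the translated regions are identified correctly.

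The only actual computation — and hence the main, but minor, obstacle — is the bookkeeping of the shifted regions: checking that $ct+\mathcal{B}_{t,\varepsilon}=\mathcal{B}_{t,\varepsilon,c}$ and that the $ct$-translate of $\mathcal{C}_{\alpha,\varepsilon}$ in the spatial coordinate equals $\mathcal{C}_{\alpha,\varepsilon,c}$. Both are routine: since $ct+t\sigma=t(c+\sigma)$, adding $ct$ to each endpoint of $\mathcal{B}_{t,\varepsilon}$ merely inserts the constant $c$ inside the relevant $\max$ and $\min$, which is precisely the definition of $\mathcal{B}_{t,\varepsilon,c}$; and $x-ct\in[\alpha,t(\frac{c_+^*}{t_0}-\varepsilon)]\cup[t(-\frac{c_-^*(-\infty)}{t_0}+\varepsilon),-\alpha]$ is equivalent to $x\in[\alpha+ct,t(c+\frac{c_+^*}{t_0}-\varepsilon)]\cup[t(c-\frac{c_-^*(-\infty)}{t_0}+\varepsilon),ct-\alpha]$, the spatial section of $\mathcal{C}_{\alpha,\varepsilon,c}$. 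The admissible ranges of $\varepsilon$ are unchanged. No dynamical input beyond Theorem~\ref{thm4.1-bil-uc/uc} is needed; the whole content of the theorem is the coordinate change $x\mapsto x-ct$.
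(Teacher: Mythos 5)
Your proposal is correct and coincides with the paper's own argument: the paper likewise obtains this theorem by applying Theorem~\ref{thm4.1-bil-uc/uc} to the semiflow $Q_t=T_{-ct}\circ P[t,\cdot]$ and the given $\varphi$, and then undoing the translation via $P[t,\varphi](\theta,x)=Q_t[\varphi](\theta,x-ct)$. Your verification that $ct+\mathcal{B}_{t,\varepsilon}=\mathcal{B}_{t,\varepsilon,c}$ and that the shifted spatial sections of $\mathcal{C}_{\alpha,\varepsilon}$ give $\mathcal{C}_{\alpha,\varepsilon,c}$ is exactly the (routine) content the paper leaves implicit.
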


Under the bilateral {\bf (GLC)} /{\bf (GLC)} -type assumption, we are
able to apply Theorem~\ref{thm4.1-bil-meal/meal} to $\{\varphi,\{Q_t\}_{t\in \mathbb{R}_+}\}$ to obtain the following result.

\begin{thm} \label{thm5.1-bil-meal/meal} Assume  that there exists   $c\in \mathbb{R}$ such that $Q_t:=T_{-ct}\circ P[t,\cdot]$ is a continuous-time semiflow on $C_+$, {\bf (SC)} holds, and $Q_{t_0}$ satisfies all the assumptions of Theorem~\ref{thm3.1-bil-meal/meal} for some $t_0>0$.
Let  $\varphi\in { C_{+}}$ with having the compact support,
$c^*_1:=\liminf\limits_{k\to \infty}c^*_{L_k}$, and
$c^*_2:=\liminf\limits_{k\to \infty}c^*_{\mathcal{S}\circ\hat{L}_k\circ \mathcal{S}}$. Then the following statements are valid:
\begin{itemize}
\item [{\rm (i)}] If $c^*_1\geq 0$ and  $c^*_2\geq 0$, then $$\lim\limits_{t\rightarrow \infty}
\sup\left\{
||P[t,\varphi](\cdot,x)||: x\geq  t(c+\frac{c^*_1}{t_0}+\varepsilon) \mbox{ or } x\leq - t(\frac{c^*_2}{t_0}-c+\varepsilon)\right\}=0, \forall \varepsilon>0.$$

\item [{\rm (ii)}] If $c^*_1< 0$ and  $c^*_2\geq 0$, then  $$\lim\limits_{t\rightarrow \infty}\sup\left\{
||P[t,\varphi](\cdot,x)||:x\geq t(c+\varepsilon) \mbox{ or } x\leq - t(\frac{c^*_2}{t_0}-c+\varepsilon)\right\}=0, \forall \varepsilon>0.$$

\item [{\rm (iii)}] If $c^*_1\geq 0$ and  $c^*_2< 0$, then  $$\lim\limits_{t\rightarrow \infty}\sup\left\{
||P[t,\varphi](\cdot,x)||: x\geq  t(c+\frac{c^*_1}{t_0}+\varepsilon) \mbox{ or } x\leq t(c-\varepsilon)\right\}=0, \forall \varepsilon>0.$$

\item [{\rm (iv)}] If $c^*_1< 0$ and  $c^*_2< 0$, then  $\lim\limits_{\alpha\rightarrow \infty}\sup\left\{||P[t,\varphi](\cdot,x)||:|x-ct|\geq \alpha \mbox{ and }t\geq \alpha\right \}=0$ and $$\lim\limits_{t\rightarrow \infty}\sup\left\{
||P[t,\varphi](\cdot,x)||:|x-ct|\geq t\varepsilon \right\}=0, \forall \varepsilon>0.$$

\end{itemize}
\end{thm}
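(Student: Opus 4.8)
The plan is to deduce this from Theorem~\ref{thm4.1-bil-meal/meal} by autonomizing $P$ in the co-moving frame. Since $Q_t=T_{-ct}\circ P[t,\cdot]$ and $T_{y}[\psi](\theta,x)=\psi(\theta,x-y)$, we have $Q_t[\varphi](\theta,x)=P[t,\varphi](\theta,x+ct)$, equivalently
\[
P[t,\varphi](\theta,x)=Q_t[\varphi](\theta,x-ct),\qquad \forall (\theta,x)\in M\times\mathbb{R},\ t\in\mathbb{R}_+,
\]
so that $\|P[t,\varphi](\cdot,x)\|=\|Q_t[\varphi](\cdot,x-ct)\|$. Consequently, controlling the size of $P[t,\varphi](\cdot,x)$ over a region of $x$-values is the same as controlling $Q_t[\varphi](\cdot,y)$ over the region obtained from it by the substitution $y=x-ct$.

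First I would verify that $\{Q_t\}_{t\ge 0}$ satisfies the hypotheses of Theorem~\ref{thm4.1-bil-meal/meal}: by assumption it is a continuous-time semiflow on $C_+$, {\bf (SC)} holds, and $Q_{t_0}$ satisfies all the assumptions of Theorem~\ref{thm3.1-bil-meal/meal}; moreover $\varphi\in C_+$ has compact support, and the exponents $c_1^*=\liminf_{k\to\infty}c^*_{L_k}$ and $c_2^*=\liminf_{k\to\infty}c^*_{\mathcal{S}\circ\hat L_k\circ\mathcal{S}}$ are built from the operators supplied by {\bf (GLC)} and {\bf (GLC$_-$)} for $Q_{t_0}$, hence are exactly the quantities appearing in Theorem~\ref{thm4.1-bil-meal/meal}. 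Applying that theorem to $\{\varphi,\{Q_t\}_{t\in\mathbb{R}_+}\}$ gives, in each of the four sign regimes for $(c_1^*,c_2^*)$, the stated decay of $\sup\{\|Q_t[\varphi](\cdot,y)\|:y\in\mathcal{R}_t\}$ over the explicit $y$-regions $\mathcal{R}_t$ written there.

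Next I would translate each conclusion back to $P$ via $y=x-ct$. In case (i), $\{y\ge t(c_1^*/t_0+\varepsilon)\}$ becomes $\{x\ge t(c+c_1^*/t_0+\varepsilon)\}$, while $\{y\le-t(c_2^*/t_0+\varepsilon)\}$ becomes $\{x\le t(c-c_2^*/t_0-\varepsilon)\}=\{x\le -t(c_2^*/t_0-c+\varepsilon)\}$, which is precisely the region in (i); cases (ii) and (iii) are identical bookkeeping with the half-line $\{y\ge t\varepsilon\}$ (resp.\ $\{y\le-t\varepsilon\}$) replacing the corresponding branch; and in case (iv) the sets $\{|y|\ge\alpha,\ t\ge\alpha\}$ and $\{|y|\ge t\varepsilon\}$ become $\{|x-ct|\ge\alpha,\ t\ge\alpha\}$ and $\{|x-ct|\ge t\varepsilon\}$. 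Because $\|P[t,\varphi](\cdot,x)\|=\|Q_t[\varphi](\cdot,x-ct)\|$, each transported supremum coincides with the original one, so the limits carry over unchanged and the four assertions follow.

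This is essentially a corollary rather than a theorem requiring a new idea, and I do not expect a substantive obstacle; the only points demanding care are the sign conventions of $T_y$ (which fix the direction of the frame shift) and the routine identification of the shifted interval endpoints with those displayed in (i)--(iv). The legitimacy of the whole reduction rests on the standing hypothesis that $Q_t=T_{-ct}\circ P[t,\cdot]$ is a genuine semiflow, i.e.\ that $P$ is autonomous in the co-moving coordinate.
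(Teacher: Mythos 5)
Your proposal is correct and is exactly the paper's argument: the theorem is stated as a direct consequence of applying Theorem~\ref{thm4.1-bil-meal/meal} to $\{\varphi,\{Q_t\}_{t\in\mathbb{R}_+}\}$ and translating back via $P[t,\varphi](\cdot,x)=Q_t[\varphi](\cdot,x-ct)$. Your sign conventions and the resulting interval endpoints in cases (i)--(iv) all check out.
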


As an application of Theorems \ref{thm4.3.6-bil-uc/uc-fixp},
we have the following  result on the existence of travelling waves under the bilateral {\bf(UC)}/{\bf(UC)}-type assumption.

\begin{thm} \label{thm5.3.6-bil-uc/uc-fixp}
Suppose that $c\in \mathbb{R}$, and $Q_t:=T_{-ct}\circ P[t,\cdot]$, $\underline{Q}_t$ are continuous-time semiflows on $C_+$ such that $\{Q_t,\underline{Q}_{t}\}$ satisfies all conditions in Theorem~\ref{thm4.3.6-bil-uc/uc-fixp}. Then  $\{P[t,\cdot]\}_{t\in\mathbb{R}_+}$ has a travelling wave  $W(\cdot,x-ct)$ in $C_{r^{**}}$ with $W(\cdot,\infty)=r^*$ and $W(\cdot,-\infty)=r^*_-$.
\end{thm}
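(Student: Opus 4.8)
The plan is to observe that a travelling wave of $P$ with speed $c$ is nothing but a nonconstant equilibrium point of the shifted semiflow $\{Q_t\}_{t\in\mathbb{R}_+}$, and then to produce such an equilibrium directly from Theorem~\ref{thm4.3.6-bil-uc/uc-fixp}.

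First I would record the elementary translation identity: for any $W\in C$, $\theta\in M$, $x\in\mathbb{R}$ and $t\in\mathbb{R}_+$,
$$
Q_t[W](\theta,x)=\bigl(T_{-ct}\circ P[t,\cdot]\bigr)[W](\theta,x)=P[t,W](\theta,x+ct),
$$
so that $Q_t[W]=W$ for all $t\ge 0$ is equivalent (after the substitution $x\mapsto x-ct$) to $P[t,W](\theta,x)=W(\theta,x-ct)$ for all $(\theta,x)\in M\times\mathbb{R}$ and $t\in\mathbb{R}_+$. Consequently, every bounded, continuous and nonconstant equilibrium point $W$ of $\{Q_t\}_{t\in\mathbb{R}_+}$ gives a travelling wave $W(\cdot,x-ct)$ of $P$, and conversely.

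By hypothesis the pair $\{Q_t,\underline{Q}_t\}$ satisfies all the conditions of Theorem~\ref{thm4.3.6-bil-uc/uc-fixp}; applying that theorem yields an equilibrium point $W\in C_{r^{**}}$ of $\{Q_t\}_{t\in\mathbb{R}_+}$ with $W(\cdot,\infty)=r^*$ and $W(\cdot,-\infty)=r^*_-$. Since $W\in C_{r^{**}}\subseteq C=C(M,X)$, it is bounded and continuous, and since its boundary values $W(\cdot,+\infty)=r^*$ and $W(\cdot,-\infty)=r^*_-$ are distinct (the non-degenerate bilateral situation of interest here), $W$ is nonconstant. Combining this with the equivalence of the previous paragraph shows that $W(\cdot,x-ct)$ is a travelling wave of $P$ lying in $C_{r^{**}}$ with $W(\cdot,\infty)=r^*$ and $W(\cdot,-\infty)=r^*_-$, which is the assertion.

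The only genuinely routine step is the translation identity; all the substantive content is inherited from Theorem~\ref{thm4.3.6-bil-uc/uc-fixp} and, through it, from Theorem~\ref{thm3.6-bil-uc/uc-fixp}, Theorem~\ref{thm3.6}, Proposition~\ref{prop3.5} and the non-monotone boundary estimate of Lemma~\ref{lemm3.1-999000}: the fixed point is constructed inside the order interval $\mathcal{K}=\bigcap_{t\in[0,t_0]}[\underline{Q}_t[\underline{W}],r^{**}]_C$, which is nonempty, closed, convex and positively invariant, and its limits at $\pm\infty$ are pinned down by the \textbf{(NM)}-type argument. Thus the main point one must check carefully is simply that the bracketed hypotheses of Theorem~\ref{thm4.3.6-bil-uc/uc-fixp} — the invariance $Q_t[C_{r^{**}}]\subseteq C_{r^{**}}$, the lower bound $Q_t[\varphi]\ge\underline{Q}_t[\varphi]$ on $C_{r^{**}}$, the semiflow property of $Q_t=T_{-ct}\circ P[t,\cdot]$, and the existence of an equilibrium of $\{Q_t\}$ in every nonempty closed convex positively invariant subset of $C_{r^{**}}$ — have been correctly transcribed for the shifted family, after which the conclusion is immediate.
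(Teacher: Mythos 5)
Your proof is correct and follows exactly the route the paper takes: the paper presents this theorem as an immediate application of Theorem~\ref{thm4.3.6-bil-uc/uc-fixp} to the shifted semiflow $Q_t=T_{-ct}\circ P[t,\cdot]$, leaving the translation identity $Q_t[W]=W\iff P[t,W](\theta,x)=W(\theta,x-ct)$ implicit. Your explicit check of that identity and of the correspondence between equilibria of $\{Q_t\}$ and travelling waves of $P$ supplies precisely the routine step the paper omits.
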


Regarding the global attracticity of the positive travelling wave, we can use
Theorem~\ref{thm4.1-bil-gas} to establish  the following result.
\begin{thm} \label{thm5.1-bil-gas}
Assume that  there exists   $c\in \mathbb{R}$ such that $Q_t:=T_{-ct}\circ P[t,\cdot]$  is a continuous-time semiflow on $C_+$ and  $\{Q_t\}_{t\geq 0}$ satisfies {\bf (GAS-CSF)}. Let $\varphi\in C_{+}\setminus\{0\}$. If there exists $t_0> 0$ such that
	$Q_{t_0}$ satisfies all the assumptions of Theorem~\ref{thm3.1-bil-uc/uc} with $Q$ replaced by $Q_{t_0}$, then $$\lim\limits_{t \rightarrow \infty}
\sup\left\{||P[t,\varphi](\cdot, x)-W(\cdot,x-ct)||: t\left(c-\frac{c_-^*(-\infty)}{t_0}+\varepsilon\right)\leq x\leq t\left(c+\frac{c_+^*}{t_0}-\varepsilon\right)\right\}= 0,$$
for all  $\varepsilon\in (0,\frac{1}{t_0}\min\{c_+^*,c_-^*(-\infty)\})$.
\end{thm}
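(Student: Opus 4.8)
The plan is to deduce Theorem~\ref{thm5.1-bil-gas} directly from Theorem~\ref{thm4.1-bil-gas}, applied to the autonomous semiflow $\{Q_t\}_{t\ge 0}$, and then to undo the moving-coordinate change $Q_t=T_{-ct}\circ P[t,\cdot]$. First I would observe that the hypotheses here are precisely those required by Theorem~\ref{thm4.1-bil-gas} for $\{Q_t\}_{t\ge 0}$: by assumption $\{Q_t\}_{t\ge 0}$ is a continuous-time semiflow on $C_+$ satisfying {\bf (GAS-CSF)} with some limiting profile $W\in C_+$, we have $\varphi\in C_{+}\setminus\{0\}$, and $Q_{t_0}$ satisfies all the assumptions of Theorem~\ref{thm3.1-bil-uc/uc}. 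Consequently, Theorem~\ref{thm4.1-bil-gas} gives, for each $\varepsilon\in(0,\frac{1}{t_0}\min\{c_+^*,c_-^*(-\infty)\})$,
$$
\lim\limits_{t\rightarrow\infty}\sup\left\{||Q_t[\varphi](\cdot,x)-W(\cdot,x)||:\ t\Big(-\tfrac{c_-^*(-\infty)}{t_0}+\varepsilon\Big)\le x\le t\Big(\tfrac{c_+^*}{t_0}-\varepsilon\Big)\right\}=0 .
$$

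Next I would translate this estimate back to $P$. Since $Q_t=T_{-ct}\circ P[t,\cdot]$ and $T_{-ct}[\psi](\theta,x)=\psi(\theta,x+ct)$, it follows that $P[t,\varphi](\theta,x)=Q_t[\varphi](\theta,x-ct)$ for all $(\theta,x)\in M\times\mathbb{R}$ and $t\ge0$. Performing the substitution $y=x-ct$ in the displayed limit, the index constraint $t(-c_-^*(-\infty)/t_0+\varepsilon)\le y\le t(c_+^*/t_0-\varepsilon)$ turns into $t(c-c_-^*(-\infty)/t_0+\varepsilon)\le x\le t(c+c_+^*/t_0-\varepsilon)$, while $||Q_t[\varphi](\cdot,y)-W(\cdot,y)||$ becomes $||P[t,\varphi](\cdot,x)-W(\cdot,x-ct)||$. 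This yields exactly the asserted convergence, and the arbitrariness of $\varepsilon$ in the indicated range finishes the argument.

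There is no substantive obstacle: the whole proof is a direct invocation of Theorem~\ref{thm4.1-bil-gas} followed by a moving-frame change of variables, and the only care needed is to track the two endpoints of the spreading wedge under $x\mapsto x-ct$ and to get the sign convention in $T_{-ct}$ right. If one prefers a self-contained argument instead, one can replay the three-step proof of Theorem~\ref{thm4.1-bil-gas} verbatim in the shifted frame: use Theorem~\ref{thm5.1-bil-uc/uc}-(ii) to place $P[t,\varphi](\cdot,x)$ within $\gamma$ of $r^*\cdot{\bf 1}_{\mathbb{R}_+}(x-ct)+r^*_-(1-{\bf 1}_{\mathbb{R}_+}(x-ct))$—hence within $2\gamma$ of $W(\cdot,x-ct)$, which has the same outer behaviour—on the two outer wedges $|x-ct|\ge\alpha_0$; then use {\bf (GAS-CSF)}, i.e.\ $Q_t[\varphi]\to W$ in $C$, to control the bounded middle zone $|x-ct|\le\alpha_0$ for all large $t$; finally patch the two estimates together on $t(c-c_-^*(-\infty)/t_0+\varepsilon)\le x\le t(c+c_+^*/t_0-\varepsilon)$ and let $\gamma\to0$.
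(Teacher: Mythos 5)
Your proposal is correct and is exactly the route the paper intends: Theorem~\ref{thm5.1-bil-gas} is stated as a direct application of Theorem~\ref{thm4.1-bil-gas} to the semiflow $Q_t=T_{-ct}\circ P[t,\cdot]$, followed by the change of variables $y=x-ct$ (with the sign convention $T_{-ct}[\psi](\theta,x)=\psi(\theta,x+ct)$, hence $P[t,\varphi](\theta,x)=Q_t[\varphi](\theta,x-ct)$), which transforms both the error term and the wedge constraint precisely as you computed.
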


Following  \cite[Section 3.1]{Zhaobook}, we say a map $Q:\mathbb{R}_{+}\times{C}_+\to{C}_+$ is a
 continuous-time  $\omega$-periodic semiflow on $C_+$ if  for any given  $r\in Int(\mathbb{R}_+^N)$,
 $Q|_{\mathbb{R}_{+}\times C_r}: \mathbb{R}_{+}\times{C}_r\to{C}_+$
 is continuous,  $Q_0=Id|_{{C}_+}$,  and $Q_{t}\circ
 Q_{\omega}=Q_{t+\omega}$ for  some number $\omega>0$ and all $t\in\mathbb{R}_+$,  where $Q_t:=
 Q(t,\cdot)$ for all $t\in\mathbb{R}_+$.

\begin{rem}
In the case where $Q_t:=T_{-ct}\circ P[t,\cdot]$ is a continuous-time
$\omega$-periodic semiflow on $C_+$, we can apply main results in Section~\ref{3sec} to the Poincar\'e map $Q_{\omega}$  to
establish the spreading properties and the forced time-periodic traveling waves with speed $c$ for the nonautonomous evolution system
$P[t,\cdot]$. We refer to\cite{LYZ} for the Poincar\'e map approach to
monotone periodic semiflows.
\end{rem}

\section{Applications}

In this section, we apply our developed theory  in Sections 3 and  4 to two classes of evolution equations in the bilateral limit case, but we leave the investigation in the unilateral limit case to readers.

	\subsection {A  delayed  reaction-diffusion equation with a shifting habitat \label {Sec6.3}}
Consider the following time-delayed  reaction-diffusion equation with a shifting habitat:
\begin {equation}
\left\{
\begin{array}{ll}
	\frac{\partial u}{\partial t}(t,x)  =  d u_{xx}(t,x)-\mu u(t,x) +
	\mu f(x-ct,u(t-\tau,x)), \, (t,x)\in (0,\infty)\times \mathbb{R}, \\
	u(\theta,x) = \varphi(\theta,x),  \quad  (\theta,x) \in [-\tau,0]
	\times \mathbb{R},
\end{array} \right.
\label{6.8}
\end {equation}
where  $c\in \mathbb{R}$,
$d,\mu,\tau>0$, and the
initial data $\varphi$ belongs to $C([-\tau,0]\times \mathbb{R},\mathbb{R}_+)\cap L^\infty([-\tau,0]\times \mathbb{R},\mathbb{R})$. Note that the shifting speed $c$ in system \eqref{6.8} can be made positive via  the transformation: $-c\to c, -x\to x, u(t,-x)\to u(t,x)$ and $f(-x,u)\to f(x,u)$ when $c<0$.  Without loss of
generality, we may assume that  $c\geq 0$.

We always assume that $f\in C(\mathbb{R}\times\mathbb{R}_{+},\mathbb{R}_+)$ and $f_\pm^\infty\in C^1(\mathbb{R}_{+},\mathbb{R}_+)$ satisfy the following condition:
\begin{enumerate}
	\item[(B)] $f(s,\cdot)\in C^{1}(\mathbb{R}_{+},\mathbb{R}), f(s,0)=0, \forall
	s\in \mathbb{R}$;  $0\leq f_{\pm}^{\infty}(u)\leq \frac{{\rm d} f_{\pm}^{\infty}(0)}{\rm d u}u, \forall u\in \mathbb{R}_+$;
there exists a sequence $\{u_k^*\}_{k\in \mathbb{N}}$ in $(0,\infty)$ such that $u_k^*<u_{k+1}^*$, $\lim\limits_{k\to\infty}u_k^*=\infty$, and $f(\mathbb{R}\times [0,u_k^*])\subseteq  [0,u_k^*]$ for all $k\in \mathbb{N}$.
\end{enumerate}

To study  bilateral propagation dynamics,  we also need  the following  assumptions on $f$:
\begin{enumerate}
	\item[(B$_+$)]  $\mbox{There exists  }u^{*}_+ >0 \mbox{  such that } \{u>0:(f_{+}^{\infty})^2(u)=u\}=\{u_+^*\}$, $\frac{{\rm d} f_{+}^{\infty}(0)}{\rm d u}>1$, $f_{+}^{\infty}(0,\infty)\subseteq (0,\infty)$, {$\inf_{x>1}\frac{f_{+}^{\infty}(x)}{x}<1$,}  $f(\mathbb{R}\times (0,\infty))\subseteq (0,\infty)$,
	and $\lim\limits_{s\to \infty}f(s,\cdot)=f_{+}^{\infty}(\cdot)$
	 in  $C_{loc}^{1}(\mathbb{R}_{+},\mathbb{R})$;
	\item[(B$_-$)]  $\mbox{There exists  }u^{*}_- >0 \mbox{  such that } \{u>0:(f_{-}^{\infty})^2(u)=u\}=\{u_-^*\}$, $\frac{{\rm d} f_{-}^{\infty}(0)}{\rm d u}>1$,  $f_{-}^{\infty}(0,\infty)\subseteq (0,\infty)$, {$\inf_{x>1}\frac{f_{-}^{\infty}(x)}{x}<1$,}  $f(\mathbb{R}\times (0,\infty))\subseteq (0,\infty)$,
	and $\lim\limits_{s\to -\infty}f(s,\cdot)= f_{-}^{\infty}(\cdot)$  in   $C_{loc}^{1}(\mathbb{R}_{+},\mathbb{R})$.
	\end{enumerate}
Note that we do not
assume  that  $f(s,u)\leq \partial_u f(s,0)u$ for   all $(s,u)\in \mathbb{R}\times \mathbb{R}_+$.

The following result is adapted from  \cite[Lemma 4.2]{ycw2019} and \cite[Lemma 6.1]{yz2020}, which gives
a   monotone and subhomogeneous nonlinear lower bound for the function $f$.
\begin{lemma}\label{lem666.001} Let {\rm (B$_+$)} hold. Then for any  $u^{**}\geq u^{*}_+$ and $\gamma\in(0,\frac{{\rm d}f_+^{\infty}(0)}{{\rm d}u}-1)$ with $f(\mathbb{R}\times [0,u^{**}])\subseteq  [0,u^{**}]$, there exist $\mathfrak{s}=\mathfrak{s}(u^{**},\gamma)>0$, $K=K_{u^{**},\gamma}>0$, and $r=r_{u^{**},\gamma}\in C(\mathbb{R},\mathbb{R}_+)$ such that
	\begin{itemize}
		\item[{\rm (i)}]$r$ is a nondecreasing function having the property that $-\frac{1}{K}= r(-\infty)=r(s_1)< r(s_2)=r(\infty)=\frac{\frac{{\rm d}f_+^{\infty}(0)}{{\rm d}u}-1-\gamma}{K}$ for all  $(s_1,s_2)\in(-\infty,\mathfrak{s}] \times [1+\mathfrak{s},\infty)$;

		\item[{\rm (ii)}] $f(s,u)\geq f_{u^{**},\gamma}(s,u)$ for all $(s,u)\in \mathbb{R}\times[0,u^{**}]$,
		 where
		\[
		f_{u^{**},\gamma}(s,u)=\left\{
		\begin{array}{ll}
		\frac{(1+Kr(s))^2}{4K}, & (s,u)\in \mathbb{R}\times [\frac{1+Kr(s)}{2K},\infty),
		\\
		u+K u(r( s)-u), \qquad & (s,u)\in \mathbb{R}\times [0,\frac{1+Kr(s)}{2K}).
		\end {array}
		\right.
		\]

\item[{\rm (iii)}]$f_{u^{**},\gamma}(\infty,\cdot)$ has a unique fixed point $u_\infty$ in $(0,\infty)$ with $$u_\infty\in (0,u^{**}]\mbox{ and }
			u_\infty=\left\{
			\begin{array}{ll}
			\frac{(1+Kr(\infty))^2}{4K}, & r(\infty)K>1,
			\\
			r(\infty), \qquad & r(\infty)K\leq 1,
			\end {array}
			\right.
			$$ and hence, $u_\infty\leq u_1^*$.
				\end{itemize}
\end{lemma}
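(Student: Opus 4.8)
The plan is to adapt the constructions of \cite[Lemma 4.2]{ycw2019} and \cite[Lemma 6.1]{yz2020}: fix a transition threshold $\mathfrak{s}$ by a $C^1$-closeness argument, then take the steepness constant $K$ large, and finally locate the fixed point $u_\infty$ of the limiting map by an elementary one-variable analysis. Abbreviate $a:=\frac{{\rm d}f_+^{\infty}(0)}{{\rm d}u}>1$ and observe that $1+Kr(\infty)=a-\gamma>1$ and $1+Kr(-\infty)=0$ will be \emph{independent of} $K$, so that for each $s$ the proposed lower bound $f_{u^{**},\gamma}(s,\cdot)$ is exactly the parabola $g_s(u):=u\big(1+Kr(s)-Ku\big)$ truncated at its maximum. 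This makes $f_{u^{**},\gamma}(s,\cdot)$ nondecreasing and subhomogeneous on $[0,u^{**}]$ (the content behind calling it a monotone subhomogeneous lower bound), and it yields the two uniform estimates $f_{u^{**},\gamma}(s,u)\le (a-\gamma)u$ and $f_{u^{**},\gamma}(s,u)\le \frac{(a-\gamma)^2}{4K}$ valid for all $u\ge 0$ and all $s$, since $0\le 1+Kr(s)\le 1+Kr(\infty)$.

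First I would fix $\mathfrak{s}$. From $f(s,0)=0$, the convergence $f(s,\cdot)\to f_+^{\infty}(\cdot)$ in $C^{1}_{loc}(\mathbb{R}_{+},\mathbb{R})$ — which gives uniform convergence of $f(s,\cdot)$ and of $\partial_u f(s,\cdot)$ to $(f_+^{\infty})'$ on the compact set $[0,u^{**}]$ — together with the continuity of $(f_+^{\infty})'$ at $0$ and $(f_+^{\infty})'(0)=a$, one chooses $\delta_0\in(0,u^{**})$ and $\mathfrak{s}=\mathfrak{s}(u^{**},\gamma)>0$ so that, for all $s\ge\mathfrak{s}$, $\partial_u f(s,v)\ge a-\frac{\gamma}{2}$ on $[0,\delta_0]$ (hence $f(s,u)\ge(a-\frac{\gamma}{2})u$ there) and $f(s,u)\ge m:=\frac{1}{2}\min_{[\delta_0,u^{**}]}f_+^{\infty}>0$ on $[\delta_0,u^{**}]$, the minimum being positive by $f_+^{\infty}\big((0,\infty)\big)\subseteq(0,\infty)$. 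Then define $r=r_{u^{**},\gamma}$ to equal $-1/K$ on $(-\infty,\mathfrak{s}]$, to equal $r(\infty):=\frac{a-1-\gamma}{K}$ on $[1+\mathfrak{s},\infty)$, and to interpolate continuously and nondecreasingly on $[\mathfrak{s},1+\mathfrak{s}]$; this gives (i) immediately, with $r(s_1)<r(s_2)$ because $-1/K<\frac{a-1-\gamma}{K}$ (here $\gamma<a-1$).

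It then remains to choose $K$ large and check (ii) and (iii). For (ii): if $s\le\mathfrak{s}$ then $1+Kr(s)=0$, so $f_{u^{**},\gamma}(s,\cdot)\equiv 0\le f(s,\cdot)$ since $f\ge 0$; if $s>\mathfrak{s}$, then on $[0,\delta_0]$ the uniform estimate gives $f_{u^{**},\gamma}(s,u)\le(a-\gamma)u\le(a-\frac{\gamma}{2})u\le f(s,u)$, while on $[\delta_0,u^{**}]$ it is enough to require $\frac{(a-\gamma)^2}{4K}<m$, whence $f_{u^{**},\gamma}(s,u)\le\frac{(a-\gamma)^2}{4K}<m\le f(s,u)$. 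For (iii), write $h(u):=f_{u^{**},\gamma}(\infty,u)$, the truncation of $g_\infty(u)=u(a-\gamma-Ku)$ at its peak value $\frac{(a-\gamma)^2}{4K}$ attained at $u_p=\frac{a-\gamma}{2K}$; since $g_\infty(u)-u=Ku\big(r(\infty)-u\big)>0$ for $0<u<r(\infty)$, a case split on whether $Kr(\infty)\le 1$ (equivalently $r(\infty)\le u_p$, equivalently $\frac{(1+Kr(\infty))^2}{4K}\le u_p$) or $Kr(\infty)>1$ shows $h$ has exactly one fixed point in $(0,\infty)$, namely $u_\infty=r(\infty)$ in the first case and $u_\infty=\frac{(1+Kr(\infty))^2}{4K}$ in the second, matching the stated formula; since $u_\infty=O(1/K)$ in either case, enlarging $K$ once more yields $u_\infty\in(0,u^{**}]$ and $u_\infty\le u_1^*$. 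The main obstacle is the bookkeeping of this two-stage choice of constants — $\mathfrak{s}$ first, then $K$ — arranged so that the single parameter $K$ simultaneously drives the quadratic lower bound below $f$ on the bulk interval $[\delta_0,u^{**}]$ and forces the limiting fixed point $u_\infty$ below $u_1^*$; the rest is routine calculus.
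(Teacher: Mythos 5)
Your proposal is correct and follows essentially the same route as the paper's proof: fix $\delta_0$ and $\mathfrak{s}$ via the $C^1_{loc}$ convergence of $f(s,\cdot)$ to $f_+^\infty$, define $r$ as the piecewise-linear nondecreasing transition from $-1/K$ to $(a-1-\gamma)/K$, and take $K$ large enough that the truncated parabola's peak $\frac{(a-\gamma)^2}{4K}$ sits below $\inf f([\mathfrak{s},\infty)\times[\delta_0,u^{**}])$. The only cosmetic difference is that the paper fixes $K=\frac{a^2}{4f_*}$ once and leaves (ii)--(iii) as "easily verified," whereas you choose $K$ in two stages and actually carry out the verification; both are fine, since enlarging $K$ only strengthens the estimates.
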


\begin{proof} Fix  $u^{**}\geq u^{*}_+$ and $\gamma\in(0,\frac{{\rm d}f_+^{\infty}(0)}{{\rm d}u}-1)$. By the choice of $\gamma$, there exists $\delta_1:=\delta_1(\gamma)>0$ such that $$\frac{{\rm d}f_+^{\infty}(u)}{{\rm d}u}>\frac{{\rm d}f_+^{\infty}(0)}{{\rm d}u}-\frac{\gamma}{3}, \, \, \forall u\in [0,\delta_1].$$ This, together with  (B$_+$), implies that there exists $\mathfrak{s}=\mathfrak{s}(u^{**},\gamma)>0$ such that
\[
f_*:=\inf f([\mathfrak{s},\infty)\times [\delta_1,u^{**}])>0,  \, \, \partial_uf(s,u)>\frac{{\rm d}f_+^{\infty}(0)}{{\rm d}u}-\frac{2\gamma}{3},\, \, \forall (s,u)\in [\mathfrak{s},\infty)\times [0,\delta_1].
\]
Take $K=\frac{1}{4f_*}[\frac{{\rm d}f_+^{\infty}(0)}{{\rm d}u}]^2$ and define $r:\mathbb{R}\to \mathbb{R}$ by
\[
r(s)=\left\{
\begin{array}{ll}
\frac{\frac{{\rm d}f_+^{\infty}(0)}{{\rm d}u}-1-\gamma}{K}, & s\geq \mathfrak{s}+1,
 \\
\frac{(\frac{{\rm d}f_+^{\infty}(0)}{{\rm d}u}-\gamma)(s-\mathfrak{s})-1}{K}, &\mathfrak{s}< s< \mathfrak{s}+1,
\\
-\frac{1}{K}, \qquad & s\leq  \mathfrak{s}.
\end {array}
\right.
\]
With  the definitions of $r(\cdot)$ and  $f_{u^{**},\gamma}$, we can easily verify  that
$r(\cdot)$ and  $f_{u^{**},\gamma}$ have all properties presented in Lemma~\ref{lem666.001}.
\end{proof}

The following observation is useful for us to verify the assumptions (GLC) and (GLC$_-$).

\begin{lemma}\label{lem666.005} Let {\rm (B$_\pm$)} hold. Then for any $u^{**}>0$ and $\gamma>0$, there exists $\overline{R}=\overline{R}_{\gamma,u^{**}}(\cdot)\in C(\mathbb{R},\mathbb{R}_+)$ such that
	\begin{itemize}
		\item[{\rm (i)}] $\overline{R}$ is a  nonincreasing function with $\infty>\overline{R}(-\infty)> \overline{R}(\infty)=\gamma+\frac{{\rm d}f_+^{\infty}(0)}{{\rm d}u}$;

		\item[{\rm (ii)}] $f(s,u)\leq \overline{R}(s)u$ for all $(s,u)\in \mathbb{R}\times[0,u^{**}]$.
	\end{itemize}
\end{lemma}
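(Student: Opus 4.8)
The plan is to build $\overline{R}$ explicitly from the ratio $f(s,u)/u$ and then fix up monotonicity, continuity, and the prescribed limit at $+\infty$ by elementary regularizations.

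First I would set, for each $s\in\mathbb{R}$, $g(s):=\sup\{f(s,u)/u:\ u\in(0,u^{**}]\}$. Since (B) gives $f(s,\cdot)\in C^1(\mathbb{R}_+,\mathbb{R})$ with $f(s,0)=0$, the map $u\mapsto f(s,u)/u$ extends continuously to $u=0$ with value $\partial_u f(s,0)$, so it is bounded on $[0,u^{**}]$ and $g(s)<\infty$; moreover $g(s)\ge 0$ and, by construction, $f(s,u)\le g(s)\,u$ on $\mathbb{R}\times[0,u^{**}]$. The next and most delicate point is that $g$ is bounded on all of $\mathbb{R}$, say $B:=\sup_{s\in\mathbb{R}}g(s)<\infty$. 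On the right tail this is exactly what (B$_+$) supplies: the $C^1_{loc}$ convergence $f(s,\cdot)\to f_+^\infty(\cdot)$ gives $\partial_u f(s,u)\to (f_+^\infty)'(u)$ uniformly for $u\in[0,u^{**}]$, hence, integrating from $0$ and using $f(s,0)=f_+^\infty(0)=0$ (the latter because $0\le f_+^\infty(u)\le\frac{{\rm d}f_+^\infty(0)}{{\rm d}u}u$), one gets $f(s,u)\le f_+^\infty(u)+\varepsilon u\le\bigl(\frac{{\rm d}f_+^\infty(0)}{{\rm d}u}+\varepsilon\bigr)u$ for all $s\ge s_\varepsilon$; in particular $\limsup_{s\to\infty}g(s)\le\frac{{\rm d}f_+^\infty(0)}{{\rm d}u}$. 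The symmetric estimate from (B$_-$) handles the left tail, giving boundedness of $g$ on $(-\infty,-s_0]\cup[s_0,\infty)$; boundedness on the remaining bounded $s$-window follows from the (local) regularity of $f$ built into (B)/(B$_\pm$).

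Granting $B<\infty$, I would put $R_1(s):=\sup_{\sigma\ge s}g(\sigma)$, which is nonincreasing, bounded by $B$, satisfies $R_1(s)\ge g(s)$, and has $R_1(\infty)=\limsup_{\sigma\to\infty}g(\sigma)\le\frac{{\rm d}f_+^\infty(0)}{{\rm d}u}$ and $R_1(-\infty)=B$. To remove the possible jumps of the monotone function $R_1$, I replace it by the running average $R_2(s):=\int_{s-1}^{s}R_1(\sigma)\,{\rm d}\sigma$: it is (locally Lipschitz, hence) continuous, still nonincreasing since $R_2(t)-R_2(s)=\int_{s}^{t}[R_1(\tau)-R_1(\tau-1)]\,{\rm d}\tau\le 0$ for $t>s$, satisfies $R_2(s)\ge R_1(s)\ge g(s)$ by monotonicity of $R_1$, and keeps the same limits $R_2(\pm\infty)=R_1(\pm\infty)$. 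Finally I define
\[
\overline{R}_{\gamma,u^{**}}(s):=\max\left\{R_2(s),\ \gamma+\frac{{\rm d}f_+^\infty(0)}{{\rm d}u}\right\}+\frac{\gamma}{1+e^{s}}.
\]
The bump $\frac{\gamma}{1+e^{s}}$ is continuous and strictly decreasing with limits $\gamma$ at $-\infty$ and $0$ at $+\infty$, so $\overline{R}_{\gamma,u^{**}}$ is continuous and nonincreasing; since $R_2(\infty)\le\frac{{\rm d}f_+^\infty(0)}{{\rm d}u}<\gamma+\frac{{\rm d}f_+^\infty(0)}{{\rm d}u}$ we get $\overline{R}_{\gamma,u^{**}}(\infty)=\gamma+\frac{{\rm d}f_+^\infty(0)}{{\rm d}u}$, while $\overline{R}_{\gamma,u^{**}}(-\infty)\ge\gamma+\frac{{\rm d}f_+^\infty(0)}{{\rm d}u}+\gamma>\overline{R}_{\gamma,u^{**}}(\infty)$ is finite, which is (i); and $f(s,u)\le g(s)u\le R_2(s)u\le\overline{R}_{\gamma,u^{**}}(s)u$ on $\mathbb{R}\times[0,u^{**}]$, which is (ii).

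The main obstacle is precisely the uniform-in-$s$ bound $B=\sup_{s}\sup_{u\in(0,u^{**}]}f(s,u)/u<\infty$: the behaviour as $s\to\pm\infty$ is what (B$_\pm$) are designed to control (via the $C^1_{loc}$ convergence together with $f_\pm^\infty(u)\le\frac{{\rm d}f_\pm^\infty(0)}{{\rm d}u}u$), so the only genuinely non-tail issue is a local estimate on a bounded $s$-interval, which is routine given the continuity and differentiability in (B). Everything downstream — monotonizing by a running supremum, smoothing by a running average, and pinning the value at $+\infty$ by adding a vanishing decreasing term — is elementary and does not use anything beyond these observations.
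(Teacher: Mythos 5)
The paper states this lemma without proof (it is presented as an ``observation''), so there is no reference argument to compare against; your explicit construction --- take $g(s)=\sup_{u\in(0,u^{**}]}f(s,u)/u$, monotonize by a running supremum, smooth by a unit running average, and pin the limit at $+\infty$ by adding the decreasing term $\gamma/(1+e^{s})$ --- is sound, and all the elementary verifications (monotonicity of $R_2$, preservation of the limits, the inequality chain $f(s,u)\le g(s)u\le R_1(s)u\le R_2(s)u\le\overline{R}(s)u$) check out. The tail estimates are also correct: the $C^1_{loc}$ convergence in {\rm (B$_\pm$)} together with $f(s,0)=f_\pm^\infty(0)=0$ and $f_\pm^\infty(u)\le\frac{{\rm d}f_\pm^\infty(0)}{{\rm d}u}u$ gives $\limsup_{s\to\pm\infty}g(s)\le\frac{{\rm d}f_\pm^\infty(0)}{{\rm d}u}$ exactly as you argue.

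The one place where I would push back is your dismissal of the bounded-$s$-window estimate as ``routine given the continuity and differentiability in (B).'' As literally stated, the hypotheses give joint continuity of $f$ on $\mathbb{R}\times\mathbb{R}_+$ but only \emph{sectional} $C^1$ regularity in $u$ (i.e.\ $f(s,\cdot)\in C^1$ for each fixed $s$), and this does not imply that $s\mapsto\partial_u f(s,0)$, hence $g$, is bounded on a compact interval: one can build a jointly continuous, nonnegative $f$ with $f(s,0)=0$ and $f(s,\cdot)\in C^1$ for every $s$, satisfying the invariance $f(\mathbb{R}\times[0,u_k^*])\subseteq[0,u_k^*]$, for which $\partial_u f(s,0)\to\infty$ as $s\to s_0$ (take $f(s,u)\approx M_s^{-1}\psi(M_s^2u)$ near $s_0$ with $M_s\to\infty$; then $f(s,\cdot)\le M_s^{-1}\to0$ preserves joint continuity while $\partial_uf(s,0)=M_s$ blows up). For such an $f$ no continuous $\overline{R}$ can exist, so this step is not a formality --- it is precisely where an implicit strengthening of {\rm (B)} is being used (e.g.\ that $s\mapsto f(s,\cdot)$ is continuous into $C^1_{loc}(\mathbb{R}_+,\mathbb{R})$, or that $\partial_uf$ is jointly continuous near $u=0$). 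Your proof becomes complete once you state that assumption explicitly and use it to get $\sup_{s\in K}g(s)<\infty$ on compacts (for $u$ bounded away from $0$ joint continuity of $f$ already suffices; the extra hypothesis is needed only for $u$ near $0$). Everything downstream of that is fine.
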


		\begin{lemma}\label{lem6.5} If $x^*\in(0,\infty)$ and $g\in C(\mathbb{R}_+,\mathbb{R}_+)$ such that $g(0)=0<1<g'(0)$, {$\inf_{x>1}\frac{g(x)}{x}<1$,}  and $\{x\in (0,\infty):g(g(x))=x\}=\{x^*\}$, then for any $b\geq a>0$ with $\{a,b\}\neq x^*$, there exist $b^*>a^*>\gamma^*>0$ such that $[a,b]\subseteq [a^*,b^*]$, $ g([a^*-\gamma^*,b^*+\gamma^*])\subseteq (a^*+\gamma^*,b^*-\gamma^*)$, and either $a+\gamma^*<\inf g([a^*-\gamma^*,b^*+\gamma^*])$ or $b-\gamma^*>\sup g([a^*-\gamma^*,b^*+\gamma^*])$.
	\end{lemma}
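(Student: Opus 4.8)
The plan is to extract the relevant structural information from the hypotheses on $g$ — namely that $g$ has a unique positive "2-periodic" point $x^*$, that $g(0)=0$, $g'(0)>1$, and $\inf_{x>1} g(x)/x<1$ — and then to do a short continuity/compactness argument on a small enlargement of $[a,b]$. First I would record the two basic monotonicity-type consequences of $\{x>0:g(g(x))=x\}=\{x^*\}$: on the interval $(0,x^*)$ one has $g(g(x))>x$ (the graph of $g\circ g$ starts above the diagonal because $(g\circ g)'(0)=(g'(0))^2>1$ and cannot cross it before $x^*$), and on $(x^*,\infty)$ one has $g(g(x))<x$ (using $\inf_{x>1}g(x)/x<1$, or iterating, to see that $g\circ g$ lies below the diagonal for large $x$ and again cannot cross before coming down from $x^*$). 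These inequalities, together with continuity of $g$, are exactly what will let me produce a gap.

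Next I would split into the two cases dictated by the statement. \textbf{Case $b<x^*$.} Here I claim one can find $a^*,b^*,\gamma^*$ with $[a,b]\subseteq[a^*,b^*]\subset(0,x^*)$ so that $a+\gamma^*<\inf g([a^*-\gamma^*,\,b^*+\gamma^*])$ and also $g([a^*-\gamma^*,b^*+\gamma^*])\subseteq(a^*+\gamma^*,b^*-\gamma^*)$. To get the first inequality, note that for $x\in(0,x^*)$ we have $g(g(x))>x$; applying this with $x=a$ and using continuity and strict positivity of $g$ away from $0$, the value $g(g(a))$ strictly exceeds $a$, and by shrinking to a neighborhood and invoking compactness one finds $\inf g$ over a small closed interval around $[a,b]$ strictly bigger than $a+\gamma^*$ for $\gamma^*$ small. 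The containment $g([\cdot])\subseteq(a^*+\gamma^*,b^*-\gamma^*)$ is arranged by first choosing $a^*<a$ close to $a$ and $b^*>b$ close to $b$ with $\overline{[a^*,b^*]}$ compactly inside $(0,x^*)$, observing $g$ maps $\overline{[a^*,b^*]}$ into a compact subset of $(0,\infty)$ that (again by $g\circ g>x$ on $(0,x^*)$ and the uniqueness of $x^*$) can be kept strictly between the endpoints after a further small adjustment, and then taking $\gamma^*$ smaller than the resulting margins and than $a^*$. \textbf{Case $a>x^*$.} This is the mirror image: for $x\in(x^*,\infty)$ we have $g(g(x))<x$, so $g(g(b))<b$, and the same compactness argument on a small enlargement of $[a,b]$ lying in $(x^*,\infty)$ yields $b-\gamma^*>\sup g([a^*-\gamma^*,b^*+\gamma^*])$, with the containment $g([\cdot])\subseteq(a^*+\gamma^*,b^*-\gamma^*)$ handled as before. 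The remaining possibility, $a\le x^*\le b$ with $\{a,b\}\ne x^*$, actually forces $a<x^*<b$ or $a=x^*<b$ or $a<x^*=b$; I would reduce it to one of the two cases above by shrinking $[a,b]$ slightly — the conclusion only requires $[a,b]\subseteq[a^*,b^*]$, so replacing $a$ by a point just above $x^*$ (when $a=x^*$) or $b$ by a point just below $x^*$ (when $b=x^*$) is \emph{not} allowed; instead, when $x^*\in(a,b)$ strictly, I would note this case cannot be reconciled with the required one-sided inequality and is in fact excluded — revisiting this, the correct reading is that the hypothesis $\{a,b\}\neq x^*$ together with the application in the paper means one always has $b<x^*$ or $a>x^*$, which I would state explicitly and then appeal to the relevant case.

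The main obstacle I expect is the bookkeeping in Case $a\le x^*\le b$: making precise why, under the way this lemma is invoked (from Lemma 4.2/the construction of sub- and supersolutions for $f_\pm^\infty$), one is always in the situation $b<x^*$ or $a>x^*$, rather than straddling $x^*$. Once that is pinned down, the two symmetric cases are routine $\varepsilon$-$\delta$ arguments: the only genuinely nontrivial inputs are (a) the sign of $g\circ g-\mathrm{id}$ on each side of $x^*$, which follows from $g'(0)>1$, $\inf_{x>1}g(x)/x<1$, and the uniqueness of the fixed point of $g\circ g$, and (b) compactness of the enlarged interval together with continuity and positivity of $g$ to pass from pointwise strict inequalities at $a$ (or $b$) to uniform strict inequalities on a neighborhood, after which $\gamma^*$ is chosen smaller than all the margins produced.
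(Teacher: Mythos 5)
There is a genuine gap, and it is structural rather than a matter of bookkeeping. First, observe that the conclusion $g([a^*-\gamma^*,b^*+\gamma^*])\subseteq (a^*+\gamma^*,b^*-\gamma^*)$ forces $g$ to map the compact interval $[a^*-\gamma^*,b^*+\gamma^*]$ into itself, hence $g$ has a fixed point there; since every fixed point of $g$ is a fixed point of $g\circ g$, that fixed point must be $x^*$ (and indeed $g(x^*)=x^*$, because if $g(x^*)=y\neq x^*$ then $y$ would be a second solution of $g(g(x))=x$). So \emph{every} admissible interval $[a^*-\gamma^*,b^*+\gamma^*]$ must contain $x^*$ in its interior. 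Your plan for the case $b<x^*$ — choosing $[a^*,b^*]$ compactly inside $(0,x^*)$ — therefore cannot work: on $(0,x^*)$ one has $g(x)>x$ (no fixed point of $g$ below $x^*$, and $g'(0)>1$ pins the sign near $0$), so $g(b^*)>b^*>b^*-\gamma^*$ and the containment fails. The same objection applies to your mirror case $a>x^*$ at the left endpoint. Second, your dismissal of the "straddling" case is incorrect: the hypothesis $\{a,b\}\neq\{x^*\}$ only excludes $a=b=x^*$, and in the actual application (Proposition 6.11, where $[a,b]=[rr^*,sr^*]$ with $\{r,s\}\neq\{1\}$ and $r^*=u^*_+=x^*$) one typically has $a<x^*<b$. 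So the configuration you declare excluded is the main one.

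The paper's argument is organized quite differently. It first records the sign of $g-\mathrm{id}$ (not of $g\circ g-\mathrm{id}$): $g>\mathrm{id}$ on $(0,x^*)$ and $g<\mathrm{id}$ on $(x^*,\infty)$, the latter using $\inf_{x>1}g(x)/x<1$ to produce a point where $g<\mathrm{id}$, which must lie above $x^*$. It then invokes an external result, \cite[Lemma 5.3]{ycw2013}, to obtain an invariant enlargement $[a',b']\supseteq[a,b]$ with $g([a',b'])\subseteq[a',b']$ and at least one of the strict one-sided inequalities $a<\inf g([a',b'])$ or $b>\sup g([a',b'])$; this invariant-interval construction around $x^*$ is the real content of the lemma and has no counterpart in your proposal. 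Finally, if one endpoint of the image touches the boundary (say $b'=\sup g([a',b'])$), the sign of $g-\mathrm{id}$ on $(x^*,\infty)$ lets one push that endpoint out by $\delta'$ to make the inclusion strict on both sides, after which a small $\gamma^*$ is chosen by uniform continuity. To repair your proof you would need either to reproduce the cited invariant-interval lemma or to supply an equivalent construction of $[a',b']$ containing $x^*$; the one-sided case split by the position of $[a,b]$ relative to $x^*$ should be abandoned.
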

	
	\begin{proof}
		Note that $g(x)>x$ for all $x\in (0,x^*)$ and $g(x)<x$ for all $x\in (x^*,\infty)$. Fix $b\geq a>0$ with $\{a,b\}\neq \{x^*\}$. In view of \cite[Lemma 5.3]{ycw2013},  we see that there exist
		$a',b'\in (0,\infty)$ such that $[a,b]\subseteq [a',b']$,
		$g([a',b'])\subseteq [a',b']$, and either $a<\inf g([a',b'])$ or $b>\sup g([a',b'])$. Thus, $a'<\inf g([a',b'])$ or $b'>\sup g([a',b'])$. Next we  finish the proof according to  three cases.
		
		{\bf Case 1.} $a'<\inf g([a',b'])
			$ and $b'>\sup g([a',b'])$.
		
		In this case, we may select $\gamma^*\in (0,a')$ such that $$a'+\gamma^*<\inf g([a'-\gamma^*,b'+\gamma^*]),\quad  b'-\gamma^*>\sup g([a'-\gamma^*,b'+\gamma^*]),
		$$
		and
		$$
		\text{either}\,  \,  a+\gamma^*<\inf g([a'-\gamma^*,b'+\gamma^*]),\, \,
		 \text{or} \, \,  b-\gamma^*>\sup g([a'-\gamma^*,b'+\gamma^*]).
		$$
		These, together with $a^*=a'$ and $b^*=b'$,  yield the desired conclusions.
		
		{\bf Case 2.} $a'<\inf g([a',b'])$ and $b'=\sup g([a',b'])$.
		
		In this case, it is easy to see  $b'\geq x^*$. Take $\delta'>0$ such that $a'<\inf g([a',b'+\delta'])$ and  either $a<\inf g([a',b'+\delta'])$ or $b>\sup g([a',b'+\delta'])$. Since $b'=\sup g([a',b'])$ and $x>g(x)$ for all $x\in (b',b'+\delta']$, it follows  that $b'+\delta'>\sup g([a',b'+\delta'])$. By applying (i) to $\{a',b'+\delta'\}$, we then obtain the desired conclusions.
		
		{\bf Case 3.} $a'=\inf g([a',b'])$ and $b'>\sup g([a',b'])$.
		
		In this case, $a'\leq x^*$. Select $\delta{''}>0$ such that $b'>\sup g([a'-\delta{''},b'])$ and  either $a<\inf g([a'-\delta{''},b'])$ or $b>\sup g([a',b'+\delta{''}])$. Since $a'=\sup g([a',b'])$ and $x>g(x)$ for all $x\in [a'-\delta{''},a')$, it follows  that $a'+\delta{''}<\sup g([a'-\delta{''},b'])$. By applying (i) to $\{a'-\delta{''},b'\}$, we then get the desired conclusions.
		\end{proof}
	
	Based on Lemma~\ref{lem6.5} and the assumptions (B$_\pm$), we easily see that the real functions  may exhibit {\bf (NM)} and {\bf (NM$_-$)}
	features  when the limiting functions have unique  positive  periodic-two point, which plays a key role in our verification of   {\bf (NM)} and  {\bf (NM$_-$)}.
	\begin{lemma}\label{lem6.9} Let  $b\geq a>0$.  Then the statements are valid:
		\begin{itemize}
			\item[{\rm (i)}]
			If $\{a,b\}\neq \{u_+^*\}$ and {\rm (B$_+$)} hold, then there exist $b^*>a^*>\gamma^*>0$ and $z^*>0$ such that $[a,b]\subseteq [a^*,b^*]$, $Cl( f([z^*,\infty)\times [a^*-\gamma^*,b^*+\gamma^*]))\subseteq (a^*+\gamma^*,b^*-\gamma^*)$ and either $a+\gamma^*<\inf f([z^*,\infty)\times [a^*-\gamma^*,b^*+\gamma^*])$ or $b-\gamma^*>\sup f([z^*,\infty)\times[a^*-\gamma^*,b^*+\gamma^*])$.
			
			\item[{\rm (ii)}]
			If $\{a,b\}\neq \{u_-^*\}$ and {\rm (B$_-$)} hold, then there exist $b^*>a^*>\gamma^*>0$ and $z^*>0$ such that $[a,b]\subseteq [a^*,b^*]$, $ Cl(f((-\infty,-z^*]\times [a^*-\gamma^*,b^*+\gamma^*]))\subseteq (a^*+\gamma^*,b^*-\gamma^*)$ and either $a+\gamma^*<\inf f((-\infty,-z^*]\times [a^*-\gamma^*,b^*+\gamma^*])$ or $b-\gamma^*>\sup f((-\infty,-z^*]\times[a^*-\gamma^*,b^*+\gamma^*])$.
		\end{itemize}
	\end{lemma}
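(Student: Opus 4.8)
\textbf{Proof proposal for Lemma~\ref{lem6.9}.}

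The plan is to deduce both statements from Lemma~\ref{lem6.5}, applied to the limiting functions $g=f_+^\infty$ (for (i)) and $g=f_-^\infty$ (for (ii)), by combining it with the uniform convergence $f(s,\cdot)\to f_\pm^\infty(\cdot)$ in $C^1_{loc}$ as $s\to\pm\infty$ furnished by (B$_\pm$). I will only carry out (i) in detail, since (ii) follows by the same argument after the reflection $s\mapsto -s$ (or equivalently by replacing $f$ with $\mathcal{S}\circ f\circ\mathcal{S}$ in the $s$-variable) and invoking (B$_-$) in place of (B$_+$).

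First I would verify that $g:=f_+^\infty$ satisfies the hypotheses of Lemma~\ref{lem6.5}: condition (B$_+$) gives $g\in C(\mathbb{R}_+,\mathbb{R}_+)$ with $g(0)=0$, $g'(0)=\frac{{\rm d}f_+^\infty(0)}{{\rm d}u}>1$, $\inf_{x>1}\frac{g(x)}{x}<1$, and $\{x\in(0,\infty):g(g(x))=x\}=\{u_+^*\}$; here $x^*=u_+^*$. So, given $b\ge a>0$ with $\{a,b\}\ne\{u_+^*\}$, Lemma~\ref{lem6.5} produces numbers $b^*>a^*>\gamma^*>0$ with $[a,b]\subseteq[a^*,b^*]$, with $f_+^\infty([a^*-\gamma^*,b^*+\gamma^*])\subseteq(a^*+\gamma^*,b^*-\gamma^*)$, and with either $a+\gamma^*<\inf f_+^\infty([a^*-\gamma^*,b^*+\gamma^*])$ or $b-\gamma^*>\sup f_+^\infty([a^*-\gamma^*,b^*+\gamma^*])$. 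Next I would shrink $\gamma^*$ slightly if necessary so that these three conditions hold with a genuine margin; more precisely, by continuity of $f_+^\infty$ on the compact interval $[a^*-\gamma^*,b^*+\gamma^*]$ one can fix $\eta>0$ with $f_+^\infty([a^*-\gamma^*,b^*+\gamma^*])\subseteq[a^*+\gamma^*+\eta,\;b^*-\gamma^*-\eta]$, and with the chosen inequality ($a+\gamma^*+\eta<\inf$ or $b-\gamma^*-\eta>\sup$) still valid.

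The key step is then a uniform-convergence argument: by (B$_+$), $f(s,\cdot)\to f_+^\infty(\cdot)$ uniformly on the compact set $[a^*-\gamma^*,b^*+\gamma^*]$ as $s\to\infty$, so there exists $z^*>0$ such that $\sup_{u\in[a^*-\gamma^*,b^*+\gamma^*]}|f(s,u)-f_+^\infty(u)|<\eta$ for all $s\ge z^*$. Consequently $f([z^*,\infty)\times[a^*-\gamma^*,b^*+\gamma^*])\subseteq[a^*+\gamma^*,b^*-\gamma^*]$; taking the closure stays inside $(a^*+\gamma^*,b^*-\gamma^*)$ because we kept the margin $\eta$. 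Moreover, in the first alternative, $\inf f([z^*,\infty)\times[a^*-\gamma^*,b^*+\gamma^*])\ge\inf f_+^\infty([a^*-\gamma^*,b^*+\gamma^*])-\eta>a+\gamma^*$, and symmetrically in the second alternative $\sup f([z^*,\infty)\times[a^*-\gamma^*,b^*+\gamma^*])\le\sup f_+^\infty([a^*-\gamma^*,b^*+\gamma^*])+\eta<b-\gamma^*$. This yields all the asserted properties with the triple $(a^*,b^*,\gamma^*)$ and the threshold $z^*$. For (ii) the only change is that $f(s,\cdot)\to f_-^\infty(\cdot)$ as $s\to-\infty$, so the uniform estimate holds for $s\le -z^*$ and one works with the half-line $(-\infty,-z^*]$; the role of $x^*$ is now played by $u_-^*$.

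I expect the only mildly delicate point to be bookkeeping the margin $\eta$ so that passing from $f_+^\infty$ to the perturbed $f(s,\cdot)$ preserves both the ``invariant interval'' inclusion \emph{and} whichever of the two strict inequalities Lemma~\ref{lem6.5} delivered; this is routine once one separates the two alternatives, but it must be done uniformly over $u$ in the compact interval, which is exactly what the $C^1_{loc}$ (in particular $C^0_{loc}$) convergence in (B$_\pm$) guarantees. No other obstacle arises: compactness of $[a^*-\gamma^*,b^*+\gamma^*]$ makes all the suprema and infima attained and all the convergences uniform.
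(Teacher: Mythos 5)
Your argument is correct and is exactly the route the paper takes: Lemma~\ref{lem6.9} is stated there without a written proof, the text noting only that it follows from Lemma~\ref{lem6.5} together with (B$_\pm$), which is precisely your combination of Lemma~\ref{lem6.5} applied to $g=f_\pm^\infty$ (with $x^*=u_\pm^*$) and the locally uniform convergence $f(s,\cdot)\to f_\pm^\infty$ on the compact interval $[a^*-\gamma^*,b^*+\gamma^*]$. One cosmetic adjustment: to ensure the \emph{closure} of $f([z^*,\infty)\times[a^*-\gamma^*,b^*+\gamma^*])$ lands inside the open interval, take the uniform approximation to within $\eta/2$ rather than $\eta$, so that the image lies in the closed subinterval $[a^*+\gamma^*+\eta/2,\,b^*-\gamma^*-\eta/2]$ of $(a^*+\gamma^*,b^*-\gamma^*)$.
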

	
By the arguments similar to those for  \cite[Lemma 2.3.2]{Zhaobook}, we have  the following observation.
		\begin{lemma}\label{lem6.10-subhomgenous}
			
			Let $F^*\in Int(\mathbb{R}_+^N)$, $F\in C(\mathbb{R}\times\mathbb{R}_{+}^N,\mathbb{R}^N), F(x,\cdot)\in C^{1}(\mathbb{R}_{+}^N,\mathbb{R}^N),\mbox{ and } F(x,0)=0 \mbox{  for all }
			x\in \mathbb{R}$. If $F(x,\cdot)|_{[0,F^*]_{\mathbb{R}^N}}$ is a subhomogeneous  map for each $x\in \mathbb{R}$, then the following statements are valid:
			\begin{itemize}
				\item[{\rm (i)}] $F(x,u)\leq {{\rm D}_u F(x,0)} u$ for all $(x,u)\in \mathbb{R}\times [0,F^*]_{\mathbb{R}^N}$;
				
				\item[{\rm (ii)}]  ${\rm D}_uF(x,u)u \leq {\rm D}_u F(x,0)u$ for all $(x,u)\in \mathbb{R}\times [0,F^*]_{\mathbb{R}^N}$. In particular,  ${\rm D}_uF(x,u) \leq {\rm D}_u F(x,0)$ for all $(x,u)\in \mathbb{R}\times [0,F^*]_{\mathbb{R}^N}$ with $N=1$.
				
\end{itemize}
\end{lemma}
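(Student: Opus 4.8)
The plan is to reduce both assertions to elementary one‑variable calculus along the segments $\{\kappa u:\kappa\in[0,1]\}$, treating $x$ as a passive parameter. Fix $x\in\mathbb{R}$ and $u\in[0,F^*]_{\mathbb{R}^N}$. Since $0\le\kappa u\le u\le F^*$ for every $\kappa\in[0,1]$, the entire segment lies in $[0,F^*]_{\mathbb{R}^N}$, so both the subhomogeneity hypothesis and the $C^1$‑smoothness of $F(x,\cdot)$ apply along it. Set $g(\kappa)=F(x,\kappa u)$ for $\kappa\in[0,1]$; then $g\in C^1([0,1],\mathbb{R}^N)$, $g(0)=F(x,0)=0$, and the chain rule gives $g'(\kappa)={\rm D}_uF(x,\kappa u)\,u$, so in particular $g'(0)={\rm D}_uF(x,0)\,u$ and $g'(1)={\rm D}_uF(x,u)\,u$.

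For (i) I would use subhomogeneity in the form $g(\kappa)=F(x,\kappa u)\ge\kappa F(x,u)=\kappa g(1)$, valid for $\kappa\in[0,1]$. Dividing by $\kappa>0$ yields $\frac{g(\kappa)-g(0)}{\kappa}\ge g(1)$, and letting $\kappa\to0^+$ (the cone $\mathbb{R}^N_+$ being closed, inequalities pass to the limit) gives $g'(0)\ge g(1)$, that is, ${\rm D}_uF(x,0)\,u\ge F(x,u)$, which is exactly (i).

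For (ii) I would introduce the auxiliary function $\psi(\kappa)=F(x,\kappa u)-\kappa F(x,u)$ on $[0,1]$. Subhomogeneity says $\psi(\kappa)\ge0$ (componentwise) for all $\kappa\in[0,1]$, while $\psi(1)=0$; hence each coordinate $\psi_i$ attains its minimum over $[0,1]$ at the right endpoint $\kappa=1$, and since $\psi$ is $C^1$ this forces $\psi'(1)\le0$. As $\psi'(1)={\rm D}_uF(x,u)\,u-F(x,u)$, we obtain ${\rm D}_uF(x,u)\,u\le F(x,u)$, and combining with (i) gives ${\rm D}_uF(x,u)\,u\le F(x,u)\le{\rm D}_uF(x,0)\,u$, the first claim of (ii). When $N=1$, the inequality ${\rm D}_uF(x,u)\,u\le{\rm D}_uF(x,0)\,u$ is scalar, so we may divide by $u>0$ to get ${\rm D}_uF(x,u)\le{\rm D}_uF(x,0)$, and for $u=0$ this is a trivial equality; thus ${\rm D}_uF(x,u)\le{\rm D}_uF(x,0)$ on $[0,F^*]_{\mathbb{R}}$.

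Since the argument copies the proof of \cite[Lemma 2.3.2]{Zhaobook} with $x$ carried along inertly, there is no serious difficulty; the only point demanding a little care is the passage from ``$\psi$ has a boundary minimum at $\kappa=1$'' to ``$\psi'(1)\le0$'', which must be done coordinate by coordinate and uses the $C^1$ regularity of $F(x,\cdot)$, together with the observation that (ii) is obtained by chaining that inequality with (i) rather than attempting it directly.
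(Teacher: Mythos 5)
Your proof is correct and is essentially the argument the paper invokes by reference to \cite[Lemma 2.3.2]{Zhaobook}: restrict to the ray $\kappa\mapsto\kappa u$, use $F(x,\kappa u)\ge\kappa F(x,u)$ with $\kappa\to0^+$ for (i), and the boundary-minimum of $\psi(\kappa)=F(x,\kappa u)-\kappa F(x,u)$ at $\kappa=1$ (coordinatewise, via the left derivative) chained with (i) for (ii). No gaps.
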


		Let $M=[-\tau,0]$, $X=BC(\mathbb{R},\mathbb{R})$,  $X_+=BC(\mathbb{R},\mathbb{R}_+)$, $C=BC([-\tau,0]\times \mathbb{R},\mathbb{R})$, and  $C_+=BC([-\tau,0]\times \mathbb{R},\mathbb{R}_+)$. Define
		$$
		e_{\phi,\nu}(\theta,x)=\varphi(\theta,x) e^{-\nu x},\, \, \forall
		(\varphi,\nu,x)\in C\times \mathbb{R}\times \mathbb{R},
		$$
		and
	$$
		\tilde{C}:=\left\{\sum\limits_{k=1}^p  e_{\varphi_k,\nu_k}:(\varphi_k,\nu_k)\in  C\times\mathbb{R}, p\in  \mathbb{N}\right\}.
		$$
		Let $S(t)$ be the semigroup generated by the linear parabolic equation:
		\[
		\left\{
		\begin{array}{rcll}
		\frac{\partial u}{\partial t}&=& d \Delta u(t,x)+cu_x-\mu u, \qquad & t>0,
		\\
		u(0,x)& =& \phi(x), & x\in \mathbb{R},
		\end{array}
		\right.
		\]
		that is,  for $(x,\phi)\in \mathbb{R}\times X$,
		
\begin{equation}\label{eqn6.3ccc}
		\left\{
		\begin{array}{rcl}
		S(0)[\phi](x) & = & \phi(x), \\
		S(t)[\phi](x) & = & \frac{\exp(-\mu t)}{\sqrt{4d\pi t}}\int_{-\infty}^\infty
		\phi(y)\exp\left(-\frac{(x+ct-y)^2}{4
			dt}\right){\rm d}y,
		\quad t >0.
		\end{array}
		\right.
		\end{equation}
		For any $(x,\varphi)\in \mathbb{R}\times \tilde{C}$,
		we also extend the domain of  $S$ to  define
		\begin{equation}\label{eqn6.3ccc}
		\left\{
		\begin{array}{rcl}
		S(0)[\varphi(0,\cdot)](x) & = & \varphi(0,x), \\
		S(t)[\varphi(0,\cdot)](x) & = & \frac{\exp(-\mu t)}{\sqrt{4d\pi t}}\int_{-\infty}^\infty
		\varphi(0,y)\exp\left(-\frac{(x+ct-y)^2}{4
			dt}\right){\rm d}y,
		\quad t >0.
		\end{array}
		\right.
		\end{equation}
		Since $S(t)[e_{\phi,\nu}]\in X$,  it follows that
		$$
		S(t)[\varphi(0,\cdot)]\in X,	\, \, \forall  (t,\phi,\nu)\in (0,\infty)\times X\times \mathbb{R}, \, \, \varphi \in \tilde{C}.
		$$
		
		By  the standard step arguments, it follows that for any given  $\varphi\in  C_+$, equation~\eqref{6.8}  has a unique mild solution on $\mathbb{R}_+$,
		denoted by $u^{\varphi}(t,x;f)$ or $(u^{\varphi;f})_t$, which is
		also the classical solution of~\eqref{6.8} on  $(\tau,\infty)$
		with $\mathbb{R}_+\ni t\mapsto (u^{\varphi;f})_t\in C_+$ being
		continuous.
		Recall that $(u^{\varphi;f})_t(\theta,x)=u^{\varphi}(t+\theta,x;f),
			\, \, \forall (\theta,x)\in [-\tau,0]\times \mathbb{R}$.
		Further, we have the following result on the monotonicity and boundedness of solutions to system \eqref{6.8}.
		
		
	\begin {prop} \label{prop6.8} Assume that $M^*>0$ and $f_1,f_2\in C(\mathbb{R}\times  \mathbb{R}_+,\mathbb{R})$ satisfy   $f_2(\cdot,\cdot)\geq f_1(\cdot,\cdot)$  and  $ f_2(s,u)\leq M^*$ for all $(s,u)\in \mathbb{R}\times [0,M^*]$. Let $\psi$, $\varphi \in C_+$ with $\varphi\leq \psi\leq M^*$. If $f_1(s,u)$ is nondecreasing in $u\in [0,M^*]$ for each $s\in \mathbb{R}$ or $f_2(s,u)$ is nondecreasing in $u\in [0,M^*]$ for each $s\in \mathbb{R}$, then  $0\leq  u^\varphi(t,x;f_1)\leq u^\psi(t,x;f_2)\leq M^*$ for all $(t,x)\in [0,\infty) \times \mathbb{R}$. In particular,  $0\leq  u^\varphi(t,x;f)\leq  u_k^*$ for all $k\in \mathbb{N}$ and $\varphi\in C_{u_k^*}$.
	\end{prop}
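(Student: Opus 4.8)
The plan is to establish the comparison and the bound by a \emph{method-of-steps} argument along the delay intervals $[n\tau,(n+1)\tau]$, $n\ge 0$, using the variation-of-constants (Duhamel) representation of the mild solution of \eqref{6.8} together with two elementary facts about the linear solution semigroup $S$: (a) $S(t)$ is a positive bounded linear operator (it is given by a strictly positive Gaussian kernel), hence $\phi\mapsto S(t)[\phi]$ is monotone; and (b) $S(t)[\alpha]=e^{-\mu t}\alpha$ for every constant $\alpha$, so that $S(t)[\alpha]+\mu\int_0^t S(t-s)[\alpha]\,{\rm d}s=\alpha$. Concretely, for $i\in\{1,2\}$ and $t\in[n\tau,(n+1)\tau]$ one has
\[
u^{\varphi}(t,\cdot;f_i)=S(t-n\tau)[u^{\varphi}(n\tau,\cdot;f_i)]+\mu\int_{n\tau}^{t}S(t-s)[f_i(\cdot-cs,\,u^{\varphi}(s-\tau,\cdot;f_i))]\,{\rm d}s ,
\]
and on this interval the retarded argument $u^{\varphi}(s-\tau,\cdot;f_i)$ has already been determined on $[-\tau,n\tau]$.

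First I would prove by induction on $n$ that $0\le u^{\varphi}(t,\cdot;f_1)\le u^{\psi}(t,\cdot;f_2)\le M^{*}$ for all $t\in[-\tau,(n+1)\tau]$; the case of $[-\tau,0]$ is precisely the hypothesis $0\le\varphi\le\psi\le M^{*}$. For the inductive step on $[n\tau,(n+1)\tau]$ the induction hypothesis puts the delayed arguments in $[0,M^{*}]$. The upper bound $u^{\psi}(t,\cdot;f_2)\le M^{*}$ follows at once from $f_2(\cdot,\cdot)\le M^{*}$ on $\mathbb{R}\times[0,M^{*}]$, the positivity of $S$, and fact (b) with $\alpha=M^{*}$. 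For the ordering, if $f_2(s,\cdot)$ is nondecreasing on $[0,M^{*}]$ one writes
\[
f_1(\cdot-cs,u^{\varphi}(s-\tau,\cdot;f_1))\ \le\ f_2(\cdot-cs,u^{\varphi}(s-\tau,\cdot;f_1))\ \le\ f_2(\cdot-cs,u^{\psi}(s-\tau,\cdot;f_2)),
\]
using $f_1\le f_2$ and then the monotonicity of $f_2$ together with the induction hypothesis, and applies the positive operators $S(t-s)$ (and $S(t-n\tau)$ to the leading terms); if instead $f_1(s,\cdot)$ is nondecreasing one uses the symmetric chain $f_1(\cdot-cs,u^{\varphi})\le f_1(\cdot-cs,u^{\psi})\le f_2(\cdot-cs,u^{\psi})$. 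Either way $u^{\varphi}(t,\cdot;f_1)\le u^{\psi}(t,\cdot;f_2)$. Finally $u^{\varphi}(t,\cdot;f_1)\ge 0$ follows from $\varphi\ge 0$ and positivity of $S$ once one notes that $f_1\ge 0$ on $\mathbb{R}\times[0,M^{*}]$ (the mild sign condition satisfied by every nonlinearity to which this proposition is applied, in particular by $f$ under (B)): by the induction hypothesis the integrand is nonnegative, so each term of the Duhamel formula is nonnegative.

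The final assertion is the specialization of this construction — not of the general statement — to $f_1=f_2=f$, $\psi=\varphi$ and $M^{*}=u_k^{*}$: condition (B) supplies $f\ge 0$ and $f(\mathbb{R}\times[0,u_k^{*}])\subseteq[0,u_k^{*}]$, which are exactly what the induction needs for nonnegativity and for $u^{\varphi}(t,\cdot;f)\le u_k^{*}$, while the ordering step is now vacuous so no monotonicity of $f$ is required. The main obstacle is not analytic depth but the delay: because \eqref{6.8} is retarded, $\{Q_t\}$ is not an order-preserving semiflow on $C_+$ in the naive sense and one cannot simply invoke a parabolic comparison theorem, so the inequalities must be bootstrapped interval by interval, and the case in which only one of $f_1,f_2$ is monotone forces the intermediate estimate displayed above. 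The only other point requiring care is to write the Duhamel identity in the frame consistent with the definition of $S$ (so that the translation by $cs$ lands on the nonlinearity rather than on $\varphi$); once that is correct, positivity of the kernel and identity (b) do the rest.
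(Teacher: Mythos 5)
Your proof is correct; the paper states Proposition \ref{prop6.8} without proof, appealing to the same ``standard step arguments'' it already invoked for existence of mild solutions, and your interval-by-interval Duhamel comparison --- positivity of the Gaussian kernel, the identity $S(t)[\alpha]+\mu\int_0^t S(t-s)[\alpha]\,{\rm d}s=\alpha$ for constants $\alpha$, and the two intermediate chains according to which of $f_1,f_2$ is monotone --- is precisely that argument. You are also right on the two fine points: the nonnegativity of $u^\varphi(\cdot;f_1)$ genuinely requires $f_1\geq 0$ on $\mathbb{R}\times[0,M^*]$, a hypothesis the statement omits but which holds for every nonlinearity to which the proposition is applied (cf.\ the standing assumption $f\in C(\mathbb{R}\times\mathbb{R}_+,\mathbb{R}_+)$), and the final assertion follows from the same induction with the ordering step deleted rather than from the general statement, since $f$ itself need not be monotone in $u$.
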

	
	Now we introduce  the following three auxiliary equations:
	
	\begin {equation}
	\frac {\partial u}{\partial t}= cu_x+d u_{xx}(t,x)-\mu u(t,x) +
	\mu f(x,u(t-\tau,x+c\tau)), \qquad t>0, x\in\mathbb {R},
	\label {6.9}
	\end {equation}

	\begin {equation}
	\frac {\partial u}{\partial t}= cu_x+d u_{xx}(t,x)-\mu u(t,x) +
	\mu f_\pm^\infty(u(t-\tau,x+c\tau)), \qquad t>0, x\in\mathbb {R},
	\label {6.10}
	\end {equation}
	and
	\begin {equation}
	\frac {\partial u}{\partial t}= d u_{xx}(t,x)-\mu u(t,x) +
	\mu f_\pm^\infty(u(t-\tau,x)), \qquad t>0, x\in\mathbb {R}.
	\label {6.11}
	\end {equation}
	Define  $P:\mathbb{R}_+\times C_+\to C_+$ by
	$$
	P[t,\varphi;f](\theta,x)=u^{\varphi}(t+\theta,x;f),\, \, \forall
	(t,\theta,\varphi)\in \mathbb{R}_+\times  [-\tau,0] \times C_+,
	$$
and let $Q[t,\varphi;f]$, $Q_\pm[t,\varphi;f_\pm^\infty]$ and $\Phi_\pm[t,\varphi;f_\pm^\infty]$ be the mild solutions  of equations \eqref{6.9}, \eqref{6.10}  and \eqref{6.11} with the initial value $u_0 = \varphi\in C_+$, respectively.   For simplity, we denote
	$P[t,\varphi;f]$, $Q[t,\varphi;f]$, $Q_\pm[t,\varphi;f_\pm^\infty]$ and $\Phi_\pm[t,\varphi;f_\pm^\infty]$ by $P[t,\varphi]$, $Q[t,\varphi]$, $Q_\pm[t,\varphi]$ and $\Phi_\pm[t,\varphi]$, respectively.
	
	In the following, we also use  $Q[t,\varphi;\zeta(\cdot) Id_{\mathbb{R}}]$ to represent the mild solution of the following linear reaction-diffusion equation with time delay:
	\begin {equation}
	\left\{
	\begin{array}{ll}
		\frac{\partial u}{\partial t}(t,x)  =cu_x+ d u_{xx}(t,x)-\mu u(t,x) +
		\mu \zeta(x)u(t-\tau,x+c \tau), \, (t,x)\in (0,\infty)\times \mathbb{R}, \\
		u(\theta,x) = \varphi(\theta,x),  \quad  (\theta,x) \in [-\tau,0]
		\times \mathbb{R},
	\end{array} \right.
	\label{6.8-lin-lin-lin}
	\end {equation}
	where $\zeta\in C(\mathbb{R},\mathbb{R})\bigcap L^\infty(\mathbb{R},\mathbb{R})$ and $\varphi\in \tilde{C}$. In other words,   $Q[t,\varphi;\zeta(\cdot)Id_{\mathbb{R}}](x)$  satisfies the integral form of equation \eqref{6.8-lin-lin-lin}:
	\begin {equation}
	\left\{
	\begin{array}{ll}
		u(t,x)  =S(t)[\varphi](x)+\int_0^t
		S(t-s)[\mu \zeta(\cdot)u(s-\tau,\cdot+c \tau)](x){\rm d} s, \, (t,x)\in [0,\infty)\times \mathbb{R},
		\\
		u_0=\varphi\in \tilde{C}.
	\end{array} \right.
	\label{6.8-lin-lin-lin-integ}
	\end {equation}
	Using   the standard step arguments, one can easily obtain the global existence on $\mathbb{R}_+$ and positivity of solutions to~\eqref{6.8-lin-lin-lin-integ}.
	
	By the definitions of $P,Q,Q_\pm$ and $\Phi_\pm$, it is easy to verify the following relations.
	\begin{prop} \label{prop6.9} Let $t\in \mathbb{R}_+$ and $\varphi\in C_+$. Then the following statements are valid:
		\begin{itemize}
			\item [{\rm (i)}]  $Q[t,\varphi](\theta,x)=P[t,\varphi](\theta,x+ct)$ for all $(\theta,x)\in [-\tau,0]\times \mathbb{R}$.

			\item [{\rm (ii)}]  If $\lim\limits_{s\to \pm \infty}
			f(s,\cdot)= f_\pm^\infty$ in $L^\infty_{loc}(\mathbb{R}_+,\mathbb{R})$, then $ Q_\pm[t,\varphi]=\lim\limits_{y\to \pm \infty} Q[t,T_{y}[\varphi]](\cdot,\cdot+y)$ in $C$.

			\item [{\rm (iii)}]   $\Phi_\pm[t,\varphi]=Q_\pm[t,\varphi](\cdot,\cdot-ct).$
		\end{itemize}

	\end{prop}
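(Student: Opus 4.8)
The plan is to prove all three identities by direct verification, reducing everything to the Duhamel (mild‑solution) formulations of the four equations \eqref{6.8}--\eqref{6.11} together with the two heat semigroups in play: the non‑advective one, $\tilde S(t)$, generated by $d\partial_{xx}-\mu$ (driving \eqref{6.8} and \eqref{6.11}), and the advective one, $S(t)$, generated by $d\partial_{xx}+c\partial_x-\mu$ with Gaussian kernel $\tfrac{e^{-\mu t}}{\sqrt{4d\pi t}}\exp\!\big(-(x+ct-y)^2/(4dt)\big)$ (driving \eqref{6.9} and \eqref{6.10}). The single elementary link between them is $\tilde S(t)[\psi](x+ct)=S(t)[\psi](x)$, read off from the kernels, together with its consequence $\tilde S(t-s)[g](x+ct)=S(t-s)[g(\cdot+cs)](x)$.

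For (i) I would set $v(t,x):=u^{\varphi}(t,x+ct;f)$, substitute into the Duhamel formula for \eqref{6.8}, and use the two semigroup identities to see that $v$ satisfies the Duhamel formula for \eqref{6.9}; the point is that the spatial offset $+c\tau$ built into the delayed term of \eqref{6.9} is exactly what makes $u^{\varphi}(t-\tau,x+ct)=v(t-\tau,x+c\tau)$, so the nonlinear term matches after the frame change. Uniqueness of mild solutions then yields $Q[t,\varphi]=T_{-ct}\circ P[t,\varphi]$, which is (i); along the way one also records that $Q_t:=T_{-ct}\circ P[t,\cdot]$ is a continuous‑time semiflow on $C_+$, the cocycle identity $Q_t\circ Q_s=Q_{t+s}$ following from the translation structure of the habitat shift together with uniqueness. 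Statement (iii) is the same computation run in the opposite direction: if $v$ solves \eqref{6.10}, then $(t,x)\mapsto v(t,x-ct)$ solves \eqref{6.11}, the advection term being absorbed by the frame change and the delayed term again matching because of the $+c\tau$ offset; this is precisely $\Phi_\pm[t,\varphi]=Q_\pm[t,\varphi](\cdot,\cdot-ct)$.

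For (ii) I would use translation invariance of $S(t)$: the function $T_{-y}\circ Q[t,\cdot]\circ T_y[\varphi]$ solves \eqref{6.9} with $f(\cdot,\cdot)$ replaced by $f(\cdot+y,\cdot)$ and the same initial datum $\varphi$, while $Q_\pm[t,\varphi]$ solves the same equation with $f$ replaced by $f_\pm^\infty$. By Proposition~\ref{prop6.8} together with (B), every solution in sight stays inside a fixed order interval $[0,u_k^*]$ determined by $\varphi$, so it suffices to control the difference of the two mild solutions over that bounded range of $u$‑values, where, by the hypothesis of (ii), $f(\cdot+y,\cdot)\to f_\pm^\infty(\cdot)$ uniformly as $y\to\pm\infty$ and $f_\pm^\infty$ is Lipschitz. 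Subtracting the Duhamel formulas, estimating $\int_0^{t}S(t-s)[\,\cdot\,]\,ds$ with positivity and the bound $\|S(t)\|\le e^{-\mu t}$, and iterating over the successive intervals $[0,\tau],[\tau,2\tau],\dots$ with a Gronwall estimate on each, gives the convergence $T_{-y}\circ Q[t,\cdot]\circ T_y[\varphi]\to Q_\pm[t,\varphi]$ in $C$, i.e.\ (ii).

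The bulk of this is routine change‑of‑variables bookkeeping; the only mildly delicate points are keeping the correspondence of the delayed history segments straight under the moving‑frame transformation in (i) and (iii), and ensuring that the continuous dependence in (ii) is uniform over all of $\mathbb{R}$ rather than merely locally — both handled by the step‑by‑step scheme on intervals of length $\tau$ together with the uniform a priori bounds of Proposition~\ref{prop6.8}.
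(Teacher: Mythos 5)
Your overall route---direct verification from the Duhamel formulas, using the kernel identity relating the advective semigroup $S(t)$ to the non-advective one for (i) and (iii), and a stepwise Gronwall comparison for (ii)---is exactly what the paper intends (it offers no proof beyond ``by the definitions''), and parts (ii) and (iii) are sound in substance. There is, however, a concrete gap at the step ``uniqueness of mild solutions then yields $Q[t,\varphi]=T_{-ct}\circ P[t,\varphi]$'' in (i). The diagonal substitution $v(t,x):=u^{\varphi}(t,x+ct)$ does turn the reaction term of \eqref{6.8} into that of \eqref{6.9} (this is what the $+c\tau$ offset is for), but its initial history is $v(\theta,x)=\varphi(\theta,x+c\theta)$, not $\varphi(\theta,x)$, and its standard time-$t$ segment is $v(t+\theta,x)=u^{\varphi}(t+\theta,x+c(t+\theta))$, not $u^{\varphi}(t+\theta,x+ct)=P[t,\varphi](\theta,x+ct)$. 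So uniqueness identifies $v$ with the solution of \eqref{6.9} issued from the tilted history $(\theta,x)\mapsto\varphi(\theta,x+c\theta)$, and even then the segments disagree by the shift $c\theta$. Under the usual segment convention $Q[t,\varphi](\theta,x)=w(t+\theta,x)$ the identity in (i) is in fact not available: at $\theta=-\tau$, $t=\tau$ it would force $\varphi(0,x)=\varphi(0,x+c\tau)$, and for $0<t\le\tau$ the delayed term of \eqref{6.9} reads $f(\cdot,\varphi(s-\tau,\cdot+c\tau))$ while the transformed \eqref{6.8} produces $f(\cdot,\varphi(s-\tau,\cdot+cs))$.

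The repair---which you gesture at (``keeping the correspondence of the delayed history segments straight'') but do not carry out---is to read the segment of the advective equations \eqref{6.9} and \eqref{6.10} in the frame frozen at time $t$, namely $Q[t,\varphi](\theta,x):=w(t+\theta,x-c\theta)$ with $w(\theta,y)=\varphi(\theta,y+c\theta)$ on $[-\tau,0]$. With this convention the delayed argument satisfies $w(t-\tau,y+c\tau)=Q[t,\varphi](-\tau,y)$, the initial segment is exactly $\varphi$, and both the $s\le\tau$ and $s>\tau$ pieces of the Duhamel formula match those of $u^{\varphi}(\cdot,\cdot+ct)$, so (i) and, mutatis mutandis, (iii) become correct identities; you should state this convention explicitly rather than appeal to uniqueness with mismatched data. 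Separately, in (ii) you should not aim for convergence uniform over all of $\mathbb{R}$: the topology of $C$ is only locally uniform in $x$, and global uniformity is actually false, since for fixed $y$ one has $f(x+y,\cdot)\to f_{\mp}^{\infty}$ as $x\to\mp\infty$; your Gronwall scheme together with the Gaussian localization of $S(t)$ and the a priori bounds of Proposition~\ref{prop6.8} gives precisely the locally uniform statement that is claimed.
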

	
	
	
	
	
	
	We can easily verify the following result on the compactness,
	monotonicity  and positivity.
	\begin{prop} \label{prop6.10} The following statements are valid:
		\begin{itemize}
			\item [{\rm (i)}]  $Q[t,C_+\setminus \{0\}]\subseteq C_+\setminus \{0\}$ and $Q[t, C_{u_k^*}]\subseteq C_{u_k^*}$ for all $k \in \mathbb{N}$ and $t\in \mathbb{R}_+$.
			Moreover, $Q[t,\varphi]\in C_+^\circ$ for all $(t,\varphi)\in(\tau,\infty)\times C_+$ with $\varphi(0,\cdot)>0$ or $f(\cdot,\varphi(\cdot,\cdot))>0$, and hence, $Q[(\tau,\infty)\times (C_+ \setminus\{0\})]\subseteq C_+^\circ$
			provided that $f(\mathbb{R}\times (0,\infty))\subseteq (0,\infty)$.
			
			\item [{\rm (ii)}]  If $f(s,u)$ is nondecreasing in $u\in \mathbb{R}_+$ for each $s\in \mathbb{R}$, then  $Q[t,\varphi]\geq Q[t,\psi]$ for all $t\in \mathbb{R}_+$ and $\varphi,\psi\in C_+$ with $\varphi\geq \psi$.
			
			\item [{\rm (iii)}]  If $f(s,u)$ is nondecreasing in $s\in \mathbb{R}$ for each $u\in \mathbb{R}_+$, then  $T_{-y}[Q[t,T_y[\varphi]]]\geq Q[t,\varphi]$ for all   $ (t,y,\varphi)\in \mathbb{R}_+^2\times C_+$.
			
			\item [{\rm (iv)}]  If $f(s,u)$ is nonincreasing in $s\in \mathbb{R}$ for each $u\in \mathbb{R}_+$, then  $T_{-y}[Q[t,T_y[\varphi]]]\leq Q[t,\varphi]$ for all   $(t,y,\varphi)\in \mathbb{R}_+^2\times C_+$.	
				
			\item [{\rm (v)}]  If  $r>0$, then  $Q[t,C_r]$ is precompact in $C$  for all  $t>\tau$ and $Q[t,C_r](0,\cdot)$ is precompact in $X$ for all $t>0$.

				\item [{\rm (vi)}]  If $f(s,\alpha u)\geq \alpha f(s,u)$ for all $(s,u,\alpha)\in \mathbb{R}\times \mathbb{R}_+\times [0,1]$, then  $Q[t,\alpha \varphi)]\geq \alpha Q[t,\varphi]$ for all $(t,\varphi,\alpha)\in \mathbb{R}\times C_+\times[0,1]$.
		\end{itemize}
		\end{prop}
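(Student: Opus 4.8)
The plan is to work throughout with the mild (variation-of-constants) form of \eqref{6.9}: writing $u=u^{\varphi}(\cdot,\cdot;f)$, one has
\[
u(t,\cdot)=S(t)[\varphi(0,\cdot)]+\int_{0}^{t}S(t-s)\big[\mu f(\cdot,u(s-\tau,\cdot+c\tau))\big]\,ds,\qquad u_{0}=\varphi,
\]
which on each interval $[n\tau,(n+1)\tau]$ is a linear integral equation in $u$ (the delayed argument already being known), so the usual step method gives global existence, uniqueness and continuity of $u$. I would use repeatedly that $S(t)$ is a bounded linear operator on $X$ with a strictly positive Gaussian kernel (hence positivity preserving and, for $t>0$, mapping $L^{\infty}$-bounded sets into sets bounded in $C^{1}(\mathbb{R},\mathbb{R})$), and that $S(t)[c_{0}]=e^{-\mu t}c_{0}$ for any constant $c_{0}$, so that the constant function $u_{k}^{*}$ solves the integral equation driven by the constant reaction $\mu u_{k}^{*}$.

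For \emph{(i)}, nonnegativity of $u^{\varphi}$ is immediate by induction over the step intervals since $f\ge 0$ and $S(t)$ is positivity preserving; nontriviality for $\varphi\neq 0$ follows because once a slice $u^{\varphi}(s_{0},\cdot)$ is nonzero one has $u^{\varphi}(s,\cdot)=S(s-s_{0})[u^{\varphi}(s_{0},\cdot)]+(\text{nonnegative})\gg 0$ for $s>s_{0}$, and $\varphi\neq 0$ forces such an $s_{0}\ge 0$ (either $\varphi(0,\cdot)\neq 0$, or, when $f(x,v)>0$ for $v>0$, the delayed reaction is nonzero on a time interval). The bound $Q[t,C_{u_{k}^{*}}]\subseteq C_{u_{k}^{*}}$ is the ``in particular'' conclusion of Proposition~\ref{prop6.8}, obtained by majorizing $f$ by the nondecreasing-in-$u$ function $v\mapsto\sup_{0\le w\le v}f(\cdot,w)\le u_{k}^{*}$ and using $u_{k}^{*}$ as a supersolution. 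The strict positivity $Q[t,\varphi]\in C_{+}^{\circ}$ for $t>\tau$ then comes from the global positivity of the Gaussian kernel applied to the (nonnegative, eventually nontrivial) solution; the last assertion is the special case in which $f(\mathbb{R}\times(0,\infty))\subseteq(0,\infty)$ ensures the reaction regenerates positivity.

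Statement \emph{(ii)} is Proposition~\ref{prop6.8} with $f_{1}=f_{2}=f$ (monotonicity of $f(s,\cdot)$ supplying the hypothesis), or a direct order-preserving induction over the step intervals. For \emph{(iii)} and \emph{(iv)} I would use a translation reparametrization: if $w:=u^{T_{y}\varphi}(\cdot,\cdot;f)$ then $\tilde{w}(t,x):=w(t,x+y)$ solves \eqref{6.9} with the same datum $\varphi$ but with $f(x,\cdot)$ replaced by $f(x+y,\cdot)$; since $f$ is nondecreasing (resp.\ nonincreasing) in its first argument, $f(\cdot+y,\cdot)\ge f(\cdot,\cdot)$ (resp.\ $\le$) for $y\ge 0$, and Proposition~\ref{prop6.8} yields $\tilde{w}\ge u^{\varphi}$ (resp.\ $\le$), i.e.\ $T_{-y}[Q[t,T_{y}[\varphi]]]\ge Q[t,\varphi]$ (resp.\ $\le$). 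Statement \emph{(vi)} follows from linearity of $S$ together with $f(x,\alpha v)\ge\alpha f(x,v)$: on $[0,\tau]$ the integral equation directly gives $u^{\alpha\varphi}(t,\cdot)\ge\alpha u^{\varphi}(t,\cdot)$, and this propagates along the step intervals. Finally, for \emph{(v)}, parabolic smoothing through $S$ makes $\{u^{\varphi}(t,\cdot):\varphi\in C_{r}\}$ bounded in $C^{1}(\mathbb{R},\mathbb{R})$ uniformly for each fixed $t>0$ (the Duhamel term has only an integrable $O((t-s)^{-1/2})$ singularity at $s=t$ and a bounded integrand, $f$ being bounded on $\mathbb{R}\times[0,\Lambda(t,r)]$), so Arzel\`a--Ascoli on each $[-n,n]$ gives precompactness of $Q[t,C_{r}](0,\cdot)$ in $X$; for $t>\tau$ every slice $u^{\varphi}(t+\theta,\cdot)$, $\theta\in[-\tau,0]$, has $t+\theta\ge t-\tau>0$ and hence lies in a common compact subset of $X$, equicontinuity in $\theta$ follows by bounding $\partial_{t}u^{\varphi}$ via the equation, and Arzel\`a--Ascoli on $[-\tau,0]\times[-n,n]$ gives precompactness in $C$.

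\textbf{Main obstacle.} The delicate point is that \eqref{6.9} has no comparison principle for general non-monotone $f$, so every ordering assertion---the upper bound in \emph{(i)}, and \emph{(ii)}, \emph{(iii)}, \emph{(iv)}, \emph{(vi)}---must be routed through Proposition~\ref{prop6.8}, which requires arranging one of the two compared nonlinearities to be nondecreasing in $u$ on the relevant range before it is invoked; keeping this bookkeeping straight is where care is needed. The second technical hurdle is the uniformity in \emph{(v)}: controlling the Duhamel integral near $s=t$ and establishing equicontinuity in the delay variable $\theta$ uniformly over $\varphi\in C_{r}$, which is where the smoothing estimates for $S(t)$ do the real work.
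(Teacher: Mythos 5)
The paper offers no proof of Proposition \ref{prop6.10} (it is dismissed with ``we can easily verify''), so your sketch cannot be compared against an official argument; on its own terms it is the natural verification via the mild formulation, and items (i), (ii) and (v) are essentially fine (for (v) you should bound a H\"older or $C^{1}$ norm and get equicontinuity in $t$ from $\|(S(h)-I)\phi\|_{\infty}\lesssim h^{1/2}\|\phi\|_{C^{1}}$ rather than trying to bound $\partial_t u$ through $u_{xx}$ in the Duhamel term, whose $O((t-s)^{-1})$ singularity is not integrable, but this is cosmetic).

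The genuine gap is in (iii), (iv) and (vi). You correctly observe in your closing paragraph that every ordering assertion must pass through Proposition \ref{prop6.8}, whose hypothesis is that one of the two compared nonlinearities is nondecreasing in $u$; but for (iii)/(iv) the compared nonlinearities are $f(\cdot+y,\cdot)$ and $f(\cdot,\cdot)$ with only monotonicity in the \emph{first} argument assumed, and for (vi) only subhomogeneity is assumed. Your argument closes on the first delay interval $[0,\tau]$, where both solutions carry the same history $\varphi$, but at the second step one must compare $f(x+y,\tilde w(s-\tau,\cdot))$ with $f(x,u(s-\tau,\cdot))$ (respectively $f(x,u^{\alpha\varphi}(s-\tau,\cdot))$ with $\alpha f(x,u^{\varphi}(s-\tau,\cdot))$), and the decomposition into a ``same argument, shifted $x$'' term plus a ``same $x$, ordered arguments'' term requires $f(x+y,\cdot)$ nondecreasing on the relevant range. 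Without it the ordering can reverse: take $f(s,u)=g(s)h(u)$ with $g$ nondecreasing and $h$ nonnegative but decreasing near the values reached at time $\tau$. So you must either add the hypothesis that $f$ is nondecreasing in $u$ to (iii), (iv), (vi) (which is harmless for the paper, since these items are only ever invoked for the monotone subhomogeneous minorants $f_l$ of Lemma \ref{lem666.001} in Proposition \ref{prop6.1-10002}), or supply a different argument; as stated, ``Proposition \ref{prop6.8} yields $\tilde w\ge u^{\varphi}$'' and ``this propagates along the step intervals'' invoke a comparison whose hypotheses you have not verified.
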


	The following three lemmas are useful in our verification of  {\bf (NM)} and {\bf (NM$_-$)}.
	
	\begin{lemma}\label{lem666.0010}
		Assume that there exists $x^*\in(0,\infty)$ such that  $f(\mathbb{R}_+\times [0,x^*])\subseteq [0,x^*]$. Let $T\in [\tau,2\tau]$, $\beta\in (0,x^*)$, and $x^*>b>a>0$. Then the following statements are
		valid:
		\begin{itemize}
			\item[{\rm (i)}] If $\beta<\inf f_\pm^\infty([a,b])$ and $\lim\limits_{s\to \pm \infty}
			f(s,\cdot)= f_\pm^\infty$ in $L^\infty_{loc}(\mathbb{R}_+,\mathbb{R})$, then for any $\gamma^*>0$, there exist $\sigma^*=\sigma^{*}(x^*,\gamma^*)>0$ and $z^*=z^{*}(x^*,\gamma^*,a,b,\beta)>0$ such that  $$T_{-z}\circ Q[t,T_z[\varphi]](0,0)>\beta+(\inf\{\varphi(0,y):y\in [-\sigma^*,\sigma^*]\}-\beta)e^{-\mu t}-\gamma^*$$  for all $\hat{z}\geq z^*$ and $(\pm z, t,\varphi)\in   [\hat{z},\infty)\times(0,T]\times  \mathcal{H}_{\sigma^*,\hat{z}}^\pm$, where {\[ \mathcal{H}_{\sigma_0,z_0}^\pm:= \{\psi\in C_+:T_{-z-c\tau}\circ Q[[0,T-\tau],T_z[\psi]]\subseteq [a\xi_{\sigma_0},b\tilde{\xi}_{\sigma_0,\frac{x^*}{b}}]_C, \, \, \forall
				\pm z\geq z_0\}\]} for all positive numbers $\sigma_0$ and $z_0$.
			
			\item[{\rm (ii)}] If $\beta>\sup f_\pm^\infty([a,b])$ and $\lim\limits_{s\to \pm \infty}
			f(s,\cdot)= f_\pm^\infty$ in $L^\infty_{loc}(\mathbb{R}_+,\mathbb{R})$, then for any $\gamma^*>0$, there exist $ \sigma^{**}=\sigma^{**}(x^*,\gamma^*)>0$ and $z^{**}=z^{**}(x^*,\gamma^*,a,b,\beta)>0$ such that  $$T_{-z}\circ Q[t,T_z[\varphi]](0,0)<\beta+(\sup\{\varphi(0,y):y\in [-\sigma^{**},\sigma^{**}]\}-\beta)e^{-\mu t}+\gamma^*$$ for all $\hat{z}\geq z^{**}$ and $(\pm z, t,\varphi)\in   [\hat{z},\infty)\times(0,T]\times  \mathcal{H}_{\sigma^{**},\hat{z}}^\pm$.
			
		\end{itemize}
	\end{lemma}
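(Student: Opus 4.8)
The plan is to argue directly from the mild (integral) formulation of \eqref{6.9}. Fix the data in the statement, write $\Psi:=T_z[\varphi]$, and let $v$ be the mild solution of \eqref{6.9} with $v_0=\Psi$, so that $T_{-z}\circ Q[t,T_z[\varphi]](0,0)=v(t,z)$ and, with $S(\cdot)$ the linear semigroup of \eqref{6.9} (given in the text by an explicit Gaussian kernel),
\[
v(t,z)=S(t)[\Psi(0,\cdot)](z)+\int_0^t S(t-s)\big[\mu f(\cdot,v(s-\tau,\cdot+c\tau))\big](z)\,\mathrm{d}s,
\]
with $v(s-\tau,\cdot)=\Psi(s-\tau,\cdot)$ for $s\le\tau$. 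I will bound the two terms separately, using throughout the single Gaussian–tail quantity
\[
\eta(\sigma):=\sup_{0<\theta\le T}\left(1-\frac{1}{\sqrt{4d\pi\theta}}\int_{-\sigma}^{\sigma}e^{-(c\theta-w)^2/(4d\theta)}\,\mathrm{d}w\right)\in[0,1],
\]
which tends to $0$ as $\sigma\to\infty$, uniformly since $c,d$ are fixed and $0<\theta\le T\le 2\tau$.

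First I would unwind the delay. For $s\in[0,t]\subseteq[0,T]$ one has $s-\tau=r+\vartheta$ with $r:=\max\{s-\tau,0\}\in[0,T-\tau]$ and $\vartheta:=\min\{s-\tau,0\}\in[-\tau,0]$; reading the inclusion defining $\mathcal{H}^{\pm}_{\sigma^*,\hat{z}}$ at this $z$ (allowed because $\pm z\ge\hat{z}$), and matching the built‑in $c\tau$–shift with the shift in the nonlinearity, gives $v(s-\tau,y+c\tau)\in[a,b]$ for $|y-z|\le\sigma^*$ and $v(s-\tau,\cdot)\in[0,x^*]$ everywhere; the case $r=0$ also yields $\Psi(0,\cdot)=\varphi(0,\cdot-z)\le x^*$, while $\Psi(0,y)\ge m_\varphi:=\inf\{\varphi(0,w):|w|\le\sigma^*\}$ on $|y-z|\le\sigma^*$ and $\ge0$ elsewhere. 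Plugging these into the Gaussian kernel gives, in case (i), $S(t)[\Psi(0,\cdot)](z)\ge e^{-\mu t}m_\varphi\,(1-\eta(\sigma^*))$, and in case (ii), $S(t)[\Psi(0,\cdot)](z)\le e^{-\mu t}\big(M_\varphi+x^*\eta(\sigma^*)\big)$ with $M_\varphi:=\sup\{\varphi(0,w):|w|\le\sigma^*\}\le x^*$.

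Next I would treat the nonlinear term via $\lim_{s\to\pm\infty}f(s,\cdot)=f_\pm^{\infty}$ in $L^\infty_{loc}(\mathbb{R}_+,\mathbb{R})$ (hence uniformly on $[0,x^*]$) together with the gap hypotheses. In case (i), $\inf f_\pm^{\infty}([a,b])=\beta+3\delta$ for some $\delta>0$, so there is $Z_1=Z_1(x^*,a,b,\beta)$ with $f(y,u)\ge\beta+2\delta$ for $y\ge Z_1$, $u\in[a,b]$; choosing $z^*\ge Z_1+\sigma^*+1$ forces $\{|y-z|\le\sigma^*\}\subseteq[Z_1,\infty)$, so there the integrand $\mu f(\cdot,v(s-\tau,\cdot+c\tau))$ is $\ge\mu(\beta+2\delta)$ and it is $\ge0$ everywhere, whence (using $\int_0^t e^{-\mu(t-s)}\,\mathrm ds=\mu^{-1}(1-e^{-\mu t})$) $\int_0^t S(t-s)[\mu f(\cdot,\cdots)](z)\,\mathrm ds\ge(\beta+2\delta)(1-\eta(\sigma^*))(1-e^{-\mu t})$. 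In case (ii), $\sup f_\pm^{\infty}([a,b])=\beta-3\delta'$ with $\delta'>0$ gives $Z_1'=Z_1'(x^*,a,b,\beta)$ with $f(y,u)\le\beta-2\delta'$ for $y\ge Z_1'$, $u\in[a,b]$, while $0\le f\le M_f:=\sup_{(s,u)\in\mathbb{R}\times[0,x^*]}f(s,u)<\infty$ everywhere; taking $z^{**}\ge Z_1'+\sigma^{**}+1$ then gives $\int_0^t S(t-s)[\mu f(\cdot,\cdots)](z)\,\mathrm ds\le\big((\beta-2\delta')+M_f\eta(\sigma^{**})\big)(1-e^{-\mu t})$.

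Finally I would add the pieces. In case (i), factoring out $(1-\eta(\sigma^*))$ and discarding the nonnegative $2\delta(1-e^{-\mu t})$ yields $v(t,z)\ge\big(\beta+(m_\varphi-\beta)e^{-\mu t}\big)-\eta(\sigma^*)\big(\beta+(m_\varphi-\beta)e^{-\mu t}\big)$, and since $0\le\beta+(m_\varphi-\beta)e^{-\mu t}\le\beta+m_\varphi\le 2x^*$ (as $\beta\in(0,x^*)$ and $m_\varphi\le x^*$), it suffices to pick $\sigma^*=\sigma^*(x^*,\gamma^*)$ with $2x^*\eta(\sigma^*)<\gamma^*$ and then $z^*:=Z_1+\sigma^*+1$, depending on $x^*,\gamma^*,a,b,\beta$. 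In case (ii), discarding the nonpositive $-2\delta'(1-e^{-\mu t})$ gives $v(t,z)\le\beta+(M_\varphi-\beta)e^{-\mu t}+\eta(\sigma^{**})\max\{x^*,M_f\}$, so choosing $\sigma^{**}=\sigma^{**}(x^*,\gamma^*)$ with $\eta(\sigma^{**})\max\{x^*,M_f\}<\gamma^*$ (note $M_f$ depends only on $f$ and $x^*$) and $z^{**}:=Z_1'+\sigma^{**}+1$ finishes it; the inclusion of $[a,b]$ in $[0,x^*]$ used above is exactly $x^*>b$. The version with $f_-^{\infty}$ (i.e.\ $z\to-\infty$) follows verbatim after the reflection $x\mapsto -x$. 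The one genuinely delicate point — everything else being routine Gaussian bookkeeping together with the step‑by‑step existence of solutions — is the organization of the error terms so that the tail $\eta(\sigma^*)$ multiplies only quantities bounded by a fixed multiple of $x^*$, whereas the spectral gap $\delta$ (resp.\ $\delta'$) is absorbed \emph{solely} into the translation threshold $z^*$ (resp.\ $z^{**}$); this is what forces $\sigma^*$ to depend on $x^*,\gamma^*$ alone.
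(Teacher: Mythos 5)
Your argument is correct and follows essentially the same route as the paper's proof: both work from the mild (Duhamel) formulation of \eqref{6.9}, split the kernel integrals into a window $|y-z|\le\sigma$ (where the $\mathcal{H}$-membership pins the delayed argument in $[a,b]$ and the translation threshold $z^*$ makes $f\gtrless\beta$ there) and a Gaussian tail whose contribution is $O(x^*)$ times a tail integral controlled by $\sigma$ alone. The only cosmetic differences are your extra margin $\delta$ in the limit of $f$ (the paper only needs $f\ge\beta$, resp.\ $f\le\beta$, on the window) and your constant $M_f$ where the paper uses $x^*$ directly; the parameter dependencies $\sigma^*(x^*,\gamma^*)$ and $z^*(x^*,\gamma^*,a,b,\beta)$ come out exactly as in the paper.
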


	\begin{proof}  We only consider the cases of $+$ since the cases of $-$ can be derived by applying the results of $+$ to $\mathcal{S}\circ Q[t,\cdot;f]\circ \mathcal{S}$.
		
		(i) Fix $t\in (0,T]$ and $\gamma^*>0$. Selete $\delta\in (0,1)$ such that $2 x^*\delta<\gamma^*$. Take  $\sigma^*,z^*\in (0,\infty)$ such that
		$$
		\frac{1}{\sqrt{\pi}}\int_{|y|\geq \frac{\sigma^*-2\tau c}{2\sqrt{2d\tau}}}
		e^{-y^2}{\rm d}y<\delta, \quad f([z^*-\sigma^*,\infty)\times [a,b])\subseteq [\beta,\infty).
		$$
		It follows  from \eqref{6.9} that for any  $\hat{z}\geq z^*$, $(z,\varphi)\in [\hat{z},\infty)\times \mathcal{H}_{\sigma^*,\hat{z}}^+$ with $\underline{a}=\inf\{\varphi(0,y):y\in [-\sigma^*,\sigma^*]\}$,  there holds
		\begin{eqnarray*}
			&&T_{-z}\circ Q[t,\cdot] \circ T_{z}[\varphi](0,0)
			\\
			&&= \frac{e^{-\mu t}}{\sqrt{4d\pi t}}\int_{-\infty}^\infty
			\varphi(0,y)e^{-\frac{(ct-y)^2}{4
					dt}}{\rm d}y
			\\
			&&\quad +\mu \int_0^t \frac{e^{-\mu (t-s)}}{\sqrt{4d\pi (t-s)}}\int_{-\infty}^\infty
			f(y+z,T_{{-z-c\tau}}[Q[s,T_{z}[\varphi]] ](-\tau,y))e^{-\frac{(c(t-s)-y)^2}{4
					d(t-s)}}{\rm d}y {\rm d}s
					\\
					&&\geq  \frac{\underline{a} e^{-\mu t}}{\sqrt{4d\pi t}}\int_{-\sigma^*}^{\sigma^*}
			e^{-\frac{(ct-y)^2}{4
					dt}}{\rm d}y
			\end{eqnarray*}

\begin{eqnarray*}
			&&\quad +\mu \int_0^t \frac{e^{-\mu (t-s)}}{\sqrt{4d\pi (t-s)}}\int_{-\sigma^*}^{\sigma^*}
			\inf f(y+z,[a,b])e^{-\frac{(c(t-s)-y)^2}{4
					d(t-s)}}{\rm d}y {\rm d}s
			\\
			&&\geq  \frac{\underline{a} e^{-\mu t}}{\sqrt{4d\pi t}}\int_{-\sigma^*}^{\sigma^*}
			e^{-\frac{(ct-y)^2}{4
					dt}}{\rm d}y
			+\mu \beta\int_0^t \frac{e^{-\mu (t-s)}}{\sqrt{4d\pi (t-s)}}\int_{-\sigma^*}^{\sigma^*}
			e^{-\frac{(c(t-s)-y)^2}{4
					d(t-s)}}{\rm d}y {\rm d}s
			\\
			&&=\frac{\underline{a} e^{-\mu t}}{\sqrt{\pi}}\int_{\frac{-\sigma^*-ct}{2\sqrt{dt}}}^{\frac{\sigma^*-ct}{2\sqrt{dt}}}
			e^{-y^2}{\rm d}y
			+\mu \beta\int_0^t \frac{e^{-\mu s}}{\sqrt{\pi }}\int_{\frac{-\sigma^*-cs}{2\sqrt{ds}}}^{\frac{\sigma^*-cs}{2\sqrt{ds}}}
			e^{-y^2}{\rm d}y{\rm d}s
			\\
			&&\geq  \frac{\underline{a} e^{-\mu t}}{\sqrt{\pi}}\int_{\frac{-\sigma^*}{2\sqrt{2d\tau}}}^{\frac{\sigma^*-2\tau c}{2\sqrt{2d\tau}}}
			e^{-y^2}{{\rm d}y}
			+\mu \beta\int_0^t \frac{e^{-\mu s}}{\sqrt{\pi }}\int_{\frac{-\sigma^*}{2\sqrt{2d\tau}}}^{\frac{\sigma^*-2\tau c}{2\sqrt{2d\tau}}}
			e^{-y^2}{\rm d}y {\rm d}s
			\\
			&&\geq  \underline{a}(1-\delta) e^{-\mu t}
			+\beta (1-\delta)(1-e^{-\mu t})
			\\
			&&\geq  \beta+(\underline{a}-\beta) e^{-\mu t}-\gamma^*.
		\end{eqnarray*}
		Thus, (i) follows from the definition of $\underline{a}$.
		
		(ii) Fix $t\in (0,T]$ and $\gamma^*>0$. Selete $\delta\in (0,1)$ such that $ x^*\delta<\gamma^*$. Take  $\sigma^{**},z^{**}\in (0,\infty)$ such that $$
		\frac{1}{\sqrt{\pi}}\int_{|y|\geq \frac{\sigma^{**}-2\tau c}{2\sqrt{2d\tau}}}
		e^{-y^2}{\rm d}y<\delta, \quad
		f([z^{**}-\sigma^{**},\infty)\times [a,b])\subseteq [0,\beta].
		$$
		It follows   from \eqref{6.9} that for any  $\hat{z}\geq z^{**}$, $(z,\varphi)\in [\hat{z},\infty) \times \mathcal{H}_{\sigma^{**},\hat{z}}^+$ with $\overline{b}:=\sup\{\varphi(0,y):y\in [-\sigma^{**},\sigma^{**}]\}$, there holds
		\begin{eqnarray*}
			&&T_{-z}\circ Q[t,\cdot] \circ T_{z}[\varphi](0,0)
			\\
			&&=\frac{e^{-\mu t}}{\sqrt{4d\pi t}}\int_{-\infty}^\infty
			\varphi(0,y)e^{-\frac{(ct-y)^2}{4
					dt}}{\rm d}y
			\\
			&&\quad  +\mu \int_0^t \frac{e^{-\mu (t-s)}}{\sqrt{4d\pi (t-s)}}\int_{-\infty}^\infty
			f(y+z,T_{ {-z-c\tau}}[Q[s,T_{z}[\varphi]] ](-\tau,y))e^{-\frac{(c(t-s)-y)^2}{4
					d(t-s)}}{\rm d}y {\rm d}s
			\\
			&&\leq  \frac{\bar{b} e^{-\mu t}}{\sqrt{4d\pi t}}\int_{-\sigma^{**}}^{\sigma^{**}}
			e^{-\frac{(ct-y)^2}{4
					dt}}{\rm d}y+\frac{x^*e^{-\mu t}}{\sqrt{4d\pi t}}\int_{|y|\geq \sigma^{**}}
			e^{-\frac{(ct-y)^2}{4
					dt}}{\rm d}y
			\\
			&&\quad  +\mu \int_0^t \frac{\beta e^{-\mu (t-s)}}{\sqrt{4d\pi (t-s)}}\int_{-\sigma^{**}}^{\sigma^{**}}
			e^{-\frac{(c(t-s)-y)^2}{4
					d(t-s)}}{\rm d}y {\rm d}s+\mu \int_0^t \frac{x^*e^{-\mu (t-s)}}{\sqrt{4d\pi (t-s)}}\int_{|y|\geq  \sigma^{**}}
			e^{-\frac{(c(t-s)-y)^2}{4
					d(t-s)}}{\rm d}y {\rm d}s
			\\
			&&= \frac{\bar{b} e^{-\mu t}}{\sqrt{\pi }}\int_{\frac{-\sigma^{**}-ct}{\sqrt{4dt}}}^{\frac{\sigma^{**}-ct}{\sqrt{4dt}}}
			e^{-y^2}{\rm d}y+\frac{x^*e^{-\mu t}}{\sqrt{\pi }}\int_{|y\sqrt{4dt}+ct|\geq \sigma^{**}}
			e^{-y^2}{\rm d}y
			\\
			&&\quad +\mu \int_0^t \frac{\beta e^{-\mu s}}{\sqrt{\pi }}\int_{\frac{-\sigma^{**}-cs}{\sqrt{4d s}}}^{\frac{\sigma^{**}-cs}{\sqrt{4d s}}}
			e^{-y^2}{\rm d}y {\rm d}s+\mu \int_0^t \frac{x^*e^{-\mu s}}{\sqrt{\pi }}\int_{|y\sqrt{4d s}+cs|\geq  \sigma^{**}}
			e^{-y^2}{\rm d}y {\rm d}s
			\\
			&&\leq \bar{b} e^{-\mu t}+\frac{x^*e^{-\mu t}}{\sqrt{\pi }}\int_{|y| \geq \frac{\sigma^{**}-2c\tau}{2\sqrt{2d\tau}}}
			e^{-y^2}{\rm d}y
			\\
			&&\quad  + \beta(1- e^{-\mu t})+\mu \int_0^t \frac{x^*e^{-\mu s}}{\sqrt{\pi }}\int_{|y| \geq \frac{\sigma^{**}-2c\tau}{2\sqrt{2d\tau}}}
			e^{-y^2}{\rm d}y {\rm d}s
			\\
			&&<  \beta+(\bar{b}-\beta) e^{-\mu t}+x^*e^{-\mu t}\delta
			+x^*(1-e^{-\mu t})\delta
			\\
			&&< \beta+(\bar{b}-\beta) e^{-\mu t}+\gamma^*.
		\end{eqnarray*}
		This, together with the definition of $\bar{b}$,  implies  (ii).
	\end{proof}

For any $\mu\in\mathbb{R}$, let $\Psi_{\pm,\mu}(t,\cdot)$ be the  solution maps of the following linear reaction-diffusion equations with time delay:
	\begin {equation}
	\left\{
	\begin{array}{ll}
		\frac{\partial v}{\partial t}(t,x)  = d v_{xx}(t,x)-\mu v(t,x) +
		\mu \frac{ {\rm d}f_\pm^\infty(0)}{du}v(t-\tau,x), \, (t,x)\in (0,\infty)\times \mathbb{R}, \\
		v(\theta,x) = \varphi(\theta,x),  \quad  (\theta,x) \in [-\tau,0]
		\times \mathbb{R},
	\end{array} \right.
	\label{6.8-lin-lin-lin}
	\end {equation}
	where $\varphi\in C(\mathbb{R},\mathbb{R})\bigcap L^\infty(\mathbb{R},\mathbb{R})$. Here and after, we  extend the domain of $\Psi_{\pm,\mu}(t,\cdot)$ to $\tilde{C}$ by the step argument.  Now we define $L_{\pm,\mu}:Y\to Y$ by
	$$
	L_{\pm,\mu}[\phi]=\Psi_{\pm,\mu}(1,e_{\phi,\mu})(\cdot,0), \,  \,  \forall \phi\in Y,
	$$
	where $e_{\phi,\mu}(\theta,x)=\phi(\theta)e^{-\mu x}$. Clearly, $L_{\pm,\mu}$ is  continuous and positive. Moreover, $(L_{\pm,\mu})^{n_0}$ is a compact and strongly positive operator on $Y$ for some $n_0\in \mathbb{N}$.
	According to \cite[Lemma 3.2]{lz2007}, we know that $L_{\pm,\mu}$ has the unique principal eigenvalue $\lambda_\pm(\mu)$ and  a unique strongly
	positive eigenfunction $\zeta_\mu^\pm$ associated with  $\lambda_\pm(\mu)>0$ such that $||\zeta_\mu^\pm||=1$.
	Let
	$$c^*(\pm\infty):=\inf\limits_{\mu>0}\frac{1}{\mu}\log\lambda_\pm(\mu). $$
	Then  $c^*(\pm\infty)>0$ provided that $\frac{ {\rm d}f_\pm^\infty(0)}{{\rm d}u}>1$.
	By  \cite[Theorem 5.1]{lz2007}, it follows  that $c^*(\pm\infty)$ is the  spreading speeds  of  the right and left limiting systems \eqref{6.11}, respectively, in the case where $f$  is nondecreasing. 
	In such a case, \cite[Theorem 5.1 ]{lz2007} and Proposition~\ref{prop6.9}-(iii) imply that $t(c^*(\pm\infty)-c)$ and $t(c^*(\pm \infty)+c)$ are just 
the right and left  spreading speeds of $Q_\pm[t,\cdot]$ for any given $t>0$. 
	
	\begin{prop} \label{prop6.11} 
	Then the following statements are valid:
		\begin{itemize}
			
			\item [{\rm (i)}] $Q[2\tau,\cdot]$ satisfies {\bf (NM)}  if {\rm (B$_+$)} holds.

			\item [{\rm (ii)}] $Q[2\tau,\cdot]$ satisfies {\bf (NM$_-$)}  if {\rm (B$_-$)} holds.
			

			
		\end{itemize}

	\end{prop}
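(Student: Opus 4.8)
The plan is to reduce (ii) to (i) by a reflection, and to prove (i) by extracting a nonlinear trap from Lemma~\ref{lem6.9} and then iterating the one-step estimates of Lemma~\ref{lem666.0010}. Here $N=1$, $M=[-\tau,0]$, and the relevant translation-limit fixed points are the constants $r^{*}=u_{+}^{*}$, $r^{*}_{-}=u_{-}^{*}$: by (B$_\pm$), $(f_\pm^\infty)^2$ has the unique positive fixed point $u_\pm^*$ and, $f_\pm^\infty$ being positive on $(0,\infty)$, $f_\pm^\infty(u_\pm^*)$ is again a positive fixed point of $(f_\pm^\infty)^2$, hence $=u_\pm^*$; so $u_\pm^*$ is a constant equilibrium of the limiting delayed equation \eqref{6.11} and $Q_\pm[2\tau,u_\pm^*]=u_\pm^*$. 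For (ii): by the reflection identities recorded just before Proposition~\ref{prop6.9}, $\mathcal S\circ Q[2\tau,\cdot\,;f]\circ\mathcal S$ is the time-$2\tau$ solution map of a problem of the same form \eqref{6.9} with shifting speed $-c$ and reaction $(x,u)\mapsto f(-x,u)$, which satisfies (B$_+$) exactly when $f$ satisfies (B$_-$); hence (NM) for $\mathcal S\circ Q[2\tau,\cdot]\circ\mathcal S$ — equivalently (NM$_-$) for $Q[2\tau,\cdot]$ — follows from (i) applied to that map. So I fix $t\ge s\ge r>0$ with $\{r,s\}\ne\{1\}$ and must exhibit $n(r,s)\in\mathbb{N}$, $\gamma(r,s)\in(0,r)$ and $(d(r,s,t),z(r,s,t))\in Int(\mathbb{R}_+^2)$ as in (NM).

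First I extract the trap. Applying Lemma~\ref{lem6.9}(i) with (B$_+$) to $\mathfrak a:=ru_+^*\le\mathfrak b:=su_+^*$ (so $\{\mathfrak a,\mathfrak b\}\ne\{u_+^*\}$) yields $\mathfrak b^*>\mathfrak a^*>\gamma^*>0$ and $z^*>0$ with $[\mathfrak a,\mathfrak b]\subseteq[\mathfrak a^*,\mathfrak b^*]$, with $Cl(f([z^*,\infty)\times[\mathfrak a^*-\gamma^*,\mathfrak b^*+\gamma^*]))\subseteq(\mathfrak a^*+\gamma^*,\mathfrak b^*-\gamma^*)$, and with either $\mathfrak a+\gamma^*<\inf f([z^*,\infty)\times[\mathfrak a^*-\gamma^*,\mathfrak b^*+\gamma^*])$ (the ``lower'' alternative) or $\mathfrak b-\gamma^*>\sup f([z^*,\infty)\times[\mathfrak a^*-\gamma^*,\mathfrak b^*+\gamma^*])$ (the ``upper'' one). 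Using (B) I fix $x^*=u_k^*$ so large that it dominates every test function appearing below and exceeds $\mathfrak b^*+\gamma^*$; then $f(\mathbb{R}\times[0,x^*])\subseteq[0,x^*]$ and, by Proposition~\ref{prop6.10}(i), solutions started below those test functions stay in $[0,x^*]$. Since $f(s,\cdot)\to f_+^\infty$ in $C^1_{loc}$, hence in $L^\infty_{loc}$, the Lemma~\ref{lem6.9} dichotomy passes to the limit: $f_+^\infty([\mathfrak a^*-\gamma^*,\mathfrak b^*+\gamma^*])\subseteq(\mathfrak a^*+\gamma^*,\mathfrak b^*-\gamma^*)$, and in the lower alternative $\mathfrak a+\gamma^*<\inf f_+^\infty([\mathfrak a^*-\gamma^*,\mathfrak b^*+\gamma^*])$; I fix $\beta$ strictly between these, so $0<\beta<x^*$ and $\beta<\inf f_+^\infty([\mathfrak a^*-\gamma^*,\mathfrak b^*+\gamma^*])$. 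Lastly I choose $\gamma(r,s)\in(0,r)$ so small that $\gamma u_+^*<\min\{\gamma^*,\ \mathfrak a-\mathfrak a^*+\gamma^*,\ \mathfrak b^*-\mathfrak b+\gamma^*,\ \beta-\mathfrak a\}$; then $(r-\gamma)u_+^*>\mathfrak a^*-\gamma^*$ \emph{strictly}, $(s+\gamma)u_+^*<\mathfrak b^*+\gamma^*$, and $(r+\gamma)u_+^*<\beta$. (The upper alternative is symmetric, with $\beta$ between $\sup f_+^\infty(\cdot)$ and $\mathfrak b-\gamma^*$ and Lemma~\ref{lem666.0010}(ii) replacing (i); I describe only the lower case below.)

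Now the iteration. Take $T=2\tau$ in Lemma~\ref{lem666.0010}, put $a=\mathfrak a^*-\gamma^*$, $b=\mathfrak b^*+\gamma^*$, $\varrho=\tfrac{t+\gamma}{r-\gamma}$, and leave $\mathfrak d=d(r,s,t)$, $z=z(r,s,t)$ to be fixed. Given $\varphi\in[(r-\gamma)\xi_{\mathfrak d}r^*,(s+\gamma)\tilde\xi_{\mathfrak d,\varrho}r^*]_C$ and $y\ge z$, form the de-shifted iterates $\psi_0=\varphi$, $\psi_j=(T_{-y}\circ Q[2\tau,\cdot]\circ T_y)[\psi_{j-1}]$, so $\psi_n=T_{-y}\circ(Q[2\tau,\cdot])^n\circ T_y[\varphi]$. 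Because $Q[2\tau,\phi](\theta,\cdot)=Q[2\tau+\theta,\phi](0,\cdot)$ and $2\tau+\theta\in[\tau,2\tau]\subseteq(0,T]$ as $\theta$ runs over $[-\tau,0]$, Lemma~\ref{lem666.0010}(i) — whose conclusion is phrased precisely for the conjugated maps $T_{-z}\circ Q[\cdot\,,\cdot]\circ T_z$ uniformly over far shifts — gives, once $\psi_{j-1}$ is known to lie in the pertinent set $\mathcal H^+_{\sigma,\hat z}$, a bound $\psi_j(\theta,0)>\beta+(m_{j-1}-\beta)e^{-\mu(2\tau+\theta)}-\eta_j$, where $m_{j-1}$ is a lower bound for $\psi_{j-1}$ on a short window about the origin and the $\eta_j>0$ are prescribed in advance with $\sum_j\eta_j<\tfrac12(\beta-(r+\gamma)u_+^*)$. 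Feeding in $m_0\ge(r-\gamma)u_+^*$ (valid once $\mathfrak d\ge\sigma$), taking the worst history parameter, and using that $\Gamma(u):=\beta-(\beta-u)e^{-\mu\tau}$ is an increasing contraction with fixed point $\beta>(r+\gamma)u_+^*$, one gets $m_j>\Gamma(m_{j-1})-\eta_j$, hence $m_n>\Gamma^n(m_0)-\sum_j\eta_j$; picking $n(r,s)$ with $\Gamma^{n(r,s)}\big((r-\gamma)u_+^*\big)>\tfrac12\big(\beta+(r+\gamma)u_+^*\big)$ forces $m_{n(r,s)}>(r+\gamma)u_+^*$, i.e.\ $T_{-y}\circ(Q[2\tau,\cdot])^{n(r,s)}\circ T_y[\varphi](\cdot,0)\ge(r+\gamma)r^*$ for all such $\varphi$ and all $y\ge z$ — precisely $I_{\mathfrak d,\,r-\gamma,\,s+\gamma,\,z,\,n(r,s),\,\varrho}\ge r+\gamma$. (In the upper alternative the same scheme pushes the near-origin \emph{upper} bounds below $\beta<(s-\gamma)u_+^*$, giving $S_{\ldots}\le s-\gamma$.)

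The hard part is the inductive claim that each $\psi_{j-1}$ ($0\le j-1\le n(r,s)$) lies in $\mathcal H^+_{\sigma,\hat z}$, i.e.\ that $T_{-z'-c\tau}\circ Q[[0,\tau],T_{z'}[\psi_{j-1}]]$ stays in $[a\xi_\sigma,b\tilde\xi_{\sigma,x^*/b}]_C$ for all large $z'$. For the upper inclusion one uses $f(\mathbb{R}\times[0,x^*])\subseteq[0,x^*]$ — so $\psi_{j-1}\le x^*$ globally, covering $b\tilde\xi_{\sigma,x^*/b}$ away from the centre — together with the strict trapping $f_+^\infty([\mathfrak a^*-\gamma^*,\mathfrak b^*+\gamma^*])\subseteq(\mathfrak a^*+\gamma^*,\mathfrak b^*-\gamma^*)$ applied near $+\infty$ (through $z$ large and Proposition~\ref{prop6.9}(ii)): the region where $\psi_{j-1}$ can rise from $\mathfrak b^*+\gamma^*$ toward $x^*$ recedes from the origin by a fixed amount per step, so taking $\mathfrak d$ larger than $n(r,s)$ times that amount (plus $\sigma$, plus an $O(\sqrt{d\tau}+|c|\tau)$ allowance for diffusion and the delay shift) keeps $\psi_{j-1}\le\mathfrak b^*+\gamma^*$ on the centre. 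For the lower inclusion one combines a Gaussian heat-kernel estimate — a plateau of height $\ge\mathfrak a-\gamma u_+^*$ over $[-\mathfrak d,\mathfrak d]$ still exceeds $\mathfrak a^*-\gamma^*$ over the much shorter $[-\sigma,\sigma]$ after any time $\le\tau$ when $\mathfrak d\gg\sigma$ — with the strict positivity of the reaction contribution, which is $\ge\inf f_+^\infty([\mathfrak a^*-\gamma^*,\mathfrak b^*+\gamma^*])>\mathfrak a^*+\gamma^*$ as soon as the argument values sit in $[\mathfrak a^*-\gamma^*,\mathfrak b^*+\gamma^*]$; it is exactly here that the \emph{strict} inequality $(r-\gamma)u_+^*>\mathfrak a^*-\gamma^*$ (hence $\gamma(r,s)<\gamma^*/u_+^*$) is needed, to absorb the heat-kernel loss. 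Everything then closes, provided the parameters are pinned down in the order: $x^*$; then $\gamma^*,\mathfrak a^*,\mathfrak b^*,z^*,\beta$; then $\gamma(r,s)$; then $\sigma$ and the threshold $\hat z$ supplied by Lemma~\ref{lem666.0010}; then $n(r,s)$ from the contraction rate of $\Gamma$; then $\mathfrak d=d(r,s,t)$; and finally $z(r,s,t)$. Keeping this dependence chain non-circular — not any individual estimate — is the real obstacle.
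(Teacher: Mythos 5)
Your overall strategy is built from the same ingredients as the paper's proof --- the reflection $\mathcal{S}\circ Q\circ\mathcal{S}$ to reduce (ii) to (i), Lemma~\ref{lem6.9} to produce the invariant interval $[a^*,b^*]$ together with the one-sided dichotomy, and Lemma~\ref{lem666.0010} to convert that dichotomy into an estimate on $T_{-z}\circ Q[t,T_z[\cdot]](0,0)$ --- but your core argument takes a genuinely different and considerably heavier route. You iterate $Q[2\tau,\cdot]$ a large number $n(r,s)$ of times and drive the near-origin lower bound up to $\beta$ via the contraction $\Gamma(u)=\beta-(\beta-u)e^{-\mu\tau}$, which forces you to maintain, inductively over $n(r,s)$ steps, that every iterate $\psi_{j-1}$ lies in $\mathcal{H}^{+}_{\sigma,\hat z}$ and that its whole spatial profile stays pinched in $[a\xi_\sigma,b\tilde\xi_{\sigma,x^*/b}]$ over a region shrinking by a fixed amount per step. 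The paper avoids all of this by taking $n(r,s)=1$: it sets $\gamma^*=\delta^*(1-e^{-\mu\tau})/3$ and $\gamma(r,s)=\gamma^*/u_+^*$, runs Lemma~\ref{lem666.0010} once with $T=\tau$ and $\beta\in\{a^*+\delta^*,b^*-\delta^*\}$ purely to verify the history condition $T_{-z-c\tau}\circ Q[[0,\tau],T_z[\varphi]]\in[(a^*-\delta^*)\xi_{\sigma^*},(b^*+\delta^*)\tilde\xi_{\sigma^*,\cdot}]_C$, and then runs it once more with $T=2\tau$ and $\beta=rr^*+\delta^*$ to get $T_{-z}\circ Q[t,T_z[\varphi]](0,0)>rr^*+\delta^*-(\gamma r^*+\delta^*)e^{-\mu\tau}-\gamma^*=rr^*+\gamma^*(2-e^{-\mu\tau})>rr^*+\gamma r^*$ for all $t\in[\tau,2\tau]$. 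The one-step gain already exceeds $\gamma r^*$ precisely because $\gamma$ was tied to $\delta^*(1-e^{-\mu\tau})$, which is the calibration your proposal misses (you fix $\gamma$ from geometric constraints only and compensate with large $n$).

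The cost of your choice is that the inductive invariant --- which you yourself flag as ``the hard part'' --- is left as a sketch, and it is not a routine one: for $j\ge 2$ the iterate $\psi_{j-1}$ is no longer a plateau, so establishing membership in $\mathcal{H}^{+}_{\sigma,\hat z}$ requires re-deriving two-sided window bounds for the full profile of $\psi_{j-1}$ at every base point of the shrinking region (using uniformity of the shift $z$), not merely at the origin, and then propagating them through the $[0,\tau]$-flow at each of the $n(r,s)$ stages. This is exactly the content of the paper's ``stage one'' estimate, which the paper only needs to perform once. So while I would not call your plan wrong, the part of it that actually carries the proof is the part you have not written out, and the single-step calibration of $\gamma(r,s)$ renders it unnecessary.
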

	
	\noindent
	\begin{proof}  
		It then suffices to prove {\bf (NM)} since {\bf (NM$_-$)} can be verified in a similar way.
		
Fix $r^*=u^*_+$ and $\tilde{t}\geq s\geq r>0$ with $\{r,s\}\neq \{1\}$.   In view of  Lemma~\ref{lem6.9}-(i), there exist $b^*>a^*>\delta^*>0$ and $z^*>0$ such that
$$[rr^*,sr^*] \subseteq [a^*,b^*],\quad  Cl(f([z^*,\infty)\times [a^*-\delta^*,{b^*+\delta^*]}))\subseteq (a^*+\delta^*,b^*-\delta^*),$$
and one of the following two cases happens:
	\begin{itemize}
		\item [{\rm (a)}] $rr^*+\delta^*<\inf f([z^*,\infty)\times [a^*-\delta^*,b^*+\delta^*])$;
			\item [{\rm (b)}] $sr^*-\delta^*>\sup f([z^*,\infty)\times[a^*-\delta^*,{b^*+\delta^*}])$.
		\end{itemize}	
		
		In the following, we only consider the case  (a) since  the case  (b) can be dealt with in a similar way.
		Note that  in the  case  (a), we easily see that $$f_+^\infty([a^*-\frac{\delta^*}{3},b^*+\frac{\delta^*}{3}])\subseteq (a^*+\delta^*,b^*-\delta^*)$$ and  $$ f_+^\infty([a^*-\delta^*,b^*+\delta^*])\subseteq (rr^*+\delta^*,\infty).$$
		Take $\gamma^*=\frac{\delta^*(1-e^{-\mu \tau})}{3}$, $\gamma=\frac{\gamma^*}{r^*}$, and $x^*=u_{k_0}^*\geq (1+\gamma+\tilde{t})r^* {+2b^*+}1$ for some $k_0\in \mathbb{N}$. Let
		$\sigma^{*}=c\tau+\max \{ \sigma^{*}(x^*,\gamma^*),\sigma^{**}(x^*,\gamma^*)\}$ and $z^*=\max\{z^*(x^*,\gamma^*,a^*-\frac{\delta^*}{3},b^*+\frac{\delta^*}{3},a^*+\delta^*), z^{*}(x^*,\gamma^*,a^*-\delta^*,b^*+\delta^*,rr^*+\delta^*),z^{**}(x^*,\gamma^*,a^*-\frac{\delta^*}{3},b^*+\frac{\delta^*}{3},b^*-\delta^*)\}$,
		  where $\sigma^{*}(\cdot),\sigma^{**}(\cdot),z^{*}(\cdot),z^{**}(\cdot)$ are defined as in Lemma~\ref{lem666.0010}.
		
		By applying  Lemma~\ref{lem666.0010} with $a=a^*-\frac{\delta^*}{3}$, $b=b^*+\frac{\delta^*}{3}$, $\beta\in \{a^*+\delta^*,b^*-\delta^*\}$, and $T=\tau$,  we see that for any $t\in (0,\tau]$, $z\geq  z^*$, and $\varphi\in [a\xi_{\sigma^{*}},b\tilde{\xi}_{\sigma^{*},\frac{x^*}{b}}]_C$,
		there holds
		\begin{eqnarray*}
			b^*+\delta^*&\geq & b^*-\delta^*+(b^*+\frac{\delta^*}{3}-b^*+\delta^*)e^{-\mu t}+\gamma^*
			\\
			&\geq & b^*-\delta^*+(\sup\{\varphi(0,y):y\in [-\sigma^{*},\sigma^{*}]\}-b^*+\delta^*)e^{-\mu t}+\gamma^*
			\\
						&\geq & b^*-\delta^*+(\sup\{\varphi(0,y):y\in [-\sigma^{*}+c\tau,\sigma^{*}-c\tau]\}-b^*+\delta^*)e^{-\mu t}+\gamma^*
			\\
			&>&T_{-z}\circ Q[t,T_z[\varphi]](0,0)
			\\
			&>&a^*+\delta^*+(\inf\{\varphi(0,y):y\in [-\sigma^{*}+c\tau,\sigma^{*}+c\tau]\}-a^*-\delta^*)e^{-\mu t}-\gamma^*
			\\
			&\geq &a^*+\delta^*+(\inf\{\varphi(0,y):y\in [-\sigma^{*},\sigma^{*}]\}-a^*-\delta^*)e^{-\mu t}-\gamma^*
			\\
			&\geq &a^*+\delta^*+(a^*-\frac{\delta^*}{3}-a^*-\delta^*)e^{-\mu t}-\gamma^*
			\\
			&\geq &a^*-\delta^*.
		\end{eqnarray*}
		This, together with the choices of $\sigma^*$ and $z^*$, implies that  $$
		T_{-z-c\tau}\circ Q[t,T_z[\varphi]]\in [(a^*-\delta^*)\xi_{\sigma^*},(b^*+\delta^*)\tilde{\xi}_{\sigma^*,\frac{x^*}{b^*+\delta^*}}]_C
		$$
		for all $(z,t,\varphi)\in [z^*+\sigma^*+1,\infty)\times [0,\tau]\times [a\xi_{2+2\sigma^*+c\tau},b\tilde{\xi}_{2+2\sigma^*+c\tau},\frac{x^*}{b}]_C$.  In particular,   $$T_{-z-c\tau}\circ Q[t,T_z[\varphi]]\in [(a^*-\delta^*)\xi_{\sigma^*},(b^*+\delta^*)\tilde{\xi}_{\sigma^*,\frac{x^*}{b^*+\delta^*}}]_C$$ for all $(z,t,\varphi)\in [z^*+\sigma^*+1,\infty)\times [0,\tau]\times   [(rr^*-\gamma r^*)\xi_{2+2\sigma^*+c\tau},(s r^*+\gamma r^*)\tilde{\xi}_{2+2\sigma^*+c\tau},\frac{x^*}{s r^*+\gamma r^*}]_C$.
		
		By  Lemma~\ref{lem666.0010}-(i) with  $a=a^*-\delta^*$, $b=b^*+\delta^*$, $\beta=rr^*+\delta^*$, and $T=2\tau$, it then follows  that for any $t\in [\tau,2\tau]$ and $\varphi\in [(rr^*-\gamma r^*)\xi_{\sigma_0},(s r^*+\gamma r^*)\tilde{\xi}_{\sigma_0,\frac{x^*}{s r^*+\gamma r^*}}]_C$ with  $z\geq z_0:=z^*+\sigma^*+1$ and $\sigma_0:=2+2\sigma^*+c\tau$,  there holds
		\begin{eqnarray*}
			T_{-z}\circ Q[t,T_z[\varphi]](0,0)
			&>&rr^*+\delta^*+(rr^*-\gamma r^*-rr^*-\delta^*)e^{-\mu t}-\gamma^*
			\\
			&=&rr^*+\delta^*-(\gamma r^*+\delta^*)e^{-\mu \tau}-\gamma^*
			\\
			&> &rr^*+\gamma r^*.
		\end{eqnarray*}
		Therefore, $Q[2\tau,\cdot;f]$ satisfies {\bf (NM)}.
	\end{proof}
	
	Regarding  assumptions {\bf (ACH)} and {\bf (ACH$_-$)}, we have the following observation.

	\begin{prop} \label{prop6.1-10002} The following statements are valid:
			\begin{itemize}
				\item [{\rm (i)}] If {\rm (B$_+$)} holds, then  $Q[2\tau,\cdot]$ satisfies {\bf (ACH)}  with   $T_{-y}\circ Q_l[\varphi]\geq Q_l\circ T_{-y}[\varphi]$ for all $l\in \mathbb{N}$ and $(y,\varphi)\in \mathbb{R}_+\times C_+$,  where $Q_l$ is defined as in  {\bf (ACH)}.
				
				\item [{\rm (ii)}] If {\rm (B$_-$)} holds, then $Q[2\tau,\cdot]$ satisfies {\bf (ACH$_-$)}  with  $T_{y}\circ Q_{-l}[\varphi]\geq Q_{-l}\circ T_{y}[\varphi]$ for all $l\in \mathbb{N}$ and $(y,\varphi)\in \mathbb{R}_+\times C_+$,  where $Q_l$ is defined as in  {\bf (ACH$_-$)}.
			\end{itemize}

		\end{prop}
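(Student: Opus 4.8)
Throughout, $Q$, $Q_l$ and $Q_{l,+}$ denote time-$2\tau$ maps. The plan is to manufacture the data required by \textbf{(ACH)} from the monotone, concave (hence subhomogeneous) nonlinear lower bounds of Lemma~\ref{lem666.001}, and then to read off \textbf{(ACH$_-$)} from \textbf{(ACH)} via the reflection $\mathcal{S}$. First I would fix, for each $l\in\mathbb{N}$, a number $\gamma_l\in(0,\frac{{\rm d}f_+^\infty(0)}{{\rm d}u}-1)$ with $\gamma_l\downarrow 0$, and apply Lemma~\ref{lem666.001} with $\gamma=\gamma_l$ and $u^{**}=u^{**}_{(l)}$ taken to be $u_l^*$ (or, for the finitely many small indices $l$ with $u_l^*<u_+^*$, the least $u_k^*$ exceeding $u_+^*$, which affects nothing below). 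This yields $K_l,\mathfrak{s}_l>0$ and a nondecreasing $r_l(\cdot)$ such that $f\ge f_{u^{**}_{(l)},\gamma_l}\ge 0$ on $\mathbb{R}\times[0,u_l^*]$, where $f_{u^{**}_{(l)},\gamma_l}(s,\cdot)$ is concave, vanishes at $0$, is nondecreasing in $s$, coincides with $f_{u^{**}_{(l)},\gamma_l}(\infty,\cdot)$ for $s\ge 1+\mathfrak{s}_l$, and has the globally attracting positive fixed point $u_\infty^{(l)}\le u_1^*$. I then let $Q_l$ be the time-$2\tau$ map of equation~\eqref{6.9} with $f$ replaced by $f_{u^{**}_{(l)},\gamma_l}$, let $Q_{l,+}$ be the time-$2\tau$ map of equation~\eqref{6.10} with $f_+^\infty$ replaced by $f_{u^{**}_{(l)},\gamma_l}(\infty,\cdot)$, and set $r_l^*=u_\infty^{(l)}\check 1$, so that $r_l^*\le\phi_1^*$ by Lemma~\ref{lem666.001}-(iii).

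With these choices, the ``bookkeeping'' clauses of \textbf{(ACH)} follow from the material already assembled for this model: $Q_l,Q_{l,+}$ are continuous (both nonlinearities are bounded and Lipschitz) and monotone by Proposition~\ref{prop6.10}-(ii); $Q\ge Q_l$ on $C_{\phi_l^*}$ by Proposition~\ref{prop6.8} together with $f\ge f_{u^{**}_{(l)},\gamma_l}$; $Q_l$ is subhomogeneous on $[0,r_l^*]_C$ by Proposition~\ref{prop6.10}-(vi) and the concavity of $f_{u^{**}_{(l)},\gamma_l}(s,\cdot)$; the triple $(Q_l,Q_{l,+},r_l^*)$ satisfies \textbf{(A)} by the argument of Proposition~\ref{prop6.9}-(ii), since $f_{u^{**}_{(l)},\gamma_l}(\cdot+y,\cdot)$ coincides with $f_{u^{**}_{(l)},\gamma_l}(\infty,\cdot)$ on every $x$-compact set once $y$ is large, forcing $T_{-y}\circ Q_l^{\,n}\circ T_y[\varphi]\to Q_{l,+}^{\,n}[\varphi]$ in $C$; and the translation inequality $T_{-y}\circ Q_l[\varphi]\ge Q_l\circ T_{-y}[\varphi]$ for $y\ge 0$ is exactly Proposition~\ref{prop6.10}-(iii) applied to the equation with nonlinearity $f_{u^{**}_{(l)},\gamma_l}$, which depends nondecreasingly on $s$.

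The substantive clause is (iv) of \textbf{(ACH)}, coupled with the convergence of speeds in (i): I must show $Q_{l,+}$ satisfies \textbf{(UC)} with some $(c_{l,-}^*,c_{l,+}^*,r_l^*)$ and that $\lim_l c_{l,\pm}^*$ exists. Now $Q_{l,+}$ is the time-$2\tau$ map of a \emph{monotone, subhomogeneous, monostable} delayed reaction--diffusion equation whose nonlinearity, being concave through the origin, is subhomogeneous and hence dominated by its linearization, $f_{u^{**}_{(l)},\gamma_l}(\infty,u)\le\bigl(\frac{{\rm d}f_+^\infty(0)}{{\rm d}u}-\gamma_l\bigr)u$ (Lemma~\ref{lem6.10-subhomgenous}-(i)), and whose spatially homogeneous dynamics has $u_\infty^{(l)}$ as a globally attracting equilibrium. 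The Liang--Zhao theory for monotone semiflows with translation invariance \cite{lz2007} --- already used above for the operators $L_{\pm,\mu}$ and their principal eigenvalues --- then furnishes leftward and rightward spreading speeds for $\{Q_{l,+}[t,\cdot]\}_{t\ge 0}$ together with the upward convergence of every nonzero solution to the constant $u_\infty^{(l)}$ on the interior of the spreading cone; restricting to $t\in 2\tau\mathbb{N}$ and using the shift relation of Proposition~\ref{prop6.9}-(iii) gives \textbf{(UC)} for $Q_{l,+}$ with $c_{l,\pm}^*=2\tau\bigl(c_l^*(\infty)\mp c\bigr)$, where $c_l^*(\infty)=\inf_{\mu>0}\frac1\mu\log\lambda_l(\mu)>0$ and $\lambda_l(\mu)$ is the principal eigenvalue of the linear delayed operator built from the slope $\frac{{\rm d}f_+^\infty(0)}{{\rm d}u}-\gamma_l$; in particular $c_{l,+}^*+c_{l,-}^*=4\tau c_l^*(\infty)>0$. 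Because the monostable limiting nonlinearity is sub-tangent at $0$ its spreading speed is linearly determined, so $c_l^*(\infty)$ depends only on $\gamma_l$ (not on $u^{**}_{(l)}$), and $\lambda_l(\mu)\uparrow\lambda_+(\mu)$ continuously as $\gamma_l\downarrow 0$; hence $c_l^*(\infty)\uparrow c^*(\infty)$ and $c_{l,\pm}^*\to 2\tau\bigl(c^*(\infty)\mp c\bigr)=:c_\pm^*$. I expect this step --- verifying the hypotheses under which the Liang--Zhao upward-convergence result applies to the monostable limiting equation, and the continuity of its spreading speed in the parameter $\gamma_l$ --- to be the main technical obstacle; everything else reduces to the comparison and monotonicity statements collected in Propositions~\ref{prop6.8} and~\ref{prop6.10}.

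Finally, (ii) follows from (i) by reflection: (B$_-$) for $f$ is (B$_+$) for $(s,u)\mapsto f(-s,u)$, and $\tilde Q=\mathcal{S}\circ Q\circ\mathcal{S}$ is the time-$2\tau$ map of the reflected problem, so applying (i) to $\tilde Q$ and invoking the equivalence noted after the block of hypotheses --- that $\tilde Q$ satisfies \textbf{(ACH)} iff $Q$ satisfies \textbf{(ACH$_-$)} --- yields that $Q[2\tau,\cdot]$ satisfies \textbf{(ACH$_-$)}, the inequality $T_{-y}\circ Q_l[\varphi]\ge Q_l\circ T_{-y}[\varphi]$ becoming $T_{y}\circ Q_{-l}[\varphi]\ge Q_{-l}\circ T_{y}[\varphi]$ for $y\ge 0$ after undoing $\mathcal{S}$.
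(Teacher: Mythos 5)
Your proposal is correct and follows essentially the same route as the paper: take $f_l=f_{u_l^*,\gamma_l}$ from Lemma~\ref{lem666.001} with $\gamma_l\downarrow 0$, set $Q_l=Q[2\tau,\cdot;f_l]$, $Q_{l,+}=Q[2\tau,\cdot;f_l^+]$ and $r_l^*=u_\infty^{(l)}$, verify the monotonicity/subhomogeneity/comparison clauses via Propositions~\ref{prop6.8}--\ref{prop6.10}, obtain {\bf (UC)} for $Q_{l,+}$ from \cite[Theorem 5.1]{lz2007}, and deduce (ii) by reflection through $\mathcal{S}$. The only difference is that you spell out the convergence of the speeds $c_{l,\pm}^*$ and the applicability of the Liang--Zhao theorem, which the paper compresses into ``we easily verify.''
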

		
		\begin{proof}  We only prove (i) since (ii) follows from  (i), as applied  to $\mathcal{S}\circ Q[2\tau,\cdot]\circ \mathcal{S}$. For any  $l\in \mathbb{N}$, let us denote $\gamma_l=\frac{1}{2l}(\frac{{\rm d}f_+^{\infty}(0)}{{\rm d}u}-1)$, $r_{l}=r_{u_l^*,\gamma_l}$, $K_l=K_{u_l^*,\gamma_l}$, $f_l=f_{u_l^*,\gamma_l}$, and $f_l^\pm(u)=\lim\limits_{s\to\pm \infty}f_l(s,u)$ for all $u\in \mathbb{R}_+$, where $r_{u_l^*,\gamma_l}$, $K_{u_l^*,\gamma_l}$,  and  $f_{u_l^*,\gamma_l,}$ are  defined as in Lemma~\ref{lem666.001}.  Then $(f_l,f_l^\pm)$ satisfies  (B)  for all $l\in \mathbb{N}$.  Moreover,  $0\leq f_l(s,u)\leq f(s,u)$, $f_l(s,u)$ is nondecreasing in $s,u$,  and $f_l(s,\alpha u)\geq \alpha f_l(s,u)$ for all $(s,u,\alpha)\in \mathbb{R}\times (0,\infty)\times [0,1]$ and $l\in \mathbb{N}$.
			Let
			$$
			r_l^*:=\left\{
			\begin{array}{ll}
			\frac{(1+K_lr_l(\infty))^2}{4K_l}, & r_l(\infty)K_l>1,
			\\
			r_l(\infty), \qquad & r_l(\infty)K_l\leq 1,
			\end {array}
			\right.
			$$
			$Q_l=Q[2\tau,\cdot;f_l]$,  and $Q_{l,+}=Q[2\tau,\cdot;f_l^+]$ for all $l\in \mathbb{N}$. By Theorem 5.1-(ii) in \cite{lz2007}, and Propositions~\ref{prop6.9}-(ii),  \ref{prop6.10}, we easily verify that $Q[2\tau,\cdot]$ satisfies {\bf (ACH)}.
		\end{proof}
		
	\begin{prop} \label{prop6.3-3-16}   $Q[\tau,\cdot]$ satisfies {\bf (GLC)} with $\tau(c^*(\infty)-c)$  and  {\bf (GLC$_-$)} with $\tau(c^*(-\infty)+c)$ provided that {\rm (B$_+$)} and {\rm(B$_-$)} hold.
	\end{prop}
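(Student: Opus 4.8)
The plan is, for each $k\in\mathbb{N}$, to build a heat-semigroup-type linear operator $L_k$ that dominates $Q[\tau,\cdot;f]$ on $C_{\phi_k^*}=C_{u_k^*}$, to identify its $+\infty$-limit $L_{k,+}$ (and the $-\infty$-limit $L_{k,-}$), to check \textbf{(SLC)} for the triple $(L_k,L_{k,+},L_{k,-})$ — which yields \textbf{(LC)} via Lemma~\ref{lem3.2-000} — and then to compute $\lim_{k}c^*_{L_{k,+}}$ through the associated transcendental equation. Running the mirror construction for $\tilde Q:=\mathcal{S}\circ Q[\tau,\cdot;f]\circ\mathcal{S}$ will give \textbf{(GLC$_-$)}.

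\textbf{Construction of the dominating operators.} I would first apply Lemma~\ref{lem666.005} with $u^{**}=u_k^*$ and $\gamma=\tfrac1k$ to get a nonincreasing $\overline{R}_k\in C(\mathbb{R},\mathbb{R}_+)$ with $\infty>\overline{R}_k(-\infty)>\overline{R}_k(\infty)=\tfrac1k+\tfrac{{\rm d}f_+^\infty(0)}{{\rm d}u}$ and $f(s,u)\le\overline{R}_k(s)u$ for all $(s,u)\in\mathbb{R}\times[0,u_k^*]$. Set $L_k:=Q[\tau,\cdot;\overline{R}_k(\cdot)Id_{\mathbb{R}}]$ and $L_{k,\pm}:=Q[\tau,\cdot;\overline{R}_k(\pm\infty)Id_{\mathbb{R}}]$, each a monotone linear operator whose action on $e_{\zeta\cdot{\bf 1}_I,\mu}$-type data is defined through the Gaussian representation of $S(\tau)$. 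Because $Q[\tau,\varphi;f]\in C_{u_k^*}$ whenever $\varphi\in C_{u_k^*}$ (Proposition~\ref{prop6.10}(i)) and $f(s,u)\le\overline{R}_k(s)u$ on $[0,u_k^*]$, a comparison carried out in the variation-of-constants form on $[0,\tau]$ (positivity of $S(t)$; cf.\ Proposition~\ref{prop6.8}) gives $Q[\tau,\varphi;f]\le L_k[\varphi]$ for $\varphi\in C_{u_k^*}$, the domination demanded by \textbf{(GLC)}. Since $T_{-z}\circ L_k\circ T_z=Q[\tau,\cdot;\overline{R}_k(\cdot+z)Id_{\mathbb{R}}]$, each $L_{k,\pm}$ is translation-invariant and is the locally uniform limit of $T_{-z}\circ L_k\circ T_z$ as $z\to\pm\infty$; monotonicity in the coefficient gives $L_{k,-}\ge L_k\ge L_{k,+}$ on $C_+$, and, because on $[0,\tau]$ the delayed term acts on the common initial data, a direct estimate gives $\kappa_k^{-1}L_{k,+}\le L_{k,-}\le\kappa_kL_{k,+}$ with $\kappa_k:=\overline{R}_k(-\infty)/\overline{R}_k(\infty)$.

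\textbf{Checking (SLC)/(LC) and the mirror statement.} Using the explicit Gaussian kernel I would verify that $z\mapsto L_{k,\pm}[e_{\zeta\cdot{\bf 1}_{[z,\infty)},\mu}](\cdot,0)$ is bounded and continuous with the prescribed limits as $z\to\mp\infty$, and that $L_{k,+}[e_{\check 1,\mu}](\cdot,0)\in Int(Y_+)$ by strict positivity of the kernel, so $I_{L_{k,+}}=(0,\infty)$ and \textbf{(SLC)}(i) holds; \textbf{(SLC)}(ii) will follow from $\overline{R}_k(\cdot+z)\to\overline{R}_k(\pm\infty)$ locally uniformly together with continuous dependence of mild solutions on the coefficient; \textbf{(SLC)}(iii) from $\overline{R}_k$ being nonincreasing and the equation monotone in a nonnegative coefficient; and \textbf{(SLC)}(iv) is the estimate above. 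As $(L_{k,+,\mu})^{n_0}$ is compact and strongly positive for some $n_0$ (argument of \cite[Lemma~3.2]{lz2007}), Lemma~\ref{lem3.2-000} then gives \textbf{(LC)} for $(L_k,L_{k,+})$, hence \textbf{(GLC)}. Applying the same four steps to $\tilde Q=\mathcal{S}\circ Q[\tau,\cdot;f]\circ\mathcal{S}$ — the time-$\tau$ map of the shifted delayed equation~\eqref{6.9} with $c$ replaced by $-c$ and $f(s,\cdot)$ by $f(-s,\cdot)$, for which \textbf{(B$_-$)} plays the role of \textbf{(B$_+$)} — produces $\tilde L_k\ge\tilde Q$ with \textbf{(LC)}; setting $\hat L_k:=\mathcal{S}\circ\tilde L_k\circ\mathcal{S}$ and $\hat L_{k,-}:=\mathcal{S}\circ\tilde L_{k,+}\circ\mathcal{S}$ gives $Q[\tau,\cdot;f]\le\hat L_k$ on $C_{\phi_k^*}$ with $(\hat L_k,\hat L_{k,-})$ satisfying \textbf{(LC$_-$)}, i.e.\ \textbf{(GLC$_-$)}.

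\textbf{Pinning down the speeds, and the main obstacle.} With $\overline{R}_k(\infty)$ constant, the ansatz $u(t,x)=e^{-\mu x}y(t)$ reduces $L_{k,+,\mu}$ to the time-$\tau$ solution map of $y'(t)=(d\mu^2-c\mu-\mu)y(t)+\mu\overline{R}_k(\infty)e^{-\mu c\tau}y(t-\tau)$, whose principal eigenvalue is $\lambda_k(\mu)=e^{\nu_k(\mu)\tau}$ with $\nu_k(\mu)$ the largest real root of $\nu=d\mu^2-c\mu-\mu+\mu\overline{R}_k(\infty)e^{-\mu c\tau-\nu\tau}$; the substitution $\nu=\tilde\nu-c\mu$ turns this into $\tilde\nu=d\mu^2-\mu+\mu\overline{R}_k(\infty)e^{-\tilde\nu\tau}$, the characteristic equation of $v_t=dv_{xx}-\mu v+\mu\overline{R}_k(\infty)v(t-\tau,x)$, so $c^*_{L_{k,+}}=\inf_{\mu>0}\tfrac1\mu\log\lambda_k(\mu)=\tau\inf_{\mu>0}\tfrac{\tilde\nu_k(\mu)}{\mu}-c\tau$. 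As $\overline{R}_k(\infty)\downarrow\tfrac{{\rm d}f_+^\infty(0)}{{\rm d}u}$, monotonicity of the principal root in the reaction rate gives $\inf_\mu\tfrac{\tilde\nu_k(\mu)}{\mu}\ge c^*(\infty)$, while for each $\mu_0$ one has $\inf_\mu\tfrac{\tilde\nu_k(\mu)}{\mu}\le\tfrac{\tilde\nu_k(\mu_0)}{\mu_0}\to\tfrac{\tilde\nu_\infty(\mu_0)}{\mu_0}$; taking $\inf_{\mu_0}$ gives $\lim_k\inf_\mu\tfrac{\tilde\nu_k(\mu)}{\mu}=\inf_\mu\tfrac1\mu\log\lambda_+(\mu)=c^*(\infty)$, whence $\liminf_k c^*_{L_k}=\lim_k c^*_{L_{k,+}}=\tau(c^*(\infty)-c)$; the mirror computation gives $\liminf_k c^*_{\mathcal{S}\circ\hat L_k\circ\mathcal{S}}=\tau(c^*(-\infty)+c)$, which is the claimed content. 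The hard part will be the verification of \textbf{(SLC)} for the shifted linear delay operator — above all the monotonicity in the translation variable and the locally uniform convergence $T_{-z}\circ L_k\circ T_z\to L_{k,\pm}$ — together with the bookkeeping in the identity $\nu_k=\tilde\nu_k-c\mu$ that links $c^*_{L_{k,+}}$ to the known speed $c^*(\infty)$ of the non-shifted limiting system; the remaining steps are routine.
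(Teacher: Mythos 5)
Your proposal is correct and its skeleton coincides with the paper's: the dominating operators come from Lemma \ref{lem666.005} with $\gamma=\frac1k$ and $u^{**}=u_k^*$, the operator $L_k$ is (because $t\le\tau$ forces the delayed argument into the initial interval) exactly the explicit Gaussian integral operator the paper writes down as $L_0+{}$(coefficient part), the domination $Q[\tau,\cdot;f]\le L_k$ on $C_{u_k^*}$ follows termwise from $f(s,u)\le\overline R_k(s)u$ and Proposition \ref{prop6.8}, and the identification $\lim_k c^*_{L_{k,+}}=\tau(c^*(\infty)-c)$ via the characteristic root and the substitution $\nu=\tilde\nu-c\mu$ is the computation the paper leaves implicit. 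The one genuine divergence is how {\bf (LC)} is certified. You route through {\bf (SLC)} and Lemma \ref{lem3.2-000}-(v), which obliges you to check four conditions, including the translation-monotonicity (iii) and the locally uniform convergence (ii) for exponentially weighted data — precisely what you flag as "the hard part." The paper instead verifies {\bf (LC)} directly: it isolates the common positive part $L_0$, writes $L_l[e_{\zeta,\nu}]-L_0[e_{\zeta,\nu}]$ as a weighted time-integral of the Gaussian average $P_l(s,x)=\int\overline R_l(2\sqrt{ds}\,y+x+cs-2\nu ds)e^{-y^2}\,{\rm d}y$, and compares it with the constant average $P_l^+=\sqrt{\pi}\,\overline R_l(\infty)$ for large $x$; since $L_0\ge0$, the single inequality $P_l\le(1+\nu\epsilon)P_l^+$ already yields {\bf (LC)}. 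Your route is more systematic and is the one the paper itself uses for the reaction-diffusion system in Section 5.2 (Proposition \ref{prop6.3-2-11}), so it is certainly viable here, but for this particular operator the direct computation is shorter and sidesteps the (SLC)(ii)--(iii) bookkeeping entirely. All the individual claims you make — the identity $T_{-z}\circ L_k\circ T_z=Q[\tau,\cdot;\overline R_k(\cdot+z)Id]$, the two-sided bound $\kappa_k^{-1}L_{k,+}\le L_{k,-}\le\kappa_kL_{k,+}$ with $\kappa_k=\overline R_k(-\infty)/\overline R_k(\infty)$, and the two-sided limit argument for $\inf_\mu\tilde\nu_k(\mu)/\mu\to c^*(\infty)$ — are sound, so I see no gap, only a longer path.
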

	
	\begin{proof} It suffices to verify {\bf (GLC)}. For any  $l\in \mathbb{N}$, let $R_{l}=\overline{R}_{\frac{1}{l},u_l^*}$, where $\overline{R}_{\frac{1}{l},u_l^*}$ is defined as in Lemma~\ref{lem666.005}. Then $R_{l}$ is a bounded and nonincreasing function such that
		$$
		R_{l}(-\infty)>R_{l}(\infty)=\frac{{\rm d} f_+^\infty(0)}{{\rm d} u}+\frac{1}{l},
		\quad f(s,u)\leq R_{l}(s)u, \, \,  \forall (s,u)\in \mathbb{R}\times [0,u_l^*].
		$$
		Define  $$L_0[\varphi](\theta,x)= \frac{e^{-\mu (\theta+\tau)}}{\sqrt{4d\pi (\theta+\tau)}}\int_{-\infty}^\infty
		\varphi(0,y)e^{-\frac{(x+c[\theta+\tau]-y)^2}{4
				d(\theta+\tau)}}{\rm d}y,$$
		$$L_l[\varphi](\theta,x)= L_0[\varphi](\theta,x)
		+\mu \int_0^{\theta+\tau}\frac{e^{-\mu (\theta+\tau-s)}}{\sqrt{4d\pi (\theta+\tau-s)}}\int_{-\infty}^\infty
		R_l(y)\varphi (s-\tau,y+c\tau)e^{-\frac{(x+c(\theta+\tau-s)-y)^2}{4
				d(\theta+\tau-s)}}{\rm d}y {\rm d}s,$$
		and $$L_l^+[\varphi](\theta,x)=L_0[\varphi](\theta,x)
		+\mu \int_0^{\theta+\tau}\frac{e^{-\mu (\theta+\tau-s)}}{\sqrt{4d\pi (\theta+\tau-s)}}\int_{-\infty}^\infty
		R_l(\infty)\varphi (s-\tau,y+c\tau)e^{-\frac{(x+c(\theta+\tau-s)-y)^2}{4
				d(\theta+\tau-s)}}{\rm d}y {\rm d}s$$
		for all $(\theta,x,\varphi)\in [-\tau,0]\times \mathbb{R}\times \tilde{C}$.
		According to the definitions of $R_l,L_l,L_l^+$,  we easily see that  $$\tau(c^*(\infty)-c)=\lim\limits_{l\to\infty}c^*_{L_l^+}, \quad Q[\tau,\cdot;f]|_{C_{u_l^*}}\leq L_l|_{C_{u_l^*}}, \forall l\in\mathbb{N}.$$
		To finish this proof, we only need to  verify that $(L_l,L_l^+)$ satisfies {\bf (LC)} for all $l\in\mathbb{N}$. Fix $(l,\epsilon,\nu,\zeta)\in\mathbb{N}\times (0,\infty)^2\times BC([-\tau,0],[1,\infty))$.
		In view of  the positivity  of $L_0$, it suffices to prove that $(L_l-L_0,L_l^+-L_0)$ satisfies {\bf (LC)}.
		By letting
		$$P_l(s,x)=\int_{-\infty}^\infty
		R_l(2\sqrt{ds} y+ x+cs-2\nu ds)  e^{-y^2}  {\rm d}y$$
		and
		$$P_l^+(s,x)=\int_{-\infty}^\infty
		R_l(\infty) e^{-y^2}  {\rm d}y=\sqrt{\pi}  R_l(\infty)$$
		for all $(s,x)\in[0,\tau]\times \mathbb{R}$, we easily check that  there exists $\mathfrak{x}_{\epsilon, \nu}>0$ such that  $$P_l(s,x)\leq (1+\nu  \epsilon) P_l^+(s,x), \forall (s,x)\in [0,\tau]\times [\mathfrak{x}_{\epsilon,\nu},\infty).$$
		By  the  definitions of $L_0$ and $L_l$,  it then follows that  for any $(\theta,x)\in [-\tau,0]\times \mathbb{R}$, there holds
		\begin{eqnarray*}
		&&L_l[e_{\zeta,\nu}](\theta,x)-L_0[e_{\zeta,\nu}](\theta,x)\\
		&&=\mu \int_0^{\theta+\tau}\frac{e^{-\mu (\theta+\tau-s)}}{\sqrt{4d\pi (\theta+\tau-s)}}\int_{-\infty}^\infty
			R_l(y) e_{\zeta,\nu}(s-\tau,y+c\tau)e^{-\frac{(x+c(\theta+\tau-s)-y)^2}{4
					d(\theta+\tau-s)}}{\rm d}y {\rm d}s
			\\
			&&=\mu \int_0^{\theta+\tau}\frac{e^{-\mu (\theta+\tau-s)}}{\sqrt{4d\pi (\theta+\tau-s)}}\int_{-\infty}^\infty
			R_l(y) \zeta(s-\tau) e^{-\nu y-\nu c\tau}e^{-\frac{(x+c(\theta+\tau-s)-y)^2}{4
					d(\theta+\tau-s)}}{\rm d}y {\rm d}s
			\\
				&&=\frac{\mu 	e^{-\nu c\tau-\nu x } }{\sqrt{\pi }} \int_0^{\theta+\tau} \zeta(\theta-s) e^{(d \nu^2-c\nu-\mu) s} \int_{-\infty}^\infty
			R_l(2y\sqrt{ds} + x+cs-2\nu ds)  e^{-y^2}{\rm d}y {\rm d}s
			\\
				&&=\frac{\mu 	e^{-\nu c\tau-\nu x } }{\sqrt{\pi }} \int_0^{\theta+\tau} \zeta(\theta-s) e^{(d \nu^2-c\nu-\mu) s} P_l(s,x) {\rm d}s.
		\end{eqnarray*}
		From the  definitions of $L_0,L_l^+$, we similarly check that  for any $(\theta,x)\in [-\tau,0]\times \mathbb{R}$, there holds
		\begin{eqnarray*}
			&&L_l^+[e_{\zeta,\nu}](\theta,x)-L_0[e_{\zeta,\nu}](\theta,x)
			\\
			&&=\frac{\mu 	e^{-\nu c\tau-\nu x } }{\sqrt{\pi }} \int_0^{\theta+\tau} \zeta(\theta-s) e^{(d \nu^2-c\nu-\mu) s} P_l^+(s,x) {\rm d}s.
		\end{eqnarray*}
		Thus, we see from the choice of $\mathfrak{x}_{\epsilon,\nu}$ that
		$$
		L_l[e_{\zeta,\nu}](\theta,x)-L_0[e_{\zeta,\nu}](\theta,x)\leq (1+\nu \epsilon) (L_l^+[e_{\zeta,\nu}](\theta,x)-L_0[e_{\zeta,\nu}](\theta,x)),
		\, \,  \forall \theta\in [-\tau,0], \, \,  x\geq \mathfrak{x}_{\epsilon,\nu}.
		$$
		In particular, $(L_l,L_l^+)$ satisfies {\bf (LC)}. Consequently, $Q[\tau,\cdot]$ satisfies {\bf (GLC)}.
	\end{proof}
	
	Under assumptions (B$_+$) and (B$_-$), we have the following result for system  ~\eqref{6.8}.

	\begin{thm}\label{thm6.9}
		Assume that  $f$ satisfies {\rm (B$_+$) }and {\rm (B$_-$)}.  Then the following statements are valid:
		\begin{itemize}
			\item [{\rm (i)}]  If $\varphi\in C_+\setminus\{0\}$, $c<c^*(\infty)$, and $\varepsilon\in (0,\frac{1}{2}\min\{c^*(-\infty),c^*(\infty)-c\})$,  then
			$$\lim\limits_{t \rightarrow \infty}
			\Big[\sup\{||P[t,\varphi](\cdot, x)-u^*_+\cdot {\bf {1}}_{\mathbb{R}_+}(x-ct)-u^*_-\cdot (1-{\bf {1}}_{\mathbb{R}_+}(x-ct)||: x\in t\mathcal{E}_{\varepsilon,c}\}\Big]= 0.
			$$
		 Moreover, if   $c<c^*(-\infty)$ and $\varepsilon\in (0,\min\{c^*(-\infty)-c,c^*(\infty)-c\})$, then
			$$\lim\limits_{\alpha \rightarrow \infty}
			\Big[\sup\{||P[t,\varphi](\cdot, x)-u^*_+\cdot {\bf {1}}_{\mathbb{R}_+}(x-ct)-u^*_-\cdot (1-{\bf {1}}_{\mathbb{R}_+}(x-ct)||: (t,x)\in \mathcal{E}_{\alpha,\varepsilon,c}\}\Big]= 0.
			$$
			Here,   $\mathcal{E}_{\varepsilon,c}= [-c^*(-\infty)+\varepsilon,\min\{c,c^*(-\infty)\}-\varepsilon] \bigcup [c+\varepsilon,c^*(\infty)-\varepsilon]$ and $\mathcal{E}_{\alpha,\varepsilon,c}=\{(t,x)\in \mathbb{R}_+\times \mathbb{R}: x \in [ t(-c^*(-\infty)+\varepsilon),-\alpha+ct ] \bigcup  [\alpha+ct,  t(c^*(\infty)-\varepsilon)] \}$.
			
			\item [{\rm (ii)}] If $\varphi\in  C_{+}$ has the compact support, then $$\lim\limits_{t\rightarrow \infty}
			\Big[\sup\{
			||P[t,\varphi](\cdot,x)||: x\geq  t(\max\{c,c^*(\infty)\}+\varepsilon) \mbox{ or } x\leq - t(c^*(-\infty)+\varepsilon)\}\Big]=0$$
			 for all $\varepsilon>0$.
			
			\item [{\rm (iii)}] $P$ has a  nontrivial   travelling wave $W(x-ct)$  such that
			$W(\infty)=u^*_+$ and $W(-\infty)=u^*_-$.
			
			\item [{\rm (iv)}] If $c<\min\{c^*(\infty),c^*(-\infty)\}$ and $f(s,u)$ is nondecreasing and subhomogeneous in $u\in \mathbb{R}_+$ for each $s\in \mathbb{R}$, then
			$$\lim\limits_{t\rightarrow \infty}\Big[\sup\{||P[t,\varphi](\cdot,x)-{ W(x-ct)}||: t(-c^*(-\infty)+\varepsilon)\leq x \leq t(c^*(\infty)-\varepsilon)\}\Big]= 0$$ for all $(\varepsilon,\varphi)\in (0,\min\{c^*(\infty)-c,c^*(-\infty)+c\})\times C_+\setminus\{0\}$, where $W$ is defined as in (iii).
		\end{itemize}
		\end{thm}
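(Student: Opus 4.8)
The plan is to obtain all four conclusions by specializing the abstract theory of Sections~3--4, in its nonautonomous form, to the solution semiflow of \eqref{6.8}. Writing $Q_t=Q[t,\cdot;f]=T_{-ct}\circ P[t,\cdot]$ for the moving-frame semiflow (Proposition~\ref{prop6.9}-(i)), I would take the Poincar\'e map $Q[2\tau,\cdot;f]$ as the discrete map $Q_{t_0}$ in the upward-convergence and fixed-point statements (i), (iii), (iv), and $Q[\tau,\cdot;f]$ in the annihilation statement (ii). Almost all hypothesis checking is already in place: Proposition~\ref{prop6.11} gives \textbf{(NM)} and \textbf{(NM$_-$)} for $Q[2\tau,\cdot;f]$; Proposition~\ref{prop6.1-10002} gives \textbf{(ACH)}, \textbf{(ACH$_-$)} and the one-sided translation monotonicity of the auxiliary minorants $Q_l$; Proposition~\ref{prop6.3-3-16} gives \textbf{(GLC)} and \textbf{(GLC$_-$)} for $Q[\tau,\cdot;f]$; and Proposition~\ref{prop6.10}-(i) supplies the strong positivity behind \textbf{(SP)} and \textbf{(SP$_-$)} --- because $f(\mathbb{R}\times(0,\infty))\subseteq(0,\infty)$ by (B$_\pm$), a nontrivial solution is everywhere positive after time $\tau$, so these hold with $N^*=1$ and any $0<\rho^*<\varrho^*$ --- while the uniform continuity \textbf{(SC)} needed for (ii) follows from the Gaussian convolution representation of mild solutions, the uniform-in-$x$ linear bound $f(x,u)\le\overline{R}(x)u$ of Lemma~\ref{lem666.005}, and $f(\cdot,0)=0$. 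The arithmetic backbone of every case is the speed dictionary: by the discussion preceding Proposition~\ref{prop6.11}, the limiting maps $Q_\pm[2\tau,\cdot]$ have leftward and rightward spreading speeds $2\tau(c^*(\pm\infty)+c)$ and $2\tau(c^*(\pm\infty)-c)$, so $c_+^*=2\tau(c^*(\infty)-c)$, $c_-^*=2\tau(c^*(\infty)+c)$, $c_+^*(-\infty)=2\tau(c^*(-\infty)-c)$, $c_-^*(-\infty)=2\tau(c^*(-\infty)+c)$; since $c\ge 0$, positivity of these four numbers is exactly the inequalities $c<c^*(\infty)$ and $c<c^*(-\infty)$ that appear in the statement.

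For (i) I would apply Theorem~\ref{thm5.1-bil-uc/uc} with $t_0=2\tau$: its first assertion holds under $\min\{c_+^*,c_-^*(-\infty)\}>0$, i.e.\ $c<c^*(\infty)$, and dividing the abstract window by $t_0=2\tau$ and inserting the speeds above turns $\mathcal B_{t,\varepsilon,c}$ into $t\mathcal E_{\varepsilon,c}$; its second assertion needs in addition $\min\{c_-^*(-\infty),c_+^*(-\infty)\}>0$, i.e.\ $c<c^*(-\infty)$, and then $\mathcal C_{\alpha,\varepsilon,c}$ becomes $\mathcal E_{\alpha,\varepsilon,c}$. For (ii) I would apply Theorem~\ref{thm5.1-bil-meal/meal} with $t_0=\tau$, so that $c^*_1=\tau(c^*(\infty)-c)$ and $c^*_2=\tau(c^*(-\infty)+c)$; here $c^*_2\ge 0$ always, while $c^*_1\ge 0$ or $c^*_1<0$ according as $c\le c^*(\infty)$ or not, so either case (i) or case (ii) of that theorem applies, and in either one $c+c^*_1/t_0$ (respectively $c$) equals $\max\{c,c^*(\infty)\}$ and $c^*_2/t_0-c=c^*(-\infty)$, which is precisely the stated annihilation region.

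For (iii) I would apply Theorem~\ref{thm5.3.6-bil-uc/uc-fixp}. As minorant semiflow I would take $\underline Q_t=Q[t,\cdot;\underline f]$ with $\underline f(s,u)=\min\{f(s,u),g(u)\}$, where $g$ is a logistic-type function with $1<g'(0)<\min\{\frac{{\rm d}f_+^{\infty}(0)}{{\rm d}u},\frac{{\rm d}f_-^{\infty}(0)}{{\rm d}u}\}$ having a unique positive fixed point; this $\underline f$ is monotone, subhomogeneous, and still strictly positive on $\mathbb{R}\times(0,\infty)$, so $\underline Q$ inherits strong positivity --- hence \textbf{(SP)}, \textbf{(SP$_-$)} --- and Proposition~\ref{prop6.8} gives $Q_t[\varphi;f]\ge\underline Q_t[\varphi]\ge\underline Q_t[\psi]$ for $\varphi\ge\psi$ together with $Q_t[C_{r^{**}}]\subseteq C_{r^{**}}$ for $r^{**}=u_{k_0}^*\check{1}$. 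One then checks that $\underline Q_{2\tau}$ meets the hypotheses of Theorem~\ref{thm3.6-bil-uc/uc-fixp}: \textbf{(NM$_\pm$)} via Lemma~\ref{lemm4.1}, using that the limiting equations of $\underline Q$ are logistic-type delay equations with linearly unstable zero state and a unique positive equilibrium, hence satisfy \textbf{(UC)} by \cite[Theorem 5.1]{lz2007}; \textbf{(ACH$_\pm$)} with $Q_l\equiv\underline Q_{2\tau}$; the translation monotonicity from Proposition~\ref{prop6.10}-(iii); the precompactness conditions from Proposition~\ref{prop6.10}-(v); condition (ii) of that theorem with $z_l=0$, valid since the relevant left-hand speeds $2\tau(\underline c^*(\pm\infty)+c)$ are positive; and the fixed-point-in-$\mathcal K$ requirement, for both $\underline Q_{2\tau}$ and $Q[2\tau,\cdot;f]$, from Schauder's fixed point theorem together with Proposition~\ref{prop6.10}-(v). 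Theorem~\ref{thm5.3.6-bil-uc/uc-fixp} then yields a travelling wave $W(x-ct)\in C_{r^{**}}$, and Lemma~\ref{lemm3.1-999000} applied to $Q[2\tau,\cdot;f]$ and to its spatial reflection upgrades the one-sided positive bounds to $W(\infty)=u_+^*$ and $W(-\infty)=u_-^*$. For (iv), when $f(s,\cdot)$ is nondecreasing and subhomogeneous the semiflow $\{Q[t,\cdot;f]\}$ is monotone (Proposition~\ref{prop6.10}-(ii)), subhomogeneous (Proposition~\ref{prop6.10}-(vi)), eventually compact, and strongly positive; assuming \textbf{(GAS-CSF)} --- namely that $W$ is the unique positive equilibrium and attracts every $\varphi\in C_+\setminus\{0\}$ --- Theorem~\ref{thm5.1-bil-gas} with $t_0=2\tau$ and $c<\min\{c^*(\infty),c^*(-\infty)\}$ (so that all four abstract speeds are positive) gives the claimed convergence to $W(\cdot-ct)$ on $[t(-c^*(-\infty)+\varepsilon),t(c^*(\infty)-\varepsilon)]$.

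I expect the verification of \textbf{(GAS-CSF)} in part (iv) to be the main obstacle. On the unbounded line, subhomogeneity of a monotone semiflow does not by itself force a globally attracting positive equilibrium, so the argument must bring in the asymptotic spatial homogeneity: each limiting delay equation \eqref{6.11} is monotone and subhomogeneous with $(f_\pm^\infty)'(0)>1$ and a unique positive fixed point $u_\pm^*$, hence has $u_\pm^*$ globally attracting by the standard theory of monotone subhomogeneous semiflows, and this, combined with the spreading estimate of part (i), should let one squeeze an arbitrary positive orbit of $Q[t,\cdot;f]$ between the decreasing orbit from $r^{**}$ and an increasing orbit from a small positive sub-equilibrium furnished by the minorant of (iii), then identify both limits with $W$; proving that $W$ is the \emph{unique} positive equilibrium and carrying out this two-sided squeeze uniformly in $x$ is where the real work lies. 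A secondary technical point, already anticipated above, is that (iii) needs a minorant that is strictly positive on the whole positive cone rather than only in the favourable habitat, which is why I would use $\min\{f,g\}$ in place of the $s$-dependent lower bound of Lemma~\ref{lem666.001}, the latter being tailored to the linear-determinacy estimate behind \textbf{(ACH)}.
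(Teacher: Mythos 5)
Your route is the same as the paper's: (i)--(iii) are read off from Theorems~\ref{thm5.1-bil-uc/uc}, \ref{thm5.1-bil-meal/meal} and \ref{thm5.3.6-bil-uc/uc-fixp} with the hypothesis checking delegated to Propositions~\ref{prop6.11}--\ref{prop6.3-3-16} and exactly the speed dictionary you wrote down, and (iv) reduces to verifying \textbf{(GAS-CSF)} and invoking Theorem~\ref{thm5.1-bil-gas}. Two points, one of which is a genuine flaw in your version.

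The minorant you propose for (iii), $\underline f(s,u)=\min\{f(s,u),g(u)\}$, is \emph{not} monotone in $u$. Under (B$_\pm$) the nonlinearity $f(s,\cdot)$ is allowed to be non-monotone --- that is the whole point of the \textbf{(NM)} framework --- and the minimum of a non-monotone function with a monotone one is in general non-monotone; it also need not be subhomogeneous. Since Theorem~\ref{thm4.3.6-bil-uc/uc-fixp} requires the minorant semiflow $\underline Q_t$ to be order-preserving, this step fails as written. The correct object is the capped-logistic minorant of Lemma~\ref{lem666.001}, $u\mapsto u+Ku(r(s)-u)$ truncated at the vertex of the parabola, which is nondecreasing and subhomogeneous in $u$ \emph{by construction} and sits below $f$ thanks to the uniform positivity of $f$ on $[\mathfrak s,\infty)\times[\delta_1,u^{**}]$ and the derivative bound near $u=0$; for the bilateral statement one takes a version of this whose profile $r(s)$ is positive at both spatial ends (equivalently, combines the construction behind \textbf{(ACH)} in Proposition~\ref{prop6.1-10002} with its mirror image behind \textbf{(ACH$_-$)}). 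So the ``$s$-dependent lower bound'' you wanted to discard is precisely what rescues monotonicity, not merely a device for the linear-determinacy estimate.

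On (iv): the paper does not first prove that $W$ is the unique positive equilibrium and then squeeze. It runs a part-metric argument directly on $\omega(\varphi)$ against $W$: with $a_-^*=\sup\{a:\omega(\varphi)\ge aW\}$ and $a_+^*=\inf\{a:\omega(\varphi)\le aW\}$ (finite and positive thanks to the spreading estimate of (i) and $W(\pm\infty)=u_\pm^*$), the subhomogeneity of $f(x,\cdot)$, made strict at the spatial tails where $f$ is close to $f_\pm^\infty$ and $W$ is close to $u_\pm^*$, is fed through the variation-of-constants formula to force $a_-^*=a_+^*=1$. Your two-sided squeeze lives in the same circle of ideas but would additionally require identifying the limits of the upper and lower monotone orbits with $W$ --- exactly the uniqueness question the part-metric argument sidesteps. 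Otherwise your reductions, including the \textbf{(SP)}/\textbf{(SP$_-$)} verification with $N^*=1$ via Proposition~\ref{prop6.10}-(i) and the case analysis $c\lessgtr c^*(\infty)$ in (ii), match the paper.
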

	
	\begin{proof}  (i)  follows  from  Theorem~\ref{thm5.1-bil-uc/uc}.
		
		(ii) follows from Theorem~\ref{thm5.1-bil-meal/meal}-(i) and (ii).
		
		(iii) follows from Theorem~\ref{thm5.3.6-bil-uc/uc-fixp}.
		
		(iv) To finish the proof, we only need to prove {\bf (GAS-CSF)} in $C_{+}\setminus\{0\}$ due to Theorem~\ref{thm5.1-bil-gas}.
		More precisely, it suffices to prove
		$$\omega(\varphi):=\bigcap\limits_{s\in \mathbb{R}_+} Cl(\{Q[t,\varphi]:t\geq s\})=\{W\},\, \,  \forall \varphi\in C_{+}\setminus\{0\}.
		$$
		Fix $\varphi\in C_{+}\setminus\{0\}$.
		Let $a^*_-=\sup \{a\in \mathbb{R}:\omega(\varphi)\geq a W\}$ and $a_+^*=\inf \{a\in \mathbb{R}:\omega(\varphi)\leq a W\}$. Then $0<a^*_-\leq 1\leq a^*_+<\infty$ and $a^*_- W\leq \omega(\varphi)\leq a^*_+ W$. We claim that $a^*_+=a^*_-=1$; otherwise, $\{a^*_+,a^*_-\}\neq \{1\}$. By the conditions of $f$, we have
		 $$
		 a^*_+ f(x,W(x))\geq f(x,\psi(\theta,x))\geq a^*_- f(x,W(x)),
		 \, \,  \forall (\theta,x,\psi)\in [-\tau,0]\times \mathbb{R}\times \omega(\varphi).
		 $$
		 We only consider the case of $a_-^*<1$ since the case of $a_+^*>1$ can be dealt with in a similar way. Note that there exists $x_0>0$ such that $$
		 f(x,\psi(\theta,x))> a^*_- f(x,W(x)), \, \,  \forall
		 (\theta,x,\psi)\in [-\tau,0]\times \mathbb{R}\times \omega(\varphi)
		  \, \,  \text{with} \, \,  |x|\geq x_0.
		 $$
		 By the definition of $Q[t,\cdot]$ and the invariance of $\omega(\varphi)$,
		 it follows  that for any $(t,x,\psi)\in (0,\infty)\times \mathbb{R} \times \omega(\varphi)$, there holds
		\begin{eqnarray*}
		Q[t,\psi](0,x)&=& S(t)[\psi(0,\cdot)](x)+ \mu\int^t_0 S(t-s)[f(\cdot,Q[s,\psi](-\tau,c\tau+\cdot))](x)
			{\rm d} s
			\\
			&>& a^*_-S(t)[W](x)+ \mu\int^t_0 S(t-s)[a^*_-f(\cdot,W(c\tau+\cdot))](x){\rm d} s
			\\
			&=& a^*_-W(x).
		\end{eqnarray*}
		As a result, we have
		$$
		\psi(\theta,x)>a^*_-W(x),\, \,  \forall
		(\theta,x,\psi)\in [-\tau,0]\times \mathbb{R} \times \omega(\varphi),
		$$
		which, together with the limiting properties of  $W$ and $\omega(\varphi)$, gives rise to  $\omega(\varphi)\geq (a^*_-+\delta_0) W$ for some $\delta_0>0$,  a contradiction. This shows that  (iv) holds true.
	\end{proof}
	
	We should point out that the first two conclusions in Theorem
	\ref{thm6.9} were obtained in \cite{hsl2020} and 
	\cite{ycw2019}, respectively, in  the special case where $\tau=0$ and $f(x,u)$ is monotone in $x$.  Further,  the third conclusion 
	on the existence of forced waves for system  \eqref{6.8} with or without time delay  seems to appear for the first time.
	


	{\subsection{Asymptotically homogeneous  reaction-diffusion systems}}
	
	In this section, we consider  a class of  cooperative reaction-diffusion systems:
	\begin {equation}
	\left\{
	\begin{array}{ll}
		\frac{\partial {\bf u}}{\partial t}(t,x)  =  D\frac{\partial^2 {\bf u}(t,x)}{\partial x^2}+{\bf f }(x,{\bf u}(t,x)), \, (t,x)\in (0,\infty)\times \mathbb{R}, \\
		{\bf u}(0,x) = \varphi(x),  \quad  x\in \mathbb{R},
	\end{array} \right.
	\label{6.3-coop-subh}
	\end {equation}
	where  ${\bf u}(t,x)=(u_1(t,x),u_2(t,x),\cdots,u_N(t,x))^{T}$, $D=diag(d_1,d_2,\cdots,d_N)$, and
	the   reaction term ${\bf f}=(f_1,f_2,\cdot,f_N)^{T} \in C(\mathbb{R}\times \mathbb{R}_+^N,\mathbb{R}^N)$.
	
An  $N\times N$ matrix  $A=(A_{ij})$ is said to be cooperative
if $A_{ij}\geq 0$ for all $1\leq i\ne j\leq N$. And the stability modulus
of $A$  is defined as $s(A):=\max\{{\rm Re} \lambda: \, \,  \det (A-\lambda I)=0\}$.
We always  assume that
	\begin{enumerate}
		\item[(C1)] $d_k>0$ for all $k\in \{1,2,\cdots,N\}$.
		
		\item[(C2)] ${\bf f}_\pm\in C(\mathbb{R}_+^N,\mathbb{R}^N)$, ${\bf f}(s,\cdot) \in C^1(\mathbb{R},\mathbb{R}^N)$, and ${\bf f} (s,0)=0$ for each $s\in \mathbb{R}$.
		
		\item[(C3)]  ${\rm D}_{\bf u}f(x,{\bf u}):=\left(\frac{\partial f_i(x,{\bf u})}{\partial u_j}\right)$ is a cooperative matrix for each $(x,{\bf u})\in \mathbb{R}\times \mathbb{R}_+^N$.
		
		\item[(C4)]  There exists ${\bf M}\in Int(\mathbb{R}_+^N)$ such that
		${\bf f }_\pm ( \alpha{\bf  M}) \ll  0$ and ${\bf f } (x, \alpha{\bf  M}) \ll  0$ for any $\alpha> 1$,  and
		${\bf f}(x,\cdot)$ is subhomogeneous on $\mathbb{R}^N_+$ for each $x\in \mathbb{R}$.
		\end{enumerate}
		
		For two limiting functions ${\bf f}_\pm\in C(\mathbb{R}_+^N,\mathbb{R}^N)$,
we need the following additional assumptions:
\begin{enumerate}
		\item[(C$_+$)] (i)   There exists ${\bf u}_+^*\in Int(\mathbb{R}_+^N)$ such that $\{{\bf u}\in\mathbb{R}_+^N\setminus\{0\}: {\bf f}_+({\bf u})=0\}=\{{\bf u}_+^*\}$;
(ii) $\lim\limits_{x\to \infty}{\bf f}(x,\cdot)= {\bf f}_+$ in $C^1_{loc}(\mathbb{R}_+^{N},\mathbb{R}^N)$;
  (iii) The matrix
			${\rm D}_{\bf u}{\bf f} _+({\bf u})$ is irreducible for each ${\bf u}\in \mathbb{R}_+^N$ and $s({\rm D}_{\bf u}{\bf f} _+(0))>0$.
		
		\item[(C$_-$)] (i) There exists ${\bf u}_-^*\in Int(\mathbb{R}_+^N)$ such that $\{{\bf u}\in\mathbb{R}_+^N\setminus\{0\}: {\bf f}_-({\bf u})=0\}=\{{\bf u}_-^*\}$;  (ii) $\lim\limits_{x\to -\infty}{\bf f}(x,\cdot)= {\bf f}_-$ in $C^1_{loc}(\mathbb{R}_+^{N},\mathbb{R}^N)$;
 (iii) The matrix
		${\rm D}_{\bf u}{\bf f} _-({\bf u})$ is irreducible for each ${\bf u}\in \mathbb{R}_+^N$ and $s({\rm D}_{\bf u}{\bf f} _-(0))>0$.	
	
\end{enumerate}
	
	Let  $C=BC(\mathbb{R},\mathbb{R}^N)$, and  $C_+=BC(\mathbb{R},\mathbb{R}_+^N)$.
	We consider the mild solutions of~\eqref{6.3-coop-subh} with
	initial values $\varphi\in C_+$. Here a mild solution solves the following
	integral equation with the given initial function:
	\begin {equation}\label{6.3-coop-subh-integal-eq}
	\left\{
	\begin{array}{rcl}
		{\bf u}(t,\cdot) & =& S_{\alpha_0^*}(t)[\varphi]+\int^t_0
		S_{\alpha_0^*}({t-s})[\alpha_0^* {\bf f}_{\alpha_0^*}({\bf u}(s,\cdot))]{ \mathrm {d} s}, \qquad  t\in
		\mathbb{R}_+,
		\\
		{\bf u}(0,\cdot) & = & \varphi\in C_+,
	\end{array}
	\right.
	\end {equation}
	where $\alpha_0^*>0$ is fixed such that
	$$
	\alpha_0^*E+{\rm D}_{\bf u}{\bf f}(x,{\bf u})\geq 0, \,  \, \forall (x,{\bf u})\in \mathbb{R}\times [0,{\bf M}]_{\mathbb{R}^N},
	$$
${\bf f}_{\alpha_0^*}$ is defined as
	$$
	{\bf f}_{\alpha_0^*}(x,{\bf u})={\bf u}+\frac{{\bf f}(x,{\bf u})}{\alpha_0^*},
	 \,  \,   \forall (x,{\bf u})\in \mathbb{R}\times \mathbb{R}^N_+,
	$$
	and
	$S_{\alpha_0^*}(t)$ is the semigroup generated by the following  linear reaction-diffusion system:
	\[
	\left\{
	\begin{array}{rcll}
	\frac{\partial {\bf u}}{\partial t}&=& D\frac{\partial^2 {\bf u}}{\partial x^2}-\alpha_0^* {\bf u}, \qquad & t>0,
	\\
	{\bf u}(0,\cdot)& =& \varphi\in C, &
	\end{array}
	\right.
	\]
	that is,  for $(x,\varphi)\in \mathbb{R}\times C$, we have
	\[
	\left\{
	\begin{array}{rcl}
	S_{\alpha_0^*,k}(0)[\varphi] & = & \varphi_k, \\
	S_{\alpha_0^*,k}(t)[\varphi](x) & = & \frac{\exp(-\alpha_0^* t)}{\sqrt{4\pi d_k t}}\int_{\mathbb{R}}
	\varphi_k(y)\exp \left(-\frac{(x-y)^2}{4d_k
		t}\right){\rm d }y,
	\quad t >0.
	\end{array}
	\right.
	\]
	
	It is well-known that for any given  $\varphi\in  C_+$, equation~\eqref{6.3-coop-subh}  has a
	unique mild solution on a maximal interval $[0, \eta_{\varphi;{\bf f}})$,
	denoted by ${\bf u}^{\varphi}(t,x;{\bf f})$, which is
	also the classical solution of~\eqref{6.3-coop-subh} on  $(0,\eta_{\varphi;{\bf f}})$
	with
	$[0,\eta_{\varphi;{\bf f}})\ni t\mapsto {\bf u}^{\varphi}(t,\cdot;{\bf f})\in C_+$ being
	continuous and $\limsup\limits_{t\to \eta_{\varphi;{\bf f}}^-
	}||{\bf u}^{\varphi}(t,\cdot;{\bf f})||=\infty$ whenever $\eta_{\varphi;{\bf f}}<\infty$.
	By the Phragm\'en-Lindel\"of type maximum principle~\cite{pw1967} and the standard comparison arguments, it easily follows that
	$$
	0\leq  {\bf u}^\varphi(t,x;{\bf f})\leq  \alpha{\bf M}, \,  \, \forall
	\alpha\in  [1,\infty),  \,  \, (t,x,\varphi)\in   [0,\eta_{\varphi;{\bf f}}) \times \mathbb{R} \times [0,\alpha  {\bf M}]_C,
	$$
	and hence, $\eta_{\varphi;{\bf f}}=\infty$.
	
	Now we introduce  the following auxiliary reaction-diffusion systems:
	\begin {equation}
	\left\{
	\begin{array}{ll}
		\frac{\partial {\bf u}}{\partial t}(t,x)  =  D\frac{\partial^2 {\bf u}(t,x)}{\partial x^2}+{\bf f }(x+z,{\bf u}(t,x)), \, (t,x,z)\in (0,\infty)\times \mathbb{R}^2, \\
		{\bf u}(0,x) = \varphi(x),  \quad  x\in \mathbb{R},
	\end{array} \right.
	\label{6.3-coop-subh-shift}
	\end {equation}

	\begin {equation}
	\left\{
	\begin{array}{ll}
		\frac{\partial {\bf u}}{\partial t}(t,x)  =  D\frac{\partial^2 {\bf u}(t,x)}{\partial x^2}+{\bf f }_+({\bf u}(t,x)), \, (t,x)\in (0,\infty)\times \mathbb{R}, \\
		{\bf u}(0,x) = \varphi(x),  \quad  x\in \mathbb{R},
	\end{array} \right.
	\label{6.3-coop-subh-+infty}
	\end {equation}
	and
	\begin {equation}
	\left\{
	\begin{array}{ll}
		\frac{\partial {\bf u}}{\partial t}(t,x)  =  D\frac{\partial^2 {\bf u}(t,x)}{\partial x^2}+{\bf f }_-({\bf u}(t,x)), \, (t,x)\in (0,\infty)\times \mathbb{R}, \\
		{\bf u}(0,x) = \varphi(x),  \quad  x\in \mathbb{R}.
	\end{array} \right.
	\label{6.3-coop-subh--infty}
	\end {equation}

	Let $Q[t,\varphi;{\bf f }]$, $Q[t,\varphi;{\bf f }_\pm]$ be the mild solutions  of~\eqref{6.3-coop-subh}, \eqref{6.3-coop-subh-+infty}  and \eqref{6.3-coop-subh--infty} with the initial value $u(0,\cdot) = \varphi\in C_+$, respectively.   For simplity, we denote
	$Q[t,\varphi;{\bf f }]$ and $Q[t,\varphi;{\bf f }_\pm]$ by  $Q[t,\varphi]$ and $Q_\pm[t,\varphi]$, respectively.
	By the standard arguments, we can
	verify the following properties  for $Q$ and $Q_\pm$.
	
	\begin{prop} \label{prop6.5} The following statements are valid:
		\begin{itemize}
			\item [{\rm (i)}]If $(t,\varphi)\in (0,\infty)\times C_+$ with $\varphi_k>0$  for all $k\in \{1,2,\cdots,N\}$, then  $Q[t,\varphi]\in C_+^\circ$, and hence, $Q[(0,\infty)\times C_+^\circ]\subseteq C_+^\circ$.   Moreover,  $Q[(0,\infty)\times (C_+\setminus\{0\})]\subseteq C_+^\circ$ if {\rm (C$_+$) } or {\rm (C$_-$) } holds true.
			
			\item [{\rm (ii)}]   $Q[\mathbb{R}_+\times C_{\alpha {\bf M}}]\subseteq C_{\alpha {\bf M}}$ and $Q_\pm[\mathbb{R}_+\times C_{\alpha {\bf M}}]\subseteq C_{\alpha {\bf M}}$ for all $\alpha \in [1,\infty)$. Moreover, $$\limsup\limits_{t\to\infty}\Big[\sup\{{\bf M }-Q[t,\varphi](x):x\in \mathbb{R}\}\Big],\quad
			\limsup\limits_{t\to\infty}\Big[\sup\{{\bf M }-Q_\pm[t,\varphi](x):x\in \mathbb{R}\}\Big]\in \mathbb{R}^N_+ ,$$  and hence, $\omega(\varphi;Q)\bigcup \omega(\varphi;Q_\pm)\subseteq C_{{\bf M}}$ for all  $\varphi\in C_{+}$, where $\omega(\varphi;Q)$ and  $\omega(\varphi;Q_\pm)$  represent the positive limiting sets of $Q$ and $Q_\pm$, respectively.
			
			\item [{\rm (iii)}] $Q$ and $Q_\pm:\mathbb{R}\times C_{r\check{1}}\to C_+$ are continuous. Moreover,   $Q[t,C_{r\check{1}}]$ and $Q_\pm[t,C_{r\check{1}}]$  are precompact in $C$ for each $(t,r)\in (0,\infty)^2$.
			
			\item [{\rm (iv)}]  $T_{-z}\circ Q[t,\cdot]\circ T_z[\varphi](\theta,x)$ is the mild solution  of~\eqref{6.3-coop-subh-shift}  for each $z\in \mathbb{R}$.
			
			\item [{\rm (v)}]  If {\rm (C$_\pm$) } holds, then $\lim\limits_{y\to \pm \infty}
			Q[t,T_{y}[\varphi]](\cdot,\cdot+y)= Q_\pm[t,\varphi]$ in $C$
			for all $(t,\varphi)\in \mathbb{R}_+\times C_+$, and hence, $Q_\pm[t,T_{y}[\varphi]](\cdot,\cdot+y)=Q_\pm[t,\varphi]$ for all $(y,t,\varphi)\in \mathbb{R}\times\mathbb{R}_+\times C_+$.

			\item [{\rm (vi)}]   $Q[t,\varphi]\geq Q[t,\psi]$ and $Q_\pm[t,\varphi]\geq Q_\pm[t,\psi]$ for all $(t,\varphi,\psi)\in \mathbb{R}_+\times C_+\times C_+$ with $\varphi\geq \psi$.

			\item [{\rm (vii)}]   $Q[t,\alpha \varphi]\geq \alpha Q[t,\varphi]$ and $Q_\pm[t,\alpha\varphi]\geq \alpha Q_\pm[t,\psi]$ for all $(\alpha,t,\varphi)\in [0,1]\times  \mathbb{R}_+\times C_{\bf M}$.
			

			\item [{\rm (viii)}]  $Q_\pm[t,\varphi(-\cdot)]=Q_\pm[t,\varphi](-\cdot)$ for all $(t,\varphi)\in \mathbb{R}_+\times C_+$.
			
			\item [{\rm (ix)}]   $\lim\limits_{t\to \infty}Q_\pm[t,{\bf w}_k^\pm]\to {\bf u}_\pm^*$ in $C$ for all $k\in \mathbb{N}$ for some sequence  $\{{\bf w}_k^\pm\}_{k\in \mathbb{N}}$ in $Int(\mathbb{R}_+^N)$ with
		$\lim\limits_{k\to \infty}{\bf w}_k^\pm=0$ provided that  {\rm (C$_\pm$) } holds.
		\end{itemize}
		\end{prop}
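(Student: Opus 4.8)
The plan is to derive all nine assertions of Proposition~\ref{prop6.5} from three ingredients: the well-posedness theory for the integral equation \eqref{6.3-coop-subh-integal-eq}, the comparison principle and Phragm\'en--Lindel\"of type maximum principle~\cite{pw1967} for cooperative reaction--diffusion systems, and the global dynamics of subhomogeneous cooperative systems. I would begin with the regularity statement (iii). Since $S_{\alpha_0^*}(t)$ acts by convolution with the Gaussian kernels, for each fixed $t>0$ it carries any bounded set of $C$ into a family that is uniformly bounded and equicontinuous on every compact $x$-interval; as $\alpha_0^*{\bf f}_{\alpha_0^*}$ is Lipschitz and bounded on $[0,r\check{1}]_{\mathbb{R}^N}$, a contraction/Gronwall argument on \eqref{6.3-coop-subh-integal-eq} gives continuity of $Q$ and $Q_\pm$ on $\mathbb{R}\times C_{r\check{1}}$, and, feeding the boundedness of the nonlinearity back through the smoothing of $S_{\alpha_0^*}$, precompactness of $Q[t,C_{r\check{1}}]$ and $Q_\pm[t,C_{r\check{1}}]$ in $C$ for $t>0$. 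The choice of $\alpha_0^*$ makes ${\bf u}\mapsto \alpha_0^*{\bf f}_{\alpha_0^*}(x,{\bf u})$ nondecreasing on $[0,{\bf M}]_{\mathbb{R}^N}$, which legitimizes all the monotone comparisons below.

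Next I would group (ii), (vi) and (vii) as applications of the comparison principle. Monotonicity (vi) is immediate from cooperativity (C3). For (ii), $0$ is a subsolution and, for every $\alpha>1$, the constant $\alpha{\bf M}$ is a supersolution of \eqref{6.3-coop-subh} because ${\bf f}(x,\alpha{\bf M})\ll 0$ by (C4); comparison yields $Q[\mathbb{R}_+\times C_{\alpha{\bf M}}]\subseteq C_{\alpha{\bf M}}$, and the case $\alpha=1$ follows by letting $\alpha\downarrow 1$. The ultimate-bound part of (ii), i.e.\ $\omega(\varphi;Q)\subseteq C_{\bf M}$, follows by dominating $Q[t,\varphi]$ by the solution of the spatially homogeneous ODE started at a large multiple of ${\bf M}$: by subhomogeneity of ${\bf f}(x,\cdot)$ and ${\bf f}(x,\alpha{\bf M})\ll 0$ this ODE solution is eventually below $(1+\epsilon){\bf M}$ for every $\epsilon>0$, whence the $\omega$-limit sets lie in $C_{\bf M}$; the same works for $Q_\pm$. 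Subhomogeneity (vii) holds because, for $\alpha\in[0,1]$, $\alpha Q[t,\varphi]$ is a subsolution of \eqref{6.3-coop-subh} (using ${\bf f}(x,\alpha{\bf u})\geq\alpha{\bf f}(x,{\bf u})$), so $Q[t,\alpha\varphi]\geq\alpha Q[t,\varphi]$, and likewise for $Q_\pm$.

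The positivity statement (i) and the translation/reflection identities (iv), (v), (viii) come next. For (i) one applies the scalar strong maximum principle componentwise: each $u_i$ solves a scalar linear parabolic inequality whose zeroth-order coefficients are nonnegative (cooperativity), so $\varphi_k>0$ for all $k$ forces $Q[t,\varphi]\in C_+^\circ$ for $t>0$; under (C$_+$) or (C$_-$) this is upgraded to $\varphi\in C_+\setminus\{0\}$ by using the irreducibility of the Jacobian of ${\bf f}_\pm$ to propagate positivity among the components. Identities (iv), (v), (viii) are obtained by checking that the proposed function solves the corresponding integral equation and invoking uniqueness: (iv) is the translation invariance of $\partial_t-D\partial^2/\partial x^2$; (v) follows from (iv) together with continuous dependence of mild solutions on the nonlinearity (Gronwall on \eqref{6.3-coop-subh-integal-eq}) and the convergence ${\bf f}(x+y,\cdot)\to{\bf f}_\pm(\cdot)$ in $C^1_{loc}$ from (C$_\pm$)(ii), noting that convergence in the $C$-norm is exactly locally uniform convergence; and the translation invariance of $Q_\pm$ and (viii) are the spatial translation and reflection symmetries of the autonomous limiting equations.

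Finally, for (ix) I would exploit that ${\bf f}_\pm$ is $x$-independent: a spatially constant datum ${\bf w}\in Int(\mathbb{R}_+^N)$ produces a spatially constant solution $Q_\pm[t,{\bf w}]\equiv w(t)$, where $w$ solves $\dot w={\bf f}_\pm(w)$. By (C4) and (C$_\pm$)(iii) this ODE is cooperative, irreducible, subhomogeneous and dissipative (the flow is eventually below $(1+\epsilon){\bf M}$), with $s({\rm D}_{\bf u}{\bf f}_\pm(0))>0$ and unique positive zero ${\bf u}_\pm^*$, so the global dynamics theory for subhomogeneous cooperative systems (as used in the paper via Lemma~\ref{lem6.10-subhomgenous}, cf.\ \cite{Zhaobook}) gives that ${\bf u}_\pm^*$ attracts every solution starting in $Int(\mathbb{R}_+^N)$; thus any sequence ${\bf w}_k^\pm\to 0$ in $Int(\mathbb{R}_+^N)$ works. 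I do not expect any single deep step here; the points requiring the most care are the locally-uniform convergence of mild solutions in (v)---where the nonlocality of the Gaussian kernel forces a tail estimate to replace the global smallness of ${\bf f}(\cdot+y,\cdot)-{\bf f}_\pm$ by its smallness on the relevant compact set---and marshalling the subhomogeneous cooperative global-dynamics machinery for (ix) together with the ultimate bound in (ii).
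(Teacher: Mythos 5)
Your proof is correct and takes exactly the route the paper intends: the paper states Proposition~\ref{prop6.5} with no proof at all (``by the standard arguments''), and your sketch --- comparison and Phragm\'en--Lindel\"of maximum principles for the cooperative system, uniqueness of mild solutions for the translation, limit and reflection identities, Gaussian smoothing for continuity and precompactness, and monotone subhomogeneous dynamics for (ix) --- is precisely the standard machinery being invoked. The two points you single out as delicate (the Gaussian tail estimate needed in (v) because $\mathbf{f}(\cdot+y,\cdot)\to\mathbf{f}_\pm$ only locally uniformly, and the ultimate bound in (ii), where one must build a decreasing spatially constant supersolution $\beta(t)\mathbf{M}$ using $\sup_x f_i(x,\alpha\mathbf{M})<0$ for $\alpha>1$ rather than appeal to a single $x$-independent ODE) are indeed the only non-mechanical steps, and both go through as you describe.
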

	
	For any $\mu \in\mathbb{R}$, we define $L_{\pm,\mu}: \mathbb{R}^N\to  \mathbb{R}^N$ by
	$$
	L_{\pm,\mu}[\phi]=Q[1,e_{\phi,\mu};{\rm D}_{\bf u}{\bf f}_\pm (0)\cdot Id_{\mathbb{R}^N}](0), \,  \,  \forall \phi\in \mathbb{R}^N,
$$
where $e_{\phi,\mu}(x)=\phi e^{-\mu x}$ for all $x\in\mathbb{R}$.
	Clearly, $L_{\pm,\mu}$ is  a strongly positive linear operator  on $\mathbb{R}^N$  provided that  {\rm (C$_\pm$) } holds.
By  the Perron-Frobenius theorem, it follows that $L_{\pm,\mu}$ has the unique principal eigenvalue $\lambda_\pm(\mu)$ and  an unique  eigenfunction $\zeta_\mu^\pm\in Int(\mathbb{R}_+^N)$ associated with  $\lambda_\pm(\mu)>0$ such that  $||\zeta_\mu^\pm||=1$.
Following  \cite{w1982, lz2007}, we define
\begin{equation}\label{spreadRDS}
c^*(\pm\infty):=\inf\limits_{\mu>0}\frac{1}{\mu}\log\lambda_\pm(\mu).
\end{equation}	
Under the assumptions (C$_+$) and (C$_-$), it follows from \cite{lz2007} that  $c^*(\pm\infty)$ are the spreading speeds  of the right and left limiting  systems \eqref{6.3-coop-subh-+infty} and  \eqref{6.3-coop-subh--infty}, respectively. We can easily verify the following properties of $c^*(\pm \infty)$.
	
	\begin{prop} \label{prop6.7} {Let  $c^*(\pm \infty)$ be defined as above. Then the following statements are valid:
			\begin{itemize}
				\item [{\rm (i)}]  If $s({\rm D}_{\bf u}{\bf f}_+ (0))>0$, then
				$c^*( \infty)>0$.
				\item [{\rm (ii)}] If $s({\rm D}_{\bf u}{\bf f}_-(0))>0$, 
				then $c^*( -\infty)>0$.
			\end{itemize}}
			\end{prop}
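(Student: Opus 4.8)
The plan is to reduce Proposition~\ref{prop6.7} to a few elementary spectral facts about cooperative matrices by computing $\lambda_\pm(\mu)$ in closed form. First I would solve the linear system $\partial_t{\bf u}=D\partial_{xx}{\bf u}+{\rm D}_{\bf u}{\bf f}_\pm(0){\bf u}$ on the exponential profile $e_{\phi,\mu}(x)=\phi e^{-\mu x}$: substituting ${\bf u}(t,x)=e^{tA_\pm(\mu)}\phi\,e^{-\mu x}$ with $A_\pm(\mu):=\mu^2 D+{\rm D}_{\bf u}{\bf f}_\pm(0)$ verifies the equation, so from the definition of $L_{\pm,\mu}$ one obtains $L_{\pm,\mu}=e^{A_\pm(\mu)}$ as an $N\times N$ matrix. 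By (C3) and (C$_\pm$)(iii) the matrix $A_\pm(\mu)$ is cooperative and irreducible, hence $e^{A_\pm(\mu)}$ is strictly positive, its Perron eigenvalue equals $e^{s(A_\pm(\mu))}$ (the spectral mapping theorem plus the fact that $s(A_\pm(\mu))$ is a real eigenvalue of $A_\pm(\mu)$), and therefore $\lambda_\pm(\mu)=e^{s(A_\pm(\mu))}$ and
$$\frac{1}{\mu}\log\lambda_\pm(\mu)=\frac{s\bigl(\mu^2 D+{\rm D}_{\bf u}{\bf f}_\pm(0)\bigr)}{\mu},\qquad \mu>0.$$

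Next, writing $g_\pm(\mu):=s\bigl(\mu^2 D+{\rm D}_{\bf u}{\bf f}_\pm(0)\bigr)$, I would record three properties of $g_\pm$ on $(0,\infty)$. (a) $g_\pm$ is continuous, since the stability modulus depends continuously on the matrix entries. (b) The stability modulus of a cooperative matrix is nondecreasing under entrywise-nonnegative perturbations (shift both matrices by a sufficiently large multiple of the identity to render them nonnegative and invoke Perron--Frobenius monotonicity of the spectral radius); since $\mu^2D\ge 0$ this gives $g_\pm(\mu)\ge s({\rm D}_{\bf u}{\bf f}_\pm(0))>0$ for all $\mu>0$, and letting $\mu\to 0^+$ (so that $g_\pm(\mu)\to s({\rm D}_{\bf u}{\bf f}_\pm(0))>0$) yields $g_\pm(\mu)/\mu\to+\infty$. (c) For a nonnegative matrix $M$ one has $\rho(M)\ge M_{ii}$, hence $s(C)\ge\max_i C_{ii}$ for any cooperative $C$; applied to $C=A_\pm(\mu)$ this gives $g_\pm(\mu)\ge \mu^2 d_1+\bigl({\rm D}_{\bf u}{\bf f}_\pm(0)\bigr)_{11}$, so $g_\pm(\mu)/\mu\ge\mu d_1+\bigl({\rm D}_{\bf u}{\bf f}_\pm(0)\bigr)_{11}/\mu\to+\infty$ as $\mu\to\infty$.

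Finally I would conclude: the function $\mu\mapsto g_\pm(\mu)/\mu$ is continuous and strictly positive on $(0,\infty)$ and tends to $+\infty$ at both endpoints, so it attains a positive minimum at some $\mu_\pm^*\in(0,\infty)$; consequently $c^*(\pm\infty)=\inf_{\mu>0}g_\pm(\mu)/\mu=g_\pm(\mu_\pm^*)/\mu_\pm^*>0$, which proves (i), and (ii) follows by the same argument with $-$ in place of $+$ (equivalently, by applying (i) to the reflected system, cf.~Proposition~\ref{prop6.5}(viii)). The main—though modest—obstacle is making sure in steps (b)--(c) that the ``principal eigenvalue'' $\lambda_\pm(\mu)$ in \eqref{spreadRDS} is genuinely the Perron eigenvalue of the positive matrix $L_{\pm,\mu}$, so that the closed form $\lambda_\pm(\mu)=e^{s(A_\pm(\mu))}$ is legitimate, and that the monotonicity and diagonal lower bound for the stability modulus of cooperative matrices are applied with the correct sign conventions; these are standard facts in the spreading-speed literature (cf.~\cite{w1982,lz2007}).
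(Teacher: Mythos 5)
Your proof is correct. The paper in fact omits any proof of Proposition \ref{prop6.7} (it is prefaced only by ``we can easily verify''), and your argument is exactly the standard verification the authors have in mind: the closed form $L_{\pm,\mu}=e^{\mu^2 D+{\rm D}_{\bf u}{\bf f}_\pm(0)}$, hence $\frac{1}{\mu}\log\lambda_\pm(\mu)=s(\mu^2D+{\rm D}_{\bf u}{\bf f}_\pm(0))/\mu$, together with continuity, the entrywise monotonicity of the stability modulus of cooperative matrices (giving the lower bound $s({\rm D}_{\bf u}{\bf f}_\pm(0))>0$ and blow-up as $\mu\to0^+$), and the diagonal lower bound forcing blow-up as $\mu\to\infty$, so the infimum is attained and positive. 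All the spectral facts you invoke (Perron eigenvalue of $e^{A}$ equal to $e^{s(A)}$ for cooperative irreducible $A$, $\rho(M)\ge M_{ii}$ for $M\ge 0$) are applied with the correct sign conventions, cf.\ \cite{w1982,lz2007}.
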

		
	By using  \cite[Theorem 2.17-(ii)]{lz2007}, Proposition~\ref{prop6.5} and Lemma~\ref{lemm4.1}, we can easily  verify the following result.	
		
		\begin{prop} \label{prop6.5-11} The following statements are valid:
			\begin{itemize}
				\item [{\rm (i)}]    If  {\rm (C$_+$)} holds, then for each
				$t>0$, the map  $Q_+[t,\cdot]$ satisfies   {\bf (UC)} with $t c^*(\infty)$, where $c^*(\infty)$ is defined  in \eqref{spreadRDS}. Hence,  $Q[t,\cdot]$ satisfies  {\bf (NM)}.
				
				\item [{\rm (ii)}]  If  {\rm (C$_-$)} holds, then for each
				$t>0$, the map $Q_-[t,\cdot]$ satisfies  {\bf (UC$_-$)} with
				$tc^*(-\infty)$, where $c^*(-\infty)$  is defined
				 in \eqref{spreadRDS}. Hence,  $Q[t,\cdot]$ satisfies  {\bf (NM$_-$)}.
		
					\end{itemize}
			\end{prop}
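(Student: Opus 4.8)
The plan is to deduce both statements from the Liang--Zhao spreading speed theory for monotone, homogeneous semiflows, handling (i) first and then obtaining (ii) by reflection. Fix $t>0$. For (i), the first task is to identify the limiting data and record {\bf (A)}: set $r^*:={\bf u}_+^*$, which lies in $Int(\mathbb{R}_+^N)$ by (C$_+$)(i) and satisfies ${\bf f}_+(r^*)=0$, hence $Q_+[t,r^*]=r^*$; Proposition~\ref{prop6.5}-(v) gives $\lim_{y\to\infty}T_{-y}\circ Q[t,\cdot]\circ T_y[\varphi]=Q_+[t,\varphi]$ in $C$ for all $\varphi\in C_+$, and iterating this identity together with the translation invariance of $Q_+[t,\cdot]$ (again Proposition~\ref{prop6.5}-(v)) yields the same statement for all powers, so $(Q[t,\cdot],Q_+[t,\cdot],r^*)$ satisfies {\bf (A)}.

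The main step is verifying {\bf (UC)} for $Q_+[t,\cdot]$. The limiting system \eqref{6.3-coop-subh-+infty} is spatially homogeneous and reflection invariant (Proposition~\ref{prop6.5}-(viii)); its reaction ${\bf f}_+$ is cooperative with irreducible Jacobian (by (C3) and (C$_+$)(iii)), subhomogeneous on $\mathbb{R}_+^N$ with $Q_+[t,\cdot]$ therefore subhomogeneous on $C_{\bf M}$ (Proposition~\ref{prop6.5}-(vii)), admits the unique positive zero $r^*$, is linearly unstable at $0$ since $s({\rm D}_{\bf u}{\bf f}_+(0))>0$, and has $\alpha{\bf M}$ as a super-equilibrium for every $\alpha>1$ because ${\bf f}_+(\alpha{\bf M})\ll0$. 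Together with the continuity and compactness of $Q_+[t,\cdot]$ (Proposition~\ref{prop6.5}-(iii)), these are exactly the hypotheses under which \cite[Theorem~2.17-(ii)]{lz2007} applies: the leftward and rightward spreading speeds of $\{Q_+[s,\cdot]\}_{s\ge0}$ coincide and equal the number $c^*(\infty)$ of \eqref{spreadRDS}, which is positive by Proposition~\ref{prop6.7}-(i), and for every $\varphi\in C_+\setminus\{0\}$ one has $\lim_{s\to\infty}\sup_{|x|\le sc}\|Q_+[s,\varphi](\cdot,x)-r^*\|=0$ for each $c\in(0,c^*(\infty))$ (for the upper estimate one uses that any such $\varphi$ is dominated by $\alpha{\bf M}$ for large $\alpha$ and that $Q_+^n[\alpha{\bf M}]$ decreases monotonically to $r^*$). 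Writing $Q_+^n[\varphi]=Q_+[nt,\varphi]$ and noting that the band $\mathcal{A}_{\varepsilon,n}^+=n[-tc^*(\infty)+\varepsilon,tc^*(\infty)-\varepsilon]$ equals $\{x\in\mathbb{R}:|x|\le nt\,c\}$ with $c:=c^*(\infty)-\varepsilon/t\in(0,c^*(\infty))$ whenever $\varepsilon\in(0,tc^*(\infty))$, the above convergence is precisely the limit required in {\bf (UC)} with $c_-^*=c_+^*=tc^*(\infty)$ and $r^*={\bf u}_+^*$.

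With {\bf (UC)} in hand, {\bf (NM)} is immediate: $Q[t,\cdot]$ is monotone by Proposition~\ref{prop6.5}-(vi), so Lemma~\ref{lemm4.1} applies and gives {\bf (NM)} for $Q[t,\cdot]$, proving (i). Statement (ii) follows by applying (i) to $\tilde Q[t,\cdot]:=\mathcal{S}\circ Q[t,\cdot]\circ\mathcal{S}$: its right-hand limiting reaction is ${\bf f}_-$, so hypothesis (C$_-$) for $Q$ is exactly hypothesis (C$_+$) for $\tilde Q$, and by the equivalence recorded after the definition of $\tilde Q$ in Section~\ref{3sec} the conclusions for $\tilde Q$ translate into: $Q_-[t,\cdot]$ satisfies {\bf (UC$_-$)} with $tc^*(-\infty)$ (positive by Proposition~\ref{prop6.7}-(ii)) and $Q[t,\cdot]$ satisfies {\bf (NM$_-$)}.

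The only nonroutine point is the invocation of \cite[Theorem~2.17-(ii)]{lz2007}: one must check that $Q_+[t,\cdot]$ meets its precise hypotheses, in particular that the convergence to ${\bf u}_+^*$ holds uniformly on the whole inner cone for arbitrary nontrivial nonnegative initial data (not just compactly supported ones) and that the two one-sided spreading speeds genuinely coincide — this is where the reflection symmetry in Proposition~\ref{prop6.5}-(viii) and the irreducibility of ${\rm D}_{\bf u}{\bf f}_+$ are used. One also has to keep careful track of the factor $t$ when passing from the continuous-time speed $c^*(\infty)$ to the discrete-time constant $c_\pm^*=tc^*(\infty)$ appearing in $\mathcal{A}_{\varepsilon,n}^+$; both are mechanical once the Liang--Zhao theorem is available, which is why the paper states the result can be ``easily verified.''
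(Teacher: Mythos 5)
Your argument is correct and follows exactly the route the paper intends: the paper gives no written proof but states that the result follows from \cite[Theorem 2.17-(ii)]{lz2007}, Proposition~\ref{prop6.5}, and Lemma~\ref{lemm4.1}, which are precisely the three ingredients you assemble (Liang--Zhao for {\bf (UC)} of the homogeneous limit, Proposition~\ref{prop6.5}-(v),(vi) for {\bf (A)} and monotonicity, Lemma~\ref{lemm4.1} for {\bf (NM)}, and reflection via $\mathcal{S}$ for part (ii)). Your bookkeeping of the factor $t$ in passing from $c^*(\infty)$ to $c_\pm^*=tc^*(\infty)$ and your identification of $\mathcal{A}_{\varepsilon,n}^+$ with the inner cone $\{|x|\le ntc\}$ are both accurate.
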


			Let $\mathbb{L}[t,\varphi;\zeta]$ be the mild solution of  the following linear system:
		\begin{equation}
		\left\{
		\begin{array}{rcll}
		\frac{\partial {\bf u}}{\partial t}&=& D\frac{\partial^2 {\bf u}}{\partial x^2}+\zeta(x) {\bf u}, \qquad & t>0,
		\\
		{\bf u}(0,x)& =& \varphi(x), & x\in \mathbb{R},
		\end{array}
		\right.
		\label{6.3-coop-subh-liz}
		\end{equation}
		where $\zeta\in C(\mathbb{R},\mathbb{R}^{N\times N})$ and $\varphi\in C$.  By the definitions of $\mathbb{L}$ and $\mathbb{L}_\pm$,  we can easily prove the following lemma.
		
		\begin{lemma} \label{lemma-6.3.sub}  Assume that $\zeta \in  C(\mathbb{R},\mathbb{R}^{N\times N})$ such that $\zeta(x)$ is cooperative and nonincreasing in $x\in \mathbb{R}$, and both  $\zeta(\infty):=\lim\limits_{x\to \infty}\zeta(x)$ and $\zeta(-\infty):=\lim\limits_{x\to -\infty}\zeta(x)$ are  irreducible $N\times N$ matrices. Then the  following statements are valid:
			\begin{itemize}
				\item [{\rm (i)}]
				$\mathbb{L}[t,C_+\setminus\{0\};\zeta]\subseteq C_+^\circ$ for all $t\in (0,\infty)$.
				
				\item [{\rm (ii)}]   $\mathbb{L}[t,\cdot;\zeta]|_{[-r\check{1},r\check{1}]_C}:[-r\check{1},r\check{1}]_C\to C$ is a continuous and compact map for each $(t,r)\in(0,\infty)^2$.
				
				\item [{\rm (iii)}]   $\mathbb{L}[t,\cdot;\zeta]:(C,C_+,||\cdot||_{L^\infty(\mathbb{R},\mathbb{R}^N)})\to (C,C_+,||\cdot||_{L^\infty(\mathbb{R},\mathbb{R}^N)})$ is a bounded and positive operator for each $t\in \mathbb{R}_+$.

				\item [{\rm (iv)}]    $T_{-z}\circ \mathbb{L}[t,\cdot;\zeta]\circ T_z$  is nonincreasing in $z\in \mathbb{R}$.
				
				\item [{\rm (v)}]  $\lim\limits_{z\to \pm \infty}||T_{-z}\circ \mathbb{L}[t,\varphi;\zeta]\circ T_z-\mathbb{L}[t,\varphi;\zeta(\pm \infty)]||=0$ for each $(t,\varphi)\in \mathbb{R}_+\times C$.
				
				\item [{\rm (vi)}]  $\mathbb{L}[t,\varphi;{\rm D}_{\bf u}{\bf f}(x,{\bf 0})]\geq Q[t,\varphi]$ for all  $(t,\varphi)\in \mathbb{R}_+\times C_+$.
				\end{itemize}
			\end{lemma}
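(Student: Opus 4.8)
The plan is to handle the whole lemma through the mild (integral) formulation, imitating \eqref{6.3-coop-subh-integal-eq}. Since $\zeta$ is continuous on $\mathbb{R}$ with finite limits $\zeta(\pm\infty)$, it is bounded, and because $\zeta(x)$ is cooperative for every $x$, I can fix a constant $\beta_0>0$ with $\beta_0 E+\zeta(x)\geq 0$ and $\beta_0 E+\zeta(\pm\infty)\geq 0$ for all $x$. Let $S_{\beta_0}(t)$ be the analogue of $S_{\alpha_0^*}(t)$ in \eqref{6.3-coop-subh-integal-eq} with $\alpha_0^*$ replaced by $\beta_0$, i.e.\ the uncoupled, damped heat semigroup whose component kernels are strictly positive Gaussians with exponential spatial decay. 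Then $\mathbb{L}[t,\varphi;\zeta]$ is the unique continuous solution of $\mathbf{u}(t,\cdot)=S_{\beta_0}(t)[\varphi]+\int_0^t S_{\beta_0}(t-s)[(\beta_0 E+\zeta(\cdot))\,\mathbf{u}(s,\cdot)]\,\mathrm{d}s$; a standard Picard/Gronwall argument yields global existence, linearity in $\varphi$, and continuity of $\varphi\mapsto\mathbb{L}[t,\varphi;\zeta]$ on each order interval $[-r\check{1},r\check{1}]_C$. When $\varphi\geq 0$ the integrand above is nonnegative, so $\mathbb{L}[t,C_+;\zeta]\subseteq C_+$; and comparing $\mathbb{L}[t,\varphi;\zeta]$ componentwise with solutions of the scalar problem $\dot m=\|\zeta\|_{L^\infty}m$ via the Phragm\'en--Lindel\"of type maximum principle~\cite{pw1967} gives $\|\mathbb{L}[t,\varphi;\zeta]\|_{L^\infty}\leq e^{\|\zeta\|_{L^\infty}t}\|\varphi\|_{L^\infty}$. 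This proves (iii) and supplies the continuity claimed in (ii); the compactness in (ii) follows for $t>0$ from interior parabolic (Schauder) estimates and the Arzel\`a--Ascoli theorem, just as for $Q$ and $Q_\pm$ in Proposition~\ref{prop6.5}-(iii). Throughout I will also use the elementary identity $T_{-z}\circ\mathbb{L}[t,\cdot;\zeta]\circ T_z[\varphi]=\mathbb{L}[t,\varphi;\zeta(\cdot+z)]$, obtained by translating \eqref{6.3-coop-subh-liz}.

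Parts (i) and (iv) then reduce to one monotone-dependence fact: if $\zeta_1,\zeta_2$ are bounded continuous cooperative matrix functions with $\zeta_1(x)\leq\zeta_2(x)$ entrywise for every $x$, then $\mathbb{L}[t,\varphi;\zeta_1]\leq\mathbb{L}[t,\varphi;\zeta_2]$ for all $\varphi\in C_+$ and $t\geq0$, since $w:=\mathbb{L}[t,\varphi;\zeta_2]-\mathbb{L}[t,\varphi;\zeta_1]$ solves a cooperative linear system with zero initial value and nonnegative source $(\zeta_2-\zeta_1)\mathbb{L}[\cdot,\varphi;\zeta_1]$, hence $w\geq 0$ by~\cite{pw1967}. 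Because $\zeta$ is nonincreasing, $z_1\leq z_2$ forces $\zeta(\cdot+z_1)\geq\zeta(\cdot+z_2)$, and the identity above combined with this fact shows $z\mapsto T_{-z}\circ\mathbb{L}[t,\cdot;\zeta]\circ T_z$ is nonincreasing on $C_+$, which is (iv). For (i), the same monotonicity gives $\zeta(\infty)\leq\zeta(x)$ for all $x$, so $\mathbb{L}[t,\varphi;\zeta]\geq\mathbb{L}[t,\varphi;\zeta(\infty)]$; since $\zeta(\infty)$ is cooperative and irreducible, $e^{t\zeta(\infty)}\gg 0$ for $t>0$ by the Perron--Frobenius theorem, so $\mathbb{L}[t,\varphi;\zeta(\infty)](x)=\int_{\mathbb{R}}K_D(t,x-y)\,e^{t\zeta(\infty)}\varphi(y)\,\mathrm{d}y$, with $K_D(t,\cdot)$ the diagonal heat-kernel matrix (component $i$ the heat kernel with diffusivity $d_i$), has all components strictly positive whenever $\varphi\in C_+\setminus\{0\}$ and $t>0$. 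Hence $\mathbb{L}[t,C_+\setminus\{0\};\zeta]\subseteq C_+^\circ$ for $t>0$, which is (i).

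For (v), I would set $w_z:=\mathbb{L}[t,\varphi;\zeta(\cdot+z)]-\mathbb{L}[t,\varphi;\zeta(\infty)]$, which satisfies $w_z(0,\cdot)=0$ and the Duhamel identity $w_z(t,\cdot)=\int_0^t S_{\beta_0}(t-s)[(\beta_0 E+\zeta(\cdot+z))w_z(s,\cdot)+g_z(s,\cdot)]\,\mathrm{d}s$ with $g_z(s,\cdot)=(\zeta(\cdot+z)-\zeta(\infty))\,\mathbb{L}[s,\varphi;\zeta(\infty)]$. Fixing $k\in\mathbb{N}$, I would bound $\|w_z(t,\cdot)\|_{L^\infty([-k,k])}$ by splitting the spatial convolution into $|y|\leq k+R$ and $|y|>k+R$: on the first region $\|g_z(s,\cdot)\|$ is uniformly small for $z$ large because $\zeta(x+z)\to\zeta(\infty)$ uniformly for $x$ in compact sets, while on the second region the exponential spatial decay of the $S_{\beta_0}$ kernels together with the bounds $\|\zeta\|_{L^\infty}$ and $\sup_{s\in[0,t]}\|\mathbb{L}[s,\varphi;\zeta(\infty)]\|_{L^\infty}$ make the contribution exponentially small in $R$; a Gronwall step in $t$ then gives $\|w_z(t,\cdot)\|_{L^\infty([-k,k])}\to 0$ as $z\to\infty$, i.e.\ convergence in $\|\cdot\|$, and the case $z\to-\infty$ is symmetric. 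Finally, (vi) is a consequence of subhomogeneity: by assumption (C4) each $\mathbf{f}(x,\cdot)$ is subhomogeneous on $\mathbb{R}^N_+$, so dividing $\mathbf{f}(x,\kappa\mathbf{u})\geq\kappa\mathbf{f}(x,\mathbf{u})$ by $\kappa$ and letting $\kappa\downarrow 0$ (recall $\mathbf{f}(x,0)=0$) gives $\mathbf{f}(x,\mathbf{u})\leq{\rm D}_{\bf u}{\bf f}(x,{\bf 0})\mathbf{u}$ for all $\mathbf{u}\in\mathbb{R}^N_+$, as in Lemma~\ref{lem6.10-subhomgenous}-(i); since ${\rm D}_{\bf u}{\bf f}(x,{\bf 0})$ is continuous, bounded (by (C$_\pm$)) and cooperative, $Q[t,\varphi]\in C_+$ is then a subsolution of \eqref{6.3-coop-subh-liz} with $\zeta={\rm D}_{\bf u}{\bf f}(x,{\bf 0})$, and the comparison principle~\cite{pw1967} gives $Q[t,\varphi]\leq\mathbb{L}[t,\varphi;{\rm D}_{\bf u}{\bf f}(x,{\bf 0})]$. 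The main obstacle is the tail estimate in (v) — controlling the nonlocal error uniformly in $z$ on compact spatial sets — while the remaining parts are routine combinations of Picard iteration, the parabolic maximum principle, Arzel\`a--Ascoli, and Perron--Frobenius.
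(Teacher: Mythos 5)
The paper offers no proof of this lemma --- it is dismissed with ``by the definitions of $\mathbb{L}$ and $\mathbb{L}_\pm$, we can easily prove the following lemma'' --- so your write-up supplies exactly the standard arguments the authors had in mind: the mild formulation with a shifted semigroup $S_{\beta_0}(t)$, monotone dependence on the coefficient matrix via the cooperative comparison principle of \cite{pw1967}, the translation identity $T_{-z}\circ\mathbb{L}[t,\cdot;\zeta]\circ T_z[\varphi]=\mathbb{L}[t,\varphi;\zeta(\cdot+z)]$ for (iv)--(v), parabolic regularity plus Arzel\`a--Ascoli for (ii) (note that the norm on $C$ is local-uniform, so this suffices), and Lemma~\ref{lem6.10-subhomgenous}-(i) plus comparison for (vi). All of this is sound and matches the paper's intent.

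One concrete slip in (i): the representation $\mathbb{L}[t,\varphi;\zeta(\infty)](x)=\int_{\mathbb{R}}K_D(t,x-y)\,e^{t\zeta(\infty)}\varphi(y)\,\mathrm{d}y$ is valid only when $D$ commutes with $\zeta(\infty)$; writing $u=e^{t\zeta(\infty)}v$ turns $u_t=Du_{xx}+\zeta(\infty)u$ into $v_t=e^{-t\zeta(\infty)}De^{t\zeta(\infty)}v_{xx}$, which is the pure heat system only if $[D,\zeta(\infty)]=0$. Since $D=\mathrm{diag}(d_1,\dots,d_N)$ with possibly distinct $d_i$ and $\zeta(\infty)$ is irreducible (hence not diagonal), this generally fails. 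The conclusion of (i) survives by the standard route: iterate the Duhamel identity $u(t)=S_{\beta_0}(t)\varphi+\int_0^tS_{\beta_0}(t-s)[(\beta_0E+\zeta)u(s)]\,\mathrm{d}s$; the strictly positive Gaussian kernels make the $i$-th component strictly positive as soon as $\varphi_i\neq0$, and irreducibility of $\zeta(\infty)\le\zeta(x)$ propagates positivity through all components after at most $N$ iterations. A smaller point in (v): a Gronwall bound on $\|w_z(t,\cdot)\|_{L^\infty([-k,k])}$ does not close by itself, since the Duhamel term feeds in $w_z(s,y)$ for $y$ outside $[-k,k]$; you should either run the estimate in a weighted norm $\sup_x e^{-\epsilon|x|}\|w_z(\cdot,x)\|$ compatible with the Gaussian convolution, or expand $w_z$ as a Picard series (uniformly convergent in $z$ thanks to the global bound on $g_z$) and pass to the limit term by term by dominated convergence. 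Both fixes are routine, so neither issue is a genuine gap in the approach.
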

		
		A straightforward  computation gives rise to the following result.
		\begin{lemma} \label{lemma-6.3.sub-glc-1}  Let $a,\mu\in (0,\infty)$, ${\bf b}\in \mathbb{R}_+^N$,  and let $A\in \mathbb{R}^{N\times N}$ be a cooperative matrix. Then    $$l_{\mu,a}:\mathbb{R}\ni z\mapsto  \frac{1}{\sqrt{4\pi a }}\int_{-z}^\infty
			e^{-\mu y}\exp \left(-\frac{y^2}{4a
			}\right){\rm d }y\in \mathbb{R}$$ is a bounded and continuous function  on $\mathbb{R}$  with    $l_{\mu,a}(-\infty)=0$ and  $l_{\mu,a}(\infty)=e^{a\mu^2}$.
			Hence,  $$L_{\mu,{\bf b},A}:=e^A({\bf b}_1l_{\mu,d_k},{\bf b}_2l_{\mu,d_2},\cdots, {\bf b}_Nl_{\mu,d_N})^{T}$$ is a bounded and continuous vector-valued function  on $\mathbb{R}$  with    $L_{\mu, {\bf b},A}(-\infty)=0$ and
			$$L_{\mu,{\bf b},A}(\infty)=e^A({\bf b}_1e^{d_1 \mu^2},{\bf b}_2e^{d_2 \mu^2},\cdots, {\bf b}_Ne^{d_N \mu^2})^{T}=e^{A+\mu^2 D}{\bf b}.$$
		\end{lemma}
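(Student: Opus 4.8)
The statement is a one–dimensional computation followed by componentwise bookkeeping for the matrix exponential, so the plan is to settle everything about the scalar function $l_{\mu,a}$ first and then lift to $L_{\mu,{\bf b},A}$. Fix $a,\mu\in(0,\infty)$ and set $g(y):=e^{-\mu y}\exp\!\big(-\frac{y^{2}}{4a}\big)$. Completing the square once, $-\mu y-\frac{y^{2}}{4a}=-\frac{(y+2a\mu)^{2}}{4a}+a\mu^{2}$, so $g(y)=e^{a\mu^{2}}\exp\!\big(-\frac{(y+2a\mu)^{2}}{4a}\big)\ge 0$; this shows $g$ is continuous, nonnegative, and in $L^{1}(\mathbb{R})$ with $\int_{\mathbb{R}}g(y)\,{\rm d}y=e^{a\mu^{2}}\sqrt{4\pi a}$. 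Consequently $0\le l_{\mu,a}(z)=\frac{1}{\sqrt{4\pi a}}\int_{-z}^{\infty}g(y)\,{\rm d}y\le \frac{1}{\sqrt{4\pi a}}\int_{\mathbb{R}}g(y)\,{\rm d}y=e^{a\mu^{2}}$, which gives boundedness (indeed $l_{\mu,a}$ takes values in $[0,e^{a\mu^{2}}]$), and for $z_{1},z_{2}\in\mathbb{R}$ we have $|l_{\mu,a}(z_{1})-l_{\mu,a}(z_{2})|=\frac{1}{\sqrt{4\pi a}}\big|\int_{-z_{1}}^{-z_{2}}g(y)\,{\rm d}y\big|$, so continuity follows from absolute continuity of the indefinite integral of $g$ (equivalently, $l_{\mu,a}\in C^{1}(\mathbb{R})$ with $l_{\mu,a}'(z)=\frac{1}{\sqrt{4\pi a}}g(-z)$).

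Next I would read off the two limits directly from the representation above. As $z\to-\infty$ the lower endpoint $-z\to+\infty$, so $l_{\mu,a}(z)$ is the tail $\frac{1}{\sqrt{4\pi a}}\int_{-z}^{\infty}g$ of a convergent integral and hence tends to $0$ (monotone, or dominated, convergence with majorant $g$). As $z\to+\infty$ the lower endpoint $-z\to-\infty$, so $l_{\mu,a}(z)\to\frac{1}{\sqrt{4\pi a}}\int_{\mathbb{R}}g(y)\,{\rm d}y$; using $g(y)=e^{a\mu^{2}}\exp\!\big(-\frac{(y+2a\mu)^{2}}{4a}\big)$ together with the Gaussian normalisation $\frac{1}{\sqrt{4\pi a}}\int_{\mathbb{R}}\exp\!\big(-\frac{u^{2}}{4a}\big)\,{\rm d}u=1$, this limit equals $e^{a\mu^{2}}$. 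This proves every assertion about $l_{\mu,a}$.

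Finally I would lift to $L_{\mu,{\bf b},A}$. Its $i$-th component is $\sum_{j}(e^{A})_{ij}\,{\bf b}_{j}\,l_{\mu,d_{j}}$, a finite linear combination with constant coefficients of the scalar functions $l_{\mu,d_{j}}$, each of which is bounded and continuous on $\mathbb{R}$ by the first part; hence $L_{\mu,{\bf b},A}$ is a bounded, continuous $\mathbb{R}^{N}$-valued function. Passing to the limits componentwise, $L_{\mu,{\bf b},A}(-\infty)=e^{A}(0,\dots,0)^{T}=0$, and $L_{\mu,{\bf b},A}(\infty)=e^{A}\big({\bf b}_{1}e^{d_{1}\mu^{2}},\dots,{\bf b}_{N}e^{d_{N}\mu^{2}}\big)^{T}$; rewriting the vector $\big({\bf b}_{1}e^{d_{1}\mu^{2}},\dots,{\bf b}_{N}e^{d_{N}\mu^{2}}\big)^{T}$ as $e^{\mu^{2}D}{\bf b}$ (the exponential of the diagonal matrix $\mu^{2}D$ applied to ${\bf b}$) and applying $e^{A}$ produces the closed form displayed in the statement.

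There is no genuine obstacle in this lemma: the only steps needing a word of justification are the two passages to the limit under the integral sign, both immediate from the dominated convergence theorem with the fixed integrable majorant $g$, and the Gaussian moment-generating identity obtained by completing the square. The cooperativity of $A$ is not used here at all — it is recorded in the hypotheses only because $A$ is later specialised to a cooperative Jacobian ${\rm D}_{\bf u}{\bf f}_{\pm}(0)$ when the lemma is invoked.
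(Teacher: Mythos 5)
Your computation is correct and is precisely the ``straightforward computation'' that the paper omits (the lemma is stated there without proof): completing the square to write $g(y)=e^{a\mu^{2}}\exp\bigl(-\tfrac{(y+2a\mu)^{2}}{4a}\bigr)$, deducing boundedness, continuity and the two limits from the integrability of $g$ and the Gaussian normalization, and then passing to $L_{\mu,\mathbf{b},A}$ componentwise. One caveat deserves more than the silent treatment you give it: the final rewriting $e^{A}\bigl(\mathbf{b}_{1}e^{d_{1}\mu^{2}},\dots,\mathbf{b}_{N}e^{d_{N}\mu^{2}}\bigr)^{T}=e^{A}e^{\mu^{2}D}\mathbf{b}=e^{A+\mu^{2}D}\mathbf{b}$ uses the identity $e^{A}e^{\mu^{2}D}=e^{A+\mu^{2}D}$, which holds only when $A$ and $D$ commute, i.e.\ when $A_{ij}=0$ whenever $d_{i}\neq d_{j}$; for a diagonal $D$ with distinct entries and a general cooperative matrix $A$ it fails. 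This is a defect of the displayed identity in the lemma itself (the paper asserts it just as silently), not of your argument for the substantive content --- everything up to and including $L_{\mu,\mathbf{b},A}(\infty)=e^{A}\bigl(\mathbf{b}_{1}e^{d_{1}\mu^{2}},\dots,\mathbf{b}_{N}e^{d_{N}\mu^{2}}\bigr)^{T}$ is fully justified --- but in a proof you should either record the commutativity hypothesis needed for the last equality or note that it is only formal.
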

		
	For any given $a,x\in \mathbb{R}$, we define
		\[
		\chi_a(x)=\left\{
		\begin{array}{ll}
		1 & x>a,
		\\
		x-a+1 & a-1\leq x\leq a,
		\\
		0, \qquad & x<a-1.
		\end{array}
		\right.
		\]
		
		\begin{prop} \label{prop6.3-2-11}   $Q[1,\cdot]$ satisfies {\bf (GLC)} with $c^*(\infty)$ whence  {\rm (C$_+$)} holds and  {\bf (GLC$_-$)} with  $c^*(-\infty)$ whence  {\rm (C$_-$)} holds, where $c^*(\infty)$ and $c^*(-\infty)$ are defined as in
		 \eqref{spreadRDS}.
		\end{prop}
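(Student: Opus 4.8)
The plan is to reduce {\bf (GLC)} to the abstract hypothesis {\bf (SLC)}, which by Lemma~\ref{lem3.2-000}-(v) produces {\bf (LC)}, and to apply this to a sequence of linear reaction--diffusion operators obtained by majorizing ${\bf f}$ through its Jacobian ${\rm D}_{\bf u}{\bf f}(\cdot,0)$ at the zero state. First I would record, from (C2) and {\rm (C$_+$)}-(ii), that $x\mapsto{\rm D}_{\bf u}{\bf f}(x,0)$ is continuous, bounded on $\mathbb{R}$, and converges entrywise to the irreducible cooperative matrix ${\rm D}_{\bf u}{\bf f}_+(0)$ as $x\to\infty$. For each $k\in\mathbb{N}$ I would then build a continuous, cooperative, nonincreasing matrix function $\zeta_k:\mathbb{R}\to\mathbb{R}^{N\times N}$ with $\zeta_k(x)\ge{\rm D}_{\bf u}{\bf f}(x,0)$ for all $x$, $\zeta_k(\infty)={\rm D}_{\bf u}{\bf f}_+(0)+\tfrac1k\check{\bf 1}$, and $\zeta_k(-\infty)=M_k\check{\bf 1}$ for a large $M_k>0$: take $\zeta_k$ constant equal to $M_k\check{\bf 1}$ up to the point $x_k$ beyond which ${\rm D}_{\bf u}{\bf f}(x,0)\le{\rm D}_{\bf u}{\bf f}_+(0)+\tfrac1k\check{\bf 1}$, then affinely decreasing to ${\rm D}_{\bf u}{\bf f}_+(0)+\tfrac1k\check{\bf 1}$ on $[x_k,x_k+1]$, then constant thereafter. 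Both limiting matrices $\zeta_k(\pm\infty)$ have strictly positive off-diagonal entries, hence are irreducible. I would set $L_k:=\mathbb{L}[1,\cdot;\zeta_k]$, $L_{k,+}:=\mathbb{L}[1,\cdot;\zeta_k(\infty)]$, and $L_{k,-}:=\mathbb{L}[1,\cdot;\zeta_k(-\infty)]$, with $\mathbb{L}$ as in \eqref{6.3-coop-subh-liz}.

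Next I would establish the domination $Q[1,\varphi]\le L_k[\varphi]$ on $C_{\phi_k^*}$ (indeed on all of $C_+$). By (C4) and Lemma~\ref{lem6.10-subhomgenous}-(i), ${\bf f}(x,{\bf u})\le{\rm D}_{\bf u}{\bf f}(x,0){\bf u}\le\zeta_k(x){\bf u}$ for every ${\bf u}\in\mathbb{R}_+^N$; combining Lemma~\ref{lemma-6.3.sub}-(vi) with the monotone dependence of $\mathbb{L}[1,\cdot;\cdot]$ on its cooperative coefficient (a routine comparison on the integral form of \eqref{6.3-coop-subh-liz}, after enlarging $\alpha_0^*$ so that $\alpha_0^*E+\zeta_k(x)\ge0$) gives $Q[1,\varphi]\le\mathbb{L}[1,\varphi;{\rm D}_{\bf u}{\bf f}(\cdot,0)]\le\mathbb{L}[1,\varphi;\zeta_k]=L_k[\varphi]$.

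For the {\bf (LC)} property of $(L_k,L_{k,+})$ --- here $M$ is a single point and $Y=\mathbb{R}^N$ --- I would verify {\bf (SLC)} for the triple $(L_k,L_{k,+},L_{k,-})$ and then invoke Lemma~\ref{lem3.2-000}-(v). Item (iii) (translation monotonicity) is Lemma~\ref{lemma-6.3.sub}-(iv), where the monotonicity of $\zeta_k$ in $x$ is precisely what is used; item (ii) (limits at $\pm\infty$) is Lemma~\ref{lemma-6.3.sub}-(v) together with $\zeta_k(\pm\infty)$ being the coefficient limits; item (i) (continuity in $z$, and the limits $0$ and $L_{k,\pm}[e_{\zeta,\mu}](0)$, of $z\mapsto L_{k,\pm}[e_{\zeta\cdot{\bf 1}_{[z,\infty)},\mu}](0)$) reduces, for the constant--coefficient Gaussian maps $L_{k,\pm}$, to the elementary computations collected in Lemma~\ref{lemma-6.3.sub-glc-1}; item (iv) holds because $L_{k,\pm,\nu}\phi=e^{D\nu^2+\zeta_k(\pm\infty)}\phi$ is multiplication by a matrix with strictly positive entries. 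To match the speed I would use $c^*_{L_{k,+}}=\inf_{\mu>0}\tfrac1\mu s\big(D\mu^2+\zeta_k(\infty)\big)$ and let $k\to\infty$: since $\zeta_k(\infty)={\rm D}_{\bf u}{\bf f}_+(0)+\tfrac1k\check{\bf 1}\to{\rm D}_{\bf u}{\bf f}_+(0)$ and the infimum is attained at a finite $\mu$, continuity gives $\lim_k c^*_{L_{k,+}}=\inf_{\mu>0}\tfrac1\mu s\big(D\mu^2+{\rm D}_{\bf u}{\bf f}_+(0)\big)=c^*(\infty)$, so $Q[1,\cdot]$ satisfies {\bf (GLC)} with $c^*(\infty)$. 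Finally {\bf (GLC$_-$)} with $c^*(-\infty)$ follows by applying the above to $\mathcal{S}\circ Q[1,\cdot]\circ\mathcal{S}$, the time--$1$ map of $\partial_t{\bf u}=D\partial_{xx}{\bf u}+{\bf f}(-x,{\bf u})$, whose right-hand limiting function is ${\bf f}_-$ and which satisfies the analogue of {\rm (C$_+$)} via {\rm (C$_-$)}, through the reflection correspondence stated just before this section.

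I expect the main obstacle to be the verification of {\bf (SLC)}: controlling how the truncated exponential data $e_{\zeta\cdot{\bf 1}_I,\mu}$ propagate under the (heat) solution maps uniformly in the truncation point --- this is exactly what Lemma~\ref{lemma-6.3.sub-glc-1} is designed to handle --- together with arranging the majorant $\zeta_k$ to be simultaneously continuous, nonincreasing, cooperative, dominating ${\rm D}_{\bf u}{\bf f}(\cdot,0)$, and with irreducible limiting matrices whose spreading speed converges to $c^*(\infty)$ as $k\to\infty$.
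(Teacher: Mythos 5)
Your proposal is correct and follows essentially the same route as the paper: majorize ${\bf f}$ by ${\rm D}_{\bf u}{\bf f}(\cdot,{\bf 0})$ via the subhomogeneity lemma, inflate this to a continuous, cooperative, nonincreasing coefficient $\zeta_k$ with $\zeta_k(\infty)={\rm D}_{\bf u}{\bf f}_+(0)+\tfrac1k\check{\bf 1}$, verify {\bf (SLC)} for the resulting linear solution maps using Lemmas~\ref{lemma-6.3.sub} and \ref{lemma-6.3.sub-glc-1}, and obtain {\bf (GLC$_-$)} by reflection. The only point treated more carefully in the paper is the extension of $L_k$, $L_k^\pm$ to the exponentially weighted class $\tilde C$ (via the Riesz representation of $\mathbb{L}[1,\cdot;\zeta_k]$) and the cutoff argument with $\chi_{-z_0}$ needed to pass from Lemma~\ref{lemma-6.3.sub}-(v), which applies to bounded data, to the unbounded function $e_{\check 1,\mu}$ in {\bf (SLC)}-(ii).
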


		\begin{proof} It suffices to prove that $Q$ satisfies  {\bf (GLC)}.  Since Lemma~\ref{lem6.10-subhomgenous}-(i) implies that
				$${\bf f}(x,{\bf u}) \leq {\rm D}_{\bf u}{\bf f}(x,{\bf 0}){\bf u},
				\quad \forall (x,{\bf u}) \in\mathbb{R}\times \mathbb{R}_+^N,
				$$
	it follows  that for any $l\in \mathbb{N}$,  there exists  $\zeta_l \in  C(\mathbb{R},\mathbb{R}^{N\times N})$
			such that $$\zeta_l\geq {\rm D}_{\bf u}{\bf f}(\cdot,{\bf 0}), \zeta_l(\infty)=\frac{1}{l}\check{\bf 1}+\limsup\limits_{x\to \infty}{\rm D}_{\bf u}{\bf f}(x,{\bf 0}),\zeta_l(-\infty)\in \mathbb{R}^{N\times N}, $$ and $\zeta_l(x)$ is cooperative, irreducible, and nonincreasing in $x\in \mathbb{R}$.
			
			Let $(l,\mu)\in \mathbb{N}\times  (0,\infty)$
				be given. By the Riesz representation theorem and  Lemma~\ref{lemma-6.3.sub}-(iii),    there exist bounded
			nonnegative measure matrices $(m_{ij,l}(x,{\rm d}y))_{N\times N}$ and $(m_{ij,l}^\pm({\rm d}y))_{N\times N}$  such that
			for any $(x,\varphi)\in \mathbb{R}\times C$, there hold $$\mathbb{L}[1,\varphi;\zeta_l](x)=\int_{\mathbb{R}}(m_{ij,l}(x,{\rm d}y))_{N\times N}\varphi(x-y),
			$$
			and
			$$
			\mathbb{L}[1,\varphi;\zeta_l(\pm\infty)](x)=\int_{\mathbb{R}}(m_{ij,l}^\pm({\rm d}y))_{N\times N}\varphi(x-y).$$
		Define $L_l$ and $L_l^\pm:\tilde{C}\to \tilde{C}$ by
			$$
			L_l[\varphi](x)=\int_{\mathbb{R}}(m_{ij,l}(x,{\rm d}y))_{N\times N}\varphi(x-y),
			$$
			  and
			$$
			L_l^\pm[\varphi](x)=\int_{\mathbb{R}}(m_{ij,l}^\pm({\rm d}y))_{N\times N}\varphi(x-y), \forall (x,\varphi)\in \mathbb{R}\times \tilde{C}.$$
			It then follows that $Q[\varphi]\leq  L_l[\varphi]$ for all $\varphi\in  C_+\setminus\{0\}$ and
			$$
			0\ll L_l^+[\varphi]\leq T_z\circ L_l\circ T_z[\varphi]\leq L_l[\varphi]\leq L_l^-[\varphi],\,  \,  \forall  (z,\varphi)\in \mathbb{R}_+\times \tilde{C}_+\setminus\{0\}.
			$$
			The latter implies that $L_l$ and $L_l^\pm$ satisfy  {\bf (SLC)}-(iii).
			Since
			$$L_l^\pm[e_{{\bf b}\cdot {\bf 1}_{[-z,\infty)},\mu}](0)=L_{\mu,{\bf b},\zeta_{l}(\pm \infty)}(z), \forall z\in \mathbb{R},
			$$
	 {\bf (SLC)}-(i) and {\bf (SLC)}-(iv) follow from Lemma~\ref{lemma-6.3.sub-glc-1}.
			
			To verify  {\bf (SLC)}-(ii), by  {\bf (SLC)}-(iii), it suffices to prove $$
			\lim\limits_{z\to \pm\infty}T_{-z}\circ L_l\circ T_{z}[e_{\check{1},\mu}]= L_l^\pm[e_{\check{1},\mu}] \, \mbox{  in }L^\infty_{loc}(\mathbb{R},\mathbb{R}), \forall \mu>0.
			$$
			Fix $\gamma,\epsilon,\mu>0$.  Note that
			$$L_l^\pm[e_{\check{1}\cdot {\bf 1}_{[-z,\infty)},\mu}](x)=e^{-\mu x}L_l^\pm[e_{\check{1}\cdot {\bf 1}_{[-z-x,\infty)},\mu}](0),
			\,  \,  \forall x,z\in \mathbb{R},
			$$
			 and
			 $$
			 L_l^\pm[e_{\check{1}\cdot {\bf 1}_{[-z,\infty)},\mu}](x)\leq L_l^\pm[e_{\check{1}\cdot {\chi}_{-z}(\cdot),\mu}](x)\leq L_l^\pm[e_{\check{1}\cdot {\bf 1}_{[-z-1,\infty)},\mu}](x),
			 	\,  \,  \forall x,z\in \mathbb{R}.
			 $$
			
			 This, together with {\bf (SLC)}-(i,iii), implies that  there exists $z_0>0$ such that
			\begin{eqnarray*}
			||L_l^+[e_{\check{1}\cdot {\chi}_{-z_0}(\cdot),\mu}](x)-L_l^+[e_{\check{1},\mu}](x) ||
		&\leq&||L_l^-[e_{\check{1}\cdot {\chi}_{-z_0}(\cdot),\mu}](x)-L_l^-[e_{\check{1},\mu}](x) ||
			      \\
	        &\leq&||L_l^-[e_{\check{1}\cdot {\bf 1}_{[-z_{0},\infty)},\mu}](x)-L_l^-[e_{\check{1},\mu}](x) ||
	                        \\
	        &=&||L_l^-[e_{\check{1}\cdot {\bf 1}_{[-z_{0}-x,\infty)},\mu}](0)-L_l^-[e_{\check{1},\mu}](0) ||e^{-\mu x}
	                        \\
		&<&\frac{\epsilon}{3},\quad  \forall x\in[-\gamma,\gamma].
		       \end{eqnarray*}
					  In view of  {\bf (SLC)}-(iii) and the choice of $z_{0}$,
					  we easily see that  	for any $(x,z)\in[-\gamma,\gamma]\times \mathbb{R}$, there holds
		     \begin{eqnarray*}
	&& || T_{-z}\circ L_l\circ T_z[e_{\check{1}\cdot {\chi}_{-z_0}(\cdot),\mu}](x)-T_{-z}\circ L_l\circ T_z[e_{\check{1},\mu}](x) ||
	\\
	&&=|| T_{-z}\circ L_l\circ T_z[e_{\check{1},\mu}-e_{\check{1}\cdot {\chi}_{-z_0}(\cdot),\mu}](x) ||
	\\
	&&\leq || L_l^-[e_{\check{1},\mu}-e_{\check{1}\cdot {\chi}_{-z_0}(\cdot),\mu}](x) ||
	<\frac{\epsilon}{3}.
			  \end{eqnarray*}
			Since $e_{\check{1}\cdot {\chi}_{-z_0}(\cdot),\mu}\in C$,
			Lemma~\ref{lemma-6.3.sub}-(v) implies that there exists $z^*>0$ such that   $$||T_{-z}\circ L_l\circ T_z[e_{\check{1}\cdot {\chi}_{-z_0}(\cdot),\mu}](x)-L_l^\pm[e_{\check{1}\cdot {\chi}_{-z_0}(\cdot),\mu}](x)||<\frac{\epsilon}{3}, \forall (x,\pm z)\in[-\gamma,\gamma]\times [z^*,\infty).$$
			Thus, we can verify that for any $x\in [-\gamma,\gamma]$, there holds
			$$ ||T_{-z}\circ L_l\circ T_z[e_{\check{1},\mu}](x)-L_l^\pm[e_{\check{1},\mu}](x)||<\epsilon,  \forall  \pm z\in [z^{*},\infty).$$
			Consequently,  the  arbitrariness of $\gamma$ and $\epsilon$  completes the proof.
		\end{proof}
		
		Now we are in a position to present the result on the propagation dynamics
		for system  ~\eqref{6.3-coop-subh} in  the bilateral {\bf (UC)}/{\bf (UC)} case.
		\begin{thm}\label{thm6.5}
			Assume that (C$_+$) and (C$_-$) hold and let  $c^*(\pm \infty)$ be defined as in  \eqref{spreadRDS}. Then the following statements are valid:
			\begin{itemize}
				\item [{\rm (i)}]
				If $\varphi\in C_+\setminus \{0\}$ and $\varepsilon\in (0,\min\{c^*(\infty),c^*(-\infty)\})$, then $$\lim\limits_{\alpha\rightarrow \infty}
				\max\{||Q[t,\varphi](x)-{\bf u}_+^*\cdot {\bf {1}}_{\mathbb{R}_+}(x)-{\bf u}^*_-\cdot (1-{\bf {1}}_{\mathbb{R}_+}(x))||:(t,x)\in \mathcal{B}_{\alpha,\varepsilon}\}= 0,$$ where $\mathcal{B}_{\alpha,\varepsilon}=\{(t,x)\in \mathbb{R}_+\times \mathbb{R}: \, \,  t\geq \alpha, \,  x\in [\alpha,t(c^*(\infty)-\varepsilon)]\bigcup [t(-c^*(-\infty)+\varepsilon),-\alpha]\}$.

				\item [{\rm (ii)}]
				If $\varepsilon\in (0,\infty)$ and $\phi\in C_+$ has the compact support, then $$\lim\limits_{t\rightarrow \infty}
				\sup\left\{||Q[t,\varphi](x)||:x\in [t(c^*(\infty)+\varepsilon), \infty)\bigcup (-\infty,-t(c^*(-\infty)+\varepsilon)]\right\}= 0.$$
				
				\item [{\rm (iii)}]  $Q$ has a  nontrivial  equilibrium points $W$ in $C_{\bf M}^\circ$ such that
				$W(\infty)={\bf u}^*_+$ and $W(-\infty)={\bf u}^*_-$.
				
				\item [{\rm (iv)}] If $(\varepsilon,\varphi)\in (0,\min\{c^*(\infty),c^*(-\infty)\})\times (C_{+}\setminus \{0\})$, then  $$\lim\limits_{t\rightarrow \infty}\sup\left\{||Q[t,\varphi](x)-W(x)||: t(-c^*(-\infty)+\varepsilon)\leq x \leq t(c^*(\infty)-\varepsilon)\right\}= 0,$$ where $W$ is defined as in (iii).
				
			\end{itemize}
			
		\end{thm}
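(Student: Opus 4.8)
The plan is to derive all four conclusions from the abstract autonomous bilateral results of Section~4, applied to the time-one map $Q[1,\cdot]$ of system~\eqref{6.3-coop-subh}. Here $M$ is a single point, so $C=X=BC(\mathbb{R},\mathbb{R}^N)$ and $Y=\mathbb{R}^N$; I would take $\phi_k^*=k{\bf M}$ (so {\bf (UB)} holds by Proposition~\ref{prop6.5}-(ii), since ${\bf f}(x,\alpha{\bf M})\ll 0$ for $\alpha>1$ makes each $k{\bf M}$ a supersolution), $r^*={\bf u}_+^*$, $r^*_-={\bf u}_-^*$ (both $\leq{\bf M}$, because ${\bf u}_\pm^*$ is the unique positive equilibrium of ${\bf f}_\pm$ and $Q_\pm[t,{\bf M}]$ decreases to it), and $c_+^*=c_-^*=c^*(\infty)$, $c_+^*(-\infty)=c_-^*(-\infty)=c^*(-\infty)$ (the two limiting systems are spatially homogeneous and isotropic). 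Propositions~\ref{prop6.5}, \ref{prop6.7}, \ref{prop6.5-11} and~\ref{prop6.3-2-11} already provide the global monotonicity and subhomogeneity on $C_{\bf M}$, the compactness, the identification of the limiting maps $Q_\pm[1,\cdot]$ through the convergence relations, the strict positivity $c^*(\pm\infty)>0$, and the hypotheses {\bf (NM)}, {\bf (NM$_-$)}, {\bf (GLC)}, {\bf (GLC$_-$)}. What remains is to record a few structural hypotheses and then quote the abstract theorems (recalling $Q^n=Q[n,\cdot]$ by the semiflow property).

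First I would verify {\bf (ACH)} and {\bf (ACH$_-$)} with \emph{constant} approximating sequences: since $Q[1,\cdot]$ is itself monotone on $C_+$ (Proposition~\ref{prop6.5}-(vi)) and subhomogeneous on $[0,{\bf M}]_C\supseteq[0,{\bf u}_\pm^*]_C$ (Proposition~\ref{prop6.5}-(vii)), one may choose $Q_l=Q[1,\cdot]$, $Q_{l,\pm}=Q_\pm[1,\cdot]$, $r_l^*={\bf u}_\pm^*$, $c_{l,\pm}^*=c^*(\pm\infty)$ for every $l$; then items (i)--(v) of {\bf (ACH)} follow from Proposition~\ref{prop6.5}-(v),(vi),(vii) together with Proposition~\ref{prop6.5-11}-(i), and similarly for {\bf (ACH$_-$)}. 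Next, {\bf (SP)} and {\bf (SP$_-$)} hold with $N^*=N^*_-=1$ and any $0<\rho^*<\varrho^*$, since Proposition~\ref{prop6.5}-(i) gives $Q^n[\varphi]=Q[n,\varphi]\in C_+^\circ$ for every $\varphi\in C_+\setminus\{0\}$ and $n\geq1$, which is stronger than {\bf (SP)} requires. The spatial translation-monotonicity assumed in Theorem~\ref{thm3.6-bil-uc/uc-fixp}, and hence in Theorem~\ref{thm4.3.6-bil-uc/uc-fixp}, is not available (the inhomogeneity ${\bf f}(x,\cdot)$ need not be monotone in $x$), but it is dispensable via Remarks~\ref{rem3.1-sp-monoty}, \ref{rem3.2-sp-monoty} and~\ref{rem4.1-sp-monoty} because $\min\{c_+^*,c_-^*,c_+^*(-\infty),c_-^*(-\infty)\}=\min\{c^*(\infty),c^*(-\infty)\}>0$ by Proposition~\ref{prop6.7}. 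The last input for Theorem~\ref{thm4.3.6-bil-uc/uc-fixp} is that $Q$ possesses an equilibrium in every nonempty closed convex positively invariant $\mathcal{K}\subseteq C_{\bf M}$; such a $\mathcal{K}$ is an intersection of order intervals whose top is a supersolution, so the monotone iteration $Q[n,\cdot]$ from the top is decreasing and, by the compactness in Proposition~\ref{prop6.5}-(iii), converges in $C$ to a fixed point of $Q[1,\cdot]$, which is then a common equilibrium of $\{Q_t\}$. For statement~(ii) I also need {\bf (SC)}; this follows from Lemma~\ref{lem6.10-subhomgenous}-(i) (giving ${\bf f}(x,{\bf u})\leq{\rm D}_{\bf u}{\bf f}(x,{\bf 0}){\bf u}$), so that $Q[t,\varphi]$ is dominated by the solution of the associated linear system with bounded coefficients, and the linear semigroup carries locally small data to uniformly small data, uniformly over bounded time intervals and over all spatial shifts. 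With these in hand, statement~(i) is Theorem~\ref{thm4.1-bil-uc/uc}(ii) with $t_0=1$; statement~(ii) is Theorem~\ref{thm4.1-bil-meal/meal}(i), noting that $c_1^*=c^*(\infty)$ and $c_2^*=c^*(-\infty)$ from the construction in the proof of Proposition~\ref{prop6.3-2-11}; and statement~(iii) is Theorem~\ref{thm4.3.6-bil-uc/uc-fixp} with $\underline{Q}=Q$, $r^{**}={\bf M}$, the resulting equilibrium $W$ lying in $C_{\bf M}\cap C_+^\circ=C_{\bf M}^\circ$ because $W=Q[1,W]$ with $W(\pm\infty)={\bf u}_\pm^*\neq0$.

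For statement~(iv) I would establish {\bf (GAS-CSF)} and then apply Theorem~\ref{thm4.1-bil-gas}. Following the argument in the proof of Theorem~\ref{thm6.9}-(iv) (itself patterned on \cite[Theorem~2.3.2]{Zhaobook}), fix $\varphi\in C_+\setminus\{0\}$ and consider its omega-limit set $\omega(\varphi)$, which is nonempty, compact, invariant and contained in $C_{\bf M}\cap C_+^\circ$ (invariance forces each element to be $Q[1,\cdot]$ of a nonzero element, hence in $C_+^\circ$). Statement~(i) already shows that every $\psi\in\omega(\varphi)$ has $\psi(\pm\infty)={\bf u}_\pm^*$, as does $W$, so $W$ and each such $\psi$ are bounded away from $0$; setting $a_-^*=\sup\{a\geq0:\psi\geq aW,\ \forall\psi\in\omega(\varphi)\}$ and $a_+^*=\inf\{a\geq0:\psi\leq aW,\ \forall\psi\in\omega(\varphi)\}$, one gets $0<a_-^*\leq a_+^*<\infty$ by combining this boundary behavior with the compactness of $\omega(\varphi)$ and the strict positivity of its elements on bounded sets. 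If $a_-^*<1$, then strict subhomogeneity at $a_-^*{\bf u}_\pm^*$ — which follows from the subhomogeneity of ${\bf f}_\pm$ together with the uniqueness of the positive equilibrium ${\bf u}_\pm^*$ (ruling out equality) and the irreducibility of ${\rm D}_{\bf u}{\bf f}_\pm({\bf 0})$ (upgrading to $Q_\pm[t,a_-^*{\bf u}_\pm^*]\gg a_-^*{\bf u}_\pm^*$) — propagates through the variation-of-constants formula and the boundary values to yield $\psi\geq(a_-^*+\delta_0)W$ uniformly for some $\delta_0>0$, contradicting the definition of $a_-^*$; the symmetric argument excludes $a_+^*>1$. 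Hence $\omega(\varphi)$ is a single equilibrium, which by its boundary values ${\bf u}_\pm^*$ must equal $W$, giving {\bf (GAS-CSF)}, and Theorem~\ref{thm4.1-bil-gas} with $t_0=1$ then yields~(iv). I expect this last step to be the main obstacle: because the cone $C_+$ has empty interior in $BC(\mathbb{R},\mathbb{R}^N)$, the classical convergence theorems for subhomogeneous monotone maps do not apply directly, and the uniform squeeze of $\omega(\varphi)$ between positive multiples of $W$ must be obtained by feeding the spreading estimate of~(i) into the two ends and using the strong positivity/irreducibility of the limiting linearizations to turn the strict inequality at $a_-^*{\bf u}_\pm^*$ into a uniform gap.
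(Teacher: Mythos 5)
Your proposal is correct and follows essentially the same route as the paper: (i), (ii) and (iii) are obtained by citing Theorems~\ref{thm4.1-bil-uc/uc}-(ii), \ref{thm4.1-bil-meal/meal}-(i) and Remark~\ref{rem4.1-sp-monoty} (equivalently Theorem~\ref{thm4.3.6-bil-uc/uc-fixp} with the translation-monotonicity replaced by positivity of all four speeds), and (iv) is proved by establishing {\bf (GAS-CSF)} through the same subhomogeneity squeeze of $\omega(\varphi)$ between multiples of $W$ and then invoking Theorem~\ref{thm4.1-bil-gas}. The only cosmetic difference is that the paper routes the contradiction in (iv) through the intermediate equilibria $\underline{W}=\lim_t Q[t,a_-^*W]$ and $\overline{W}=\lim_t Q[t,a_+^*W]$ rather than working with $a_\pm^*$ and $W$ directly, but the mechanism (strict subhomogeneity at the tails via (C$_\pm$) plus the variation-of-constants formula yielding a uniform gap) is identical.
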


		\begin{proof}
			(i) follows from Theorem~\ref{thm4.1-bil-uc/uc}-(ii).
			
			(ii) follows from Theorem~\ref{thm4.1-bil-meal/meal}-(i).
			
			(iii)  follows from Remark~\ref{rem4.1-sp-monoty}.
			
				
			(iv) We only need to prove {\bf (GAS-CSF)} in $C_{\bf M}\setminus \{0\}$ due to Theorem~\ref{thm4.1-bil-gas}-(i) and Proposition~\ref{prop6.5}-(i). More precisely, it suffices to show that
			$$
			\omega(\varphi):=\bigcap\limits_{s\in \mathbb{R}_+}Cl(\{Q[t,\varphi]:t\geq s\})=\{W\},
				\,  \,  \forall  \varphi\in C_{\bf M}\setminus \{0\}.
			$$
			Fix $\varphi\in C_{\bf M}\setminus \{0\}$.
			Let
			$$a^*_-=\sup \{a\in \mathbb{R}:\omega(\varphi)\geq a W\}$$
			 and
			 $$a_+^*=\inf \{a\in \mathbb{R}:\omega(\varphi)\leq a W\}.$$
			 By  (i) and (iii), we then have
			 $$0<a^*_-\leq 1\leq a^*_+<\infty,	\quad
			 a^*_- W\leq \omega(\varphi)\leq a^*_+ W.
			 $$
			 This, together with Proposition~\ref{prop6.5}-(vii), implies  that
			$$
			Q[t,a^*_- W]\geq a^*_- W, \quad Q[t,a^*_+ W]\leq a^*_+ W,
			\quad  \forall  t\in \mathbb{R}_+.
			$$
			Thus, there exist two  equilibrium points $\overline{W},\underline{W}$ in $C_+^\circ$ such that
			$$\underline{W}:=\lim\limits_{t\to \infty}Q[t,a^*_-W]\leq W\leq \overline{W}:=\lim\limits_{t\to \infty}Q[t,a^*_+W]$$
			 and $\underline{W}\leq\omega(\varphi)\leq \overline{W}$. Then  (i)
			 implies that
			 $$\overline{W}(\infty)=\underline{W}(\infty)={\bf u}_+^*, \quad \overline{W}(-\infty)=\underline{W}(-\infty)={\bf u}_-^*.$$
			
			To finish the proof,  it suffices to prove $a^*:=\sup\{a>0:\underline{W}\geq a\overline{W}\}=1$. Otherwise, $a^*\in (0,1)$ and $\overline{W}\geq \underline{W}\geq a^*\overline{W}$. Thus, by the subhomogeneity of $f$ and the choice of $\alpha_{0}^{*}$, we have ${\bf f }(\cdot, a^{*}\overline{W})\geq a^{*}{\bf f} (\cdot, \overline{W})$ and ${\bf f }_{\alpha_{0}^{*}}(\cdot,\underline{W})\geq a^{*}{\bf f}_{\alpha_{0}^{*}} (\cdot, \overline{W})$.
		In view of  (C$_+$) and (C$_-$),
			we easily see that
			$$u^*_+ +\frac{{\bf f }_+(u^*_+)}{\alpha^*_0}\gg a^*[u^*_+ + \frac{{\bf f }_+(u^*_+)}{\alpha^*_0}]
			$$
		and hence there exists $x_{0}>0$ such that
		${\bf f }_{\alpha_{0}^{*}}(x,\underline{W}(x))\gg a^{*}{\bf f}_{\alpha_{0}^{*}} (x, \overline{W}(x)), \forall |x|\geq x_{0}$.

		 It follows from \eqref{6.3-coop-subh-integal-eq} and the definition of $Q[t,\cdot]$ that  for any $x\in \mathbb{R}$, there holds
			\begin{eqnarray*}
				\underline{W}(x)&=& Q[1,\underline{W}](x)
				\\
				&\geq &S_{\alpha_0^*}(1)[\underline{W}](x)+\int^1_0 S_{\alpha_0^*}({1-s})[\alpha_0^* {\bf f}_{\alpha_0^*}(\cdot, \underline{W}(\cdot))](x){ \mathrm {d} s}
				\\
				&\gg & a^*S_{\alpha_0^*}(1)[\overline{W}](x)+a^*\int^1_0 S_{\alpha_0^*}({1-s})[\alpha_0^* {\bf f}_{\alpha_0^*}(\cdot, \overline{W}(\cdot))](x){ \mathrm {d} s}
				\\
				&=& a^*\overline{W}(x).
			\end{eqnarray*}
			This, together with the limits of $\underline{W}$ and  $\overline{W}$,
			gives rise to  $\underline{W}\geq (a^*+\epsilon_0)\overline{W}$ for some $\epsilon_0>0$, a contradiction. As a result, (iv) holds true.
			\end{proof}

Finally, we remark that the first two conclusions
in Theorem \ref{thm6.5} were established in 
\cite{hsl2020} and \cite{ycw2019}, respectively, in the special case where $N=1$ and ${\bf f}(x,u)$ is monotone in $x$.
Here we proved that  these two conclusions hold true no matter 
whether ${\bf f}(x,u) $ is monotone in $x$. Moreover,  the third conclusion on the existence of the nontrivial steady states for system \eqref{6.3-coop-subh} seems to appear for the first time.
		
\

\noindent
{\bf Acknowledgements.} T.  Yi's research is supported by the National Natural Science Foundation of  China (NSFC 11971494),  and X.-Q. Zhao's research is supported in part by the NSERC of Canada (RGPIN-2019-05648).

\end{document}